 \def\Ta{{\mathcal T}}
\def\ix{{\epsilon}}
\def\Bn{ B_N}
\def\norma{{\|}}
\def\td{{\tau_1}}
\def\esse{{S_0}}
\def\PaPi{{K}}\def\val{{\vartheta}}
\def\er{{\mathtt r}}
\def\ro{{\varepsilon^2}}
\def\pluto{{\mathtt j}}
\def\art{{\bar \Theta}}
\def\Gama{{\Gamma_S}}
\def\GA{{\mathcal A}}
\def\Co{{\mathbb C}}
\def\pd#1#2{\dfrac{\partial#1}{\partial#2}}
\font\teneufm=eufm10
\font\seveneufm=eufm7
\font\fiveeufm=eufm5
\def\eufm#1{{\fam\eufmfam\relax#1}}
\def\rro{{\varepsilon^2}}
\def\Vgot{{\eufm V}}
\def\vgot{{\eufm v}}
\def\C{{\mathbb C}}
\def\teta{\theta}
\theoremstyle{plain}
\newtheorem*{lemma*}{Lemma}
\newtheorem{lemma}[subsection]{Lemma}
\newtheorem*{theorem*}{Theorem}
\newtheorem{theorem}{Theorem}
\newtheorem*{proposition*}{Proposition}
\newtheorem{proposition}[subsection]{Proposition}
\newtheorem*{corollary*}{Corollary}
\newtheorem{step}{KAM Step}
\newtheorem{corollary}[subsection]{Corollary}
\theoremstyle{definition}
\newtheorem*{definition*}{Definition}
\newtheorem{definition}[subsection]{Definition}
\newtheorem*{example*}{Example}
\newtheorem{example}[subsection]{Example}
\theoremstyle{remark}
\newtheorem*{remark*}{Remark}\newtheorem*{proof*}{proof}
\newtheorem{remark}[subsection]{Remark}
\def\Q{{\mathbb Q}}
\def\T{{\mathbb T}}
\def\frec#1{{\stackrel{#1}\rightarrow}}
\def\N{{\mathbb N}}
\def\cc{{\mathtt c}}
\def\CC{{\mathtt C}}
\newcommand{\be}{\begin{equation}}
\newcommand{\ee}{\end{equation}}
\newcommand{\om}{\omega}
\newcommand{\e}{\varepsilon}
\newcommand{\al}{\alpha}
\renewcommand{\a }{\alpha }
\renewcommand{\b }{\beta }
\newcommand{\s }{\sigma }
\newcommand{\ii }{{\rm i} }
\renewcommand{\d }{\delta }
\newcommand{\g }{\gamma}
\renewcommand{\l }{\lambda }
\renewcommand{\o }{\omega }
\def\ome{{\omega}}
\def\Ome{{\Omega}}
\renewcommand{\O }{\mathcal O }
\def\Pappa{{{K}}}
\newcommand{\Z}{\mathbb{Z}}
\newcommand{\R}{\mathbb{R}}
\newcommand{\n}{\mathtt j}\def\ome{{\omega}}
\def\Ome{{\Omega}}
\renewcommand{\O }{\mathcal O }
\renewcommand\c{{\bar \Theta}}
\def\Art{{\mathtt C_0}}
 \title[A KAM algorithm for the NLS]{A KAM algorithm for the resonant  non--linear Schr\"odinger  equation }
\author{ M. Procesi*, \and 
 C. Procesi{**}. }
 \thanks{ *Universit\`a di Roma, La Sapienza supported by ERC under FP7, {**}  Universit\`a di Roma, La Sapienza }
\begin{document}

\begin{abstract}
We prove, by applying a KAM algorithm, existence of large families of stable and unstable    quasi periodic solutions  for the NLS in any number of independent frequencies. The main tools are the existence of a non-degenerate integrable normal form proved in \cite{PP} and \cite{PP1} and a suitable generalization of the quasi-T\"oplitz functions introduced in \cite{PX}
 \end{abstract}\maketitle
 
\tableofcontents

\section{Introduction}

 The present paper is devoted to the  construction of  stable and unstable   quasi--periodic solutions for the completely resonant cubic NLS equation on a torus $\T^d$:

\begin{equation}\label{scE}iu_t-\Delta u=\mathtt k |u|^{2}u +\partial_{\bar u}G(|u|^2).\end{equation}
Here $u:= u(t,\varphi)$, $\varphi\in \T^d$, $\Delta$ is the Laplace operator, $G(a)$ is a real analytic function whose Taylor series starts from degree $3$ and $\mathtt k=\pm 1$.   
   
  Our results are obtained by exploiting the Hamiltonian structure of equation \eqref{scE} and  applying  a KAM algorithm.  As is well known such algorithms require strong {\em non-degeneracy} conditions  which  are not always valid, even for finite dimensional systems and, when valid, are generally proved by  performing on the Hamiltonian a few steps of Birkhoff  normal form.   This is done in \cite{PP} (where the results are for for a larger class of NLS with non--linearity $|u|^{2q}u$)  and \cite{PP1} in which we  exhibit  and study the normal forms   for classes of  completely resonant non--linear Schr\"odinger  equations. 
  
 Let us give a brief overview of the main difficulties in proving existence and  stability of small quasi-periodic solutions for PDEs. One starts with a Hamiltonian PDE which has an elliptic fixed point at $u=0$, and wishes to prove that some of the solutions of the nonlinear equation stay {\em close} to the linear solutions for all time.  One has to deal with two classes of problems: 
 
 the {\em resonances}, namely the equation linearized at $u=0$ does not have quasi-periodic solutions so we are dealing with a singular perturbation problem;
 
  the {\em small divisors},  namely the equation linearized at $u=0$ is described by an operator whose inverse is unbounded so that in order to find small solutions one needs to use some {\em Generalized Implicit Function Theorem}.
   
   There are two main approaches to these problems:\quad 1. Use a combination of  Lyapunov--Schmidt reduction techniques  and a Nash--Moser algorithm to solve the small divisor problem. This is the so--called {\em Craig--Wayne--Bourgain} approach, see \cite{CW} ,\cite{Bo2} and  for a recent generalization also \cite{BBhe}.\quad
   2. Use a combination of Birkhoff normal form and a KAM algorithm, see for instance \cite{KP}, \cite{BB}.
   
   In both cases one usually studies simplified models, namely parameter families of PDEs  with the parameters chosen in such a way as to avoid resonances. Even under this simplifying hypotheses the problems related to the small divisors are in general quite complicated. Essentially, in order to perform a quadratic iteration scheme to prove the existence of quasi-periodic solutions, one needs some control on the operator  (which we denote by $L(u)$) describing the equation linearized on an approximate solution $u(x,t)$  (and not only  at $u=0$). In the Nash--Moser scheme one requires very weak hypotheses, in order to ensure that one may define a left inverse for $L(u)$ with some control on the {\em loss of regularity}.
   In a KAM scheme instead one imposes  lower bounds on  the {\em eigenvalues} of  $L(u)$ and on {\em  their differences}, this allows to prove a stronger result namely the NLS operator  linearized at a quasi-periodic solution can be diagonalized by an analytic time dependent change of variables.  Note that these last hypotheses imply a very good control on the loss of regularity of $L^{-1}$.  The Nash-Moser approach combined with reducibility arguments (together with some novel ideas from pseudo-differential calculus) was used in \cite{BBM} in order to prove existence and stability of quasi-periodic solutions for a class of fully-nonlinear perturbations of the KdV equation.

  For PDEs in dimension $d>1$, where the eigenvalues of $L(0)$ are clearly multiple, the Nash--Moser algorithm is more readily applicable, see for instance \cite{Bo2}.     KAM results for PDEs in dimension $d>1$ are few and relatively recent, see for instance \cite{GY}, and  in particular the paper \cite{EK} which studies an NLS with external parameters.
  Note that not only one needs to impose that the eigenvalues are different but one must give a lower bound on the difference. In the case of the NLS of  \cite{EK} this requires a subtle analysis and the introduction of the class of  T\"oplitz-Lipschitz functions (see also \cite{PX}).
   
   In the case of equation \eqref{scE}, before attempting to study the small divisor problem one must deal with the resonances, since there are no external parameters and the only freedom is in the choice of the  {\em initial data}. 
   
   In the case of \eqref{scE} in dimension one this  problem is avoided by just performing a step of Birkhoff normal form then applying a KAM algorithm (see \cite{KP} or  \cite{BB}). This is due to the fact that the  NLS equation after one step of Birkhoff normal form is integrable and non-degenerate. Unfortunately this very strong property holds only for the cubic NLS in dimension one, indeed for $d>1$ the non-integrability of the NLS normal form has been exploited (see for instance \cite{KT} and \cite{KG}) to construct diffusive orbits.
   In order to overcome this problem Bourgain proposed the idea of choosing the initial data wisely. More precisely 
   one looks for  a set  $S\subset \Z^d$, the  tangential sites, such that the Birkhoff normal form Hamiltonian admits quasi-periodic solutions which excite only the modes $\mathtt j\in S$. Then, by choosing $S$ appropriately, one may prove existence of true solutions nearby. 
   
    This idea was used in \cite{Bo2}  to  prove the existence of quasi--periodic solutions with two frequencies for the cubic NLS in dimension two. This strategy was generalized by Wang in \cite{W09},\cite{W11} to study the NLS on a torus $\T^d$ and prove existence of quasi periodic solutions. 
    
    A similar idea was exploited in \cite{GP2} and \cite{GP3} to look for   ``wave packet'' periodic solutions (i.e. periodic solutions which at leading order excite an arbitrarily large number of ``tangential sites'') of the cubic NLS in any dimension both in the case of periodic and Dirichlet boundary conditions.  All the previous papers only deal with the existence of quasi-periodic solutions and not  the linear stability and  reducibility of the normal form. Note that the results  by Wang on the NLS imply existence of quasi-periodic solutions for equation \eqref{scE} and indeed her  approach to the resonance problem is parallel in various ways  to the one of \cite{PP}. Her approach is through the Nash--Moser method and hence does not prove  reducibility results as explained before. Note however that \cite{W11} covers a larger class of cases i.e. non-cubic NLS equations with explicit dependence on the spatial variable. 
     
     In the context of KAM theory and normal form, we mention the result of \cite{G} for the NLS in dimension one with the nonlinearity $|u|^4u$.  
     
    A  strategy similar to the one used in this paper  is proposed   by Geng You and Xu in \cite{GYX}, to study the cubic NLS in dimension two. In that paper the authors show that one may give constraints on the tangential sites so that the normal form is non--integrable (i.e. it depends explicitly on the angle variables) but block diagonal with blocks of dimension $2$.    They apply this result to perform a KAM algorithm and prove existence (but not stability) of quasi--periodic solutions. 
    We also mention the paper \cite{MP}, which studies the non-local NLS and the beam equation both for periodic and Dirichlet boundary conditions. 
   \smallskip
   
   The present paper is the last of a series of three papers in which we have developed a strategy aimed at the construction of large families of stable and unstable quasi-periodic solutions for the cubic NLS \eqref{scE} in any dimension.

In the first paper  \cite{PP}  we study the NLS equation after one step of Birkhoff normal form and give   ``genericity conditions''  on the {\em tangential sites} $S\subset \Z^d$ in order to make the normal form  as simple as possible. 
  
  The main results of \cite{PP} are formulated in Theorem \ref{teo1}, where  we prove that,  for  $|S|<\infty$ and for generic $S$, one can choose symplectic coordinates in which the normal form is integrable. 
  On the {tangential variables} the normal form is non-degenerate and the motion is quasi-periodic with frequency
  $\omega=\omega(\xi)$, where $\omega(\xi)$ is a diffeomorphism and $\xi\in \R^n$ ($n=|S|$)  are free parameters modulating the initial data.  Moreover, in the ``normal variables,'' the normal form is a block diagonal quadratic form, with blocks of dimension at most $d+1$. All blocks have constant coefficients. These infinitely many blocks are explicitly  described by a graph $\Gamma_S$ (cf. \S \ref{gra}) which contains all the combinatorial difficulties of the structure. This  combinatorial structure will influence the KAM--algorithm presented in this paper.
\smallskip

In \cite{PP1} we address the delicate question of  the {\em non-degeneracy} of the normal form deduced in \cite{PP}, we obtain precise positive results for the cubic case $q=1$. 

In this paper we address the question of constructing  quasi--periodic solutions and present a general solution.  We need to analyze three issues
\begin{enumerate}
\item The second Melnikov non--degeneracy condition. This we  prove by using the results of \cite{PP1}.
\item  The T\"oplitz--Lipschitz (cf. \cite{EK}) or quasi--T\"oplitz property of the perturbation. This    is done by generalizing the quasi--T\"oplitz  functions of \cite{PX} to this context; in particular we need to prove that the changes of variables that we perform to integrate the normal form do not destroy the quasi--T\"oplitz structure.
\item The KAM algorithm. This is a variation (with some complications) of a well established path; we follow closely the structure of \cite{BB} and of \cite{PX}.
\end{enumerate}
\smallskip

In all these steps we need to combine the analysis of \cite{PX} with the special structure of the graph $\Gamma_S$.  This is the source of most of the specific problems for the NLS which make this case particularly complex.\smallskip

The final result will be the construction, for any dimension $d$, of families of  linearly stable (and also elliptic for appropriate initial data) quasi--periodic solutions for the cubic NLS.

Following \cite{PP} for any $n\in \N$  we introduce the notion of {\em generic set of frequencies}  $S\subset \Z^d$ with $|S|=n$ (see Definition 3.of \cite{PP}). 
The conclusive result of this analysis is: 
\begin{theorem}
 For any $n\in \N$ and any generic  set of frequencies $S=\{\mathtt j_1,\dots \mathtt j_n\} \subset\Z^d$ 
 the NLS equation 
 \eqref{scE} 
 admits small-amplitude, analytic (both in $ t $ and $ \varphi
 $), quasi-periodic solutions of the form \begin{equation}\label{solutions} u(\varphi,t) =
 \sum_{j \in {S}} \sqrt{  \xi_j} e^{\ii (\omega^\infty(\xi)t+ j\cdot \varphi)} + o( \sqrt{\xi}
 ), \quad
  \omega^\infty_j (\xi) \stackrel{\xi \to 0}\approx |j|^2
  \end{equation}
 for all sufficiently small $\xi \in \R^n$ belonging to a  "Cantor-like" set of parameters
 with asymptotical density $ 1$ at $ \xi = 0 $. The term $ o( \sqrt{\xi} ) $ in \eqref{solutions}
  is small in an analytic norm. The equations \eqref{scE} linearized at these
 quasi-periodic solutions  are diagonable by an analytic time dependent change of variables. Finally, in a non empty open set of this Cantor set the solutions are also elliptic and linearly stable. 
 
 \end{theorem}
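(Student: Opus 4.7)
The plan follows the three-part strategy spelled out in the introduction. As a preliminary step, using the structure result from \cite{PP}, I would introduce action-angle variables on the tangential sites $S$ and pass to a Hamiltonian of the form $H = \omega(\xi)\cdot y + \tfrac{1}{2}\langle \Omega(\xi) z,\bar z\rangle + P$, where the quadratic part in the normal variables is block-diagonal along the graph $\Gamma_S$ with blocks of dimension at most $d+1$ and constant coefficients, and where $P$ collects the higher-order terms. The diffeomorphism $\xi \mapsto \omega(\xi)$ on $\R^n$, established in \cite{PP1}, already yields the first Melnikov conditions on a Cantor set of $\xi$ of asymptotic density one at the origin, after excising a union of small-measure strips.

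Next I would establish the second Melnikov conditions of the form $|k\cdot\omega(\xi) + \lambda_\alpha(\xi) - \lambda_\beta(\xi)| > \gamma\langle k\rangle^{-\tau}$ for all non-trivial triples $(k,\alpha,\beta)$, where $\lambda_\alpha,\lambda_\beta$ are eigenvalues coming from distinct blocks of the normal form. Because the blocks have size at most $d+1$, only pairs from different blocks pose a real difficulty, and the non-degeneracy results of \cite{PP1} guarantee that the differences $\lambda_\alpha(\xi)-\lambda_\beta(\xi)$ depend non-trivially on $\xi$. A standard Borel--Cantelli argument then gives a positive-measure Cantor set $\mathcal C_\gamma$ of parameters on which all small-divisor inequalities hold simultaneously.

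The main technical obstacle, as I see it, lies in the third ingredient: adapting the quasi-T\"oplitz calculus of \cite{PX} to the combinatorics of $\Gamma_S$. One must define a Banach algebra of Hamiltonians that is stable under Poisson bracket, supports solution of the KAM homological equation with the Melnikov bounds just described, and contains both the perturbation $P$ and the generating function used in the preparatory step that integrates the normal form of \cite{PP}. The novelty compared to \cite{PX} is that the blocks of the normal form, indexed by connected components of $\Gamma_S$, must be built into the T\"oplitz asymptotics: matrix elements of the Hamiltonian vector field must have controlled behavior along each translation-orbit underlying $\Gamma_S$ rather than merely along individual lattice points. Verifying that this structure is preserved under the preparatory symplectic transformation is where most of the new work lies.

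Finally I would run the KAM iteration along the lines of \cite{BB} and \cite{PX}: at each step solve the homological equation using the Melnikov bounds, eliminate the non-normal terms modulo a super-exponentially smaller perturbation, and check that the new Hamiltonian retains the quasi-T\"oplitz property with only a mild loss of parameters while the frequency asymptotics are preserved. Convergence produces an analytic symplectic change of variables conjugating $H$, on a Cantor subset of $\mathcal C_\gamma$ of density one at the origin, to a pure normal form supporting the invariant torus in \eqref{solutions}; reducibility of the linearized operator is built into the iteration and yields linear stability. Ellipticity on a non-empty open sub-Cantor set then follows by checking, from the explicit description in \cite{PP1}, that the asymptotic normal frequencies $\Omega_\alpha^\infty(\xi)$ remain real on a suitable open region of $\R^n$.
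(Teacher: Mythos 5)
Your overall strategy matches the paper's: normal form reduction via \cite{PP}, \cite{PP1}, quasi-T\"oplitz calculus adapted to $\Gamma_S$, then a KAM iteration. But there is a genuine gap in the way you handle the second Melnikov conditions, and it is at the heart of why the quasi-T\"oplitz machinery exists.

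You write that after formulating the conditions $|k\cdot\omega(\xi)+\lambda_\alpha(\xi)-\lambda_\beta(\xi)|>\gamma\langle k\rangle^{-\tau}$, ``a standard Borel--Cantelli argument then gives a positive-measure Cantor set.'' This step fails. Fix $k$ with $\pi(k)\neq 0$. Momentum conservation forces $\er(n)=\er(m)+\pi(k)$ but leaves $m$ free to range over infinitely many high-frequency sites, and the dominant part of the divisor, namely $|\er(m)|^2-|\er(n)|^2=-2\langle\pi(k),\er(m)\rangle-|\pi(k)|^2$, genuinely changes with $m$. So for each $k$ you have infinitely many pairwise distinct resonant strips, each of measure $\sim\gamma\langle k\rangle^{-\tau}$, and the union bound over $m$ diverges before you even sum over $k$. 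No choice of $\tau$ rescues this, because the bound you are removing has no decay in $(m,n)$. This is precisely the obstruction that makes the $d>1$ case hard and that \cite{EK} and \cite{PX} were built to overcome.

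The paper's resolution (Definition \ref{Opiu}(iv) together with Proposition \ref{key}) is that the quasi-T\"oplitz property of $\sum_j\val_j|z_j|^2$ and of $\tilde\Omega(z)=\sum_j\tilde\Omega_j|z_j|^2$ forces $\Omega_m-\Omega_n$ to vary by at most $O(N^{-4d\tau})$ as $m$ ranges over the good points of a fixed affine space $A\in\mathcal H_N$ (Lemma \ref{diago} supplies the comparison $|\tilde\Omega_m-\tilde\Omega_{m_A}|\lesssim\|X_{\tilde\Omega}\|^T N^{-4d\tau}$, while $\val_m=\val_{m_A}$ exactly by T\"oplitzness and Theorem \ref{Lostra}). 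One may therefore impose a single non-resonance condition at one representative $m_A$ per affine space and Proposition \ref{key} shows this propagates to all of $A^g$. The count of such representative conditions is polynomial in $K$ (Remark \ref{numero}), and only then does the measure sum converge (Lemma \ref{measure}). In other words, quasi-T\"oplitz is not merely an ingredient for solving and bounding the homological equation, as your sketch suggests; it is the ingredient that makes the second Melnikov measure estimate possible at all, and it must be maintained along the iteration (Proposition \ref{submain}) precisely so that $\tilde\Omega^{(j)}$ stays quasi-T\"oplitz and the clustering argument can be re-used at every step. Reordering your plan so that the quasi-T\"oplitz structure is established before, and then invoked in, the measure estimate would close the gap; the remainder of your outline, including the elliptic open cone from Theorem \ref{xixi}(ii), is consistent with the paper.
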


% and the corresponding solutions of the resonant Hamiltonian depending on $n$ parameters $\xi_i$ we have a Cantor set of positive measure in the parameter space for which these solutions persist giving a family of solutions of the NLS. The NLS equation linearized at such solutions is diagonable by an analytic time dependent change of variables. Finally, in a non empty open set of this Cantor set the solutions are also elliptic and linearly stable.
%\end{theorem*}
\smallskip

We prove this result  by verifying that the NLS Hamiltonian can be brought into a normal form which satisfies the properties of an abstract KAM Theorem, Theorem \ref{KAM}.

Most of the properties necessary for Theorem \ref{KAM}  have been verified for the NLS in Theorem 1 of \cite{PP} and in \cite{PP1}, here we  have to  prove the quasi--T\"oplitz property of the NLS, cf. \S \ref{tpnls}.\smallskip

It is possible to perform a KAM algorithm for any analytic NLS obtaining a weaker result. In this case one has the second Melnikov condition property  i) only in a finite {\em block form}. This will be discussed elsewhere.

 \subsubsection{The plan of the paper} The paper is divided into four parts. In  Part 1 we  recall all the properties of the normal form proved in \cite{PP} and \cite{PP1}  which will be needed.  In Part two we start by recalling the  geometric formalism  developed in \cite{PX}  and prove that this formalism is compatible with the structure of the graph $\Gamma_S$.  Having done this we proceed to define quasi-T\"oplitz functions in our context and prove their basic properties.  Parts 3 and 4 are devoted to the KAM algorithm.  In Part 3 we discuss the general properties of the type of algorithm that we shall apply to the NLS while in the final Part 4 we  verify that the NLS  satisfies all the properties of the class of Hamiltonians studied in Part 3. We can finally conclude that the KAM algorithm, applied to the Hamiltonian of the NLS starting from the normal form described in Part 1,  leads to a successful construction of a family of quasi--periodic solutions of the NLS parametrized by a set of positive measures of the parameters $\xi_i$,  actions of the initial excited frequencies.  We discuss also which solutions are stable or unstable.

\part{The normal form} 
\section{Summary of   results from \cite{PP}}
 \subsubsection{The Hamiltonian} In \cite{PP} we have studied   the NLS  on $\T^d$ as an infinite dimensional Hamiltonian system. After rescaling and passing to     Fourier representation\footnote{\label{ft}In fact one should work in a slightly more general setting where the torus is the quotient of $\R^d$ by any lattice $\Lambda$ of finite index in $\Z^d$ and write $u(t,\varphi):= \sum_{k\in  \Lambda^*} u_k(t) e^{\ii (k, \varphi)}$.}
\begin{equation}
u(t,\varphi):= \sum_{k\in \Z^d} u_k(t) e^{\ii (k, \varphi)}\ 
\end{equation} 
the Hamiltonian is (having normalized $\kappa$):
\begin{equation}\label{Ham}H:=\sum_{k\in \Z^d}|k|^2 u_k \bar u_k + \sum_{k_i\in \Z^d: \sum_{i=1}^{4}(-1)^i k_i=0}\hskip-30pt u_{k_1}\bar u_{k_2}u_{k_3}\bar u_{k_4}  . \end{equation}
The complex symplectic form is $ i \sum_{k}d u_k\wedge d \bar u_k$, on the scale of complex Hilbert spaces 
\begin{equation}\label{scale}
{\bf{\bar \ell}}^{(a,p)}:=\{ u=\{u_k \}_{k\in \Z^d}\;\big\vert\;|u_0|^2+\sum_{k\in \Z^d} |u_k|^2e^{2 a |k|} |k|^{2p}:=||u||_{a,p}^2 \le \infty \},
\end{equation}$$\ a>0,\ p>d/2.$$
 
We   systematically apply the fact that we have $d+1$ conserved  quantities:
the  $d$--vector {\em momentum} \ $\mathbb M$ and the scalar {\em mass}  $\mathbb L$:
$$\mathbb  M:=\sum_{k\in \Z^d} k |u_k|^2\,,\qquad \mathbb L:= \sum_{k\in \Z^d} |u_k|^2\,,$$ with 
\begin{equation}
\{\mathbb M,u_h\}=\ii h u_h,\ \{\mathbb M,\bar u_h\}=-\ii h \bar u_h,\ \{\mathbb L,u_h\}=\ii u_h,\ \{\mathbb L,\bar u_h\}=-\ii \bar u_h.\ 
\end{equation}\smallskip
The terms in equation \eqref{Ham} commute with $\mathbb L$. The conservation of momentum is expressed by the constraints $\sum_{i=1}^{4}(-1)^i k_i=0$.

 \subsubsection{Choice of the tangential sites\label{cts}} If in the Hamiltonian $H $ we remove all quartic terms which do not Poisson commute with the quadratic part, we obtain a simplified Hamiltonian  denoted $H_{Birk}$. This has the property that its Hamiltonian vector field is tangent to infinitely many subspaces obtained by setting some of the coordinates equal to  0 (cf. \cite{PP}, Proposition 1). On infinitely many of them furthermore the restricted system is  completely integrable, thus  the next step  consists in choosing such a subset $S$ which, for obvious reasons, is called of {\em tangential sites}. Without loss of generality one may assume that $S$ spans $\Z^d$ over $\Z$ (cf.   footnote  \ref{ft}).
\smallskip

With this remark in mind we  partition 
\begin{equation}\label{essec}
\Z^d= S\cup S^c,\quad S:=(\mathtt j_1,\ldots,\mathtt j_n)\end{equation}
where 
the elements of  $S$ play the role of {\em tangential sites} and of $S^c$ the {\em normal sites}. We divide $u\in \bar\ell^{a,p}$ in two components $u=(u_1,u_2)$, where $u_1$ has indexes in $S$ and $u_2$ in $S^c$.   The choice of $S$ is subject to several constraints which make it {\em generic} and which are fully discussed in \cite{PP} and finally refined in \cite{PP1}. Here we shall always assume that these constraints are  valid so we just refer to the results of these  two papers  in all the statements.\smallskip

We often use the map $\pi:\R^n\to \R^d,\, \,\pi(a_1,\ldots,a_n):=\sum_ia_i\mathtt j_i$, notice that $\pi$ maps $\Z^n$ to $\Z^d$, and set
\begin{equation}\label{essec1}
  \kappa:=\max_{j\in S} |\mathtt j| .
\end{equation} If we use on $\R^n$ the $L^1$ norm then $\kappa$ is also the norm of the map $\pi$.

 We apply a standard {\em semi-normal form} change of variables with generating function:
\begin{equation}\label{birkof}
  F_{Birk}= -\ii \sum_{\alpha,\beta\in (\Z^d)^\N: |\alpha|=|\beta|=2\,, |\alpha_2|+|\beta_2|\leq 2\atop {\sum_k (\alpha_k-\beta_k)k=0\,,\;\sum_k (\alpha_k-\beta_k)|k|^2\neq 0}} \hskip-10pt\binom{2}{\alpha}\binom{2}{\beta}\frac{u^\alpha\bar u^\beta}{\sum_k (\alpha_k-\beta_k)|k|^2}. 
\end{equation}
Here the notation $\alpha_2,\beta_2$ refers to the exponents for the variable $u_k,\bar u_k$ with $k\in S^c$. 
We use the operator notation $ad(F)$ for the operator $X\mapsto\{F,X\}$. 
The change of variables by $\Psi^{(1)}:=e^{ad( F_{Birk})}$  is well defined and analytic:    $   B_{\epsilon_0}\times B_{\epsilon_0} \to B_{2{\epsilon_0}} \times B_{2\epsilon_0}$,  for $\epsilon_0$ small enough, see \cite{PP}.   By construction $\Psi^{(1)}$  brings (\ref{Ham}) to  the form
 $H= H_{Birk} +P^{4}(u)+P^{6}(u)$ where $P^{4}(u)$ is of degree $4$ but at least cubic in $u_2$ while $P^{ 6 }(u)$ is analytic of degree at least $6$ in $u$, finally 
\begin{equation}\label{Ham2}H_{Birk}:=\sum_{k\in \Z^d}|k|^2 u_k \bar u_k +  \hskip-30pt\sum_{\alpha,\beta\in (\Z^d)^\N: |\alpha|=|\beta|=2\,,\, |\alpha_2|+|\beta_2|\leq 2 \atop {\sum_k (\alpha_k-\beta_k)k=0\,,\;\sum_k (\alpha_k-\beta_k)|k|^2=0}} \hskip-10pt\binom{2}{\alpha}\binom{2}{\beta}u^\alpha\bar u^\beta.
\end{equation} 

The three constraints in the second summand of the previous formula express the conservation of $\mathbb L$, $\mathbb M$ and  of the {\em quadratic energy} 
\begin{equation}
\label{kappabb}\mathbb K:= \sum_{k\in \Z^d}|k|^2 u_k \bar u_k .
\end{equation}
 \vskip10pt
 
 In order to perform perturbation theory from the  system given by the tangential sites it is convenient to switch to polar coordinates.
We set \begin{equation}
\label{chofv}u_k:= z_k \;{\rm for}\; k\in S^c\,,\quad u_{\mathtt j_i}:= \sqrt {\xi_i+y_i} e^{\ii x_i}= \sqrt {\xi_i}(1+\frac {y_i}{2 \xi_i }+\ldots  ) e^{\ii x_i}\;{\rm for}\;  i=1,\dots n,
\end{equation}  considering  the $\xi_i>0$ as parameters $|y_i|<\xi_i$ while $y,x,w:=(z,\bar z)$ are dynamical variables.   We  denote   by   $ {\bf \ell}^{(a,p)}:= {\bf\ell}^{(a,p)}_S $  the subspace of $\bf{\bar \ell}^{(a,p)}\times\bf{\bar \ell}^{(a,p)} $  of the sequences $u_i,\bar u_i$ with indices in $S^c$ and denote the coordinates  $w=(z,\bar z) $.
\begin{definition}\label{Aro}
Let $\mathfrak K\subset \R^n_+$ be a compact domain and let $0<c_1<c_2$  be such that 
$$ c_1^2= \min_{\mathfrak K}(\min_i\xi_i) \,,\quad c_2^2= \max_{\mathfrak K}(\max_i\xi_i) $$
It is convenient to choose as $\mathfrak K=\mathfrak H\times J$ a product  in polar coordinates of a compact domain $\mathfrak H$  in the unit sphere and some compact set  $J$ in the coordinate $\rho$.
We will consider  for all $\rho>0$ the {\em scaled  domain} $\rho \mathfrak K$ and notice that $\rho\mathfrak K=\mathfrak H\times \rho J$.

One can refer to such a domain as a {\em truncated cone}.
\end{definition}
We choose $\rho=\e^2$ and note that,  for all $r<  c_1  \e$, formula \eqref{chofv} is an analytic and symplectic change of variables $\Phi_\xi$ in  the  domain
\begin{equation}\label{domain}  D_{a,p}(s,r)= D(s,r):= 
 \{   x,y,w\,:\   x\in \T^n_s\,,\  |y|\le r^2\,,\  \|w\|_{a,p}\le r\}\subset \T^n_s\times\Co^n\times {\bf{\ell}}^{(a,p)}.
\end{equation} Here $\e>0$, $s>0$ and $ 0<r<\e c_1$ are auxiliary parameters. $\T^n_s$ denotes the compact subset of the complex torus $\T_{\Co}^n:=\Co^n/2\pi\Z^n$ where   $ x\in\Co^n,\ |$Im$(x)|\leq s$. Moreover if \begin{equation}
\label{bapa}\sqrt{2 n} c_2 \kappa^{p} e^{ ( s+ a\kappa)} \e  < {\epsilon_0}\,, \quad(\mbox{recall}\quad \kappa=\max(|\mathtt j_i|)\;)
\end{equation} the change of variables sends $D(s,r)\to B_{{\epsilon_0}}$ so we can apply it to our Hamiltonian.

We thus assume that the parameters $\e,\, r,\, s$  satisfy \eqref{bapa}.
Formula \eqref{chofv}  puts  in action angle variables  $(y;x)= (y_1,\dots, y_n;x_1,\dots, x_n) $ the tangential sites, close to the action $\xi= \xi_1,\dots, \xi_n$, which are parameters for the system.  

The  symplectic form is now $ dy \wedge dx + i \sum_{k\in S^c} dz_k\wedge d \bar z_k $.

We give degree $0$ to the angles $x$, $2$ to $y$ and $1$ to $w$.  
We use the degree only for handling dynamical variables, as follows. We develop  in  Taylor expansion, in particular since $y$ is small with respect to $\xi$ we develop $\sqrt{\xi_i+y_i}= \sqrt{\xi_i}(1+\frac{y_i}{2\xi_i}+\ldots)$ as a series in $\frac{y_i}{\xi_i}$.  
\smallskip

By abuse of notations we still call $H$ the composed Hamiltonian $H\circ \Psi^{(1)}\circ \Phi_\xi$.
\begin{definition}\label{puzza}
We define the {\em normal form} $\mathcal N$ which collects all the terms of $H_{Birk}$ of degree $\leq 2$ (dropping the constant terms). We then set $P= H-\mathcal N$.
\end{definition} Notice that the Hamiltonian $H_{Birk}$ is different from the corresponding one in \cite{PP} (in that paper we performed a full normal form transformation), however the resulting normal form $\mathcal N$ is the same since it collects only terms of degree less or equal to two in the variables $z= u_2$.

 \section{Functional setting}

Following \cite{Po} we study {\em regular} functions $F:\ro \mathfrak K\times D_{a,p}(s,r)\to \Co$, that is whose Hamiltonian vector field  $X_F(\cdot;\xi)$ is M-analytic from $D(s,r)\to \Co^n\times\Co^n\times\ell^{a,p}_S$. In the variables $\xi$ we require Lipschitz regularity.  Let us recall the  definitions of M-analytic and majorant norm and their properties proved in  \cite{BBP}.\smallskip

Let us consider the space  \begin{equation}\label{E}
V := \Co^n \times \Co^n \times  \ell^{a, p}_{S}
\end{equation}
with  $(s,r)$-weighted norm
\begin{equation}\label{normaEsr}
v =  (x,y,z,\bar z) \in V \, , \quad
\|v\|_V := \|v\|_{s,r}= \|v\|_{V,s,r}= \frac{|x|_\infty}{s} + \frac{|y|_1}{r^2}
 +\frac{\|z\|_{a,p}}{r}+\frac{\|\bar z\|_{a,p}}{r}
\end{equation}
where $ 0 < s< 1 $, $0 < r <c_1\e$  and
$ |x|_\infty := \max_{h =1, \ldots, n} |x_h| $,
$ |y|_1 := \sum_{h=1}^n |y_h| $.

For a vector field, i.e. a map $X:  D(s,r)\to V$, described by the formal Taylor expansion:
$$ X = \sum_{\nu,i,\a,\b} X_{\nu,i,\a,\b}^{(\vgot)} e^{\ii (\nu, x)} y^i z^\a \bar z^\b \partial_{\mathtt v}\,, \quad \mathtt v= x,y,z,\bar z$$   we define the {\em majorant} and its {\em norm}:
$$ 
MX := \sum_{\nu,i,\a,\b} |X_{\nu,i,\a,\b}^{(\vgot)}| e^{s|\nu|} y^i z^\a \bar z^\b \partial_{\mathtt v}\,, \quad \mathtt v= x,y,z,\bar z
$$
\begin{eqnarray}\label{normadueA}
|| X ||_{s,r} & := &
\sup_{(y,z, \bar z) \in D(s,r)} \| M X \|_{V} \,.  \end{eqnarray}
The different weights ensure that, if $\Vert X_F\Vert_{s,r}<\frac12$,  then $F$ generates a close--to--identity symplectic change of variables from $D(s/2,r/2)\to D(s,r)$, Proposition \ref{mpomn}.
\begin{remark}
The notion of $M$--analytic can be given in general for any map between separable Hilbert spaces with prescribed bases. It means that the {\em map} given in coordinates by the corresponding majorant functions is in fact analytic.  It is then easy to see that  composition of $M$--analytic maps is $M$--analytic with the corresponding estimate on norms.\end{remark}

In our algorithm we deal with functions which depend in a Lipschitz way on some parameters $\xi$ in a compact set $ \mathcal O \subseteq \ro \mathfrak K$ (Formula \eqref{Aro}). To handle this dependence we introduce weighted Lipschitz norms   for a map $X:  \O \times D(s,r)\to V$ setting: 
$$ \|X \|^{lip}_{s,r,\mathcal O}:=\sup_{\xi\neq \eta \in \mathcal O\,,\;(x,y,w)\in D(s,r)}\frac{\|X(\eta)-X(\xi)\|_{s,r}}{|\eta-\xi|},  $$
\begin{equation}
\label{weno} \Vert X\Vert_{s,r,\mathcal O}=\Vert X\Vert_{s,r} := \sup_{\mathcal O \times D(s,r)} \| M X \|_{V}\,,  \Vert X\Vert^\lambda_{s,r}= \|X\|_{s,r,\mathcal O}+\lambda  \|X_f\|^{lip}_{s,r,\mathcal O}
\end{equation}  where $\lambda $ is a parameter proportional to $|\mathcal O|$.  
Correspondingly for a parameter dependent sequence $f=\{f_m(\xi)\}_{m\in I}\,,$ here $I$ is any index set, we define:
\begin{equation}\label{seque}
 |f|_\infty:= \sup_{\xi\in \mathcal O} \sup_{m\in I}|f_m(\xi)|\,,
\quad |f|_\infty^{lip}:=  \sup_{\xi\neq \eta \in \mathcal O}\sup_{m\in I}\frac{|f_m(\xi)-f_m(\eta)|}{|\eta-\xi|_\infty}\,,
\end{equation} 
\begin{definition}
We define by $\mathcal H_{s,r,\mathcal O}=\mathcal H_{s,r}$ the space of regular analytic Hamiltonians depending on a parameter $\xi\in\mathcal O$ 
with 
the norm\footnote{in fact Hamiltonians should be considered up to scalar summands and then this is actually a norm}
\begin{equation}\label{normasr}
\| F\|^\lambda_{s,r} := \Vert X_F\Vert^\lambda_{s,r}<\infty.
\end{equation} 
\end{definition}
We denote by  $\mathbb I=\Z^n\times\N^n\times \N^{S^c}\times \N^{S^c}$  the indexing set of the monomials, that is  $k,i,\a,\b$ is associated to $e^{\ii(k,x)}y^i z^\a \bar z^\b$. For all $I\subset \mathbb{I} $ we define the projection $\Pi_I$ as the linear operator which acts as the identity on the monomials associated to $I$ and zero otherwise. In particular we define $ \Pi_{|k|< K}$ to be the projection relative to the set $I$ of  $k,i,\a,\b$ with $|k|<K$, same for $ \Pi_{|k|\geq K}$, similarly we define $\Pi^{(\ell)}$ as the projection on the $k,i,\a,\b$ with $2i +|\a|+|\b|= \ell$, same for $\Pi^{(\geq \ell)}$ and $\Pi^{(\leq\ell)}$.

The main properties of the majorant norm are contained in the following statements, proved in \cite{BBP1}, Lemma 2.10,\ 2.15,\  2.17.
 \begin{proposition} \label{mpomn}
Let $ H, K \in {\mathcal H}_{s,r} $. Then, for all $ r/2 \leq r' < r $,
$ s/2 \leq s' < s $, $\l'\leq \l$: 
\be\label{inclu} \norma X_H \norma_{s',r'}^{\l'} \leq 4  \norma X_H \norma_{s,r}^\l\,,
\ee
 \be\label{commXHK} \norma X_{\{H,K\}}\norma_{s',r'}^\l
=  \norma \,[ X_H, X_K]\, \norma_{s',r'}^\l \leq 2^{2n+3} \delta^{-1}
\norma X_H \norma_{s,r}^\l \norma X_K\norma_{s,r}^\l \end{equation} where $\d$ is
defined by:
\begin{equation}\label{diffusivumsui}
  \d :=  \min\Big\{ 1- \frac{s'}{s},   1-  \frac{r'}{r}  \Big\}\,.
\end{equation}

Let $ r/2 \leq r' < r $, $ s/2 \leq s' < s $, and
$ F \in \mathcal{H}_{s,r}$ with
\be\label{defeta}
\norma X_F \norma_{s,r}^\l < \d/ (2^{2n + 6 }e)
\end{equation} 
with $\d$ defined in \eqref{diffusivumsui}.
Then  the time $ 1$-Hamiltonian flow
$$
 \Phi^1_F := e^{{\rm ad}(F)}  : D(s', r') \to D(s,r)
 $$
is well defined, analytic, symplectic, and,
$ \forall H \in \mathcal{H}_{s,r} $, we have $H\circ \Phi^1_F  \in
\mathcal{H}_{s',r'}  $ and
\begin{equation}\label{iluvatar}
\norma  X_{H\circ \Phi^1_F}\norma_{s', r'}^\l\leq 2 \norma
X_H\norma_{s,r}^\l \,,\quad \norma  X_{H\circ \Phi^1_F}-X_H\norma_{s', r'}^\l\leq2 \norma  X_F \norma_{s,r}^\l\norma
X_H\norma_{s,r}^\l \,.
\end{equation}
For all $I\subset \mathbb I$  and,
$ \forall H \in \mathcal{H}_{s,r} $, we have
\begin{equation}\label{caligola}
 \norma \Pi_I X_H \norma^\l_{s,r} \leq \norma X_H \norma^\l_{s,r} \,.
\end{equation}
In particular we have the {\em smoothing estimates}: $ s' < s $,
\begin{equation}\label{smoothl}
\norma\Pi_{|k| \geq K}X_H \norma_{s',r}^\l\leq \frac{s}{s'} \, e^{-K(s-s')}\norma X_H
\norma_{s,r}^\l \, ,
\end{equation}
and the {\em degree estimates} 
\begin{equation}\label{degrl}
\norma X_{\Pi^{( l \geq d)}H} \norma_{s,r'}^\l\leq (\frac{r'}{r})^{d-2}\norma X_H
\norma_{s,r}^\l \, .
\end{equation}
\end{proposition}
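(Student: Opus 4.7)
The plan is to exploit systematically the \emph{majorant property}: since $MX$ has non-negative Taylor coefficients in $(y,z,\bar z)$, every inequality reduces to a monotone manipulation of formal power series with positive coefficients, combined with Cauchy-type bounds in the four groups of variables. I would handle the seven inequalities in order, reusing one geometric sequence of loss parameters throughout, and lifting every bound to the Lipschitz seminorm by applying it to the difference quotient $\xi\mapsto (X(\eta)-X(\xi))/|\eta-\xi|$.

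For the inclusion \eqref{inclu}, restricting the sup to the smaller polydisc $D(s',r')\subset D(s,r)$ only decreases $\|MX\|_V$ by positivity, while the target weights $1/s', 1/{r'}^2, 1/r'$ grow by at most a factor $2$ each under $s'\ge s/2,\, r'\ge r/2$, yielding the overall constant $4$. The projection estimate \eqref{caligola} is immediate from positivity since $\Pi_I$ merely erases monomials. For \eqref{smoothl} I would factor $e^{s|k|}=e^{s'|k|}\,e^{(s-s')|k|}$ on each retained harmonic with $|k|\ge K$ and pull out $e^{-K(s-s')}$ uniformly, the prefactor $s/s'$ coming from the $x$-weight. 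For \eqref{degrl}, a monomial of total degree $\ell$ in $(y,z,\bar z)$ acquires $(r'/r)^\ell$ when evaluated on $D(s,r')$; after normalizing by the vector-field weights ($r^{-2}$ on $y$, $r^{-1}$ on $z,\bar z$) the net factor is $(r'/r)^{\ell-2}$.

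The heart of the proof is the commutator estimate \eqref{commXHK}. I would expand
\[
\{H,K\}=\sum_{h=1}^n\bigl(\partial_{y_h}H\,\partial_{x_h}K-\partial_{x_h}H\,\partial_{y_h}K\bigr)-\ii\sum_{k\in S^c}\bigl(\partial_{\bar z_k}H\,\partial_{z_k}K-\partial_{z_k}H\,\partial_{\bar z_k}K\bigr)
\]
and bound each partial derivative by Cauchy at loss $\delta$: $\|\partial_{x_h}H\|_{s',r}\le \delta^{-1}\|H\|_{s,r}$, and analogues for $\partial_{y_h},\partial_{z_k},\partial_{\bar z_k}$. The delicate point is the infinite sum over $k\in S^c$: the weighted $\ell^{(a,p)}$ structure is designed precisely so that the $z$-derivatives contract against the dual weight, producing a loss $\delta^{-1}$ that is uniform in $k$. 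Products of vector-field components are then controlled via the majorant, which converts sup-of-products into products-of-suprema of positive series, and the constant $2^{2n+3}$ absorbs the finite-dimensional combinatorics of the $2n$ symplectically paired $(x_h,y_h)$.

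For the flow estimate \eqref{iluvatar} I would feed this into the Lie series $H\circ \Phi^1_F=\sum_{m\ge 0}\frac{1}{m!}\,\mathrm{ad}(F)^m H$, iterating \eqref{commXHK} $m$ times with equal losses $\delta/(2m)$ at each step; this gives $\|X_{\mathrm{ad}(F)^m H}\|_{s',r'}^\lambda\le \bigl(2^{2n+4}\,m\,\delta^{-1}\|X_F\|_{s,r}^\lambda\bigr)^m\,\|X_H\|_{s,r}^\lambda$. Using $m^m/m!\le e^m$, the series converges geometrically under the smallness assumption \eqref{defeta} and is bounded by $2\|X_H\|$, while starting the sum at $m=1$ yields the difference estimate. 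The main obstacle throughout is securing the uniform-in-$k$ Cauchy bound on the $z$-derivatives in the commutator step; once that is in place, every remaining inequality is a packaging exercise in majorant arithmetic.
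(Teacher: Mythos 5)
The paper does not prove this proposition itself: it imports it wholesale from \cite{BBP1} (Lemmas 2.10, 2.15, 2.17), so there is no internal argument to compare against. Your plan follows exactly the standard majorant-norm strategy that underlies those lemmas, and for \eqref{inclu}, \eqref{caligola}, \eqref{smoothl} and \eqref{degrl} the reasoning is essentially complete (one small slip: $1/(r')^2$ grows by a factor $4$, not $2$, when $r'\ge r/2$; the stated constant $4$ is still correct because it is the \emph{maximum} of the weight ratios, not their product).

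Where I would push you to do more is the commutator estimate \eqref{commXHK}, which is the technical heart of the lemma and the reason the majorant norm was introduced in the first place. Saying that ``the $z$-derivatives contract against the dual weight'' and that the majorant ``converts sup-of-products into products-of-suprema'' names the difficulty without resolving it. The step you actually need is the component-wise inequality $M[X_H,X_K]\le [MX_H,MX_K]$, followed by a Cauchy estimate on the commutator of two vector fields with non-negative Taylor coefficients, carried out in the $(s,r)$-weighted space $V$ of \eqref{normaEsr}. The weights $1/r$, $1/r^2$ there are what make the infinite sum over $k\in S^c$ a bounded pairing, and the finite factor $2^{2n+3}$ comes from the combinatorics of that Cauchy step, not merely from ``absorbing'' it. Until this is written out, \eqref{commXHK} is asserted rather than proved, and since \eqref{iluvatar} rests on it, the whole flow estimate inherits the gap. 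Also note that in your iteration of \eqref{commXHK} with losses $\delta/(2m)$, the factor $\norma X_F\norma$ must be evaluated on the intermediate domains, which by \eqref{inclu} costs an extra factor $4$ per step; that is harmless given the $2^{2n+6}e$ margin in \eqref{defeta}, but it should be said. Finally your derivation of the difference bound $\norma X_{H\circ\Phi^1_F}-X_H\norma\le 2\norma X_F\norma\,\norma X_H\norma$ actually produces an extra $\delta^{-1}$ that the stated inequality does not have; you will need to address how that factor is absorbed (e.g.\ whether it is meant to be swallowed by the smallness hypothesis on $\norma X_F\norma$), rather than leave it implicit.
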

\begin{remark}\label{diaH} 
For a diagonal quadratic Hamiltonian $F=\sum_m\val_m(\xi)z_m\bar z_m$  we have
$$ X_F=\\i(\sum_m\val_m( \xi)z_m\pd{}{ z_m}-\sum_m\val_m( \xi)\bar z_m \pd{}{\bar z_m})$$$$ MX_F= \sum_m|\val_m( \xi)|(z_m\pd{}{ z_m}+\bar z_m \pd{}{ \bar z_m}) ,\quad  \| MX_F\|_{s,r}^\l =|\val|_\infty+\lambda|\val|_\infty^{lip}.$$
\end{remark}
\begin{lemma}
For $c_1 \e >r >\e^3$, the perturbation $P$ of Definition \ref{puzza} is in $\mathcal H_{s,r}$  and satisfies the bounds
\begin{equation}
\label{bonls}\norma X_P\norma_{s,r}^\l \leq  C(\e r + \e^5 r^{-1})\,,   
\end{equation}  where $C$ does not depend on $r$ and depends on $\e,\lambda$ only through $\lambda/\e^2$.
\end{lemma}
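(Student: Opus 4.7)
The plan is to split $P = (H_{Birk} - \mathcal N) + P^4 + P^6$ and to bound each piece of the majorant vector--field norm on $D(s,r)$ separately, then combine with the Lipschitz estimate at the end. The three pieces have genuinely different characters: $H_{Birk} - \mathcal N$ and $P^4$ are explicit polynomial expressions whose vector fields can be estimated monomial by monomial, whereas $P^6$ is known only as an analytic function of order $\ge 6$ on $B_{\epsilon_0}$ and must be controlled by a Cauchy--type argument.

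For $H_{Birk} - \mathcal N$, I pass each quartic resonant monomial $u_{k_1}\bar u_{k_2}u_{k_3}\bar u_{k_4}$ through the polar change of variables, expanding each factor $\sqrt{\xi_i + y_i} = \sqrt{\xi_i}\sum_{j\ge 0}\binom{1/2}{j}(y_i/\xi_i)^j$. This series converges absolutely on $D(s,r)$ because $r^2/\xi_i \le r^2/c_1^2\e^2 < 1$, which is precisely the reason for the constraint $r < c_1\e$. Subtracting $\mathcal N$ removes every contribution of weighted degree $\le 2$; each surviving monomial of weighted degree $\ell \ge 3$ with $j$ factors of $y/\xi$ and $a$ tangential legs has coefficient of size $\e^{a-2j}$ and majorant vector--field norm bounded by $\e^{a-2j} r^{\ell-2}$ up to finite combinatorial factors and a harmless $e^{s|k|}$ with $|k| \le 4\kappa$. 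Using $\ell = 2j + (4-a)$ and $r^2 \le c_1^2\e^2$ one verifies that each such term is dominated by $r^2$, and the sum over $j$ is geometric, so this piece is $\lesssim r^2 \le c_1\e\cdot r$. The same mechanism handles $P^4$: being quartic in $u$ and at least cubic in $z$, its polar-coordinate monomials are either of type $\sqrt{\xi}\,z^3$ (vector field $\lesssim \e r$) or of type $z^4$ (vector field $\lesssim r^2 \le c_1\e r$), both compatible with the target bound.

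For $P^6$ the strategy is a Cauchy estimate for M--analytic vector fields. Condition \eqref{bapa} guarantees that $\Phi_\xi$ maps $D(s,r)$ into a ball $B_{C_0\e} \subset B_{\epsilon_0/2}$, so on this ball the analyticity of $P^6$ gives $\|MX_{P^6}\|_{B_{C_0\e}} \lesssim \e^5$ in the $u$--variables. When one translates to the polar weighted norm $\|\cdot\|_{s,r}$, the $z$--components get divided by $r$ and the $y$--components by $r^2$; the leading contribution is the $z$--component, producing a bound $\lesssim \e^5/r$. This is the source of the second term in the statement and also explains the lower bound $r > \e^3$: it keeps $\e^5/r$ bounded.

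Finally, for the Lipschitz dependence, the parameter $\xi$ enters only through $\Phi_\xi$, namely through the factors $\sqrt{\xi_i}$ and the expansion coefficients $1/\xi_i^j$. Differentiating in $\xi$ costs at most one factor $1/\xi_i \sim 1/\e^2$, so the Lipschitz seminorm of each contribution is bounded by $1/\e^2$ times the corresponding sup bound. Multiplying by $\lambda$ and adding gives $\|X_P\|^\lambda_{s,r} \le C(1+\lambda/\e^2)(\e r + \e^5/r)$ with $C$ independent of $r$, as required. I expect the main obstacle to be the Cauchy estimate for $P^6$: one has to verify that passing from the $u$--variable majorant bound to the polar weighted norm produces no hidden inverse powers of $r$ worse than $1/r$, which depends on the explicit form of $\Phi_\xi$ and on the fact that the $z$--components of $X_{P^6}$ give the worst scaling. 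The explicit polynomial estimates in the other two pieces are straightforward geometric sums once the bookkeeping of $(a,j,\ell)$ is in place.
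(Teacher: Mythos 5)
The paper's own ``proof'' of this lemma is a one-line citation: the estimate is item~(iv) of Theorem~1 of~\cite{PP}, together with the remark that the majorant norm only changes constants and not the orders of magnitude. You instead give a direct, from-scratch derivation, which is a genuinely different route. Your decomposition $P=(H_{Birk}-\mathcal N)+P^{4}+P^{6}$ is the right one, and the treatment of the two explicit polynomial pieces by expanding $\sqrt{\xi_i+y_i}$ and bookkeeping $(a,j,\ell)$ is sound, apart from a minor slip: for $a=3$, $j=1$ the coefficient is $\varepsilon$ and the degree is $\ell=3$, so the contribution is $\varepsilon\, r$, which is \emph{not} dominated by $r^2$ (indeed $\varepsilon\, r > r^2$ since $r<c_1\varepsilon$); the correct conclusion for that piece is $\lesssim \varepsilon r$, which is still the target.

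The real gap is in the $P^{6}$ step, and you have in fact put your finger on exactly the place where it lies, but then asserted the wrong resolution. You claim that ``the $z$-components give the worst scaling,'' producing $\varepsilon^5/r$. That is not automatic: the $y$-component of $X_{P^{6}\circ\Phi_\xi}$ is $-\partial_x(P^{6}\circ\Phi_\xi)$, and this is divided by $r^2$ (not $r$) in the weighted norm \eqref{normaEsr}. If $P^{6}$ contains a purely tangential monomial $u^{\alpha^{(1)}}\bar u^{\beta^{(1)}}$ of degree $\ell$ with $\nu:=\alpha^{(1)}-\beta^{(1)}\neq 0$ (such monomials can indeed arise, e.g.\ from $\{F_{Birk},A_2\}$ by contracting the unique normal legs of two $|\alpha_2|+|\beta_2|=1$ monomials), then $P^{6}\circ\Phi_\xi$ has a term $g(\xi,y)e^{\ii(\nu,x)}$ with $g\sim\varepsilon^{\ell}$, and its $y$-component contributes $\sim\varepsilon^{\ell}/r^{2}$. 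For $\ell=6$ this is $\varepsilon^{6}/r^{2}$, which strictly exceeds $\varepsilon^{5}/r$ throughout the allowed range $r<c_1\varepsilon$, and for $r$ near $\varepsilon^3$ it is $O(1)$, destroying the claimed bound. To salvage the estimate one must show that mass conservation ($\sum_i\nu_i=0$), momentum conservation ($\pi(\nu)=0$), and the genericity assumptions on $S$ force any such $\nu$ to have $|\nu|\geq 8$ (so the lowest contributing degree is $\ell\geq 8$, giving $\varepsilon^{8}/r^{2}\leq\varepsilon^{5}/r$ for $r>\varepsilon^{3}$), or to be absent altogether. That structural input is precisely what the reference~\cite{PP} must supply, and it is the missing idea in your argument: the Cauchy estimate by itself does not yield $\varepsilon^{5}/r$ without controlling the $x$-dependent, $(y,z)$-degree-zero part of $P^{6}$.
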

\begin{proof}
This is item iv) of Theorem 1 of \cite{PP}. The fact that we are using the majorant norm only changes the constant and not the order of magnitude. 
\end{proof}
\section{The normal form}
We will work with many quadratic Hamiltonians in the variables $w$ (thought as a row vector). We  represent a quadratic form $\mathcal F$ by a matrix $F$ as
 \begin{equation}\label{represQ}
\mathcal F(w)= \frac 12 (w, wJF^t)=-\frac1 2  w F Jw^t\,,
\end{equation} where $J:=-\ii \{w^t,w\}$ is the standard matrix of the symplectic form  
which expresses the action by Poisson bracket.

\smallskip

By explicit computation, and under simple generiticity conditions, the normal form $\mathcal N$ of Definition \ref{puzza} is as follows:
\begin{equation}
\label{Pno}(\ome(\xi),y)+\sum_{k\in S^c} |k|^2 |z_k|^2 +{\mathcal Q}(\xi;x,w) \,,\quad \ome_i(\xi)= |\mathtt j_i|^2 -2\xi_i
\end{equation} 
 here    ${\mathcal Q}(\xi;x,w)$ is a quadratic Hamiltonian in the variables $w$ with coefficients trigonometric polynomials in $x$ given by Formula ($30$) of \cite{PP}:
  \begin{equation}
 \mathcal Q  (\xi,w)= 4\sum^*_{  1\leq i\neq j\leq m    \atop h,  k \in S^c}\sqrt{\xi_{i}\xi_{j}}e^{\ii  (x_{i}-x_{j})}z_{h}\bar z_{k } +
\end{equation} $$ + 2\sum^{**}_{ 1\leq i< j\leq m   \atop h,  k \in S^c }\sqrt{\xi_{i}\xi_{j}}e^{-\ii  (x_{i}+x_{j})}z_{h} z_{k } +
  2\sum^{**}_{ 1 \leq i<j\leq m    \atop h,  k \in S^c }\sqrt{\xi_{i}\xi_{j}}e^{\ii  (x_{i}+x_{j})}\bar z_{h}\bar  z_{k }.  $$  Here $\sum^*$ denotes that $  (h,  k,  v_i,  v_j)$ satisfy:
 $$ \{  (h,  k,  v_i,  v_j)\,  |\,    {h+v_i= k+v_j},  \  { |h|^2+|v_i|^2=| k|^2+|v_j|^2}\}. $$
  and $\sum^{**}$,   that  $  (h,  v_i,   k,  v_j)$ satisfy:
   $$ \{  (h,  v_i,  k,  v_j)\,  |\,    {h+k= v_i+v_j},  \  { |h|^2+|k|^2=| v_i|^2+|v_j|^2}\}. $$
Notice that in the sums  $  \sum^{**}$ each term appears twice.  \vskip10pt

This is   a very complicated infinite dimensional quadratic
Hamiltonian, by applying the results of \cite{PP}, we decompose this infinite dimensional
system into infinitely many decoupled finite dimensional systems corresponding to the connected components of a graph
(which is recalled in \S  \ref{gra}).      One of the main results of \cite{PP} is the construction of an explicit symplectic change of variables which reduces $\mathcal N$ to constant coefficients.
 
 Since this construction is needed in the following we recall   quickly   Theorem   \ref{teo1}   of \cite{PP} adapted to the case of the cubic NLS. In the cubic case   we also apply the more precise results of \cite{PP1}.
 \begin{theorem}\label{teo1}For all {\em generic} choices $S=\{\mathtt j_1,\dots,\mathtt j_n\}\in\nobreak  \Z^{nd}$ of the tangential sites,    there exists   a map 
$$S^c\ni k\to L(k)\in \Z^n \,,\quad |L(k)|\leq d+1$$  such that the analytic symplectic change of variables:
$$z_k= e^{-\ii (L(k),x)}z_k' ,\ y=y'+\sum_{k\in S^c}  L(k)  |z_k'|^2,\ x=x'. $$
$$\Psi: ( y', x)\times (z',\bar z') \to ( y, x)\times (z,\bar z) $$   from $ D(s,r/2) \to D(s,r)$ 
has the property that $\mathcal N$ in the new variables  has constant coefficients, namely:
\begin{equation}
\label{Sno}\mathcal N\circ\Psi= (\ome(\xi),y') +\sum_{k\in S^c}\tilde\Ome_k|z'_k|^2 +\tilde {\mathcal Q}(w')\,,
\end{equation}
 where $\omega(\xi)$ is defined in \eqref{Pno} and furthermore:
\smallskip

\noindent i) {\bf Asymptotic of the normal frequencies:} We have $\tilde\Ome_k= |k|^2 +\sum_i |\mathtt j_i|^2 L^{(i)}(k)$.

\noindent ii) {\bf Reducibility}: The matrix $\tilde{ Q}(\xi)$ which represents the quadratic form	$\tilde{\mathcal Q}(\xi,w')$ (see formula \eqref{represQ})   depends  only on the variables $\xi$  
and all its entries are homogeneous of degree one in these variables.  It is   block--diagonal  with blocks of dimension  $\leq d+1$ and satisfies  the following properties: 

\quad   All of the blocks except a finite number  are self adjoint.
 
\quad All the (infinitely many) blocks are  chosen from a finite list of matrices $\mathcal M(\xi)$.

\noindent iii)   {\bf Smallness:} \ If $\e^3<r<c_1\e$,  the perturbation $\tilde P:= P\circ \Psi $ is  small, more precisely 
we have   the bounds:
\begin{equation}\label{pertu}
\Vert X_{\tilde P}\Vert^\lambda_{s,r}\leq C (\e r + \e^{5} r^{-1}) \,, 
\end{equation}  where $C$ is independent of $r$ and depends on $\e,\lambda$ only through $\lambda/\e^2$. 
\end{theorem}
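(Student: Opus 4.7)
The plan is to follow the strategy of \cite{PP,PP1}: first exploit the momentum/energy conservation built into $\mathcal{Q}$ to isolate a finite combinatorial structure on $S^c$, then absorb the remaining $x$-dependence into a carefully chosen linear change of the normal variables. Concretely, I declare $h \sim k$ in $S^c$ whenever there exist $\mathtt j_i, \mathtt j_j \in S$ such that $(h, k, \mathtt j_i, \mathtt j_j)$ satisfies one of the two constraint systems appearing in $\sum^*$ and $\sum^{**}$. The resulting graph is $\Gamma_S$, and by the genericity assumption on $S$ (proved in the cubic case in \cite{PP1}), every connected component has at most $d+1$ vertices.

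The next step is to define $L : S^c \to \Z^n$ on each connected component by fixing a base vertex, setting $L$ to zero there, and propagating via $L(h) - L(k) = e_j - e_i$ along every $\sum^*$-edge, and the analogous rule (with appropriate signs) along the $\sum^{**}$-edges. The delicate point, which I expect to be the main obstacle, is consistency: the accumulated shift along any cycle in $\Gamma_S$ must vanish. This is precisely where the quadratic energy conservation $\sum_k(\alpha_k - \beta_k)|k|^2 = 0$, combined with the genericity of $S$, is used, and it is the essence of the combinatorial analysis in \cite{PP}. The uniform bound $|L(k)| \leq d+1$ then follows from the bounded diameter of each component.

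With $L$ in hand, a direct computation of $dy \wedge dx + \ii \sum_k dz_k \wedge d\bar z_k$ in the primed variables shows that $\Psi$ is symplectic: the phase-induced cross term coming from the $z$-substitution is cancelled exactly by the choice $y = y' + \sum_k L(k)|z'_k|^2$. Substituting into $\mathcal{N}$, each exponential $e^{\pm\ii(x_i \pm x_j)}$ appearing in $\mathcal{Q}$ is multiplied by the phase $e^{\mp\ii(L(h) - L(k), x)}$ (or its analogue for the $\sum^{**}$ terms) produced by the substitution of $z_h, z_k$, and by construction of $L$ these phases cancel, leaving $\tilde{\mathcal{Q}}(w')$ with constant coefficients and block-diagonal along the components of $\Gamma_S$. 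The asymptotic (i) is read off by expanding $(\omega(\xi), y)$ in the primed variables; since $\omega_i = |\mathtt j_i|^2 - 2\xi_i$, the $\xi$-independent part produces $\tilde\Omega_k = |k|^2 + \sum_i |\mathtt j_i|^2 L^{(i)}(k)$, while the $\xi$-proportional contribution is absorbed into $\tilde{\mathcal{Q}}$. A pigeonhole argument (already carried out in \cite{PP}) showing that only finitely many combinatorial types of components occur gives the finite list $\mathcal{M}(\xi)$ in (ii).

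Finally, for the smallness bound (iii), I would apply Proposition \ref{mpomn}. Because $|L(k)| \leq d+1$ uniformly, $\Psi$ restricts to an analytic symplectic map $D(s, r/2) \to D(s, r)$ whose majorant norm is controlled by an absolute constant. Composing the previously established bound $\|X_P\|^\lambda_{s,r} \leq C(\e r + \e^5 r^{-1})$ of \eqref{bonls} with this estimate yields \eqref{pertu} after adjusting $C$. The only analytic subtlety is to control the effect of the $y$-shift on the majorant norm, but this is absorbed into the relative weighting $r^2$ on $y$ versus $r$ on $w$ in \eqref{normaEsr}.
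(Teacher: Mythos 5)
Your outline follows essentially the same route as the paper (which itself recalls Theorem~1 of \cite{PP}): define $L$ by propagating the shift $e_i-e_j$ along edges of $\Gamma_S$ from a chosen root, check the symplectic cancellation in $y=y'+\sum_k L(k)|z'_k|^2$, and read off (i) and (ii) by separating the $\xi$-independent part of $(\omega(\xi),L(k))$ from the part $-2\xi\cdot L(k)|z'_k|^2$ that merges into $\tilde{\mathcal Q}$. That much is right.

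However, the step ``the uniform bound $|L(k)|\le d+1$ then follows from the bounded diameter of each component'' is too quick and, as written, does not give the stated bound. Each edge changes $L$ by $\pm(e_i-e_j)$, which has $L^1$-norm $2$; a component has at most $d+1$ vertices, so its diameter can be $d$ (a path), and if you root at a leaf you only get $|L(k)|\le 2d$. To get $d+1$ you must choose the root as a \emph{center} of the combinatorial graph, so that every vertex is at distance at most $[(d+1)/2]$ from it -- this is exactly the content of Remark~\ref{stiL}. Without that refinement the constant is off by a factor of roughly two, which then propagates into the low/high-momentum thresholds used throughout Part~2; it is a small but genuine hole.

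Two lesser points. First, for (iii) you invoke Proposition~\ref{mpomn}, but that proposition is for time-one flows $e^{\mathrm{ad}(F)}$ of a \emph{small} $F$ (it requires \eqref{defeta}); the map $\Psi$ is an explicit, $O(1)$ change of variables, not a near-identity flow. The paper instead estimates the majorant norm of the composition directly, obtaining a factor of the form $4e^{2d\kappa s}$ at the cost of halving $r$ (cf.\ the discussion preceding Proposition~\ref{qtop0bis}); citing \ref{mpomn} here would not compile as a proof. Second, watch the sign convention in your propagation rule: with the paper's labelling $h-k=\mathtt j_j-\mathtt j_i$ on a black edge, the cancellation in $e^{\ii(x_i-x_j)}z_h\bar z_k$ forces $L(h)-L(k)=e_i-e_j$, the opposite of what you wrote; the consistency on cycles then follows from genericity (\cite{PP}) rather than from quadratic energy conservation per se, which only enters in defining which pairs are edges at all.
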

The smallness condition implies that, if $r$ is of the order of $\e^2$ and $\lambda/\e^2$ is of order one,  then $\Vert X_{\tilde P}\Vert^\lambda_{s,r}$ is of order $\e^3$. As we shall see this is exactly a type of smallness required in order to insure the success of the KAM algorithm (cf. Theorem \ref{gacom}).\smallskip

{\bf Warning}\quad In $\Z^n$ we always use as norm $|l|$ the $L^1$ norm $\sum_{i=1}^n|l^{(i)}|$. On the other hand  in $\Z^d$, and hence in $S^c$, we use the euclidean $L^2$ norm.

\subsection{The geometric graph $\Gamma_S$\label{gra}} It is important to recall that the term  $\tilde{\mathcal Q}(\xi,w')$  comes from the sum of two contributions,  the term  $ {\mathcal Q}(\xi,x,w )$, in the new variables and the contribution $-2\sum_{k\in S^c}\xi\cdot L(k)|z'_k|^2$ (coming from the $y$ variables).  Hence $\tilde {\mathcal Q}(w')=$
\begin{equation}\label{tildeQ}
-2\sum_{k\in S^c}\xi\cdot L(k)|z'_k|^2+ 4\sum^*_{  1\leq i\neq j\leq m    \atop h,  k \in S^c}\sqrt{\xi_{i}\xi_{j}} z'_{h}\bar z'_{k } + 2\sum^{**}_{ 1\leq i< j\leq m   \atop h,  k \in S^c }\sqrt{\xi_{i}\xi_{j}} z'_{h} z'_{k } +
  2\sum^{**}_{ 1 \leq i<j\leq m    \atop h,  k \in S^c }\sqrt{\xi_{i}\xi_{j}} \bar z'_{h}\bar  z'_{k }.
\end{equation}   In its matrix description the two terms will give the off diagonal and the diagonal terms respectively.

The off diagonal terms are described through a simple geometric construction (which gives a complicated combinatorics). Given two distinct elements $\mathtt j_i,\mathtt j_j\in S$  construct the sphere $S_{i,j}$  having  the two vectors as opposite points of a diameter  and the two Hyperplanes, $H_{i,j},\ H_{j,i}$,  passing through $\mathtt j_i$  and $\mathtt j_j$ respectively, and perpendicular to the line though the two vectors  $\mathtt j_i,\mathtt j_j.$ 
\smallskip 

From this configuration of spheres and pairs of parallel hyperplanes  we deduce a {\em geometric colored graph}, denoted by $\Gamma_S$, with vertices  the points in $S^c$ and two types of edges, which we call {\em black} and {\em red}.

\begin{itemize}\item A black edge connects  two points $p\in H_{i,j},\ q\in H_{j,i}$, such that the line $p,q$ is orthogonal to the two hyperplanes, or in other words $q=p+\mathtt j_j-\mathtt j_i$.

\item A red edge connects  two points $p,q\in S_{i,j} $ which are opposite points of a diameter ($p+q=\mathtt j_i+\mathtt j_j$).

\end{itemize}
 \begin{figure}[!ht]
\centering
\begin{minipage}[b]{11cm}
\centering
{
\psfrag{a}{$\mathtt j_j$}
\psfrag{b}{$\mathtt j_i$}
\psfrag{c}{$ a_2$}
\psfrag{d}{$ b_2$}
\psfrag{e}{$ a_1$}
\psfrag{f}{$ b_1$}
\psfrag{H}{$H_{i,j}$}
\psfrag{S}{$S_{i,j}$}
\psfrag{m}{$ \mathtt j_j-\mathtt j_i$}
\psfrag{l}{$ \mathtt j_j+\mathtt j_i$}
\includegraphics[width=11cm]{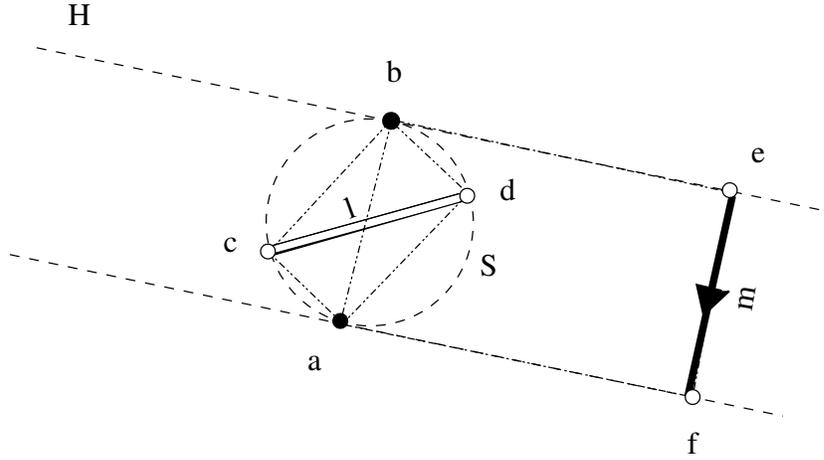}
}
\caption{\footnotesize{the plane $H_{i,j}$ and the sphere $S_{i,j}$. The points $a_1,b_1,\mathtt j_j,\mathtt j_i$ form  the vertices of a rectangle. Same for  the points $a_2 ,\mathtt j_j,b_2,\mathtt j_i$}}\label{fig1}
\end{minipage}
\end{figure}

The condition  for two points $p,q$ to be the vertices of an edge  is   given by algebraic equations.  Visibly $p\in H_{i,j}$  means that $(p-v_i,\mathtt j_i-\mathtt j_j)=0$, the corresponding $q=p+\mathtt j_j-\mathtt j_i$,  while $p\in S_{i,j}$  is given by $(p-\mathtt j_i,p-\mathtt j_j)=0$  and the corresponding opposite point $q$  is  given by $p+q=\mathtt j_i+\mathtt j_j$.

We thus have two types of constraints  describing when two points are joined by an edge, a linear  $q-p=\mathtt j_j-\mathtt j_i$ or $p+q=\mathtt j_i+\mathtt j_j$ and a quadratic constraint $(p-\mathtt j_i,\mathtt j_i-\mathtt j_j)=0$ or $(p-\mathtt j_i,p-\mathtt j_j)=0$. 
Given a connected component $A$ of the graph we can choose one vertex $x\in S^c$ and use the linear constraints in order to write all the equations which define $A$ by linear or quadratic equations on $x$.  We keep track of the linear constraints by  {\em marking }  the edges by $\mathtt j_j-\mathtt j_i$ for black edges  and $\mathtt j_j+\mathtt j_i$ for red ones.

  Now  each connected  component $A$  has  a  purely combinatorial description  which encodes  the information on the  edges which connect the vertices of $A$. We obtain an {\em abstract} graph with two types of edges (black, red) marked with pairs $i,j\in[1,\ldots,n]$. 
  
    A connected component of the geometric graph is a solution of a system of equations (associated to the graph) having the vertices as unknowns. It is easily seen that these equations may be all expressed on a single vertex (which we call the root $\er$), more precisely, as seen in \cite{PP}  we obtain one  equation (with unknown $\er$)  for each vertex $v\neq \er$. A combinatorial graph of this type is {\em admissible} if its equations admit a solution $\er\in \R^d$ for generic values of the tangential sites.\footnote{we are interested only in solutions in $S^c$ but it is more convenient to extend  the possible solutions to $\R^d$.}
  
   In \cite{PP} we have seen that such graphs have at most $2d$ vertices hence we have a finite list of combinatorial graphs  (which we have described explicitly in terms of a Cayley graph, since we do not need it here we do not recall it).  In \cite{PP1} we have strengthened this estimate, shown that for a generic choice of $S$ the vertices of the  geometric graph, corresponding to an admissible combinatorial graph,   are {\em affinely independent} and hence at most $d+1$. This stronger estimate is necessary for the proof of the second Melnikov condition.
  
  \smallskip
  
  We denote by $\GA$ the combinatorial graph associated to $A$, note that $\GA$ encodes the information on the equations which the vertices of $A$ must solve so naturally there may be many $A$ wich have the same $\GA$.

\begin{example} \label{ungra}  $$  
    \xymatrix{ &\er-\mathtt j_1+\mathtt j_3\ar@{<- }[d] _{2,1}& &\\  &\er-\mathtt j_2+\mathtt j_3& &\\  \er\ar@{ ->}[ru] _{3,2}\ar@{<-}[ruu] ^{3,1}\ar@{=}[rr]_{1,2} && -\er+\mathtt j_1+\mathtt j_2 \ar@{=}[luu]_{2,3} \ar@{=}[lu] ^{1,3} &   }    \xymatrix{ &\er-\mathtt j_1+\mathtt j_2+\mathtt j_4+\mathtt j_3 & &\\  &\er-\mathtt j_2+\mathtt j_3& &\\  \er\ar@{ ->}[ru] _{3,2}\ar@{=}[rr]_{1,2} &&-\er+\mathtt j_1+\mathtt j_2 \ar@{=}[luu]_{4,3} \ar@{=}[lu] ^{1,3} &   }    
 $$  
 the equations that $\er$ has to satisfy  are:
$$\begin{matrix}
(\er,\mathtt j_2-\mathtt j_3)=|\mathtt j_2|^2-(\mathtt j_2,\mathtt j_3)&&&(\er,\mathtt j_2-\mathtt j_3)=|\mathtt j_2|^2-(\mathtt j_2,\mathtt j_3)\\
|\er|^2-(\er,\mathtt j_1+\mathtt j_2)=-(\mathtt j_1,\mathtt j_2)&&&|\er|^2+(\er,\mathtt j_1+\mathtt j_2)=-(\mathtt j_1,\mathtt j_2)\\
(\er,\mathtt j_1-\mathtt j_3)=|\mathtt j_1|^2-(\mathtt j_2,\mathtt j_3)&&&(\er,\mathtt j_1-\mathtt j_2-\mathtt j_3-\mathtt j_4)=
-|\mathtt j_1|^2 +( \mathtt j_1,\mathtt j_2) +( \mathtt j_1,\mathtt j_3)  \\&&&-
 ( \mathtt j_2,\mathtt j_3) +( \mathtt j_1,\mathtt j_4) - 
(  \mathtt j_2,\mathtt j_4) - (\mathtt j_3,\mathtt j_4)
\end{matrix} $$\end{example}
In case the graph has no red edges  the equations for the vertex $x$ are all linear.
This implies that the  connected components of  $\Gamma_S$  which correspond to a given combinatorial graph with a chosen vertex      are all obtained from a single one by translations by vectors which are orthogonal to the edges of the graph.

 By convention  we also have chosen   a preferred  vertex, called the {\em root}, in each connected component,  in such a way that  the  roots of the translates are the translates of this root.  
 
 Formalizing, \begin{itemize}\item we have a map $\er:S^c\to S^c$ with image the chosen set  $S^{c,\er}$ of roots. \item  The fibers of this map are the connected components  of the graph $\Gamma_S$.  
\item When we walk from the root $\er(k)$ to $k$ (inside the corresponding connected component) we  count the parity $\pm 1$ of the number of red edges on the path, this is independent of the path and we denote by $\sigma(k)$ (the {\em color} of $k$).  
\item There are only finitely many elements $k$  with $\s(k)=-1$, the finitely many corresponding roots are exactly the roots of the components with red edges. \item  In any case the color of the root is always 1 ({\em black}).

\end{itemize}\smallskip

At this point we can explain how to construct the  elements $L(k)$ which tell us how to go from the root, of the component $A$ of the  graph  $\Gamma_S$ to which $k$ belongs, to $k$.
The  equations defining  the component $A$ imply  that 

\begin{equation}\label{defL}k+\sum_iL_i(k)\mathtt j_i=\sigma(k)\er(k)\,,\quad 
|k|^2+\sum_iL_i(k)|\mathtt j_i|^2=\sigma(k)|\er(k)|^2\,, \quad \sigma(k)=1+\sum_iL_i(k).  
\end{equation}
Note that the first of the equations \eqref{defL} defines the $L(k)$, which {\em depend only on the combinatorial graph}. The fact that this definition is well posed even if $A$ is not a tree is a consequence of our genericity conditions.

The main fact is that 
\begin{proposition}\label{laC}
The Hamiltonian $ {\mathcal Q}(\xi, x,w')$ in  the new coordinates $z'$ is the sum $\sum_\ell {\mathcal Q}_\ell(\xi,w')$ over all edges $\ell$ of the geometric graph of the  following elements \begin{itemize}
\item $ {\mathcal Q}_\ell(\xi,w'):=4\sqrt{\xi_i\xi_j}(z'_h\bar z'_k+z'_k\bar z'_h)$ if $h,k$ are joined by a black edge $\ell$ marked $i,j$ 
\item $ {\mathcal Q}_\ell(\xi,w'):= 4\sqrt{\xi_i\xi_j}(z'_hz'_k +\bar z'_h\bar z'_k)$  if $h,k$ are joined by a red edge $\ell$ marked $i,j$. 
\end{itemize}
\end{proposition}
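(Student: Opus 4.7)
The proposition asserts two things simultaneously: after the substitution $z_k = e^{-\ii(L(k),x)} z'_k$, every monomial of $\mathcal Q(\xi,x,w)$ loses its $x$-dependence, and the surviving constant-coefficient quadratic form can be reorganized as a sum indexed by the edges of $\Gamma_S$. The plan is to treat the phase cancellation and the edge bookkeeping as two separate steps.

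\emph{Step 1 (Phase cancellation).} Substituting in a typical term of the first sum gives
\[
e^{\ii(x_i-x_j)} z_h \bar z_k \;\longmapsto\; e^{\ii(e_i-e_j-L(h)+L(k),\,x)}\, z'_h \bar z'_k,
\]
so it suffices to show $L(h)-L(k)=e_i-e_j$ whenever $h,k$ are joined by a black edge marked $(i,j)$. Since a black edge preserves both the connected component and the color, one has $\er(h)=\er(k)$ and $\sigma(h)=\sigma(k)$; subtracting the two defining relations \eqref{defL} gives $(h-k)+\sum_l (L_l(h)-L_l(k))\mathtt j_l=0$, and the black-edge constraint $h-k=\mathtt j_j-\mathtt j_i$ then forces $L(h)-L(k)=e_i-e_j$ (the genericity of $S$ being what ensures that $L$ is well-defined from this linear relation, as noted after \eqref{defL}). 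For a red edge, $h+k=\mathtt j_i+\mathtt j_j$ and the color flips, $\sigma(k)=-\sigma(h)$; adding the defining relations now gives $(h+k)+\sum_l (L_l(h)+L_l(k))\mathtt j_l=0$, hence $L(h)+L(k)=-(e_i+e_j)$. This kills the phase in $e^{-\ii(x_i+x_j)}z_hz_k$ and, by complex conjugation, in $e^{\ii(x_i+x_j)}\bar z_h\bar z_k$.

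\emph{Step 2 (Edge counting).} Each tuple $(h,\mathtt j_i,k,\mathtt j_j)$ satisfying the starred constraint is exactly the data of a black edge $\{h,k\}$ of $\Gamma_S$ marked by the unordered pair $\{i,j\}$. The transposition $(h,i)\leftrightarrow(k,j)$ produces a second tuple solving the same constraints, so each unordered edge contributes two ordered tuples; combining their images under Step 1 yields $4\sqrt{\xi_i\xi_j}(z'_h\bar z'_k+z'_k\bar z'_h)$. For the double-starred sums, the pair $(i,j)$ is ordered by $i<j$, and the repetition noted in the excerpt ("each term appears twice") accounts for the two orderings of $h,k$ compatible with a fixed red edge $\{h,k\}\subset S_{i,j}$; together with the symmetric antiholomorphic sum this gives $4\sqrt{\xi_i\xi_j}(z'_h z'_k+\bar z'_h\bar z'_k)$. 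Summing over edges reproduces the off-diagonal part of $\tilde{\mathcal Q}$ in \eqref{tildeQ}.

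\emph{Expected obstacle.} The technical subtlety lies in Step 1: the argument that $\sum_l c_l \mathtt j_l=0$ forces the coefficients $c_l$ to vanish does not follow from linear independence of $S$ (we may well have $n>d$). The resolution is built into the genericity assumptions of \cite{PP},\cite{PP1}: the function $L$ is constructed combinatorially by walking from the root $\er(k)$ to $k$ along edges of $\Gamma_S$, each black edge contributing an $\pm(e_i-e_j)$ and each red edge a $-(e_i+e_j)$ (with the appropriate sign twist from $\sigma$). Consistency of this construction around the (possibly non-tree) connected components is exactly the non-trivial content guaranteed by genericity, and once it is known that $L$ satisfies \eqref{defL}, the identities $L(h)-L(k)=e_i-e_j$ and $L(h)+L(k)=-(e_i+e_j)$ for adjacent $h,k$ are simply the one-step increments along the path. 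No further calculation beyond tracking signs of $\sigma$ and reading off the edge labels is needed.
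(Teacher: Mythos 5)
The paper states Proposition~\ref{laC} without proof (it follows ``By explicit computation'' and ``The main fact is that\ldots''), so there is no argument to compare against; I can only assess correctness. Your argument is sound and is, as far as I can tell, exactly the intended one: translate the phase cancellation into per--edge identities on $L$, then reorganize the sums $\sum^*$ and $\sum^{**}$ into unordered edges. The sign bookkeeping in Step~1 checks out: from $\sum^*$ the black--edge constraint reads $h - k = \mathtt j_j - \mathtt j_i$, which together with $\sigma(h)=\sigma(k)$, $\er(h)=\er(k)$ and \eqref{defL} is consistent with $L(h)-L(k)=e_i-e_j$; for a red edge the sign flip $\sigma(k)=-\sigma(h)$ makes the two right--hand sides of \eqref{defL} cancel on addition, yielding $L(h)+L(k)=-(e_i+e_j)$. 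Step~2 correctly accounts for the factor of $4$: the ordered pair $(h,i,k,j)\leftrightarrow(k,j,h,i)$ symmetry for $\sum^*$, and the noted double--counting in $\sum^{**}$ together with the conjugate sum, both reproduce the stated coefficient.

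One point of exposition is worth tightening. As written, Step~1 says the linear relation plus the edge constraint ``forces'' $L(h)-L(k)=e_i-e_j$, with genericity invoked in a parenthetical; but as you yourself observe under ``Expected obstacle,'' for $n>d$ the relation $\sum_l c_l\mathtt j_l=0$ does not force $c=0$, so the implication as literally stated does not hold. The logically clean version is the one you give second: $L$ is \emph{defined} combinatorially by accumulating $\pm(e_i-e_j)$ (black) and $-(e_i+e_j)$ (red) along a path from the root, genericity guarantees path--independence, and \eqref{defL} together with the one--step increments is then a consequence of that construction rather than its source. I would present that as the primary argument and drop the appearance of deriving the increments from \eqref{defL} alone. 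With that reordering, the proof is complete and matches what the paper's surrounding text indicates is the intended verification.
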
 Form the previous remarks there are only finitely many elements of the second type.\subsubsection{The matrix blocks of $\tilde{\mathcal Q}$ and $ad(\mathcal N)$}  According to Proposition \ref{laC}, 
the graph has been constructed in such a way that
we can group  $\tilde{\mathcal Q}=\sum_A \tilde{\mathcal Q}_A$ (cf. \eqref{tildeQ}) where the sum runs over all blocks $A\in \Gama$ and, if $E(A)$ denotes the set of edges in $A$: $$\tilde{\mathcal Q}_A:=\sum_{k\in A}-2\xi\cdot L(k)|z'_k|^2+ \sum_{\ell\in E(A)} {\mathcal Q}_\ell(\xi,w')$$ is a quadratic Hamiltonian in the variables $w'_A=z'_k,\bar z'_k$ with $k$  running over the vertices of $A$. The matrix of $\tilde Q_A$ has a natural block diagonal structure in two conjugated blocks, corresponding to two Lagrangian subspaces in the symplectic space generated by the variables $z'_k,\bar z'_k, \ k\in A$ appearing in  it. We can thus divide $w'_A$  into two conjugate components $w'_A=(u',\bar u')$ where $u'_k= (z')^{\sigma(k)}_k$ then $-\frac\ii 2\tilde Q_A $ has as matrix denoted by  $ C_{A}\oplus -C_{A}$. By convention in the first block the root $\er$ corresponds to $z'_\er$.
\smallskip

Given two vertices $u'_h,u'_k$ $h\neq k\in A$  
 we have that the matrix element $c_{u'_h,u'_k}$ of $C_A$ is non zero if and only if $h,k$ are joined by an edge  (marked say $(i,j)$) and then 
 \begin{equation}
\label{Lema}c_{u'_h,u'_k}= 2\sigma(k)\sqrt{\xi_i\xi_j},\qquad c_{u'_k,u'_k}= - \sigma(k)(\xi,L(k)). 
\end{equation}  

By definition $L(k)$ depends only on the combinatorial graph $\GA$ of which $A$ is a realization, therefore the matrix $C_A=C_\GA$ depends only on the combinatorial block $\GA$.
\begin{remark}\label{stiL}
One may choose the root of each combinatorial graph so that any other vertex is connected by a path with at most  $[(d+1)/2]$  edges. One deduces the estimates $\sum_i|L_i(k)|\leq d+1,\, $ for all $k$.
\end{remark}\subsubsection{The space $F^{0,1}$}

In the   KAM algorithm we shall need to study in particular the  action by Poisson bracket of $ \mathcal N $ on a special space of functions  called $F^{0,1}$, so we recall some of this formalism.
\begin{definition}
We set $F^{0,1}$ to be the space of functions spanned by the basis elements
  $$e^{\ii \s \s (k)\nu\cdot x}{z'_k}\,^\sigma=e^{\ii \s ([\s (k)\nu+L(k)] \cdot x)}{z_k}^\sigma$$   which preserve mass and momentum. \footnote{we deviate from the notations of \cite{PP} and in  $F^{0,1}$ we also impose zero mass}\end{definition} 
One easily sees that $F^{0,1}$ is a symplectic space under Poisson bracket. The formulas for mass and momentum in the new variables are
  \begin{equation}\label{posta2}
  \{ \mathbb L, e^{\ii \s \s (k)\nu\cdot x}{z'_k}\,^\sigma\}= \ii \sigma\s (k)(\sum_i \nu_i+1)\,e^{\ii \sigma \s (k)\nu\cdot x}{z'_k}\,^\sigma,\quad 
  \end{equation}
  $$
   \{ \mathbb M, e^{\ii \sigma \sigma(k)\nu\cdot x}{z'_k}\,^\sigma\}= \ii\s \s (k) (\sum_i \nu_i \mathtt j_i+ \er(k))\,e^{\ii\s \sigma(k) \nu\cdot x}{z'_k}\,^\sigma,
   $$
hence the conservation laws  tell us that for an element $e^{\ii \s \s (k)\nu\cdot x}{z'_k}\,^\sigma\in F^{0,1}$     the vector $\nu\in\Z^d$   is constrained  by the fact that $-\sum_i \nu_i \mathtt j_i$ must be in the set of roots in $S^c$ and moreover the mass constraint $\sum_i \nu_i=-1$.

For each connected components $A$ of the graph $\Gamma_S$ with some root $\er$  any solution $\nu$ of $\sum_i \nu_i \mathtt j_i+ \er=0$ determines in the space $F^{0,1}$ a block  denoted $A,\nu$ with basis the elements $e^{\ii\s \sigma(k) \nu\cdot x}{z'_k}\,^\sigma$ with ${z'_k}\,^\sigma$  the corresponding basis of the two Lagrangian blocks corresponding to $A$.
 
From the previous formulas we have thus  that this space decomposes again into blocks indexed by   pairs $A,\nu$ with $A$ a connected component   of the graph $\Gamma_S$ and $\nu$ any solution of $\sum_i \nu_i \mathtt j_i+ \er=0$ where the mass of $\nu$ is $-1$, 
each such block is a symplectic space decomposed  into a  pair  of $-\frac{\ii}{2} {\mathcal N} $ stable Lagrangian subspaces.\medskip

 Given thus such a pair of a component $A$ and a  frequency $\nu$,  notice that in fact $A$ is determined by its root which is determined by $\nu$  by the conservation law.  We have to understand the action  of $-\frac\ii 2 ad(\mathcal N)$, on the block of $F^{0,1}$  with basis the elements  $e^{\ii \sum_j\nu_jx_j}{z'_{m}}^{\s(m)}$ with $ \er(m)=-\sum_i\nu_i\mathtt j_i$, (on its  conjugate $\bar A$ it is the minus transpose). The action of $\tilde {\mathcal Q}$ does not depend on $\nu$ and as before it is only through $\tilde {\mathcal Q}_A$ and gives the matrix $C_A$,  we need then to understand  the elements $(\ome(\xi),y') +\sum_{k\in S^c}\tilde\Ome_k|z'_k|^2$.   By Formula \eqref{defL}, the term $\sum_{k\in S^c}\tilde\Ome_k|z'_k|^2$ contributes on the first block the scalar $ |\er(m)|^2 $.  As of $(\ome(\xi),y')$  it also contributes by a scalar, this time $ \sum_i\nu_i |\mathtt j_i|^2  -2\sum_i\nu_i\xi_i.$ Summarizing
 \begin{proposition}\label{madiN}
The matrix of $-\frac\ii 2 ad(\mathcal N)$ on the block $A,\nu$ is the sum of   the matrix $C_\GA$ plus the scalar matrix $[\frac12(|\er(m)|^2+\sum_i\nu_i |\mathtt j_i|^2 )-\sum_i\nu_i\xi_i]\, I_A.$
\end{proposition}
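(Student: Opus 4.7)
The plan is to decompose $\mathcal{N}=\mathcal{N}_1+\mathcal{N}_2+\mathcal{N}_3$ with $\mathcal{N}_1:=(\omega(\xi),y')$, $\mathcal{N}_2:=\sum_{k\in S^c}\tilde\Omega_k|z'_k|^2$ and $\mathcal{N}_3:=\tilde{\mathcal Q}(w')$, and to compute the action of $-\tfrac{\ii}{2}\mathrm{ad}(\mathcal N_j)$ on the basis $e^{\ii\nu\cdot x}(z'_m)^{\sigma(m)}$, $m\in A$, of the block separately for each piece. The quadratic piece $\mathcal{N}_3$ will produce the matrix $C_\GA$, while the two ``diagonal'' pieces $\mathcal{N}_1,\mathcal{N}_2$ will combine into a scalar multiple of the identity once the relations \eqref{defL} are invoked.

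For the quadratic piece one uses the block decomposition $\tilde{\mathcal Q}=\sum_{A'}\tilde{\mathcal Q}_{A'}$ from Proposition \ref{laC}: since each $\tilde{\mathcal Q}_{A'}$ is a quadratic form only in the variables indexed by vertices of $A'$, only $\tilde{\mathcal Q}_A$ contributes to the action on the basis of the $A,\nu$ block. Moreover $\tilde{\mathcal Q}_A$ is $x$-independent, so the factor $e^{\ii\nu\cdot x}$ is inert under $\mathrm{ad}(\tilde{\mathcal Q}_A)$ and the resulting linear operator on the Lagrangian subspace spanned by the $u'_m=(z'_m)^{\sigma(m)}$ is exactly the matrix $C_A$ whose entries are given by \eqref{Lema}. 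Since $L(k)$, $\sigma(k)$ and the incidence of edges depend only on the combinatorial graph, $C_A$ coincides with $C_\GA$.

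For the two diagonal pieces, $\mathcal{N}_1$ is independent of the $z'$ variables and $\mathcal{N}_2$ is independent of $x$, so each acts as multiplication by a scalar on each basis element. A direct application of the canonical brackets $\{y_i,x_j\}=\delta_{ij}$ and $\{z'_k,\bar z'_k\}=-\ii$ gives
\[
-\tfrac{\ii}{2}\{\mathcal N_1,e^{\ii\nu\cdot x}(z'_m)^{\sigma(m)}\}=\tfrac{1}{2}(\omega(\xi),\nu)\,e^{\ii\nu\cdot x}(z'_m)^{\sigma(m)},\qquad -\tfrac{\ii}{2}\{\mathcal N_2,(z'_m)^{\sigma(m)}\}=\tfrac{1}{2}\sigma(m)\tilde\Omega_m\,(z'_m)^{\sigma(m)}.
\]

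The only non-trivial step, where the structure of the normal form enters, is to see that the combined scalar $\tfrac{1}{2}\big[(\omega(\xi),\nu)+\sigma(m)\tilde\Omega_m\big]$ is in fact independent of $m\in A$. This follows at once from the second identity in \eqref{defL}, which gives $\tilde\Omega_m=\sigma(m)|\er(m)|^2$, so that $\sigma(m)\tilde\Omega_m=|\er|^2$ is the same for every $m\in A$ (all vertices of $A$ share the same root $\er=-\sum_i\nu_i\mathtt j_i$ by the momentum constraint that defines the block, and $\sigma(m)^2=1$). Substituting $(\omega(\xi),\nu)=\sum_i\nu_i|\mathtt j_i|^2-2\sum_i\nu_i\xi_i$ and collecting terms yields exactly the scalar $\tfrac{1}{2}\big(|\er(m)|^2+\sum_i\nu_i|\mathtt j_i|^2\big)-\sum_i\nu_i\xi_i$ appearing in the statement.
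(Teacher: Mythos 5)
Your proof is correct and follows essentially the same route as the paper: decompose $\mathcal N$ into the angle--action piece $(\omega,y')$, the diagonal piece $\sum_k\tilde\Omega_k|z'_k|^2$, and the block-quadratic piece $\tilde{\mathcal Q}$, observe that the last gives $C_\GA$ (via the block decomposition of Proposition \ref{laC} and $x$-independence), and compute the two scalar contributions, with the second identity of \eqref{defL} turning $\sigma(m)\tilde\Omega_m$ into the $m$-independent quantity $|\er(m)|^2$. The paper's own argument, in the paragraph preceding the proposition, is precisely this computation written out more tersely.
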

   \subsubsection{ The  standard form\label{stanfor}}
 By the rules of Poisson bracket we have on the real space spanned by $z,\bar z$ that $\{a,\bar b\} =-\{\bar  a,b\}= \{b,\bar  a \}$ is imaginary so 
 $\{\bar a,\bar b\} =-\{  a,b\}=\overline{\{  a,b\}}$
and \begin{definition}
 $(a,b):=\ii \{a,\bar b\}$ is a real symmetric form, called the {\em standard form}.
\end{definition}
 For the variables  we have $(z_h,z_h)=1,\ (\bar  z_h,\bar  z_h)=-1$, so the form is positive definite on the space spanned by the $z$, negative on the space spanned by the $\bar z$  and of course indefinite if we mix the two types of variables.  Thus we may say that an element $a$ in the real space spanned by $z,\bar z$ is {\em of type $z$ (resp. $\bar z$)} if  $(a,a)=1$ resp.   $(a,a)=-1$. Now choose any quadratic real Hamiltonian $\mathcal H=\bar{\mathcal H}$. We have  $\{\mathcal H,\{a,\bar b\}\}=0$  by the Jacobi identity, moreover the map $x\mapsto \ii\{\mathcal H,x\}$ preserves the real subspace spanned by $z,\bar z$ hence we have \begin{equation}
\label{resi}(a, \ii\{\mathcal H,  b\})=\ii\{a,\overline{\ii\{\mathcal H,  b\} }\}= \{a,\{\mathcal H,\bar b\} \}=- \{\{\mathcal H,a\},\bar b \}=\ii \{\ii\{\mathcal H,a\},\bar b \}=  (\ii\{\mathcal H,a\},  b ).
\end{equation}Formula \eqref{resi} tells us that  the operator $\ii\{\mathcal H,-\}$ is symmetric with respect to this  form.

 \subsubsection{ The case of $-\frac\ii 2\{ \tilde{\mathcal Q},-\}$ } We apply the previous analysis to $  \mathcal H =-\frac 1 2  \tilde{\mathcal Q}$ and its block decomposition.
 When we have red edges each of the two Lagrangian blocks contains both variables  $z$ and $\bar z$  and by convention we  take as first block the one in which the variable corresponding to the root is of type $z$.  The standard form  $(a,b):=\ii\{a,\bar b\}$ is indefinite.

By assumption the operator $-\frac\ii 2\{\tilde{\mathcal Q},-\}$ can be put in normal form by a change of basis  preserving the form $(a,b)$. Thus the new basis  is formed still by elements  which we have called of types $z$  and $\bar z$.
 
From all these considerations one has:
\begin{lemma}\label{autoag}
For all combinatorial blocks $\GA$ which do not contain red edges, the matrix $C_{\GA}$ is self--adjoint for all $\xi\in \R_+^n$. If $\GA$ contains red edges then each vertex  $k$ has a sign and corresponds to an element $u'_k= (z')^{\sigma(k)}_k$. 

The diagonal matrix of signs $\sigma_\GA=$ diag$(\sigma(k))$ is the matrix of the standard form in the basis   $u'_k= (z')^{\sigma(k)}_k$ and  $C_{\GA}$ is self--adjoint with respect to the indefinite form defined by $\sigma_\GA$.
\end{lemma}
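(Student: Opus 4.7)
The plan is to package the two assertions of the lemma as a single statement about the operator $T:=-\tfrac{\ii}{2}\{\tilde{\mathcal Q}_A,\cdot\}$ restricted to the Lagrangian block with basis $\{u'_k\}_{k\in A}$, and then read off the two cases. Since $\tilde{\mathcal Q}_A$ is a real quadratic Hamiltonian, formula \eqref{resi}, applied to $\mathcal H=-\tfrac12\tilde{\mathcal Q}_A$, tells us that $T=\ii\{\mathcal H,\cdot\}$ is symmetric with respect to the standard form $(a,b)=\ii\{a,\bar b\}$ on the real space spanned by the $z'_k,\bar z'_k$ with $k\in A$. By the construction in \S\ref{stanfor} and Proposition \ref{laC}, $T$ preserves the first Lagrangian block $\{u'_k\}_{k\in A}$, and the matrix of $T$ in this basis is exactly $C_\GA$; so what remains is to identify the matrix of $(\cdot,\cdot)$ on the same basis.

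Next I would compute this Gram matrix directly. Recall $u'_k=(z'_k)^{\sigma(k)}$, so $\overline{u'_k}=(z'_k)^{-\sigma(k)}$. For $h\neq k$ the Poisson bracket $\{u'_h,\overline{u'_k}\}$ vanishes because the indices are different, and for $h=k$ one gets $\ii\{z'_k,\bar z'_k\}=1$ when $\sigma(k)=1$ and $\ii\{\bar z'_k,z'_k\}=-1$ when $\sigma(k)=-1$. In either case $(u'_h,u'_k)=\sigma(k)\,\delta_{h,k}$, i.e.\ the Gram matrix is $\sigma_\GA=\mathrm{diag}(\sigma(k))$. Now the abstract symmetry $(Ta,b)=(a,Tb)$, written in the basis $\{u'_k\}$ with Gram matrix $\sigma_\GA$, becomes
\[
\sigma_\GA\,C_\GA=C_\GA^t\,\sigma_\GA,
\]
which is precisely self-adjointness of $C_\GA$ with respect to the indefinite form $\sigma_\GA$. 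If $\GA$ contains no red edges the color function is constantly $+1$ on $A$, so $\sigma_\GA=I$ and the relation reduces to ordinary symmetry of $C_\GA$; this gives the first assertion.

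As a consistency check (and to convince oneself that no sign mishap occurs) one can verify the relation $\sigma_\GA C_\GA=C_\GA^t\sigma_\GA$ directly from the explicit entries \eqref{Lema}: the off-diagonal entry $c_{u'_h,u'_k}=2\sigma(k)\sqrt{\xi_i\xi_j}$ satisfies $\sigma(h)c_{u'_h,u'_k}=2\sigma(h)\sigma(k)\sqrt{\xi_i\xi_j}$, which is manifestly symmetric in $h,k$, while the diagonal contribution $c_{u'_k,u'_k}=-\sigma(k)(\xi,L(k))$ is trivially so. Thus the only genuine point is the intrinsic one, namely formula \eqref{resi} together with the computation of the Gram matrix; once these are in hand, both statements of the lemma follow for every $\xi\in\R^n_+$, and the expected main difficulty is purely bookkeeping, ensuring that the identification of $\{u'_k\}$ as a Lagrangian block of the symplectic structure on $A$ is compatible with the sign conventions coming from the coloring $\sigma$.
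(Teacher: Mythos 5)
Your proof is correct and follows exactly the route the paper intends: the paper gives no separate proof, presenting Lemma \ref{autoag} as an immediate consequence of formula \eqref{resi} (symmetry of $\ii\{\mathcal H,\cdot\}$ with respect to the standard form) together with the identification of the Lagrangian blocks in \S\ref{stanfor}, and you have simply made this explicit by computing the Gram matrix $(u'_h,u'_k)=\sigma(k)\delta_{h,k}$ and translating the operator-level symmetry into the matrix relation $\sigma_\GA C_\GA=C_\GA^t\sigma_\GA$. The consistency check against \eqref{Lema} is a nice touch but not needed.
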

  The orthogonal group of the standard form acts on the entire symplectic block preserving the two Lagrangian subspaces and thus it has a mixed  invariant which we still call the standard form\begin{equation}
\label{stanfor1}\sum_{k \,|\, \mathtt r(k)=r} \s(k)  |z_k|^2=\sum_{k \,|\, \mathtt r(k)=r} \s(k)  |z'_k|^2.
\end{equation}
  \begin{lemma}[conservation laws]
  In the new variables the conserved quantities are:
  $$ \mathbb L= \sum_i y'_i + \sum_k \s(k)  |z'_k|^2 \,,\quad  \mathbb M = \sum_i \mathtt j_i y'_i + \sum_k \s(k) \er(k) |z'_k|^2\,,$$ $$\quad \mathbb K = \sum_i |\mathtt j_i|^2 y'_i + \sum_k \s(k) |\er(k)|^2 |z'_k|^2 $$  note that all of these three quadratic Hamiltonians are represented by a {\em scalar} matrix on each component of $w'_A$.
\end{lemma}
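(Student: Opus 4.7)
The plan is to pull the three conserved quantities through the two successive changes of variables and then recognize the resulting coefficients of $|z'_k|^2$ as the three left-hand sides of the identities \eqref{defL}.

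First I would express $\mathbb L=\sum_{k\in\Z^d}|u_k|^2$, $\mathbb M=\sum_{k\in\Z^d}k|u_k|^2$, $\mathbb K=\sum_{k\in\Z^d}|k|^2|u_k|^2$ in the intermediate variables $(y,x,z,\bar z)$ produced by the action--angle change \eqref{chofv}. Since $|u_{\mathtt j_i}|^2=\xi_i+y_i$ and $u_k=z_k$ on $S^c$, the constants $\sum_i\xi_i$, $\sum_i\mathtt j_i\xi_i$, $\sum_i|\mathtt j_i|^2\xi_i$ drop out (Hamiltonians are defined up to additive constants) and one is left with
\[
\mathbb L=\sum_i y_i+\sum_{k\in S^c}\!|z_k|^2,\quad \mathbb M=\sum_i\mathtt j_i y_i+\sum_{k\in S^c}\!k|z_k|^2,\quad \mathbb K=\sum_i|\mathtt j_i|^2 y_i+\sum_{k\in S^c}\!|k|^2|z_k|^2.
\]

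Next I would apply the map $\Psi$ of Theorem \ref{teo1}, using $|z_k|^2=|z'_k|^2$ and $y_i=y'_i+\sum_{k\in S^c}L_i(k)|z'_k|^2$. A one-line substitution exhibits the coefficients $1+\sum_i L_i(k)$, $k+\sum_i L_i(k)\mathtt j_i$, and $|k|^2+\sum_i L_i(k)|\mathtt j_i|^2$ in front of $|z'_k|^2$ in $\mathbb L$, $\mathbb M$, $\mathbb K$ respectively. By the three identities in \eqref{defL} these coefficients equal $\sigma(k)$, $\sigma(k)\er(k)$ and $\sigma(k)|\er(k)|^2$, which gives the three asserted formulas.

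The scalar-matrix assertion then follows because $\er(k)\equiv \er_A$ is constant on each component $A$ of $\Gamma_S$: the coefficient of $|z'_k|^2$ in each of the three conserved quantities depends on $k\in A$ only through $\sigma(k)$. Passing to the Lagrangian basis $u'_k=(z')^{\sigma(k)}_k$ and applying Remark \ref{diaH}, the Hamiltonian vector field of $\sigma(k)|z'_k|^2$ acts on $u'_k$ by $\ii$ regardless of the sign $\sigma(k)$, because the factor $\sigma(k)$ is absorbed by the swap $z'_k\leftrightarrow \bar z'_k$; the same cancellation produces scalar actions $\ii\er_A$ and $\ii|\er_A|^2$ for $\mathbb M$ and $\mathbb K$ on each Lagrangian component of $w'_A$. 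The only genuine point requiring care is tracking the Poisson-bracket sign conventions across the identification $u'_k=(z')^{\sigma(k)}_k$; no analytic estimate or deep combinatorial input is needed beyond \eqref{defL}.
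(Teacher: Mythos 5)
Your proof is correct and follows the same route the paper takes: the paper's entire argument is ``substitute the new variables and use the identities \eqref{defL},'' and your write-up is an explicit unpacking of that sentence — express $\mathbb L,\mathbb M,\mathbb K$ in the action--angle variables, push $y_i=y'_i+\sum_k L_i(k)|z'_k|^2$ and $|z_k|^2=|z'_k|^2$ through $\Psi$, then recognize the three coefficients of $|z'_k|^2$ as the three left-hand sides of \eqref{defL}. Your closing observation about the sign cancellation in passing to $u'_k=(z')^{\sigma(k)}_k$ is precisely why the adjoint action is scalar on each Lagrangian component, which the paper asserts without comment.
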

\begin{proof}
We substitute the new variables and use the identities \eqref{defL}.
\end{proof}
\section{Normal form reduction}
We now want to simplify   $-\frac{\ii}{2} {\mathcal N} $ on each pair of stable Lagrangian subspaces described in Proposition \ref{madiN}, using the standard Theory of canonical form of symplectic matrices.

The main ingredient we need is :
\begin{theorem}\label{te12p}[Proposition 1  of \cite{PP1}] i)\quad For all combinatorial blocks $\GA$, $C_\GA$ has distinct eigenvalues, namely it is regular semisimple  for values of the parameters $\xi_i$ outside a real hypersurface (the discriminant).

\smallskip

ii)\quad For any pair of distinct blocks $(A_1,\nu_1),(A_2,\nu_2)$  the resultant of the characteristic polynomials of the two matrices of the action of $-\frac{\ii}{2}ad(\mathcal N)$  is non--zero, hence outside this hypersurface the eigenvalues of these two blocks are distinct.
 \end{theorem}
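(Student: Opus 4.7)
The plan is to follow the route of Proposition 1 of \cite{PP1}. By Theorem \ref{teo1} the collection of combinatorial blocks $\GA$ is finite, and by \eqref{Lema} each matrix $C_\GA$ has entries that are polynomials in $\sqrt{\xi_i}$: linear in $\sqrt{\xi_i\xi_j}$ off the diagonal, and linear in $\xi_i$ on the diagonal. Consequently the discriminant $\Delta_\GA(\xi)$ of the characteristic polynomial $\chi_\GA(\lambda,\xi)$ and, in (ii), the resultant of the two shifted characteristic polynomials, are polynomial functions of the $\sqrt{\xi_i}$. Their vanishing loci are real algebraic hypersurfaces, so both statements reduce to proving that these polynomials are \emph{not identically zero}, one combinatorial type at a time.

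For (i) I would proceed block by block through the finite list of admissible combinatorial graphs classified in \cite{PP}. Thanks to the strengthened estimate in \cite{PP1}, the vertices of $A$ are affinely independent, so the size is at most $d+1$ and the off-diagonal pattern is rigidly determined by the edge structure. When $\GA$ contains no red edges, Lemma \ref{autoag} says $C_\GA$ is self-adjoint for all $\xi\in \R^n_+$, so regular-semisimplicity follows by exhibiting one $\xi$ where the spectrum is simple, e.g.\ by specializing the $\xi_i$ to have very different orders of magnitude so that the diagonal $-\sigma(k)(\xi,L(k))$ dominates and has distinct entries. In the presence of red edges one works with the indefinite form of Lemma \ref{autoag}; the same kind of dominant-diagonal specialization, together with the explicit description of the (finitely many) graphs carrying red edges, reduces the problem to checking non-vanishing of a concrete polynomial in $\sqrt{\xi}$.

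For (ii) I would use Proposition \ref{madiN}: on the block $(A_i,\nu_i)$ the eigenvalues of $-\tfrac{\ii}{2}ad(\mathcal N)$ have the shape
\[
\mu + \tfrac12\bigl(|\er(A_i)|^2+\textstyle\sum_k \nu_i^{(k)}|\mathtt j_k|^2\bigr)-\sum_k \nu_i^{(k)}\xi_k,
\]
where $\mu$ runs over the spectrum of $C_{\GA_i}$. A coincidence of eigenvalues across two distinct blocks gives the identity
\[
\mu^{(1)}-\mu^{(2)} = \textstyle\sum_k(\nu_1-\nu_2)_k\xi_k+\tfrac12\bigl(|\er(A_2)|^2-|\er(A_1)|^2+\sum_k(\nu_2-\nu_1)_k|\mathtt j_k|^2\bigr).
\]
Rescaling $\xi\mapsto t\xi$, the $\mu^{(i)}$ and the $\sum_k(\nu_1-\nu_2)_k\xi_k$ term scale linearly in $t$, while the last parenthesis is $t$-independent. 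Hence, as a polynomial identity in $t$, the resultant splits into a homogeneous-degree-one relation in $\sqrt{\xi}$ plus a purely Diophantine condition on the tangential sites. The Diophantine condition is killed by the genericity of $S$ built into the definition of admissible graph (see \cite{PP},\cite{PP1}); the homogeneous relation is ruled out block-by-block, using part (i) and, crucially, the observation that whenever $A_1\ne A_2$ one has $\er(A_1)\ne\er(A_2)$ so that $\nu_1\ne\nu_2$ and the linear shift $\sum_k(\nu_1-\nu_2)_k\xi_k$ is non-trivial.

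The main obstacle I expect is precisely the case $\GA_1=\GA_2$ (same combinatorial type, different realizations or different choices of $\nu$): here $C_{\GA_1}$ and $C_{\GA_2}$ are the \emph{same} matrix, so the homogeneous spectral contribution cancels and one is left with having to exclude a linear resonance between $\sum_k(\nu_1-\nu_2)_k\xi_k$ and the constant term. Managing this requires the fine interplay, worked out in \cite{PP1}, between the affine independence of the vertices, the explicit form of the map $L$ in \eqref{defL}, and the Diophantine genericity of $S$; all the other cases, where $\GA_1\ne\GA_2$, will follow by a direct, if tedious, inspection of the finite list.
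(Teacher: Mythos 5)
You should first note that the paper does \emph{not} prove Theorem~\ref{te12p} at all: the bracketed attribution in the statement marks it as an import of Proposition~1 of \cite{PP1}, and the text immediately moves on to use it. There is therefore no internal proof to compare against; what you have produced is a plausible high-level reconstruction of what the argument in \cite{PP1} might look like, and you correctly flag the hardest case. Still, as a putative proof the sketch has concrete gaps.

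For part (i), the ``dominant diagonal'' specialization is not as clean as you suggest. The matrix $C_\GA$ is homogeneous of degree one in $\xi$, so you cannot shrink the off-diagonal terms $2\sigma(k)\sqrt{\xi_i\xi_j}$ relative to the diagonal $-\sigma(k)(\xi,L(k))$ by overall scaling; you have to degenerate to a boundary regime such as $\xi_1=1,\ \xi_{i>1}=\epsilon\to 0$, and in that limit the diagonal becomes $-\sigma(k)L_1(k)$, which need not be injective in $k$ (the root contributes $0$ since $L(\er)=0$, and nothing in the combinatorics prevents two non-root vertices from sharing the value $\sigma(k)L_1(k)$). When the limiting diagonal entries coincide, whether the $\sqrt\epsilon$-size off-diagonal perturbation splits them requires a genuine case analysis of the marked graph, not just a dominance argument. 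Moreover, the statement in the paper is actually used in a stronger form — Proposition~\ref{nodeno} invokes \emph{irreducibility} of the characteristic polynomials of the $C_\GA$ — and irreducibility is stronger than simplicity of the spectrum; your sketch never addresses it.

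For part (ii), your homogeneity decomposition and the observation that distinct blocks force $\nu_1\neq\nu_2$ are both correct and useful. But the case you identify as the hardest, $\GA_1=\GA_2$, is in fact not handled even in outline when $\mu^{(1)}$ and $\mu^{(2)}$ are \emph{different} eigenvalues of the same matrix $C_{\GA}$: you must exclude a relation $\mu^{(1)}-\mu^{(2)}=\sum_k(\nu_1-\nu_2)_k\,\xi_k$ between a homogeneous degree-one algebraic function (a difference of two roots of $\chi_\GA$) and a homogeneous degree-one rational linear form. Ruling this out for generic $S$ is exactly where the irreducibility/separation theorem for the $\chi_\GA$ is needed (cf.\ the way Proposition~\ref{nodeno} cites it), and that step is neither stated nor reduced in your sketch. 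Finally, ``direct inspection of the finite list'' only controls the finite family of combinatorial types $\GA$; the resultants run over countably many geometric pairs $(A_1,\nu_1),(A_2,\nu_2)$, and the finiteness that saves the day comes from the fact that the resultant depends only on $\GA_1,\GA_2$ and the integer data $\nu_1-\nu_2,\,|\er(A_1)|^2-|\er(A_2)|^2$ — a reduction you implicitly use but never state.
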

The algebraic  hypersurface union of all discriminant varieties for all the combinatorial matrices $C_\GA$ will  be denoted by    $\mathfrak A$ and called {\em discriminant}. It is given by a homogeneous polynomial equation, thus  $(\R_+)^n\setminus \mathfrak A$ is a union of finitely many connected open cones $(\R_+)^n_1,\dots, (\R_+)^n_M $  where  the number of  real, resp. complex eigenvalues of any given combinatorial matrix $C_\GA$ is constant.  On each of these regions $(\R_+)^n_\alpha$  we can thus describe a normal form.

Since our normal form, thought of as operator has possibly also complex eigenvalues let us recall  the basic normal form of the simplest Hamiltonians.  

Consider  $H_\vartheta:=a (|z_1|^2-|z_2|^2) + b(z_1 z_2 +\bar z_1\bar z_2)$, setting $\vartheta=a+\ii b$. On the space with basis $z_1,\bar z_2,\bar z_1,   z_2$ (symplectic form $J$)  the operator $-\ii ad(H)$ has matrix.
$$ M_\vartheta=\begin{vmatrix}a& -b&0&0          \\
b& a&0&0\\
 0&0&-a&b\\
0&0&-b&-a
\end{vmatrix},\qquad J=\begin{vmatrix}0&0 &-\ii&0         \\
 0&0&0& \ii\\
\ii&0&0&0 \\
0&-\ii&0&0 
\end{vmatrix}$$ with eigenvalues $\pm\vartheta,\pm\bar\vartheta,\quad\vartheta=a+\ii b$. One easily sees that a $4\times 4$ real symplectic matrix commuting with  this matrix, when the 4 eigenvalues  $\pm\vartheta,\pm\bar\vartheta $ are distinct,  has the same block form $M_\beta$ for some complex number $\beta=c+\ii d$ and so it is represented by the Hamiltonian $H_\beta =c (|z_1|^2-|z_2|^2) + d(z_1 z_2 +\bar z_1\bar z_2)$.\smallskip

Now we need to decompose the various combinatorial blocks that we previously described. We have already defined the discriminant  hypersurface $\mathfrak A$. It is now convenient to choose the compact domain $\mathfrak K$ of Formula \eqref{Aro} to be a union  of compact domains \begin{equation}\label{ikappaa}
\mathfrak K=\cup_\alpha\mathfrak K_\alpha
\end{equation} each contained in the corresponding open connected component $(\R_+)^n_\alpha$  of $(\R_+)^n\setminus \mathfrak A$.
For each combinatorial matrix $C_\GA$  the standard form  $\sigma_\GA$ on $(\R_+)^n_\alpha$ has constant signature   and we have:

\begin{proposition}[cf. Williamson \cite{Wil}]
On each region $(\R_+)^n_\alpha$ the eigenvalues of  the $C_\GA$ are analytic functions 
of $\xi$, say $\val_1,\dots,\val_{dim(\mathcal A)}$.  

 For all $\xi \in (\R_+)^n_\alpha$ there exists a linear   symplectic change of coordinates  $ u' \to U_\GA(\xi) u'= u''$ such that:

1. $U_\GA(\xi)$ is orthogonal with respect to $\sigma_\GA$

2. $U_\GA(\xi)$ is analytic in $\xi$. 

3. $U_\GA(\xi)$  conjugates  $C_\GA$  into the following normal form:

For  each real eigenvalue $\val$ , $C_\GA$ acts as $\val I$  on the (one dimensional) eigenspace of $\val$ in $u_A$.  

For each  pair of conjugate  complex eigenvalues $\val_\pm= a\pm \ii b$, we  have a real two dimensional space such that the two complex eigenvectors lie in its complexification. Then we have a  basis of this subspace such that $C_\GA$ restricted to this subspace is a $2\times 2$   matrix $$ \begin{pmatrix}  a & -b \\ b& a\end{pmatrix}. $$
The matrix $\sigma_\GA$ of the standard form  on this basis is diag$(1,-1)$ so one of the variable is a $z$ and the other a $\bar z$.

\end{proposition}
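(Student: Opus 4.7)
The plan is to combine analytic perturbation theory for simple eigenvalues with the Williamson normal form for a real operator self--adjoint with respect to the indefinite symmetric bilinear form $\sigma_\GA$. By Theorem \ref{te12p}, on each connected cone $(\R_+)^n_\alpha$ the matrix $C_\GA(\xi)$ has simple spectrum, and the signature of $\sigma_\GA$, the number of real eigenvalues, and the number of complex--conjugate pairs all remain constant. Standard perturbation theory therefore yields analyticity of the eigenvalues $\val_k(\xi)$ together with (locally in $\xi$) analytic complex eigenvectors defined up to scalar.

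For a simple real $\val(\xi)$ I would take an analytic generator $e(\xi)$ of the $1$--dimensional eigenspace. The identity $\val\,\sigma_\GA(e,f)=\sigma_\GA(C_\GA e,f)=\sigma_\GA(e,C_\GA f)=\mu\,\sigma_\GA(e,f)$ implies that eigenvectors for distinct eigenvalues are $\sigma_\GA$--orthogonal, so $\sigma_\GA$ restricts non--degenerately to each eigenspace; hence $\sigma_\GA(e,e)$ is a nowhere--vanishing analytic function of constant sign, and one normalizes $\sigma_\GA(e,e)=\pm 1$.

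For a pair $\val_\pm=a\pm\ii b$ with $b\neq 0$, I pick an analytic eigenvector $v_+(\xi)=u(\xi)+\ii w(\xi)$ with $u,w$ real. Then $C_\GA u=au-bw$, $C_\GA w=bu+aw$, so $V:=\mathrm{span}_\R\{u,w\}$ is a real $2$--dimensional $C_\GA$--invariant subspace, and in the basis $(u,-w)$ the restriction $C_\GA|_V$ has matrix $\begin{pmatrix}a & -b\\ b & a\end{pmatrix}$. The key point is that $\sigma_\GA|_V$ has signature $(1,1)$: it is non--degenerate by the orthogonality argument, and cannot be (semi--)definite, for otherwise $C_\GA|_V$ would be self--adjoint with respect to a Euclidean inner product and would have real eigenvalues, contradicting $b\neq 0$. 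A short computation, using $v_-=\bar v_+$ and $\sigma_\GA(v_+,v_-)=0$, expresses the Gram matrix of $\sigma_\GA|_V$ on the basis $(u,w)$ in terms of the single complex number $\alpha(\xi):=\sigma_\GA(v_+,v_+)$, namely $\sigma_\GA(u,u)=\tfrac12\mathrm{Re}\,\alpha$, $\sigma_\GA(w,w)=-\tfrac12\mathrm{Re}\,\alpha$, $\sigma_\GA(u,w)=\tfrac12\mathrm{Im}\,\alpha$. Replacing $v_+$ by $cv_+$ sends $\alpha\mapsto c^2\alpha$, so choosing $c$ with $c^2\alpha=2$ puts $\sigma_\GA|_V$ in the form $\mathrm{diag}(1,-1)$ in the basis $(u,-w)$, as required.

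\textbf{Main obstacle.} The delicate point is to perform this choice analytically in $\xi$ on the whole cone $(\R_+)^n_\alpha$: the function $\alpha(\xi)$ is complex--valued real--analytic and nowhere zero by non--degeneracy of $\sigma_\GA|_V$, so only the two--valuedness of $\sqrt{2/\alpha}$ is at issue. One fixes a continuous branch on the connected region $(\R_+)^n_\alpha$, and the residual $c\mapsto -c$ ambiguity is the only non--trivial element of $O(1,1)$ commuting with the rotation block when $b\neq 0$. Assembling the vectors so obtained into the columns of $U_\GA(\xi)$ yields the desired analytic $\sigma_\GA$--orthogonal map; extending it by the inverse transpose to the conjugate Lagrangian block $\bar u'$ produces the full symplectic change of variables.
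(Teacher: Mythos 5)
Your argument is essentially correct, and it supplies a legitimate elementary proof of what the paper treats purely as a citation to Williamson's normal form: the paper states the Proposition with the tag ``[cf.~Williamson]'' and gives no argument, relying on the fact that on each $(\R_+)^n_\alpha$ the spectrum of $C_\GA$ stays regular semisimple and the signature type is locally constant. Your route — self-adjointness of $C_\GA$ with respect to $\sigma_\GA$ (Lemma~\ref{autoag}) $\Rightarrow$ $\sigma_\GA$-orthogonality of eigenspaces for distinct eigenvalues $\Rightarrow$ non-degeneracy of $\sigma_\GA$ on each real eigenline and each real two-plane coming from a complex pair, then explicit normalization — is exactly the right hands-on way to reconstruct the statement without invoking the full Williamson classification. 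The Gram-matrix computation $\sigma_\GA(u,u)=\tfrac12\operatorname{Re}\alpha$, $\sigma_\GA(w,w)=-\tfrac12\operatorname{Re}\alpha$, $\sigma_\GA(u,w)=\tfrac12\operatorname{Im}\alpha$ (using $\sigma_\GA(v_+,v_-)=0$, which is where simplicity of the pair $\val_\pm\neq\val_\mp$ enters) is correct, as is the observation that the $\sigma_\GA$-orthogonal commutant of the rotation block when $b\neq 0$ reduces to $\{\pm I\}$, so the normalization $c^2\alpha=2$ is unique up to sign.

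Two small points worth sharpening. First, the global analytic selection of the branch $\sqrt{2/\alpha}$ requires a topological hypothesis (vanishing of the relevant $\Z/2$ obstruction) on $(\R_+)^n_\alpha$; your phrase ``one fixes a continuous branch on the connected region'' elides this. In the paper this is harmless because $U_\GA$ is only ever needed on the compact domains $\rho\,\mathfrak K_\alpha$ of Remark~\ref{normU}, which one may take simply connected, and because the eigenvectors can be chosen homogeneous of degree $0$ so that $\alpha$ descends to the cross-section of the cone. Second, your opening sentence attributes constancy of ``the signature of $\sigma_\GA$'' to Theorem~\ref{te12p}; the matrix $\sigma_\GA=\mathrm{diag}(\sigma(k))$ is in fact $\xi$-independent, and what is locally constant on $(\R_+)^n_\alpha$ is the partition of the spectrum into real eigenvalues and complex-conjugate pairs together with the induced signatures on the corresponding $\sigma_\GA$-orthogonal blocks; this follows from simplicity of the spectrum (real/complex transitions would force a collision) rather than from a signature statement about $\sigma_\GA$ itself. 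Neither point affects the validity of the argument.
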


  \begin{remark}\label{normU}
We note that the matrices $C_\GA$ have entries wich are homogeneous of degree one in $\xi$. Therefore given any compact domain $O$  which does not intersect $\mathfrak A$ the entries of the matrices $U_\GA(\xi), U_\GA(\xi)^{-1}, \partial_\xi U_\GA(\xi)$ can be uniformly bounded in $\rho \, O$ independently of $\rho$, in particular this applies to each $\mathfrak K_\alpha$.

Since the $U_{\GA}$ is determined by $\GA$ we denote its matrix elements by $[U_{\GA}]_{a,b}$ where $a,b$ run over the vertices of $\GA$.  Namely  on a given geometric block $A$ isomorphic to $\GA$, $[U_{\GA}]_{a,b}$ is the entry relative to the elements $z^\s_k$ associated to $a,b$ respectively. 
  \end{remark}   
 
%   In order to ensure analyticity we remove the algebraic  hypersurface  $\mathfrak A$ so that the multiplicity of each eigenvalue is constant. In each connected component $(\R_+)^n_\alpha$ of  $(\R_+)^n\setminus \mathfrak A$ and for each block containing red edges, we have that the number of real and complex eigenvalues is constant. \marginpar{questo e' ripetitivo}
 
Given a geometric block $A$ let $\GA$ be the corresponding combinatorial block.  In each connected component, the non-unique choice of the matrix $U$, putting in canonical form the matrix $C_{\mathcal A}$ determines a symplectic change of variables for all blocks $A$ with combinatorial block $\mathcal A$, we do this for all the finitely many $\mathcal A$. We may then index the new variables still by $S^c$ and decompose  $S^c$ in two sets: an infinite set $S^c_r$, which  indexes the real eigenvalues,  namely $u_k$ is an eigenvector of $-\frac\ii 2\mathcal Q$  of real eigenvalue $\val_k(\xi)$. Then a finite set $S^c_i$  which indexes the complex eigenvalues (note that by the special block structure of the Hamiltonian there are no purely imaginary eigenvalues). 
 By the reality condition each two by two block corresponding to a pair of conjugate eigenvalues is indexed by a pair $(h,k)$ of elements of $S^c_i$. We write the conjugate eigenvalues as $\val_{h,k},\bar \val_{h,k}$ with $\val_{h,k}= a_{h,k}+\ii b_{h,k}$.    Note that  for $(h,k)\in S^c_i$ we have $\sigma(h)=1$ and $\sigma(k)=-1$.
 By abuse of notation we still call $x,y,z_k,\bar z_k$ the new variables.  
We have finally the final {\em diagonal form of the Hamiltonian}
\begin{theorem}\label{xixi}
 i)\quad For each connected component  of $(\R_+)^n_\alpha$  on  $\ro \mathfrak K_\alpha$  we have a symplectic change of variables $\Xi$ (depending analytically on $\xi$) which puts the Hamiltonian $\mathcal N$ in the canonical {\em diagonal} form $\mathcal N= \mathbb K+2\mathbb K^1$ where
\begin{equation}
  {\mathbb K^1}= -\sum_{i=1}^n\xi_iy_i+\sum_{k\in S^c_r} \sigma(k) \val_k |z_k|^2 + \sum_{(h,k)\in S^c_i} a_{h,k} (|z_h|^2-|z_k|^2) + b_{h,k}(z_h z_k +\bar z_h\bar z_k)
\end{equation} The elements $\val_k , a_{h,k}, b_{h,k}$ are analytic functions of  $\xi$ and homogeneous of degree one..

ii) [Elliptic open set]\quad  There exists a connected component (hence a non empty open cone) of $ \R_+ ^n$ such that in this region   all the eigenvalues are real i.e.  $S^c_i$ is empty (\cite{PP1}, \ proposition 1.13).
\end{theorem}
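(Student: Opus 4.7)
The proof is an assembly of three ingredients already at our disposal: Theorem~\ref{teo1}, which delivers the block normal form $\mathcal N\circ\Psi$; Theorem~\ref{te12p}, which guarantees regular semisimplicity of each combinatorial block $C_\GA$ off the discriminant $\mathfrak A$ and separation of spectra across distinct blocks; and the Williamson-type proposition stated just above, which diagonalizes these blocks by means of an analytic, $\sigma_\GA$-orthogonal symplectic change $U_\GA(\xi)$.

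I would proceed in three steps. First, starting from $\mathcal N\circ\Psi=(\omega(\xi),y')+\sum_k\tilde\Omega_k|z'_k|^2+\tilde{\mathcal Q}(w')$, use $\omega_i=|\mathtt j_i|^2-2\xi_i$ together with the identity $\tilde\Omega_k=\sigma(k)|\er(k)|^2$ (immediate from the second equation in \eqref{defL} and item i) of Theorem~\ref{teo1}) to recognize the sum of the first two terms as $\mathbb K-2\sum_i\xi_iy_i'$, with $\mathbb K$ the quadratic energy written in the new variables. Second, on each connected component $\mathfrak K_\alpha$ of $\mathfrak K\setminus\mathfrak A$ and for each combinatorial type $\GA$, apply $U_\GA(\xi)$ to every geometric block $A$ realizing $\GA$; since the decomposition $\tilde{\mathcal Q}=\sum_A\tilde{\mathcal Q}_A$ is blockwise and the $y$-variables remain untouched, these changes glue into one global analytic symplectic transformation $\Xi$. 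Third, translate the matrix normal forms back into Hamiltonians following the model $H_\vartheta$ stated immediately before the Williamson proposition: each real eigenvalue $\vartheta_k$ contributes $\sigma(k)\vartheta_k|z_k|^2$ (the sign being the type of the corresponding basis vector for the standard form) and each complex conjugate pair $a_{h,k}\pm\ii b_{h,k}$ contributes $a_{h,k}(|z_h|^2-|z_k|^2)+b_{h,k}(z_h z_k+\bar z_h\bar z_k)$; the overall factor $2$ in $\mathcal N=\mathbb K+2\mathbb K^1$ accounts for the pair of conjugate Lagrangian copies $C_A\oplus -C_A$ representing $\tilde{\mathcal Q}_A$ on its symplectic block.

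Invariance $\Xi^*\mathbb K=\mathbb K$ follows because $U_\GA$ is orthogonal for the standard form by construction, while $\mathbb K$ is a scalar multiple of that form on each block by the conservation-laws lemma; analyticity and degree-$1$ homogeneity in $\xi$ of the coefficients $\vartheta_k,a_{h,k},b_{h,k}$ are inherited from the entries of $C_\GA$ (Remark~\ref{normU}). This establishes i). Part ii) is then a direct citation of Proposition~1.13 of \cite{PP1}, which exhibits a non-empty open cone of $\R^n_+$ on which every $C_\GA$ has purely real spectrum. The only genuinely delicate point in this assembly, namely ensuring that the eigenvalues of each $C_\GA$ are pairwise distinct and that spectra of distinct blocks are disjoint throughout the domain (so that the Williamson construction applies analytically), has been settled in \cite{PP1}; this is precisely why one is forced to work inside a single connected component of $(\R_+)^n\setminus\mathfrak A$ rather than over the whole positive orthant.
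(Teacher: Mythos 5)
Your proposal is correct and reconstructs precisely the argument the paper leaves implicit: rewriting $(\omega,y')+\sum_k\tilde\Omega_k|z'_k|^2$ as $\mathbb K-2\sum_i\xi_iy'_i$ via Theorem~\ref{teo1}~i) and \eqref{defL}, gluing the blockwise Williamson transformations $U_\GA(\xi)$ into the global analytic symplectic map $\Xi$, translating the real and complex-conjugate normal forms of $C_\GA$ into the two kinds of Hamiltonian summands, invoking $\sigma_\GA$-orthogonality of $U_\GA$ together with the conservation-laws lemma for $\Xi^*\mathbb K=\mathbb K$, Remark~\ref{normU} for analyticity and homogeneity, and Proposition~1.13 of \cite{PP1} for ii). One small imprecision: the factor $2$ in $\mathcal N=\mathbb K+2\mathbb K^1$ is a bookkeeping consequence of the normalization $C_A=-\tfrac{\ii}{2}\,\mathrm{ad}(\tilde{\mathcal Q}_A)\big|_{\text{first Lagrangian}}$ (so the diagonalized $\tilde{\mathcal Q}_A$ is $2\sum\sigma(k)\val_k|z_k|^2+\cdots$), rather than of the doubling $C_A\oplus(-C_A)$ per se; this does not affect the validity of the argument.
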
\begin{remark}
In order to simplify the notations we shall write the final variables as $z_k$ since no confusion should arise with the initial variables.
\end{remark}
We claim that for $\mathbb L, \mathbb M,\mathbb K$    we have still
   \begin{equation}\label{posta}
 \mathbb L= \sum_i y_i + \sum_k \s(k)  |z_k|^2 \,,\quad  \mathbb M = \sum_i \mathtt j_i y_i + \sum_k \s(k) \er(k) |z_k|^2\,,
   \end{equation}  $$\quad \mathbb K = \sum_i |\mathtt j_i|^2 y_i + \sum_k \s(k) |\er(k)|^2 |z_k|^2 .$$  
   In fact it is enough to compare the contribution of each block of given root $\er$, to these three quantities, it is the sum $\sum_{k \,|\, \mathtt r(k)=r} \s(k)  |z_k|^2$ times $1, \er(k),|\er(k)|^2$ respectively. So it is enough to  see that the quadratic expression $\sum_{k \,|\, \mathtt r(k)=r} \s(k)  |z_k|^2$ remains invariant.  This expression is in fact the standard form   \eqref{stanfor1}  and in our  case we can diagonalize the matrix block by an orthogonal transformation with respect to this form, the claim follows.
\begin{corollary}\label{partoKAM} i)  Let $(\R_+)^n_e$ be the elliptic open region, where all eigenvalues of all combinatorial matrices are real and $\mathfrak K_e$  the corresponding domain. 

ii)  For all $\xi\in \ro\mathfrak K_e$ the NLS normal form is
$$(\omega,y) +\sum_k \Omega_k |z_k|^2 \,, $$
where $$\omega_i= |\mathtt j_i|^2 -2 \xi_i\,,\quad \Omega_k = \sigma(k)( |\er(k)|^2 +2\val_k) .  $$ 
The functions  $\val_k$ are real valued and analytic. \end{corollary}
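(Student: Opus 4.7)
The statement is essentially a direct specialization of Theorem \ref{xixi} to the elliptic region, combined with the conservation law formula \eqref{posta} for $\mathbb K$, so the plan is to assemble these two pieces.

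\textbf{Step 1.} Define $(\R_+)^n_e$ as one of the connected components $(\R_+)^n_\alpha$ of $(\R_+)^n\setminus \mathfrak A$ on which $S^c_i=\emptyset$; such a component exists by Theorem \ref{xixi}(ii). Let $\mathfrak K_e\subset (\R_+)^n_e$ be the corresponding compact domain appearing in the decomposition \eqref{ikappaa}. Apply Theorem \ref{xixi}(i) to this component to obtain the symplectic change of variables $\Xi$ (analytic in $\xi\in \ro\mathfrak K_e$) bringing $\mathcal N$ into the diagonal form $\mathcal N=\mathbb K+2\mathbb K^1$. Because $S^c_i=\emptyset$ on $\mathfrak K_e$, the complex block $\sum_{(h,k)\in S^c_i}\cdots$ in $\mathbb K^1$ is empty, so
\[
\mathbb K^1 = -\sum_{i=1}^n \xi_i y_i + \sum_{k\in S^c}\sigma(k)\val_k|z_k|^2,
\]
and all $\val_k(\xi)$ are real-analytic functions of $\xi$ on $\mathfrak K_e$.

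\textbf{Step 2.} Substitute the expression \eqref{posta} for the quadratic energy $\mathbb K$, namely $\mathbb K=\sum_i |\mathtt j_i|^2 y_i + \sum_k \sigma(k)|\er(k)|^2|z_k|^2$, into $\mathcal N=\mathbb K+2\mathbb K^1$. Collecting the coefficients of $y_i$ and $|z_k|^2$ gives
\[
\mathcal N=\sum_{i=1}^n \bigl(|\mathtt j_i|^2-2\xi_i\bigr)y_i+\sum_{k\in S^c}\sigma(k)\bigl(|\er(k)|^2+2\val_k\bigr)|z_k|^2,
\]
which matches the required identification with $\omega_i=|\mathtt j_i|^2-2\xi_i$ and $\Omega_k=\sigma(k)(|\er(k)|^2+2\val_k)$. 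The only subtlety is that formula \eqref{posta} was stated in the primed variables just after Williamson's reduction; but the subsequent symplectic change of basis $\Xi$ within each block is orthogonal with respect to the standard form (Remark \ref{normU} and the construction preceding Theorem \ref{xixi}), so the quadratic invariant $\sum_{k:\er(k)=r}\sigma(k)|z_k|^2$ is preserved and \eqref{posta} continues to hold in the final variables.

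\textbf{Step 3.} Finally, analyticity and reality of $\val_k$ on $\mathfrak K_e$ come for free: analyticity is part of Theorem \ref{xixi}(i), and reality is exactly the defining property of the elliptic region $(\R_+)^n_e$. There is no real obstacle here; the whole corollary is a bookkeeping consequence of Theorem \ref{xixi} and the conservation law \eqref{posta}, the nontrivial content having already been absorbed into Williamson's reduction and the genericity results of \cite{PP1}.
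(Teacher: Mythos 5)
Your proof is correct and takes essentially the same route as the paper: Theorem \ref{xixi}(ii) provides the elliptic component, Theorem \ref{xixi}(i) gives $\mathcal N=\mathbb K+2\mathbb K^1$ with $S^c_i=\emptyset$, and substituting the expression for $\mathbb K$ from \eqref{posta} (which the paper itself verifies persists under $\Xi$ because $\Xi$ is orthogonal with respect to the standard form) yields the stated $\omega_i$ and $\Omega_k$ by direct collection of terms. Nothing is missing.
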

\subsubsection{Complex coordinates\label{coco}}Although it is not strictly necessary it is convenient to diagonalize also the blocks with complex eigenvalues, although this implies the introduction of non real symplectic transformations.

We write everything in possibly complex coordinates as $\mathcal N=(\omega,y)+ \sum_{k\in S^c } \Omega_k |\zeta_k|^2 $, where $\zeta_k=z_k$ if $k\in S^c_r$. In order to do this we have to define $\zeta_k$ for $k\in S^c_i$. Consider one of the terms   , say ${\mathbb K_1}^{(h,k)}:=a_{h,k} (|z_h|^2-|z_k|^2) + b_{h,k}(z_h z_k +\bar z_h\bar z_k)$ and set $\vartheta_{k}:= a_{h,k}  + \ii  b_{h,k}$, $\vartheta_h= a_{h,k}  - \ii  b_{h,k}$ . Think of  $z\mapsto \bar z$ as a {\em $\C $ linear map on polynomials!} so that
$\overline{z_h+\ii \bar z_k}= \bar z_h+\ii  z_k$ and thus setting $\zeta_h:= \frac{z_h+\ii \bar z_k}{\sqrt{2}},\ \zeta_k:= \frac{ \bar z_h-\ii   z_k}{\sqrt{2}}$ we have that $\zeta_h  ,\bar  \zeta_h  $ (resp. $\zeta_k,\ \bar \zeta_k$)  are eigenvectors with opposite eigenvalues for $ad(\mathbb K_1)$:
$$\{\mathbb K_1^{(h,k)},\zeta_h^\s\}= \s\ii\vartheta_h\zeta_h^\s,\quad   \qquad      \{\mathbb K_1^{(h,k)},\zeta_k^\s\}=-\s\ii\vartheta_k \zeta_k^\s$$
$$\{ \zeta_h^{\sigma_1},\zeta_k^{\sigma_2}\}=0,\quad  \{\bar\zeta_h,\zeta_h\}=\{\bar\zeta_k,\zeta_k\}= \ii,\  \bar\alpha |\zeta_h|^2+ \alpha |\zeta_k|^2=\mathbb K_1^{(h,k)}. $$
 Moreover in these coordinates the three quantities   $\mathbb L, \mathbb M,\mathbb K$    are still in the form of Formula \eqref{posta}.
 In fact the same argument that we gave before applies  since the complex transformation that we have used is in the orthogonal group of the form.
 
We can finally claim that we can use the notation $z_k$ also for complex coordinates and write $\mathcal N=(\omega,y)+ \sum_{k\in S^c } \Omega_k |z_k|^2$
we assume that the $\Omega_k$ are all distinct and the complex ones come together with their conjugates according to the previous rules. 
\smallskip

Summarizing:

 A monomial $\mathfrak m= e^{\ii (k,x)}y^lz^\alpha\bar z^\beta$ has momentum $\ii\pi_\er(\mathfrak m)$ with\begin{equation}
\label{mome}\pi_\er(\mathfrak m):=\pi_\er(k,\alpha,\beta)=\pi(k) +\sum_{j\in S^c}(\alpha_j-\beta_j)\s(j)\er(j)=\pi (k,\alpha,\beta) +\sum_{j\in S^c}(\alpha_j-\beta_j)(\s(j)\er(j)-j) 
\end{equation}and it satisfies momentum conservation if  
$
\pi_\er(k,\alpha,\beta)=0
$.
Note that given functions $f,g$ which are eingenvectors of momentum we have $\pi_\er(\{f,g\})=\pi_\er(f)+\pi_\er(g)$.
  Given $k\in S^c $ (corresponding to the eigenvalue $\val_k$ of $C_\GA$)   the
   monomial $e^{\ii \sigma\sigma(k)\nu\cdot x}z_k^{\sigma}$ is an eigenvector for all our operators with eigenvalues:
 $$ \{ \mathbb L, e^{\ii \sigma\sigma(k)\nu\cdot x}z_k^{\sigma}\}= \ii \sigma\sigma(k)(\sum_i \nu_i +1)\,e^{\ii \sigma\sigma(k)\nu\cdot x}z_k^{\sigma},$$$$  \{ \mathbb M, e^{\ii \sigma\sigma(k)\nu\cdot x}z_k^{\sigma}\}= \ii \sigma\sigma(k)(\sum_i \nu_i \mathtt j_i+ \er(k) )e^{\ii \sigma\sigma(k)\nu\cdot x}z_k^{\sigma}$$
  $$\{ \mathbb K, e^{\ii \sigma\sigma(k)\nu\cdot x}z_k^{\sigma}\}= \ii \sigma\sigma(k)( \sum_i \nu_i |\mathtt j_i|^2 + |\er(k)|^2)\,e^{\ii \sigma\sigma(k)\nu\cdot x}z_k^{\sigma},$$ $$\{ \mathbb K^1, e^{\ii \sigma\sigma(k)\nu\cdot x}z_k^{\sigma}\}= \ii \sigma\sigma(k)(- \sum_i \nu_i \xi_i +\val_k)\,e^{\ii \sigma\sigma(k)\nu\cdot x}z_k^{\sigma},$$

\section{The kernel of $ad(\mathcal N)$} 

\subsubsection{ Non-degenerate quadratic Hamiltonians}
Consider a quadratic Hamiltonian
\begin{equation}
\mathcal Q=(\omega,y)+ \sum_{k\in S^c_r} a_k |z_k|^2 + \sum_{(h,k)\in S^c_i} a_{h,k} (|z_h|^2-|z_k|^2) + b_{h,k}(z_h z_k +\bar z_h\bar z_k)
\end{equation} we want to study  the kernel of $ad(\mathcal Q)$ on the space of Hamiltonians of degree $\leq 2$. We set $\vartheta_{h,k}:=a_{h,k}+\ii b_{h,k}$  so that $\pm\vartheta_{h,k},\pm\bar \vartheta_{h,k}$ are the four eigenvalues of $ad(a_{h,k} (|z_h|^2-|z_k|^2) + b_{h,k}(z_h z_k +\bar z_h\bar z_k))$ acting on the space spanned by $z_h,z_k,\bar z_h,\bar z_k$. It is convenient to write
 $z^1=z,\ z^{-1}=\bar z$ so if we do not want to specify if a variable is $z$ or $\bar z$ we write is as $z^\sigma$ where $\s $ can be $\pm 1$.
 
 Next the operator $\ii \mathcal Q$ acts on the real space spanned by the elements $e^{\ii \s \nu\cdot x}z_k^{\sigma}$ (we need some convergence conditions given by its norm).  If as in our  case  $\mathcal Q$  commutes with the mass and momentum then it acts  also  on the subspace $F^{0,1}$  where \begin{definition}
We denote by  $F^{0,1}$  the space of functions spanned   by the   elements $e^{\ii \s \nu\cdot x}z_k^{\sigma}$ with zero mass and momentum.  I.e. $\sum_i\nu_i=-1,\sum_i\nu_i\mathtt j_i+k=0$.
\end{definition}\begin{definition}
We say that $\mathcal Q$ is {\em non-degenerate}\footnote{in the usual language  we should say {\em regular semisimple}.} if the coordinates $\omega_i $ are  linearly independent over $\Q$ and  its eigenvalues for the action on $F^{0,1}$ are all non--zero and distinct. 
\end{definition}
It is then not difficult to analyze the kernel of  $\ii\, ad(\mathcal Q)$, i.e. the elements which Poisson commute with   $\ii\, ad(\mathcal Q)$, on the space of Hamiltonians of degree $\leq 2$ commuting with mass and momentum, we have:
\begin{proposition}\label{Pocom} If  $\mathcal Q$ is  non-degenerate then a (real)  Hamiltonian of degree $\leq 2$, which commutes with mass,   Poisson commutes with  $\mathcal Q$ if and only if it is of the form:
\begin{equation}\label{comfor}
\mathcal Q'=(\omega',y)+ \sum_{k\in S^c_r} a'_k |z_k|^2 + \sum_{(h,k)\in S^c_i} a'_{h,k} (|z_h|^2-|z_k|^2) + b'_{h,k}(z_h z_k +\bar z_h\bar z_k)
\end{equation} with $a'_k, a'_{h,k} ,b'_{h,k} \in\R$.
\end{proposition}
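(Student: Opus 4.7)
The ``if'' direction is a direct check using the Poisson brackets from \S\ref{coco}. For the converse my plan is to pass to the complex diagonal coordinates of \S\ref{coco}, in which $\mathcal Q=(\omega,y)+\sum_k\Omega_k|\zeta_k|^2$, and use that the monomials $\mathfrak m=e^{i\nu\cdot x}y^i\zeta^\alpha\bar\zeta^\beta$ form a common eigenbasis of $ad(\mathcal Q)$ and $ad(\mathbb L)$ with eigenvalues
\[ i\bigl[-\omega\cdot\nu+\textstyle\sum_k\Omega_k(\alpha_k-\beta_k)\bigr]\quad\text{and}\quad i\bigl[-\textstyle\sum_j\nu_j+\textstyle\sum_k\s(k)(\alpha_k-\beta_k)\bigr] \]
respectively. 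Both operators preserve the total degree $2|i|+|\alpha|+|\beta|\le 2$, so I split $\mathcal H=H^{(0)}+H^{(1)}+H^{(2)}$ and analyse each degree, eventually translating back to the real basis of \S\ref{coco} using the reality of $\mathcal H$.

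The ``diagonal'' monomials ($\alpha=\beta$, i.e.\ constants, $y_j e^{i\nu x}$, and $|\zeta_k|^2 e^{i\nu x}$) have $ad(\mathcal Q)$-eigenvalue $-i\omega\cdot\nu$, and the $\mathbb Q$-linear independence of $\{\omega_i\}$ forces $\nu=0$. Unpacking the $(h,k)\in S^c_i$ diagonal form of \S\ref{coco} back to real variables converts the resulting real combination $\sum c_k|\zeta_k|^2$ into exactly $\sum a'_k|z_k|^2+\sum a'_{h,k}(|z_h|^2-|z_k|^2)+\sum b'_{h,k}(z_h z_k+\bar z_h\bar z_k)$, and the $y_j e^{i\nu x}$ piece produces the $(\omega',y)$ summand, with $\omega',a'_k,a'_{h,k},b'_{h,k}\in\mathbb R$ by reality of $\mathcal H$; this accounts for every term in \eqref{comfor}.

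The content of the statement is therefore the vanishing of every ``off-diagonal'' monomial ($\alpha\ne\beta$). For a degree-$1$ monomial $e^{i\nu x}\zeta_k^\sigma$ the mass condition imposes $\sum_j\nu_j=\sigma\s(k)$ and the kernel condition reads $\omega\cdot\nu=\sigma\Omega_k$. If in addition $\sum_j\nu_j\mathtt j_j+\sigma\s(k)\er(k)=0$ the monomial lies in $F^{0,1}$, where non-degeneracy of $\mathcal Q$ forbids the eigenvalue from vanishing; otherwise I pick any $\nu_\ast$ realising both mass and momentum for the same $(k,\sigma)$ and write $\nu=\nu_\ast+\mu$ with $\mu\in\mathbb Z^n$, $\sum_j\mu_j=0$, $\mu\ne 0$, so that the eigenvalue shifts by $-i\omega\cdot\mu$, nonzero by $\mathbb Q$-independence of $\{\omega_i\}$. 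Hence $H^{(1)}=0$. A degree-$2$ off-diagonal monomial $e^{i\nu x}\zeta_h^{\sigma_1}\zeta_k^{\sigma_2}$ is handled by the analogous reduction: the eigenvalue reads $-\omega\cdot\nu+\sigma_1\Omega_h+\sigma_2\Omega_k$, and splitting $\nu=\nu_1+\nu_2$ so each piece (viewed as the frequency of a degree-$1$ factor) is a would-be $F^{0,1}$-element reduces its vanishing to the vanishing or coincidence of $F^{0,1}$-eigenvalues, excluded by the distinctness and non-vanishing clauses of non-degeneracy together with the lattice-shift argument above. The principal obstacle is precisely this propagation from $F^{0,1}$ --- where the non-degeneracy hypothesis is formulated --- to the larger space of mass-commuting degree $\le 2$ Hamiltonians, and the bridge in every case is the $\mathbb Q$-linear independence of $\{\omega_i\}$, which rigidly determines $\omega\cdot\nu$ from the momentum class of $\nu$.
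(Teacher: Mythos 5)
Your route is essentially the paper's: pass to the complex diagonal coordinates of \S\ref{coco}, split degree by degree, use $\Q$-linear independence of $\omega$ for the degree-$0$ terms and the non-degeneracy on $F^{0,1}$ for degrees $1$ and $2$. The point at which you go beyond the paper is also the point at which the proof breaks. You notice that the Proposition, as literally worded, asks only for mass-commutation and try to close the momentum-violating case by a lattice shift: writing $\nu=\nu_\ast+\mu$ with $\nu_\ast$ momentum-conserving and $\mu\ne 0$ mass-neutral, the $ad(\mathcal Q)$-eigenvalue at $\nu$ equals (eigenvalue at $\nu_\ast$) plus $-\ii\,\omega\cdot\mu$. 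You establish that each summand is nonzero, but the sum of two nonzero numbers can of course vanish, and the non-degeneracy hypothesis constrains only the $F^{0,1}$ eigenvalues, so nothing prevents the eigenvalue at $\nu_\ast$ from landing on the countable set $\{\omega\cdot\mu : \eta(\mu)=0,\ \mu\ne 0\}$. The same defect then propagates into your degree-$2$ reduction, which leans on ``the lattice-shift argument above.''

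The gap is in fact spurious, because the intended hypothesis is stronger than what you read off the statement: the sentence immediately preceding the Proposition fixes the ambient space as ``Hamiltonians of degree $\leq 2$ commuting with mass \emph{and momentum},'' and every Hamiltonian produced by the KAM algorithm (assumption $(A3)$) does preserve momentum. Once momentum conservation is imposed on every monomial, all degree-$1$ monomials lie in $F^{0,1}$ and the non-degeneracy applies with no shift at all; in degree $2$ one chooses the split $\nu=\nu_1+\nu_2$ so that each piece carries zero mass \emph{and} zero momentum (take $\nu_1$ realising the $F^{0,1}$ constraint for $\zeta_h^{\sigma_1}$ and set $\nu_2:=\nu-\nu_1$), whence both pieces are genuine $F^{0,1}$ eigenvalues, and distinctness of those eigenvalues forces the two factors to be mutually conjugate, giving $M=|\zeta_h|^2$. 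That is the paper's proof. You should either impose momentum conservation from the outset, as the preceding sentence and the diagonalised form of $\mathbb M$ in \S\ref{coco} make natural, or flag that the Proposition as literally stated needs this extra hypothesis; in either case drop the lattice-shift step, since as written it proves only distinctness, not non-vanishing.
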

\begin{proof}  It is immediate that a Hamiltonian of the form of Formula \eqref{comfor}  commutes with $\mathcal Q$  we need to show the converse.\smallskip

 {\bf Degree zero in $w$:} Monomials of degree $\leq 2$ and of degree 0 in $w$ are of the form $y^\ell e^{\ii \nu\cdot x},\ \ell=0,1$ and for the  eigenvalues  we have:
\begin{equation}\label{piffero}
 \{ \mathcal Q, y^\ell e^{\ii \nu\cdot x}\}= - \ii (\omega,\nu)\,y^\ell e^{\ii \nu\cdot x}
\end{equation}
By hypothesis of linear independence over  $ \Q $ these eigenvalues are 0 if and only if $\nu=0$,   hence the Kernel of $ad(\mathcal N )$ is $x$ independent and hence of the form $ c  +(\omega',   y)$.
 
 \smallskip
 
  {\bf Degree one in $w$:} By definition the eigenvalues of the adjoint action of $\mathcal Q$ on $F^{0,1}$ are non-zero. 

 \smallskip
 
  {\bf Degree two in $w$:}  We write everything in possibly complex coordinates as $\mathcal Q=(\omega,y)+ \sum_{k\in S^c } a_k |\zeta_k|^2 $ as in the previous paragraph.  We assume that the $a_k$ are all distinct and the complex ones come together with their conjugates according to the previous rules. Then we can write any monomial of degree 2
$$M=e^{\ii \nu\cdot x} \zeta_h^{\sigma_1} \zeta_k^{\sigma_2},\quad \{\mathcal Q,\, M \}=\ii [(\omega,\nu)+\sigma_1a_{h}+  \sigma_2a_{k } ]  M$$
conservation of mass $\eta(\nu)+\sigma_1+\sigma_2=0$ implies   that we can write  $\nu=\nu_1+\nu_2$ with $\eta(\nu_1)+\sigma_1= \eta(\nu_2)+ \sigma_2=0$.  Then $(\omega,\nu_1)+\sigma_1a_{h} $ and $(\omega,\nu_2)+   \sigma_2a_{k }$ are two eigenvalues of elements in $F^{0,1}$  since the eigenvalues are all distinct we must have that  the corresponding eigenvectors are one the conjugate of the other and then $M$ is of the form $|\zeta_k|^2$ for some $k$.  Finally if we assume that the Hamiltonian is real  the two terms associated to a pair $h,k$ giving complex eigenvalues must have conjugate coefficients so that in real coordinates they give a term of type $a'_{h,k} (|z_h|^2-|z_k|^2) + b'_{h,k}(z_h z_k +\bar z_h\bar z_k)
$.\end{proof}
  {\bf Warning}\quad From now on even if we shall use complex coordinates we shall denote them by $z_k$ and not $\zeta_k$.

\subsection{Eigenvalues and eigenvectors\label{eiei}}The fact that  the Hamiltonian  $\mathcal N$  is non-degenerate  for generic values of $\xi$ is essentially a consequence of Theorem \ref{te12p}, we state it as:
\begin{proposition}\label{nodeno}
 The normal form $\mathcal N$ is non-degenerate for all $\xi$ outside countably many algebraic hypersurfaces. 
\end{proposition}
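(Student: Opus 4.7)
The plan is to verify separately the two defining conditions of non-degeneracy, producing in each case a countable family of algebraic hypersurfaces off which the condition holds.

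Rational independence of $\omega(\xi)$ is immediate from \eqref{Pno}: since $\omega_i(\xi) = |\mathtt j_i|^2 - 2\xi_i$, every nonzero integer vector $k \in \Z^n$ gives a relation $\sum_i k_i \omega_i(\xi) = 0$ cutting out an affine hyperplane in $\xi$-space, and there are countably many such hyperplanes.

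For the distinctness and non-vanishing of the eigenvalues of $\mathrm{ad}(\mathcal N)$ on $F^{0,1}$, I use the block decomposition of Proposition \ref{madiN}: $-\tfrac{\ii}{2}\mathrm{ad}(\mathcal N)|_{F^{0,1}}$ splits into blocks indexed by pairs $(A,\nu)$, each acting as $C_{\GA} + \lambda_{A,\nu}(\xi)\,I$ with $\lambda_{A,\nu}(\xi) = \tfrac12(|\er(A)|^2 + \sum_i \nu_i |\mathtt j_i|^2) - \sum_i \nu_i\xi_i$. Distinctness within a single block follows from Theorem \ref{te12p}(i), which says that $C_{\GA}$ is regular semisimple outside the discriminant $\mathfrak A$; distinctness between two different blocks $(A_1,\nu_1) \ne (A_2,\nu_2)$ follows from Theorem \ref{te12p}(ii), since the non-vanishing of the resultant of the two characteristic polynomials cuts out exactly one algebraic hypersurface per pair. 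As there are only finitely many combinatorial types $\GA$ but countably many pairs $(A,\nu)$, these exceptional sets form a countable family of algebraic hypersurfaces.

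It remains to exclude identically vanishing eigenvalues. By Remark \ref{normU}, the eigenvalues of $C_{\GA}$ are homogeneous of degree one in $\xi$, so every eigenvalue of the $(A,\nu)$-block is an affine function of $\xi$; its zero locus is an algebraic hypersurface unless it vanishes identically. The main obstacle is to rule out the identically-vanishing case. I would handle this by examining the determinant $\det(C_{\GA} + \lambda_{A,\nu}(\xi)\,I)$, which is a polynomial in $\xi$ of degree bounded by $d+1$, and verifying that it is not identically zero, for instance by evaluating at one suitably chosen value of $\xi$ (equivalently, treating zero as a phantom eigenvalue and running a variant of the resultant argument of Theorem \ref{te12p}(ii)). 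Taking the union of these hypersurfaces with those collected in the previous two steps gives the required countable family, outside of which $\mathcal N$ is non-degenerate.
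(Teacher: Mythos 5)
Your overall architecture — rational independence of $\omega$, distinctness within a block via the discriminant, distinctness between blocks via the resultants of Theorem~\ref{te12p}(ii), and non-vanishing via the determinant — matches the paper's route. But there are two issues, one a slip and one a genuine gap.

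The slip: the eigenvalues of $C_\GA$ are homogeneous of degree one in $\xi$, but for a block of dimension $>1$ they are \emph{algebraic} functions of $\xi$ (roots of a degree-$d_\GA$ characteristic polynomial with coefficients homogeneous of degree $i$), not affine polynomials. So your sentence ``every eigenvalue of the $(A,\nu)$-block is an affine function of $\xi$'' is false in general, and the conclusion ``its zero locus is an algebraic hypersurface unless it vanishes identically'' needs to be rephrased in terms of the determinant $\det\bigl(C_\GA+\lambda_{A,\nu}(\xi)\,I\bigr)$ (which you do switch to in the next sentence, correctly — but the affine claim as written is wrong).

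The gap: you explicitly flag that ruling out an identically vanishing eigenvalue is ``the main obstacle'' and then only sketch how you ``would handle'' it (evaluate at a test point / run a variant resultant argument). This is precisely the non-trivial input the paper supplies: the characteristic polynomials of the blocks are \emph{irreducible} (a result established in \cite{PP1}), and for a block of size $>1$ irreducibility forces the constant term — i.e.\ the determinant — to be a non-zero polynomial in $\xi$; for size-$1$ blocks the eigenvalue is a visibly non-zero linear form. Without citing or reproving this irreducibility statement, your argument stops one step short: there is no reason a priori why some $\det\bigl(C_\GA+\lambda_{A,\nu}(\xi)\,I\bigr)$ could not vanish identically, and ``evaluating at a suitable $\xi$'' is not an argument until you exhibit such a $\xi$. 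You should either invoke the irreducibility of the blocks' characteristic polynomials from \cite{PP1}, or give an independent proof that each such determinant is not identically zero.
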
 \begin{proof}
We have $\ome_i= |\mathtt j_i|^2 -2\xi_i$ which are linearly independent over the rationals outside  countably many algebraic hypersurfaces (the dependency relations).
 
 The eigenvalues of the action on $F^{(0,1)}$ are the roots of the characteristic polynomials of the blocks into which this space decomposes. We have seen that these polynomials are all irreducible and distinct. For a block of size $>1$ irreducibility implies that the constant term of the characteristic polynomial is a non--zero polynomial in $\xi$ so the eigenvalues are non--zero outside the hypersurfaces given by these determinants. Moreover it is immediate that for the  blocks of  size 1 the  eigenvalues are non--zero linear polynomials.
 
 In order to have that all the eigenvalues be distinct we have to remove  the discriminant $\mathfrak A$, and countably many {\em resultants} which are all non--zero polynomials in $\xi$ by the irreducibility and separation Theorem. \end{proof} 
%\begin{remark}
%\label{lemme}As we  shall see in the course of the algorithm it is  not necessary to remove from the beginning  the hypersurfaces given by all these resultants, or rather small neighborhoods of all these hypersurfaces.  One of the basic assumptions we make in the KAM algorithm is  a lower bound on the difference of eigenvalues only when the  difference in frequency $\nu_1-\nu_2$ is bounded by fixed quantity.  This is condition $(A5)$ of Definition \ref{buone}.
%
%This is thus achieved if we impose that the domains $\mathfrak K_i$  should be disjoint from the finitely many resultants  which define the hypersurfaces outside which  the eigenvalues for the matrices $ C_\GA -\sum_i\nu_i\xi_i \, I_\GA$ translated, as in Proposition \ref{madiN} by the finitely many frequencies $\nu,\,|\nu|<S_0$ given by the cut, are distinct. \marginpar{non capisco come e' scritto...}
%\end{remark}

\begin{remark}
\label{lemme} In the course of the KAM algorithm  of Part \ref{rottura} we shall see that the non-degeneracy of the Normal form plays a fundamental role. Imposing the non-degeneracy however requires removing a countable number of proper hypersurfaces hence working on complicated sets in parameter-space.
In order to avoid this problem, we impose that the compact domains $\mathfrak K_\al$  of Formula \eqref{ikappaa}, should   be disjoint from finitely many resultants, i.e. we fix an integer $S_0$ and impose $\forall \xi\in \cup_\al\mathfrak K_\al$:
$$ -(\xi,k) + \vartheta_i(\xi) \pm \vartheta_j(\xi) \neq 0\,.  $$
for all the couples of eigenvalues  of (different) combinatorial matrices and  forall $k\in \Z^n\,, |k|<S_0$ .
\end{remark}

We would like to choose coordinates, independent of $\xi$,   which are eigenvectors    for $\mathbb M$, $\mathbb L$, $\mathcal N$  so that
\begin{lemma}
A regular analytic function $F$ Poisson commutes with $\mathcal N$, $\mathbb M$, $\mathbb L$ for generic values of $\xi$ if and only if each monomial appearing in $F$ Poisson commutes with   $\mathbb M$, $\mathbb L$, $\mathbb K$ and  $\mathbb K^1$.
\end{lemma}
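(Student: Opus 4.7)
The plan is to expand $F = \sum_\mathfrak{m} F_\mathfrak{m}(\xi)\,\mathfrak{m}$ in its Taylor--Fourier series over the monomials $\mathfrak{m} = e^{\ii(\nu,x)} y^i z^\alpha \bar z^\beta$, and exploit that each such $\mathfrak{m}$ is a joint eigenvector of the four derivations $\mathrm{ad}(\mathbb L)$, $\mathrm{ad}(\mathbb M)$, $\mathrm{ad}(\mathbb K)$, $\mathrm{ad}(\mathbb K^1)$. From the explicit formulas collected at the end of \S\ref{coco} the eigenvalues $\lambda_{\mathbb L}(\mathfrak{m})$, $\lambda_{\mathbb M}(\mathfrak{m})$, $\lambda_{\mathbb K}(\mathfrak{m})$ are $\xi$-independent (integers, resp.\ integer vectors, determined solely by the exponents of $\mathfrak{m}$), while
\[
\lambda_{\mathbb K^1}(\mathfrak{m};\xi) \;=\; \ii\Big[-(\nu,\xi) + \sum_k \sigma(k)\,\val_k(\xi)(\alpha_k-\beta_k)\Big]
\]
is analytic and homogeneous of degree one in $\xi$ by Theorem \ref{xixi}.

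Since distinct monomials are linearly independent, the Poisson brackets decompose diagonally, and the vanishing of $\{F,\mathbb L\}$ and $\{F,\mathbb M\}$ for generic $\xi$ forces, for every $\mathfrak{m}$ with $F_\mathfrak{m}\not\equiv 0$, the $\xi$-free identities $\lambda_{\mathbb L}(\mathfrak{m}) = \lambda_{\mathbb M}(\mathfrak{m}) = 0$, i.e.\ the monomial-wise conservation of mass and momentum. Similarly $\{F,\mathcal N\}=0$ yields, for each such $\mathfrak{m}$, the identity $\lambda_{\mathbb K}(\mathfrak{m}) + 2\lambda_{\mathbb K^1}(\mathfrak{m};\xi) = 0$ on a dense subset of the parameter domain, which extends identically in $\xi$ by analyticity on the connected open set $\rho\mathfrak K$ (the coefficient $F_\mathfrak{m}(\xi)$ itself is analytic, hence nonzero on a dense open set, so the second factor must vanish identically there).

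The core step is the separation of $\mathbb K$ and $\mathbb K^1$ inside $\mathcal N$. Because $\rho\mathfrak K = \mathfrak H \times \rho J$ is radially foliated (Definition \ref{Aro}), fix $\xi_0 \in \mathfrak H$ and let $t$ vary in $\rho J$; substituting $\xi = t\xi_0$ and using homogeneity of degree one of $\lambda_{\mathbb K^1}$ rewrites the identity as
\[
\lambda_{\mathbb K}(\mathfrak{m}) + 2t\,\lambda_{\mathbb K^1}(\mathfrak{m};\xi_0) \;=\; 0 \qquad \forall\, t\in \rho J.
\]
A polynomial of degree $\le 1$ in $t$ vanishing on an interval is identically zero, whence $\lambda_{\mathbb K}(\mathfrak{m}) = 0$ and $\lambda_{\mathbb K^1}(\mathfrak{m};\xi_0) = 0$; since $\xi_0$ was arbitrary, $\mathfrak{m}$ Poisson commutes individually with $\mathbb K$ and $\mathbb K^1$. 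The converse is trivial, $\mathcal N$ being $\mathbb K + 2\mathbb K^1$. The main (and only) subtlety is this scaling separation, which is made rigorous purely through the homogeneity supplied by Theorem \ref{xixi}, with no further removal of hypersurfaces required beyond those already used in Proposition \ref{nodeno} to diagonalize $\mathcal N$.
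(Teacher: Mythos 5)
Your proof is correct and takes essentially the same route as the paper: the paper's proof is a one-liner ("an element commutes with $\mathcal N$ if and only if it commutes with both homogeneous parts of degree $0,1$ that is $\mathbb K$ and $\mathbb K^1$"), and your argument is precisely that homogeneity-in-$\xi$ separation, carried out carefully monomial by monomial with the radial scaling $\xi = t\xi_0$ making explicit why a linear-in-$t$ identity forces both $\lambda_{\mathbb K}$ and $\lambda_{\mathbb K^1}$ to vanish.
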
\begin{proof}
An element commutes with $\mathcal N$ if and only if it commutes with both homogeneous parts of degree 0,1 that is $\mathbb K$ and  $\mathbb K^1$.
\end{proof}

 \part{Quasi  T\"oplitz functions}

\section{Optimal presentations cuts and Good Points} 
{\em Let us recall the definition of $N$-optimal presentations, cuts, good points as given in \cite{PX}, we omit most proofs.}

\subsection{$N$-optimal presentations}
An affine space $A$ of codimension $\ell$ in $\R^d$  can be defined
by a list of $\ell$ equations $A:=\{x\,|\,(v_i, x)=p_i\}$ where
the $v_i$ are independent row vectors in $\R^d$.  We will write shortly that
$A=[v_i;p_i]_{\ell}$. We will be interested in particular in the
case when $v_i,p_i$ have integer coordinates, i.e.  are {\em integer
vectors}\footnote{such a subspace is usually called {\em rational}.} and the vectors $v_i$  lie in a prescribed ball $B_N$ of radius  some constant $\kappa N$.
Recall that,   \eqref{essec1}, we have set ${\kappa} := \max_i |\mathtt j_i|$. We denote by $$\langle v_i\rangle_{\ell}={\rm
Span}(v_1,\dots,v_\ell;\R)\cap \Z^d \,,\quad B_N:=\{x\in
\Z^d \setminus \{0\}\,\vert\; |x| < {\kappa}   N\}  ,$$  here $N$ is
any large number.  In particular we  implicitly assume that $B_N$  contains a basis of $\R^d$. \smallskip

For given $s\in \N$, in the set of vectors $\Z^s$  we can define  the {\em sign lexicographical order $\prec$ as in \cite{PX}} as follows.\begin{definition}
 Given $a=(a_1,\ldots,a_s)$ set $(|a|):=(  |a_1|,\ldots,|a_s|)$ then we set $a\prec b$  if either $(|a|)< (|b|)$ in the lexicographical order (over $\N$) or if  $(|a|)=(|b|)$ and $a>b$ in the lexicographical order in $\Z$.
\end{definition}
  With this definition every non empty set of elements in $\Z^s$ has a unique minimum.

Notice that, by convention, among the finite number of vectors with a given prescribed value of  $(|a|)$ we have chosen as minimum the one with non negative coordinates.\smallskip

  In particular consider a fixed but  large enough $N$.
  \begin{definition}
We set  $\mathcal H_{N }$ to be the set of all   affine spaces $A$ which can be presented as $A=[v_i;p_i]_\ell$  for some $0<\ell\leq d$ so that that $v_i \in B_N,\, p_i\in\mathbb N$. 

We denote the  subset of $\mathcal H_{N }$ formed by the subspaces of codimension $\ell$ by $\mathcal H_{N }^\ell$.\end{definition}\noindent We display as  $(p_1,\ldots,p_\ell;v_1,\ldots,v_\ell)$  a given presentation, so that it is a vector in $\Z^{\ell(d+1)}$. Then we can say that $[v_i;p_i]_\ell \prec [w_i;q_i]_\ell$ if  $(p_1,\ldots,p_\ell;v_1,\ldots,v_\ell)\prec (q_1,\ldots,q_\ell;v_1,\ldots,v_\ell)$.
  \begin{definition}
 The $N$--optimal presentation $[l_i;q_i]_\ell$ of $A\in \mathcal H_{N }^\ell$ is the minimum, in the sign lexicographical order, of the presentations of $A$ which satisfy the previous bounds.
 
 Given an affine subspace $A:=\{x\,\vert (v_i, x) =p_i\,,\  i=1,\dots,\ell\}$ by the notation $A\frec{N} [v_i;p_i]_\ell$ we mean that the subspace has codimension $\ell$ and the given presentation is $N$--optimal.  
 \end{definition}

\begin{remark}\label{lordi} i) Note that each point $m=(m_1,\ldots,m_d)\in \Z^d $ has a $N$--optimal presentation (this presentation is usually not the naive one $[e_i,m_i]_d$ where the $e_i$ form the standard  basis of $\Z^d$).

ii) Thus we may use the ordering given by  $N$--optimal presentations of points  in order to define a new lexicographic order on $\Z^d$ which we shall denote by $a\prec_N b$ or $a\prec b$ when $N$ is understood.
\end{remark}
\begin{remark}
At this point we extend the definition of $\prec$ to the elements of  $\mathcal H_{N }$ by using their $N$--optimal presentation.
\end{remark}
\begin{lemma}\label{basso}
i) If the presentation $A=[v_i;p_i]_\ell$ is $N$--optimal,  we have
\begin{equation}\label{va2}
0\leq  p_1 \leq p_2 \leq\ldots\leq  p_\ell .  \end{equation}

 ii) For
all $j<\ell$ and for all $v\in \Bn\cap(\langle
v_1,\dots,v_\ell\rangle \setminus\langle
v_1,\dots,v_j\rangle)$, one has:
\begin{equation}\label{va3}
|(v,r)|\geq  p_{j+1}\,,\quad  \forall r\in A.\end{equation}

iii) Given $j<\ell$ set $A_j:=\{x\,|\,(v_i, x)=p_i,\ i\leq j\},$ then the presentation $A_j=[v_i,p_i]_j$ is $N$--optimal.

iv) Finally   $-A$  has a $N$--optimal presentation $-A =[v'_i,p_i] $ with the same constants $p_i$ and   $(|v'_i|)=(|v_i|)$.
\end{lemma}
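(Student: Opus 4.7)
The plan is to prove each part by an exchange argument: given a feature to establish, one constructs a presentation of $A$ (or of $A_j$, or of $-A$) that would be sign-lex strictly smaller than the original if the conclusion failed. Since the sign-lex order on $\N$ coincides with ordinary lex order, any strict decrease in an earlier $p_i$ lowers the whole presentation.

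For (i), if $p_j>p_{j+1}$ for some $j$, swap the pairs $(v_j,p_j)\leftrightarrow(v_{j+1},p_{j+1})$: this still presents $A$, but its $p$-block has $p_{j+1}<p_j$ in position $j$, contradicting optimality. The bound $p_1\geq 0$ is built into $p_i\in\N$. Part (iii) is similarly direct: any presentation $[w_i;q_i]_j$ of $A_j$ concatenates with the remaining pairs $(v_i,p_i)$, $i>j$, of the original to give the presentation $[\,w_1,\ldots,w_j,v_{j+1},\ldots,v_\ell;\,q_1,\ldots,q_j,p_{j+1},\ldots,p_\ell\,]$ of $A$; since the sign-lex comparison of the composite presentation is governed by its initial $j$-block, any strict improvement at the $A_j$ level would propagate to a strict improvement for $A$, forcing the truncation $[v_1,\ldots,v_j;p_1,\ldots,p_j]$ to be $N$-optimal. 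For (iv) note $-A=\{x:(-v_i,x)=p_i\}$, so $[-v_i;p_i]_\ell$ presents $-A$ and has exactly the same sign-lex absolute-value profile as $[v_i;p_i]_\ell$. Negating once more the $N$-optimal presentation $[w_i;q_i]$ of $-A$ yields a presentation of $A$; applying the optimality of each side then forces the two absolute-value profiles to be equal componentwise, whence $p_i=q_i$ (both in $\N$) and $(|w_i|)=(|v_i|)$.

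The substantive step is (ii). Pick the smallest $k>j$ with $v\in\langle v_1,\ldots,v_k\rangle$ and write $v=\sum_{i=1}^k c_i v_i$ with $c_k\neq 0$. The scalar $(v,r)=\sum c_i p_i$ is then independent of $r\in A$. Choose a sign $\epsilon=\pm 1$ so that $\epsilon(v,r)\geq 0$ and substitute the pair $(v_k,p_k)$ in the original list by $(\epsilon v,\,|(v,r)|)$. Since $v_1,\ldots,v_{k-1},\epsilon v$ and $v_1,\ldots,v_k$ span the same rational subspace and the prescribed values are compatible on $A$, this is a valid presentation of $A$, and $\epsilon v\in B_N$. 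Assume for contradiction that $p':=|(v,r)|<p_{j+1}$. The new $p$-multiset is obtained from the original sorted one by deleting $p_k\geq p_{j+1}$ and inserting $p'<p_{j+1}$; thus the sorted new $p$-tuple has at least $j+1$ entries strictly less than $p_{j+1}$, and so at some position $m\leq j+1$ it is strictly smaller than the old sorted tuple. Reordering the new presentation by (i), we obtain a presentation of $A$ that is sign-lex strictly smaller than $[v_i;p_i]_\ell$, contradicting $N$-optimality.

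The main obstacle is to ensure that the replacement presentation in (ii) really belongs to $\mathcal H_N$, i.e.\ that $p'=|(v,r)|\in\N$. Non-negativity is arranged by the sign choice, but integrality of $(v,r)$ is not automatic because $v=\sum c_i v_i$ with only $c_i\in\Q$. This is resolved in the setting of the paper because each admissible $A\in\mathcal H_N$ contains an integer point $r_0$ (the affine spaces of interest cut out by integer equations and having integer content), so $(v,r)=(v,r_0)\in\Z$ for the integer vector $v$; once this is established uniformly, the exchange argument above goes through without change.
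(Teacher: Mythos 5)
The paper recalls this lemma from \cite{PX} and omits its proof, so there is no in-paper argument to compare against; your exchange-argument strategy is the natural one, and parts (i), (iii), (iv) are correct. For (ii) the exchange is essentially right, though the intermediate sentence ``the sorted new $p$-tuple has at least $j+1$ entries strictly less than $p_{j+1}$'' overstates: deleting $p_k\geq p_{j+1}$ and inserting $p'<p_{j+1}$ raises the count of entries strictly below $p_{j+1}$ by exactly one (so the new count is at most $j+1$, not ``at least''). What is true and suffices is that $p'$ is inserted at some sorted position $m\leq j+1$, and the new sorted $p$-tuple then agrees with the old in positions $1,\dots,m-1$ and has $p'<p_m$ at position $m$, which already forces a strict sign-lex decrease.

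Your integrality concern is well-spotted and is genuinely the subtle point, but the resolution as written is wrong: it is not true that every $A\in\mathcal H_N$ contains an integer point. With $d=2$, the singleton $A=\{(1/2,0)\}$ has $N$-optimal presentation $[(0,1),(2,0);0,1]\in\mathcal H_N$ and contains no integer point; taking $v=(1,0)$, $j=1$ gives $|(v,r)|=1/2<1=p_2$, so part (ii) is in fact false as literally stated. The correct justification is narrower and needs to be spelled out: every affine space to which this lemma is applied in the paper arises as a truncation $A_j$ of the $N$-optimal presentation of some point $m\in S^c\subset\Z^d$, and by part (iii) such $A_j$ is $N$-optimal and contains the integer point $m$; hence $(v,r)=(v,m)\in\Z$ for all $r\in A_j$, and your exchange then goes through. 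The lemma must therefore be read with the implicit hypothesis $A\cap\Z^d\neq\emptyset$.
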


\begin{remark}\label{numero}
For fixed $N$, $\ell$, $p$  the number of  affine spaces in $\mathcal H_{N }^\ell$ of codimension $\ell$ and such that $ p_\ell \leq p$ is bounded by
$(2{\kappa}   N+1)^{\ell d} (p+1)^{\ell}$.
\end{remark}

\subsection{ A decomposition of $\Z^d$}
   We shall need several auxiliary parameters in the course of our proof.
 We start by fixing some numbers 
  \begin{equation}
\label{itau}\tau_0> \max( d^2+ n,12),\quad \td:=(4d)^{d+1}(\tau_0+1)\,,
\end{equation}
$$
\mathtt c\leq \frac12\,,\; \mathtt C\geq 4 \,,\; N_0 =(d+1)! {\kappa}   ^{d+1} \mathtt C \mathtt c^{-1}.
$$
In what follows $N$ will always denote some large number, in particular $N>N_0$. \smallskip

Using the fixed parameters $\mathtt c,\mathtt C $ and the notion of optimal presentation, for each  $N>N_0$ we want to construct a decomposition \begin{equation}\label{pustola}
\Z^d= \cup_{i=0}^{d} A_i(N) 
\end{equation}of $\Z^d $ which will be crucial for the estimates of small denominators (cf. \S \ref{Kamstep}) and
 given by the following
 \begin{definition} \label{papilla} i)\quad  A subspace $A\frec{N}[v_i;p_i]_\ell\in \mathcal H_N^\ell$ with $1\leq \ell<d$ is  called {\em $N $--good} if    $ p_\ell \leq \mathtt c N^{{\tau_1}\over 4d}$. The set of $N $--good subspaces  of codimension $\ell<d$ is denoted by $\mathcal H_N^{\ell ,g}$.
\smallskip

ii)\quad  Given $A\in \mathcal H_N^{\ell,g}$ the set: 
\begin{equation}\label{pippo}
A^{g} :=  \left\{x\in
A\cap \Z^d\,\vert \quad  |\er(x)|> \mathtt C N^{\tau_1}\,, \; |(v,x)|\geq \mathtt  C \max(
N^{4d\tau_0},\mathtt c^{-4d} p_\ell ^{4d}) , \forall v\in B_N\setminus
\langle v_i\rangle_\ell\right\} \end{equation}  will be called the $N-${\em good}
portion of the subspace $A$.
  \end{definition}

\begin{remark}
 Notice that every $v\in B_N\setminus
\langle v_i\rangle_\ell $ gives a non constant linear function $(v,x)$ on $A$. Thus the good points of $A$ form a non empty open set complement of a finite union of strips  around subspaces of codimension 1 in $A$.
Note moreover that  we are interested only in integral points  and the integral points in $A$ which are not good are formed, by the finitely many points with $|\er(x)|\leq \mathtt C N^{\tau_1}$, plus  a finite union of affine subspaces of codimension one in $A$.\end{remark}
We construct a decomposition of $\Z^d $ using the following Proposition   which is a variation of  \cite{PX} Proposition 1).
\begin{proposition}\label{key2}
Each point $m\frec{N}[v_i,p_i]$ with $|\er(m)|>\mathtt C N^{{\tau_1}}$  and $ p_1 < \mathtt C N^{4d\tau_0}$ belongs to the set $[v_i;p_i]_\ell^g$  for some choice $0<\ell<d$. 
\end{proposition}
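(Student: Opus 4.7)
The plan is to reformulate the desired membership $m\in A^g$ in terms of the integer sequence $p_1\le\cdots\le p_d$ coming from the $N$-optimal presentation $[v_1,\dots,v_d;p_1,\dots,p_d]_d$ of the point $m$. Lemma~\ref{basso}(i),(iii) gives the ordering and shows that each truncation $A_\ell:=\{x\,:\,(v_i,x)=p_i,\ i\le\ell\}$ is itself $N$-optimally presented by $[v_i;p_i]_\ell$, hence lies in $\mathcal H_N^\ell$. Lemma~\ref{basso}(ii), applied to the codimension-$d$ presentation of $m$, gives $|(v,m)|\ge p_{\ell+1}$ for every $v\in B_N\setminus\langle v_i\rangle_\ell$, with equality at $v=v_{\ell+1}$. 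Using the hypothesis $|\er(m)|>\mathtt C N^{\tau_1}$, this reduces the proposition to finding $\ell\in\{1,\dots,d-1\}$ satisfying the two conditions
\[
\text{(A)}\ \ p_\ell\le\mathtt c N^{\tau_1/(4d)}, \qquad \text{(B)}\ \ p_{\ell+1}\ge\mathtt C\max\bigl(N^{4d\tau_0},\,\mathtt c^{-4d}p_\ell^{4d}\bigr).
\]

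I would argue by contradiction, setting $\ell^\ast:=\max\{\ell\in\{1,\dots,d\}\,:\,p_\ell\le\mathtt c N^{\tau_1/(4d)}\}$. The hypothesis $p_1<\mathtt C N^{4d\tau_0}$ combined with $4d\tau_0<(4d)^d(\tau_0+1)=\tau_1/(4d)$ yields $\ell^\ast\ge 1$ for $N>N_0$. The first substantive step is to rule out $\ell^\ast=d$ by a size comparison on $m$: Cramer's rule applied to the integer matrix with rows $v_1,\dots,v_d$ (determinant of absolute value $\ge 1$, entries bounded by $\kappa N$) yields $|m|\le d!\,\kappa^{d-1}N^{d-1}p_d$, while \eqref{defL} and Remark~\ref{stiL} force $\bigl||m|-|\er(m)|\bigr|\le(d+1)\kappa$, so $|m|>\tfrac{\mathtt C}{2}N^{\tau_1}$ for $N>N_0$. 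The hypothetical bound $p_d\le\mathtt c N^{\tau_1/(4d)}$ is incompatible with this for large $N$, since $\tau_1(1-(4d)^{-1})>d-1$.

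Knowing $\ell^\ast\le d-1$, condition (A) holds for every $\ell\in\{1,\dots,\ell^\ast\}$, and the contradiction hypothesis forces (B) to fail at each such $\ell$, i.e.\ $p_{\ell+1}<\mathtt C\max(N^{4d\tau_0},\mathtt c^{-4d}p_\ell^{4d})$. Iterating this recurrence from $p_1<\mathtt C N^{4d\tau_0}$, I would prove by induction on $\ell$ an estimate of the form $p_{\ell+1}\le C^{(\ell)} N^{(4d)^{\ell+1}\tau_0}$ with $C^{(\ell)}$ depending only on $\mathtt C,\mathtt c,d$; in particular $p_{\ell^\ast+1}\le C^{(\ell^\ast)} N^{(4d)^{\ell^\ast+1}\tau_0}$. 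On the other hand the maximality of $\ell^\ast$ gives $p_{\ell^\ast+1}>\mathtt c N^{\tau_1/(4d)}$, and since $\ell^\ast+1\le d$,
\[
(4d)^{\ell^\ast+1}\tau_0\le(4d)^d\tau_0<(4d)^d(\tau_0+1)=\tau_1/(4d),
\]
yielding the desired contradiction for $N$ large.

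The main technical point is this iterative pigeonhole: each failure of (B) multiplies the exponent bound on $p_{\ell+1}$ by a factor of $4d$, so after the worst case of $d-1$ iterations a strict gap must still remain, and this is exactly calibrated by the choice $\tau_1=(4d)^{d+1}(\tau_0+1)$ in \eqref{itau}. The Cramer-plus-root comparison ruling out $\ell^\ast=d$ is the companion input that links the lower bound $|\er(m)|>\mathtt C N^{\tau_1}$ to the lower bound on $p_d$ one would otherwise need.
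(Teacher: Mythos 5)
Your proof is correct and uses the same two ingredients as the paper's own argument: the recurrence $p_{\ell+1}<\mathtt C\max(N^{4d\tau_0},\mathtt c^{-4d}p_\ell^{4d})$ that comes from failure of the good-set condition at each codimension, and the Cramer's-rule size estimate on $|m|$ played against $|\er(m)|>\mathtt C N^{\tau_1}$. The only difference is organizational: the paper runs a forward trial over $\ell=1,2,\dots,d-1$ and invokes Cramer at the end, whereas you set up $\ell^*$ and argue by contradiction, invoking Cramer up front to exclude $\ell^*=d$; this bookkeeping is slightly cleaner but the underlying reasoning, lemmas used, and parameter calibration are the same.
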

\begin{proof}
Consider $A_1:=[v_1;p_1]$, since $\mathtt C N^{4d\tau_0}\leq \mathtt c N^{ \frac{\tau_1}{4d}}$   we see that it is $N$--good, so if $m\in A_1^g$ we are done, otherwise we have that  $p_2<    \mathtt  C \max(
N^{4d\tau_0},\mathtt c^{-4d} p_1 ^{4d})\leq  \mathtt c N^{{\tau_1}\over 4d}.$ Consider $A_2:=[v_1,v_2;p_1,p_2]$, we   see again that it is $N$--good, so if $m\in A_2^g$ we are done otherwise we have that  $p_3<    \mathtt  C \max(
N^{4d\tau_0},\mathtt c^{-4d} p_2 ^{4d})\leq  \mathtt c N^{{\tau_1}\over 4d} $ we continue in this way and either we show that $m\in A_i^g, \ i<d$ or we have a sequence of inequalities
$p_j<    \mathtt  C \max(
N^{4d\tau_0},\mathtt c^{-4d} p_{j-1 }  ^{4d}). $  Let $k\geq 1$ be the maximum index  such  $p_k<  \mathtt  C 
N^{4d\tau_0}$, if $k<d$  we have for all $p_j, j=k+1,\dots, d$ that $p_j<(\mathtt c^{-4d}\mathtt C)^{j-k} N^{(4d)^{j-k+1}\tau_0}.$ We then compute the coordinates of $m$ by Cramer's rule and by just estimating the numerator we have a sum  of  $d!$ terms each of which can be  bounded by $N^{d-1}N^{(4d)^{d}\tau_0}  (\mathtt c^{-4d}\mathtt C)^{d-1} $. This sum is then bounded by $N^{5d(d-1)+(4d)^{d}\tau_0}$, it follows that $|m|\leq \sqrt{d}N^{5d(d-1)+(4d)^{d}\tau_0}$  and $\er(m)\leq  \sqrt{d}N^{5d(d-1)+(4d)^{d}\tau_0}+d\kappa$. Recall $\td:=(4d)^{d+1}(\tau_0+1)$ thus we easily see that $|\er(m)|<\mathtt c N^\td$.
\end{proof}From this proposition to a point  $m\frec{N}[v_i,p_i]$ with $|\er(m)|>\mathtt C N^{{\tau_1}}$  and $ p_1 < \mathtt C N^{4d\tau_0}$ we have associated an affine space $A\in \mathcal H_N^{\ell,g}$  such that   $m\in A^g$ and we set $A_\ell(N)$ to be the set of points of previous type for which $A$ has codimension $\ell$.

The remaining points may be distributed in two sets:
$$ A_d= A_d(N)\subset  \{m\in \Z^d\,:  |\er(m)|\leq \mathtt C N^{\tau_1}\}$$
and 
\quad \begin{equation}\label{nodiv}
 A_0:=   \{ m\in \Z^d  \;:\; m\frec{N}[v_i;p_i]\quad{\rm with} \quad |\er(m)|>\mathtt C N^{{\tau_1}},\quad p_1\geq  \mathtt C K^{4d \tau_0} \}.
\end{equation}
In this way we construct a decomposition of $\Z^d=\cup_{\ell=0}^dA_\ell(N)$.
 \begin{figure}[!ht]
\centering
\begin{minipage}[b]{11cm}
\centering
{\psfrag{A}{$A_0$}
\psfrag{a}{$\mathtt j_j$}
\psfrag{b}{$\mathtt j_i$}
\psfrag{c}{$ a_2$}
\psfrag{d}{$ b_2$}
\psfrag{e}{$ a_1$}
\psfrag{f}{$ b_1$}
\psfrag{H}{$H_{i,j}$}
\psfrag{S}{$S_{i,j}$}
\psfrag{m}{$ \mathtt j_j-\mathtt j_i$}
\psfrag{l}{$ \mathtt j_j+\mathtt j_i$}
\includegraphics[width=11cm]{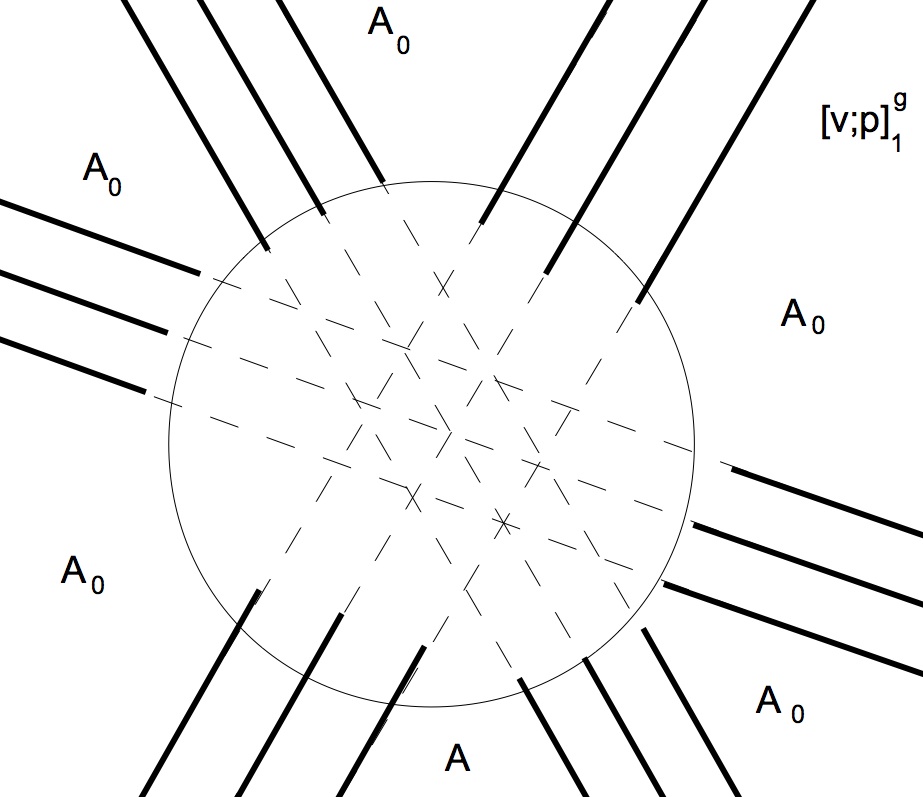}}
\caption{\footnotesize{A drawing of the standard decomposition in $\Z^2_1$. $A_0$ is $\Z^2_1$ minus the dashed lines (each dashed line is described by an equation $[v;p]_1$). On each dashed line the set $[v;p]_1^g$ is signed in solid boldface. Note that $[v;p]_1^g$ is $[v;p]_1\cap \Z^2_1$ minus a finite number of subspaces of codimension two, i.e. points.} \label{fig2}}
\end{minipage}
\end{figure}

\subsection{Cuts} In the previous paragraph  a point $m$ which is a good point has an associated affine space $A$ of some codimension $\ell$ which, as we have seen in the proof of Proposition \ref{key2} corresponds to a jump, or as we shall say a {\em cut}, in the optimal presentation.  In fact for technical reasons  having to do with the structure of Poisson bracket we need to refine this notion, introducing auxiliary parameters $\mu,\theta$ which give a better control on the cut and allow some flexibility in the constructions.  This is the topic of this paragraph.\smallskip

We assume that $N$ has been fixed.
  Given a point $m$ we write $m \frec{N}[v_i;p_i]$ for its optimal presentation  dropping
the index $\ell$ which for a point equals $ d$, we implicitly also mean that $m\in \Z^d$.
 Set by
convention $p_{0}=0$ and $p_{d+1}=\infty$.

We  then make a definition involving three more parameters: 
\begin{definition}
\label{allpa} The parameters $N, \theta,\mu,\tau$ are called {\em allowable} if 
$$\tau_0\leq \tau\leq\tau_1/(4d),\quad  \mathtt c< \theta,\mu<\mathtt C,\quad  N> N_0.$$
\end{definition} 

We need to
analyze certain {\em cuts}, for the values $p_i$ associated to an optimal presentation of a point.  This will be an index $\ell$  where the values of the $p_i$ jump according to the following: 

\begin{definition}\label{cut}
The point $m \frec{N}[v_i;p_i]$ has a {\em cut} at  $\ell\in\{0,1,\dots,d\}$  with the  parameters  $\underline p=( N,\theta,\mu,\tau)$,   if $\ell$ is such that $  p_\ell  < \mu
N^{\tau}$, $ p_{\ell+1 }  > \theta N^{4 d \tau}$. \smallskip

The space $A:=\{x\,|\, (v_i, x) =p_i,\ i=1,\ldots,\ell\}$  has $[v_i;p_i]_\ell$  as optimal presentation and it is called the {\em affine space associated to the cut} of $m$.
\end{definition}
 \begin{remark}\label{oboe}   Note that, by Lemma \ref{basso}, if $m\frec{N}[v_i;p_i]$ has a $(N,\theta,\mu,\tau)  $ cut at $\ell$  then  $|(m,v)|> \theta N^{4 d \tau} $ for all $v\in B_N\setminus \langle v_i\rangle_l $. 
 \end{remark}

Consider a subspace $A\in\mathcal H_N$ of codimension $\ell$   such that in its optimal presentation $p_\ell<\mu N^\tau$.  The set of points $m\in A$  which have $\ell$ as a cut with the  parameters $N, \theta,\mu,\tau$ have $A$ as associated affine space, if furthermore   $|\er(m)|> \theta N^{\tau_1}$  we call them $(N, \theta,\mu,\tau)$--{\em good points of} $A$, we write $ m\in A^g_{(N, \theta,\mu,\tau)}$.

By definition the other affine spaces have no good points with respect to these parameters.\smallskip
\begin{definition}\label{isotb} Given allowable parameters $\underline p=(N, \theta,\mu,\tau)$ 
we denote by $\mathcal H_{\underline p}^\ell$ the set of affine subspaces $A\in\mathcal H_N$ of codimension $\ell$   such that in their optimal presentation $p_\ell<\mu N^\tau$.  The union of all these sets for $0<\ell<d$ is denoted by
$\mathcal H_{\underline p}$.
\end{definition}
Notice that  $\theta N^{4 d \tau}>\mu
N^{\tau}$ (since by \eqref{itau} we have $\mathtt c N^{4 d \tau}>\mathtt C N^{  \tau}$), so  for any given $m\in S^c $   there is at  most {\bf one} choice of $\ell$ such that $m$ has a $\ell$ cut with parameters $\theta,\mu,\tau$.
\begin{remark}
1)\quad The purpose of defining a cut $\ell$ is to separate the
numbers $p_i$ into {\em small} and {\em large}. The parameters $N, \theta,\mu,\tau$
give a quantitative meaning to this statement.

2)\quad The set of good points  $A^g_{(N, \theta,\mu,\tau)}$, if non--empty,  is the complement in $A$ of a finite number of codimension one subspaces plus finitely many points. 

3)\quad Given any rational affine subspace $A$ (i.e. defined by equations over $\Z$) there is an $\bar N$ so that $\forall N\geq \bar N$ we have $A\in \mathcal H_N$, its optimal presentation is independent of $N$, the set $A^g_{(N, \theta,\mu,\tau)}$  is non-empty.

\end{remark}
We need an auxiliary parameter depending on an optimal presentation
\begin{definition}\label{taup}
Given $p\leq  \mathtt c N^{\tau_1/(4d)}$, set  $\tau(p)$ so that $N^{\tau(p)}=\max(N^{\tau_0}, {\mathtt c}^{-1} p )$. 
\end{definition}Notice that $\tau_0\leq \tau(p)\leq \tau_1/(4d)$.
The connection between the notion  of good points $A^g$, defined in \eqref{pippo}, of a given subspace $A$ and the notion just introduced is explained by the following Lemma.
\begin{lemma}\label{minko} For all $\mathtt c< \theta,\mu<\mathtt C$
and for all affine subspaces $[v_i;p_i]_\ell\in {\mathcal H}_N $ such   $  p_\ell \leq  \mathtt c N^{\tau_1/(4d)}$, we have that  every point $m\in [v_i;p_i]_\ell^g$  is a $(N,\theta,\mu,\tau(p_\ell))$--good point for $[v_i;p_i]_\ell$. I.e. $[v_i;p_i]_\ell^g\subset ([v_i;p_i]_\ell)^g_{(N,\theta,\mu,\tau(p_\ell))}$ for all $\mathtt c<\theta,\mu<\mathtt C$. 
\end{lemma}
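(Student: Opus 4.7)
The strategy is to compare the $N$-optimal presentation of the point $m$ with the presentation of the affine space $A:=[v_i;p_i]_\ell$ it belongs to. Write $m\frec{N}[w_i;q_i]_d$, set $V:=\langle v_1,\ldots,v_\ell\rangle_\R$, and abbreviate $\tau:=\tau(p_\ell)$, so that $N^\tau=\max(N^{\tau_0},\mathtt c^{-1}p_\ell)$ and $N^{4d\tau}=\max(N^{4d\tau_0},\mathtt c^{-4d}p_\ell^{4d})$. Once we know that the first $\ell$ vectors $w_1,\ldots,w_\ell$ span $V$, the rest is essentially formal: by Lemma \ref{basso} iii), $[w_i;q_i]_\ell$ is $N$-optimal for the affine space $\{x\,:\,(w_i,x)=q_i,\ i\leq\ell\}$, which has direction $V^\perp$ and contains $m$, hence coincides with $A$; the uniqueness of the $N$-optimal presentation (as the sign-lex minimum) then forces $(q_i;w_i)=(p_i;v_i)$ for $i\leq\ell$. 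In particular $q_\ell=p_\ell$, and $q_\ell<\mu N^\tau$ follows from $\mu N^\tau\geq \mu\mathtt c^{-1}p_\ell>p_\ell$ (using $\mu>\mathtt c$). For the second cut condition, $w_{\ell+1}\in B_N\setminus V$ — since the $w_i$'s are linearly independent and $V=\langle w_1,\ldots,w_\ell\rangle$ is already $\ell$-dimensional — so the definition of $A^g$ yields $q_{\ell+1}=|(w_{\ell+1},m)|\geq \mathtt C N^{4d\tau}>\theta N^{4d\tau}$ using $\mathtt C>\theta$. Finally $|\er(m)|>\theta N^{\tau_1}$ is immediate from $|\er(m)|>\mathtt C N^{\tau_1}$ and $\mathtt C>\theta$.

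The core step, which I would prove by contradiction, is the span identification $\langle w_i\rangle_\ell=V$. If the two $\ell$-dimensional subspaces differ then $\langle w_i\rangle_\ell\not\subseteq V$ by dimension, so some index $k\leq\ell$ has $w_k\in B_N\setminus V$. The good-point hypothesis immediately gives
\[
q_k=|(w_k,m)|\geq \mathtt C\max(N^{4d\tau_0},\mathtt c^{-4d}p_\ell^{4d})\geq \mathtt C\mathtt c^{-4d}p_\ell^{4d}.
\]
On the other hand, Lemma \ref{basso} ii) applied to the optimal presentation of $m$ with $j:=k-1$ asserts that every $v\in B_N\setminus\langle w_1,\ldots,w_{k-1}\rangle$ satisfies $|(v,m)|\geq q_k$. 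Since $\dim\langle w_1,\ldots,w_{k-1}\rangle=k-1<\ell=\dim V$, at least one of the $v_i$ with $i\leq\ell$ lies outside that $(k-1)$-plane, which yields $p_\ell\geq p_i=(v_i,m)\geq q_k$. Combining,
\[
p_\ell\geq \mathtt C\mathtt c^{-4d}p_\ell^{4d}.
\]
If $p_\ell\geq 1$ this forces $\mathtt C\leq \mathtt c^{4d}\leq 2^{-4d}$, contradicting $\mathtt C\geq 4$; if $p_\ell=0$ it forces $q_k=0$, contradicting $q_k\geq \mathtt C N^{4d\tau_0}>0$.

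The main obstacle in the argument is precisely this span identification. The tempting shortcut — reading $\langle w_i\rangle_\ell=V$ off directly from sign-lex minimality of $[w_i;q_i]_d$ compared against an arbitrary extension of the presentation of $A$ to one of $m$ — does not work, since lex-smaller $\ell$-tuples need not be pointwise smaller (e.g.\ $(0,10)\prec(1,1)$). The trick that replaces it is the two-sided squeeze above: the $A^g$-condition forces a lower bound on $q_k$ of polynomial size $p_\ell^{4d}$, while Lemma \ref{basso} ii) for $m$'s presentation forces an upper bound of linear size $p_\ell$, and the gap between these is incompatible with the choice $\mathtt c\leq 1/2$, $\mathtt C\geq 4$. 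Once this is established, the verification of the cut conditions and of $|\er(m)|>\theta N^{\tau_1}$ is just bookkeeping with the definitions of $\tau(p_\ell)$, $\mu,\theta<\mathtt C$ and $\mu>\mathtt c$.
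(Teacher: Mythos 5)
Your proof is correct. The paper does not include an argument for Lemma~\ref{minko} (it is taken over from \cite{PX}, and the paper states that ``we omit most proofs''), so a direct comparison is not possible; but the route you take is the natural one and every step checks out. The one substantive idea is the span identification $\langle w_1,\ldots,w_\ell\rangle=\langle v_1,\ldots,v_\ell\rangle$, and your two--sided squeeze nails it cleanly: if some $w_k\in B_N$ with $k\leq \ell$ escaped $V=\langle v_i\rangle_\ell$, the $A^g$ hypothesis would force $q_k\geq \mathtt C\mathtt c^{-4d}p_\ell^{4d}$, while Lemma~\ref{basso}~ii) applied to $m$'s optimal presentation with $j=k-1$ and a dimension count gives some $v_{i_0}\in B_N\setminus\langle w_1,\ldots,w_{k-1}\rangle$, hence $q_k\leq p_{i_0}\leq p_\ell$, and these two bounds are incompatible with $\mathtt c\leq \tfrac12$, $\mathtt C\geq 4$. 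Your remark that naive sign--lex comparison of the two presentations would not yield the span equality directly is also on point; that is exactly why one has to pass through the good--point lower bound. After the span identification, Lemma~\ref{basso}~iii) plus uniqueness of the $N$--optimal presentation identifies $[w_i;q_i]_\ell$ with $[v_i;p_i]_\ell$, and the three conditions for $m\in A^g_{(N,\theta,\mu,\tau(p_\ell))}$ ($q_\ell<\mu N^{\tau(p_\ell)}$ from $\mu>\mathtt c$, $q_{\ell+1}>\theta N^{4d\tau(p_\ell)}$ from $w_{\ell+1}\in B_N\setminus V$ and the $A^g$ inequality with $\mathtt C>\theta$, and $|\er(m)|>\theta N^{\tau_1}$ from $\mathtt C>\theta$) are, as you say, bookkeeping.
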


 \begin{remark}\label{varlm0}
1)\quad If    $m \frec{N}[v_i;p_i]$ has a cut at $\ell$      for the
  parameters $ \theta',\mu' ,\tau$ then it has also a cut at $\ell$   for parameters $\theta,\mu,\tau$ with $ \theta\leq
\theta',\ \mu'\leq \mu $ provided  $ \theta,\mu,\tau$ are   allowable.

If $ \theta\leq
\theta',\ \mu'\leq \mu $ we shall say that the allowable parameters $\theta,\mu$ are {\em less restrictive } than $\theta',\mu'$.\smallskip

2)\quad If for a given $\ell,\tau$ we have  $p_\ell\leq \mathtt cN^\tau,\, p_{\ell+1}\geq \mathtt CN^{4d\tau}$, then $\ell$ is a cut with parameters $ \theta,\mu,\tau$ for every choice of allowable $ \theta,\mu$.

\end{remark}
\begin{lemma}\label{mah}[Neighborhood property]
Consider $m,r \in \Z^d $ with $m\frec{N}[v_i;p_i]$, $r\frec{N}[w_i;q_i]$.
 Suppose that $m$ has a  cut at $\ell$    for     the parameters $N, \theta',\mu',\tau$,  and suppose there exist  allowable parameters   $ \theta<\theta',\mu'<\mu $:
 \begin{equation}
\label{vicin}|r-m|<\min({\kappa}   ^{-1}(\mu-\mu')N^{\tau-1},\ {\kappa}   ^{-1}(\theta'-\theta)N^{4d\tau-1} ).
\end{equation} then:

 (1) The point $r$ has a cut at $\ell$  for all allowable parameters  $\theta, \mu,\tau$  for which \eqref{vicin} holds.
\smallskip

(2) $\langle
w_1,\dots,w_\ell\rangle=\langle v_1,\dots,v_\ell\rangle$.

(3)  $[w_i;q_i]_\ell$ is the $N$--optimal presentation of $[v_i;p_i]_\ell+r-m$.
\end{lemma}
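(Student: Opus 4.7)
The plan is to exploit the optimality of $r$'s presentation, together with the closeness hypothesis \eqref{vicin}, to transfer the cut structure from $m$ to $r$. I will prove the three claims in the order (2), then the two halves of (1), then (3).

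Throughout I rely on the basic estimate that for every $v\in B_N$,
$$
|(v,r)-(v,m)|\leq \kappa N |r-m| < \min\bigl((\mu-\mu')N^\tau,\,(\theta'-\theta)N^{4d\tau}\bigr).
$$
In particular, for each $i\leq\ell$ the vector $v_i$ from $m$'s optimal presentation satisfies $|(v_i,r)|\leq p_i+\kappa N|r-m|<\mu N^\tau$, since $p_i\leq p_\ell<\mu' N^\tau$ by the cut hypothesis. For the upper bound $q_\ell<\mu N^\tau$, I apply Lemma \ref{basso}(ii) to $r$ viewed as a codimension-$d$ subspace: any $v\in B_N$ with $|(v,r)|<q_\ell$ lies in $\langle w_1,\dots,w_{\ell-1}\rangle$, a saturated sublattice of rank $\ell-1$; if $q_\ell\geq\mu N^\tau$ held, the $\ell$ independent vectors $v_1,\dots,v_\ell$ would all sit in this rank-$(\ell-1)$ lattice, a contradiction. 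For (2), each $i\leq\ell$ gives $|(w_i,r)|=q_i\leq q_\ell<\mu N^\tau$, hence $|(w_i,m)|<(2\mu-\mu')N^\tau<\theta' N^{4d\tau}$; the last inequality uses $\mu,\mu'<\mathtt C$ and the allowability constraints \eqref{itau} (which force $N^{(4d-1)\tau_0}\gg 2\mathtt C/\mathtt c$). Remark \ref{oboe} applied to $m$'s cut then forces $w_i\in\langle v_j\rangle_\ell$, and since $\{w_1,\dots,w_\ell\}$ is independent and $\langle w_i\rangle_\ell,\langle v_i\rangle_\ell$ are saturated rank-$\ell$ sublattices of $\Z^d$, the inclusion is an equality.

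For the lower bound, any $v\in B_N\setminus\langle w_i\rangle_\ell=B_N\setminus\langle v_i\rangle_\ell$ satisfies $|(v,m)|>\theta' N^{4d\tau}$ by Remark \ref{oboe}, so $|(v,r)|>\theta' N^{4d\tau}-(\theta'-\theta)N^{4d\tau}=\theta N^{4d\tau}$; the vector $w_{\ell+1}$ lies in this set (else $\{w_i\}_{i=1}^{\ell+1}$ would not be linearly independent), which completes (1). For (3), Lemma \ref{basso}(iii) states that $[w_i;q_i]_\ell$ is $N$-optimal for $W:=\{y:(w_i,y)=q_i,\,i\leq\ell\}$; by (2) the shifted subspace $[v_i;p_i]_\ell+(r-m)=\{y:(v_i,y)=p_i+(v_i,r-m),\,i\leq\ell\}$ has the same $\ell$-codimensional linear direction as $W$, and both contain $r$, so they coincide.

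The main obstacle is the span equality step, where a bound on $|(w_i,r)|$ must be upgraded to a genuine membership $w_i\in\langle v_j\rangle_\ell$ via the cut of $m$. This demands that the polynomial gap between $N^\tau$ and $N^{4d\tau}$ be wide enough to absorb both the correction $\kappa N|r-m|$ and the universal constants $\mathtt c,\mathtt C$, which is precisely what the definition \eqref{itau} of $\tau_0,\tau_1,N_0$ guarantees.
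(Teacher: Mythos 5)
Your proof is correct, and the logical dependency you set up is sound: you first establish $q_\ell<\mu N^\tau$ (via the transfer estimate $|(v_i,r)|<\mu N^\tau$ plus Lemma \ref{basso}(ii) applied to $r$), then use that to get the span equality (2) via Remark \ref{oboe}, then use (2) to finish the lower bound $q_{\ell+1}>\theta N^{4d\tau}$, and finally deduce (3) from Lemma \ref{basso}(iii) and (2). The paper omits the proof of this lemma (it refers to \cite{PX}), so no direct comparison is possible, but your argument uses exactly the toolkit the paper recalls and is the natural route. Two small points worth noting: your estimate $2\mu-\mu'<2\mathtt C$ is a bit generous (one could use $2\mathtt C-\mathtt c$), but either way the needed inequality $N^{(4d-1)\tau}\geq 2\mathtt C\mathtt c^{-1}$ follows from $N>N_0=(d+1)!\kappa^{d+1}\mathtt C\mathtt c^{-1}$ and $\tau\geq\tau_0\geq 1$, $4d-1\geq 3$; and for the boundary cases $\ell=0$ (no upper-bound condition) and $\ell=d$ (no lower-bound condition) the argument degenerates correctly thanks to the conventions $p_0=0$, $p_{d+1}=\infty$, which you rely on implicitly.
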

\begin{corollary}\label{dueta}
Consider $m\frec{N}[v_i;p_i]$, $r\frec{N}[w_i;q_i]$  such that
 \begin{equation}\label{edueta}
 |r-m| < {\kappa}   ^{-1}c( N^{4d \tau-1}- {\mathtt c \mathtt C^{-1}} N^{\tau -1})
\end{equation}
and both $m,r$ have a cut with parameters $\underline p=(N,\theta,\mu,\tau)$  then we can deduce:

\noindent i)\quad the vectors $m,r$  have the cut at the same $\ell$;

\noindent ii)\quad   the space $B$ associated to the cut of $r$  is the one parallel to $A=[v_i;p_i]_\ell$ and passing through $r$ namely
\begin{equation}   B = A+ r-m.
 \label{traslo} 
\end{equation}
\end{corollary}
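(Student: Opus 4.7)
My plan is to apply Lemma \ref{mah} (the Neighborhood property) to transport the cut of $m$ onto $r$, and then invoke the uniqueness of cuts for given parameters (noted right after Definition \ref{cut}) to force the two cuts to coincide. Writing $\ell_1$ for the cut of $m$ and $\ell_2$ for the cut of $r$, both with the common parameters $\underline p = (N,\theta,\mu,\tau)$, I have $p_{\ell_1} < \mu N^\tau$, $p_{\ell_1+1} > \theta N^{4d\tau}$, and similarly $q_{\ell_2} < \mu N^\tau$, $q_{\ell_2+1} > \theta N^{4d\tau}$.

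The key observation is that Definition \ref{allpa} enforces the strict inequalities $\mathtt c < \theta, \mu < \mathtt C$, so there is a ``cushion'' in the parameters that I will exploit. I apply Lemma \ref{mah} to $m$ with the role of $(\theta', \mu')$ in the lemma played by the given $(\theta, \mu)$, and with the less restrictive target parameters $(\bar\theta, \bar\mu)$ chosen to satisfy $\mathtt c \leq \bar\theta < \theta$ and $\mu < \bar\mu \leq \mathtt C$; the concrete choice $\bar\theta = \mathtt c$, $\bar\mu = \mathtt C$ is the natural one. The required distance hypothesis becomes
\[
|r - m| < \min\bigl(\kappa^{-1}(\bar\mu - \mu) N^{\tau - 1},\; \kappa^{-1}(\theta - \bar\theta) N^{4d\tau - 1}\bigr),
\]
after which Lemma \ref{mah} yields (a) $r$ has a cut at $\ell_1$ with the parameters $(\bar\theta, \bar\mu, \tau)$, and (b) $[w_i; q_i]_{\ell_1} = [v_i; p_i]_{\ell_1} + r - m$.

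For part (i): by Remark \ref{varlm0}(1), $r$'s original cut at $\ell_2$ with parameters $(\theta, \mu, \tau)$ persists when passing to the less restrictive $(\bar\theta, \bar\mu, \tau)$. Since cuts for fixed allowable parameters are unique, I must have $\ell_1 = \ell_2$. For part (ii): the space associated to $r$'s cut is $B = [w_i; q_i]_{\ell_2}$, which by (i) equals $[w_i; q_i]_{\ell_1}$, and by (b) equals $A + r - m$, establishing \eqref{traslo}.

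The main obstacle will be the book-keeping needed to verify that the distance bound \eqref{edueta}, with its somewhat unusual subtracted lower-order term $\mathtt c \mathtt C^{-1} N^{\tau-1}$, does indeed dominate both entries of the two-term minimum arising in the hypothesis of Lemma \ref{mah} for the chosen cushion $(\bar\theta, \bar\mu) = (\mathtt c, \mathtt C)$. This will rely on the quantitative inputs from Definition \ref{allpa} and \eqref{itau}, particularly $\tau \geq \tau_0$ and $N > N_0$, which together ensure that $N^{4d\tau - 1}$ overwhelmingly dominates $N^{\tau - 1}$ and that the factors $\mathtt C - \mu$ and $\theta - \mathtt c$ can absorb the constants coming from \eqref{edueta}.
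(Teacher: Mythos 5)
Your strategy — apply Lemma \ref{mah} with $(\theta,\mu)$ in the role of $(\theta',\mu')$, relax to cushion parameters $(\bar\theta,\bar\mu)$, then invoke uniqueness of the cut for fixed allowable parameters to force $\ell_1=\ell_2$, and read off (ii) from part (3) of Lemma \ref{mah} — is exactly the route the paper intends; the corollary is stated without proof precisely because it is meant to be an immediate consequence of Lemma \ref{mah} plus Remark \ref{varlm0}(1). Two small cautions: the boundary choices $\bar\theta=\mathtt c$, $\bar\mu=\mathtt C$ are not themselves allowable (Definition \ref{allpa} demands strict inequalities $\mathtt c<\bar\theta$, $\bar\mu<\mathtt C$), so you must approach them from inside; this is harmless.

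The real problem is the ``book-keeping'' you defer to, and in fact your final paragraph has the inequality running the wrong way. The hypothesis you need to deliver to Lemma \ref{mah} is
$$
|r-m| \;<\; \min\bigl(\kappa^{-1}(\bar\mu-\mu)N^{\tau-1},\; \kappa^{-1}(\theta-\bar\theta)N^{4d\tau-1}\bigr),
$$
and since $\bar\mu-\mu\le \mathtt C-\mu$, $\theta-\bar\theta\le \theta-\mathtt c$, the best achievable right-hand side is $\min\bigl(\kappa^{-1}(\mathtt C-\mu)N^{\tau-1},\,\kappa^{-1}(\theta-\mathtt c)N^{4d\tau-1}\bigr)$. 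Because $4d\tau-1>\tau-1$ and $\mathtt C-\mu$ is bounded by a constant, this minimum is generically of order $N^{\tau-1}$: the $N^{4d\tau-1}$ term does \emph{not} dominate, it is the large one, so the $N^{\tau-1}$ term from the $\mu$-cushion is the binding constraint. The bound \eqref{edueta}, on the other hand, is $\kappa^{-1}\mathtt c\bigl(N^{4d\tau-1}-\mathtt c\mathtt C^{-1}N^{\tau-1}\bigr)\sim \kappa^{-1}\mathtt c\,N^{4d\tau-1}$, which is larger than the Lemma \ref{mah} threshold by a factor $N^{(4d-1)\tau}$. So \eqref{edueta} does \emph{not} imply the distance hypothesis of Lemma \ref{mah}; your claim that it ``dominates both entries of the two-term minimum'' is the opposite of what you need — a larger hypothesized distance makes the corollary harder, not easier, to prove. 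Moreover the cushions $\mathtt C-\mu$ and $\theta-\mathtt c$ are merely positive for allowable $\theta,\mu$ and can be arbitrarily small, so they cannot be used to ``absorb constants.'' You should either record that a stronger distance assumption (of order $N^{\tau-1}$, involving $\mathtt C-\mu$ and $\theta-\mathtt c$) is what Lemma \ref{mah} actually requires — noting that in the paper's only use of the corollary, Remark \ref{dueta2}, the relevant distance is $|r-m|\lesssim\mu N^3+3d\kappa N$, which is $\ll N^{\tau-1}$ since $\tau\ge\tau_0>12$, so the result does go through there — or supply a different argument that genuinely gets the conclusion from the weaker hypothesis \eqref{edueta}, which your current proposal does not do.
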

The previous results explain why we wanted to introduce the parameters $\mu,\theta$  to define cuts, in  fact  
\begin{remark}
With the above lemma we are stating that if $m$  has a $\ell$ cut with parameters $\theta',\mu',\tau$  then, for all choices of $\theta<\theta',\mu'<\mu$, for which $ \theta,\mu$ are allowable parameters, we have described  a spherical neighborhood $B$ of $m$ such that all points $r\in B$  have a $\ell$ cut with parameters $\theta,\mu,\tau$. The radius of $B$ is determined by Formula \eqref{vicin}. Note moreover that if $r$ has a cut at $\ell$ for some parameters then so has $-r$ and with the same parameters. Then lemma \ref{mah} holds verbatim if in formula \eqref{vicin} we substitute $|m-r|$ with $|m+r|$.
\end{remark}
We finally combine  \ref{minko} and \ref{dueta}
\begin{lemma}\label{lintor}
    For all affine subspaces $[v_i;p_i]_\ell$ with  $p_\ell \leq  \mathtt c N^{\tau_1/(4d)}$  the following holds. For all  $m\in \Z^d $  with $m\in [v_i;p_i]_\ell^g$,  for all $r\in \Z^d $ and
 for all parameters $\mathtt c< \theta,\mu<\mathtt C$ such that
\begin{equation}\label{ladisa}
|r-m|<{\kappa}   ^{-1}(\mu-\mathtt c) N^{\tau_0-1},{\kappa}   ^{-1}(\mathtt C-\theta)N^{4d\tau_0-1} ,
\end{equation}
$r,m$ have the same cut $ \ell$ with parameters $ \theta,\mu,\tau(p_\ell) $ with parallel corresponding affine spaces.\end{lemma}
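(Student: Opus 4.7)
The plan is to combine the two previous results, Lemma \ref{minko} and Lemma \ref{mah} (together with its Corollary \ref{dueta}), in a direct way. The hypothesis $m\in [v_i;p_i]_\ell^g$ is the starting point that feeds into Lemma \ref{minko}, while the uniform smallness condition \eqref{ladisa} is what we need to invoke Lemma \ref{mah} with the most restrictive source parameters.

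First I would note that since $p_\ell\leq \mathtt c N^{\tau_1/(4d)}$, Definition \ref{taup} produces an auxiliary exponent $\tau(p_\ell)$ with $\tau_0\leq \tau(p_\ell)\leq \tau_1/(4d)$. Lemma \ref{minko} then says that $m\in [v_i;p_i]_\ell^g$ is a $(N,\theta',\mu',\tau(p_\ell))$--good point for every choice $\mathtt c<\theta',\mu'<\mathtt C$; in particular, applying it with the most restrictive source values $\theta'=\mathtt C$, $\mu'=\mathtt c$, the point $m$ has a cut at $\ell$ with parameters $(N,\mathtt C,\mathtt c,\tau(p_\ell))$ and associated affine space $[v_i;p_i]_\ell$.

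Next I would invoke Lemma \ref{mah} with source parameters $(\theta',\mu')=(\mathtt C,\mathtt c)$ and target allowable parameters $(\theta,\mu)$ in the range $\mathtt c<\theta,\mu<\mathtt C$ prescribed by the statement. The neighborhood condition \eqref{vicin} that needs to be verified is
\begin{equation*}
|r-m|<\min\bigl(\kappa^{-1}(\mu-\mathtt c)N^{\tau(p_\ell)-1},\ \kappa^{-1}(\mathtt C-\theta)N^{4d\tau(p_\ell)-1}\bigr).
\end{equation*}
Because $\tau(p_\ell)\geq \tau_0$, we have $N^{\tau(p_\ell)-1}\geq N^{\tau_0-1}$ and $N^{4d\tau(p_\ell)-1}\geq N^{4d\tau_0-1}$, so the hypothesis \eqref{ladisa} of the present lemma implies \eqref{vicin}. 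Then part (1) of Lemma \ref{mah} gives that $r$ has a cut at the same $\ell$ with parameters $(N,\theta,\mu,\tau(p_\ell))$, and part (3) identifies its associated affine space as the $N$--optimal presentation of $[v_i;p_i]_\ell+(r-m)$, i.e.\ the affine subspace parallel to $[v_i;p_i]_\ell$ through the translate.

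There is no real obstacle here: the argument is essentially bookkeeping. The only point that requires a moment's care is the monotonicity in $\tau$ of the radii appearing in \eqref{vicin}, which is why the hypothesis \eqref{ladisa} is stated with the smaller exponents $\tau_0$ and $4d\tau_0$: these are uniform over all admissible $\tau(p_\ell)$, and so the same bound on $|r-m|$ simultaneously ensures the neighborhood property for every $[v_i;p_i]_\ell$ in the range allowed by the hypothesis. Corollary \ref{dueta} is then automatically in force and confirms that the affine spaces associated to $m$ and $r$ are parallel translates, completing the proof.
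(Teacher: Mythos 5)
The overall strategy — feed $m$ into Lemma \ref{minko}, then propagate the cut to $r$ via the neighborhood property of Lemma \ref{mah}/Corollary \ref{dueta}, using the monotonicity $\tau(p_\ell)\geq\tau_0$ to pass from the $\tau_0$–radii in \eqref{ladisa} to the $\tau(p_\ell)$–radii in \eqref{vicin} — is the right one. But there is a real gap in the way you set up the source parameters for Lemma \ref{mah}.

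You assert that Lemma \ref{minko} can be ``applied with the most restrictive source values $\theta'=\mathtt C$, $\mu'=\mathtt c$'' to get that $m$ has a cut at $\ell$ with parameters $(N,\mathtt C,\mathtt c,\tau(p_\ell))$. Lemma \ref{minko} only gives the conclusion for $\mathtt c<\theta',\mu'<\mathtt C$ (strict), and the boundary claim is in fact false whenever $p_\ell\geq\mathtt c N^{\tau_0}$. Indeed by Definition \ref{taup} one has $\mathtt c N^{\tau(p_\ell)}=\max(\mathtt c N^{\tau_0},p_\ell)$, so if $p_\ell\geq\mathtt c N^{\tau_0}$ then $\mathtt c N^{\tau(p_\ell)}=p_\ell$ and the cut condition $p_\ell<\mu' N^{\tau(p_\ell)}$ with $\mu'=\mathtt c$ reads $p_\ell<p_\ell$. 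The hypothesis $p_\ell\leq\mathtt c N^{\tau_1/(4d)}$ certainly allows this regime, so you cannot invoke the cut at the boundary values. (Similarly, the good-point condition \eqref{pippo} only yields $p_{\ell+1}\geq\mathtt C N^{4d\tau(p_\ell)}$, not strictly greater, so $\theta'=\mathtt C$ is also at the boundary of validity.)

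The fix is small but must be written: since the inequalities in \eqref{ladisa} are strict, choose $\epsilon>0$ so that $|r-m|<\kappa^{-1}(\mu-\mathtt c-\epsilon)N^{\tau_0-1}$ and $|r-m|<\kappa^{-1}(\mathtt C-\epsilon-\theta)N^{4d\tau_0-1}$; then take $\mu'=\mathtt c+\epsilon$ and $\theta'=\mathtt C-\epsilon$ (allowable for $\epsilon$ small, with $\mu'<\mu$ and $\theta'>\theta$). Lemma \ref{minko} applies legitimately to this pair, $m$ has a cut with $(N,\theta',\mu',\tau(p_\ell))$, and the neighborhood condition of Lemma \ref{mah} for the target $(\theta,\mu)$ follows from $N^{\tau(p_\ell)-1}\geq N^{\tau_0-1}$ exactly as you wrote. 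Everything after that step in your proof is correct; the parallelism statement already comes from Lemma \ref{mah}(2)–(3), so the appeal to Corollary \ref{dueta} at the end is redundant but harmless.
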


The definitions which we have given are sufficient to define and analyze the quasi--T\"oplitz functions, which are introduced in section \ref{toppa}.
In the next subsection we collect some definitions which are useful for the measure estimates and which are independent of the auxiliary parameters $\theta,\mu$.

\subsection{Graphs and cuts\label{gac}} Recall that the choice of the vectors $S:=\{\mathtt j_i\}$  determines  a colored marked graph $\Gamma_S$ with vertices in the set $S^c$. 

 This graph has   finitely many  components containing red edges.\footnote{A rough estimate of a bound on the norm of these points is $(2d+3)\kappa$.} The remaining set will be denoted by ${\bar S}^c$ and it is a union of connected components each combinatorially isomorphic  to a combinatorial graph out of a finite list
 $\mathcal G:=\{\GA_1,\ldots,\GA_N\}$ formed only of black edges. It will be enough to concentrate our analysis only on these {\em black graphs}. 
 \begin{definition}
Given $\GA\in \mathcal G$ we set $\Sigma_\GA$ to be the union of all connected components of $\Gamma_S$ isomorphic to $\GA$.
\end{definition}
The set $\mathcal G$ is partially ordered  by setting $\GA_i\leq \GA_j$ if $\GA_i$  is isomorphic as marked graph to a subgraph of $\GA_j$.

From the theory developed it follows that, if $\GA\in\mathcal G$ has $d_\GA+1$ vertices we have that $\Sigma_\GA$ is a union of translates of any of its components (of the graph $\Gamma_S$).  Moreover $\Sigma_\GA$  is the portion of $\bar S^c$ in  a union of $d_\GA+1$ parallel affine subspaces of codimension $d_\GA$  minus   a union of finitely many affine subspaces of higher codimension  whose points lie in $\bigcup_{\GA_i> \GA}\Sigma_{\GA_i}$.

 Let us recall how we arrived at this statement.   We   choose a {\em root}  in each $\GA_i$. Using the root    a geometric realization of  some $\GA=\GA_i$ is an isomorphic graph, with vertices in $\bar S^c$,  in which the image $\er$ of the root     solves a certain set of $d_{\GA}$ independent linear equations (Formula (61) of \cite{PP}) and the other vertices are determined by the labels on the graph  $\GA$.
 The components $A$ of $\Gamma_S$ which are  isomorphic  to $\GA$  are exactly those     geometric realizations of   $\GA$   which  are not properly contained in a larger component.  Recall that we have imposed generic conditions so that the  vertices in a component are affinely independent. Therefore, a component  $A$ associated to $\GA$, spans an affine subspace $\langle A\rangle$ of dimension exactly $d_\GA$.  All other geometric realizations of $\GA$ are obtained from a given $A$  by translating the graph $A$ with the integral vectors orthogonal to $\langle A\rangle$.  This set is thus a union of $d_\GA+1$ parallel affine subspaces of codimension $d_\GA$ passing each through an element $m$ of $A$, image of a point $a\in\GA$.  It may well happen that a translate of $\er$ may solve also the linear equations defining a larger graph, the  points in the corresponding component lie thus in a stratum $\Sigma_{\GA_i},\ \GA_i>\GA$.

\begin{example}       
  $$   \xymatrix{ &x-\mathtt j_1+\mathtt j_3\ar@{<- }[d] _{2,1} ^{\qquad }& &equations&&(x,\mathtt j_1-\mathtt j_3)=|\mathtt j_1|^2-(\mathtt j_2,\mathtt j_3)\\ x\ar@{ ->}[r] _{3,2}\ar@{<-}[ru] ^{3,1} &x-\mathtt j_2+\mathtt j_3& &  &&(x,\mathtt j_2-\mathtt j_3)=|\mathtt j_2|^2-(\mathtt j_2,\mathtt j_3) }   \quad  
 $$  
 \end{example} Notice that this is a subgraph of the graph of example \ref{ungra}.

    If $a\in \GA $  we let $\Sigma_{\GA,a}$ be the subset of  $\Sigma_\GA$ formed by the corresponding elements so that $\Sigma_\GA$ is the disjoint union of the  strata  $\Sigma_{\GA,a}$ as $a$ runs over the vertices of $\GA$.    

The set $\Sigma_{\GA,a}$ spans an affine space  $\langle \Sigma_{\GA,a}\rangle$ of codimension $d_\GA$ and, in fact, it  is the complement in this affine space of the points which belong to graphs  which contain strictly $\GA $. Thus $\Sigma_{\GA,a}$ is obtained from the affine space $\langle \Sigma_{\GA,a}\rangle$ removing  a finite union of proper affine subspaces.  \smallskip

Thus for any $\GA\in\mathcal G$ having chosen a root $\er$  we have the stratum $\Sigma_{\GA_,\er}$ which is the complement in an affine space of a finite union of codimension 1 subspaces.  Any point $m\in {\bar S}^c$    lies in a unique stratum $\Sigma_{\GA,a}$ which is parallel to $\Sigma_{\GA,\er}$ we thus have for $m$ a corresponding root $\mathtt r(m)\in \Sigma_{\GA,\er}$ which is the intersection of $\Sigma_{\GA,\er}$ with the connected component of the graph $\GA_S$ in which $m$ lies. Thus $m-\mathtt r(m)$ depends only upon the stratum $\Sigma_{\GA,a}$.
\begin{definition}
We denote the vector $m-\mathtt r(m)$ as the {\em type of $m$}.
\end{definition}

\begin{remark}
\label{types} The possible types run on a finite set $\mathcal Z$ of vectors. The type of a vector $m$,  as seen in Formula \eqref{defL} is a linear combination of the elements $\mathtt j_i$ with coefficients the coordinates of $L(m)$. Hence each  $u\in \mathcal Z$ has $|u|\leq d\kappa$. Note that when $m\in \Sigma_{\GA,a}$ the element $L(m)$ is fixed (by $a$), the corresponding type will  be denoted by $u_a$.  Notice that $\Sigma_{\GA,a}=\Sigma_{\GA,\er}+u_a$. \end{remark}
\begin{remark}\label{opstrat}
Among the combinatorial graphs we have the graph $\{0\}$ formed by a single vertex.  The corresponding {\em open} stratum $\Sigma_{\{0\},0}$  obviously spans $\Z^d$, it is formed of all the points in $S^c$ which do not belong to any of the proper strata $\Sigma_\GA$.
\end{remark} 
\medskip

We have thus finitely many affine subspaces  $\langle \Sigma_{\GA,a}\rangle$ associated to the pairs $(\GA,a)$, these subspaces can be presented using the linear equations associated to the geometric realization by formulas (61) of \cite{PP}. 
We have a finite number of possible systems of equations with coefficients depending linearly or quadratically from the set $S$. We verify  that all the constant coefficients of these equations are $<\mathtt c N_0$ (in fact  the coefficients can be bound by $(2d\kappa)^2$). \label{ilpb}Thus by the bounds chosen each of these subspace lies in $\mathcal H_N,\ \forall N>N_0$ and thus has an $N$--optimal presentation $[w_i;q_i]_{d_\GA}$, furthermore each $q_i<\mathtt c N_0$.
\begin{lemma}\label{dimelle}
Assume that $m\in \Sigma_{\GA,a}$ has a cut at $\ell$ with parameters $\theta,\mu,\tau$ and associated space $[v_i,p_i]_\ell$.    Then  $[v_i,p_i]_\ell$ is contained in the affine space $\langle \Sigma_{\GA,a}\rangle=[w_i;q_i]_{d_\GA}$. In particular $\ell\geq d_{\GA}$.
\end{lemma}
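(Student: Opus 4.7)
The plan is to exploit the defining property of the cut, which says that any vector $v \in B_N$ not lying in $\langle v_1,\dots,v_\ell\rangle$ must pair with $m$ to give something of size at least $\theta N^{4d\tau}$ (Remark \ref{oboe}). I will compare this with the trivial upper bound $|(w_j,m)|=|q_j|<\mathtt c N_0$ coming from the equations that cut out $\langle\Sigma_{\GA,a}\rangle$, and derive a contradiction unless the $w_j$ lie in the span of the $v_i$. Once this is shown, both the codimension inequality $\ell\geq d_\GA$ and the inclusion of affine spaces fall out essentially by linear algebra.

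First I would set up the two presentations. Since $m\in\Sigma_{\GA,a}\subset\langle\Sigma_{\GA,a}\rangle=[w_i;q_i]_{d_\GA}$, we have $(w_j,m)=q_j$ for $j=1,\dots,d_\GA$. By the remark on page \pageref{ilpb}, the coefficients $w_j$ and the constants $q_j$ are all bounded by $\mathtt c N_0$ (they are polynomial in $S$ of low degree, explicitly bounded by $(2d\kappa)^2$), so in particular $w_j\in B_N$ for every $N>N_0$, and $|q_j|<\mathtt c N_0$.

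Next I would show that every $w_j$ lies in $\langle v_1,\dots,v_\ell\rangle$. Suppose not: then some $w_j\in B_N\setminus\langle v_1,\dots,v_\ell\rangle$, and Remark \ref{oboe} applied to the cut at $\ell$ forces
\[
\mathtt c N_0 > |q_j| = |(w_j,m)| > \theta N^{4d\tau} \geq \mathtt c N_0^{4d\tau_0},
\]
which is absurd since $N\geq N_0$ and $4d\tau_0>1$. Hence $w_1,\dots,w_{d_\GA}\in \langle v_1,\dots,v_\ell\rangle$; as the $w_j$ are linearly independent (they cut out a codimension $d_\GA$ affine space) one obtains $\ell\geq d_\GA$.

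Finally, to get the inclusion $[v_i;p_i]_\ell\subseteq[w_i;q_i]_{d_\GA}$, expand each $w_j=\sum_i c_{ji}v_i$. Then for any $x\in[v_i;p_i]_\ell$,
\[
(w_j,x)=\sum_i c_{ji}(v_i,x)=\sum_i c_{ji}p_i=\sum_i c_{ji}(v_i,m)=(w_j,m)=q_j,
\]
where the third equality uses $m\in[v_i;p_i]_\ell$. So $x\in[w_i;q_i]_{d_\GA}$ and the inclusion is proved. The only nontrivial step is the first one (containment of the $w_j$ in the span of the $v_i$); everything else is linear bookkeeping. The crux is really the numerology comparing $\mathtt c N_0$ with $\theta N^{4d\tau}$, which is automatic for allowable parameters by the choice of $N_0$ in \eqref{itau}.
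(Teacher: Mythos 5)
Your proof is correct and follows the same strategy as the paper: observe that the equations $[w_i;q_i]$ defining $\langle\Sigma_{\GA,a}\rangle$ have small constants $q_j$, so if some $w_j$ lay outside $\langle v_1,\dots,v_\ell\rangle$ the cut estimate (Remark \ref{oboe}) would give $|(w_j,m)|>\theta N^{4d\tau}$, contradicting $|q_j|<\mathtt c N_0$; hence $\langle w_i\rangle_{d_\GA}\subset\langle v_i\rangle_\ell$ and the affine inclusion follows since $m$ lies in both spaces. The only difference is that you spell out the final linear-algebra step, which the paper leaves implicit.
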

\begin{proof}
Since $m$ is in both spaces  it is enough to prove that $\langle w_i\rangle_{d_\GA}\subset \langle v_i\rangle_\ell$.  By contradiction if some  $w_j\notin \langle v_i\rangle_\ell$ then we have that $|(w_j, m)|>c N^{4d\tau_0}$. This is incompatible with the estimates on the $ q_i$.
\end{proof}
\begin{theorem}\label{Lostra}
If $m,n$ have the same cut $\ell$ and the same associated affine space $[v_i,p_i]_\ell$ then they belong to the same stratum $\Sigma_{\GA,a}$ and hence have the same type, i.e. $ m-\er (m)= n-\er(n)$.  \end{theorem}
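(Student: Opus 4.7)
The plan is to extract, from the shared cut data $(A,\ell)$, enough information to show that $m$ and $n$ lie in the same stratum $\Sigma_{\GA,a}$; the equality of types $m-\er(m)=n-\er(n)=u_a$ will then follow automatically. First I would apply Lemma~\ref{dimelle} simultaneously to $m\in\Sigma_{\GA_m,a_m}$ and to $n\in\Sigma_{\GA_n,a_n}$, both with cut $\ell$ and associated affine space $A$. This yields
\[ A\subseteq\langle\Sigma_{\GA_m,a_m}\rangle\cap\langle\Sigma_{\GA_n,a_n}\rangle, \]
and, since $m,n\in A$, the cross-inclusions $m\in\langle\Sigma_{\GA_n,a_n}\rangle$ and $n\in\langle\Sigma_{\GA_m,a_m}\rangle$ would be the key geometric input.

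The crucial observation I would invoke next is that the combinatorial graphs in $\mathcal G$ are purely black, so each affine span $\langle\Sigma_{\GA,a}\rangle$ is cut out by \emph{linear} equations only (the edge-relations $q=p+\mathtt j_j-\mathtt j_i$ and $(p-\mathtt j_i,\mathtt j_i-\mathtt j_j)=0$, with no quadratic sphere condition). Hence an integer point of $\bar S^c$ lying in $\langle\Sigma_{\GA_n,a_n}\rangle$ is automatically realized as the $a_n$-vertex of a subgraph isomorphic to $\GA_n$ inside its ambient connected component of $\Gamma_S$; applied to $m$, this forces the combinatorial type $\GA_m$ of $m$'s component to dominate $\GA_n$ in the partial order on $\mathcal G$. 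Swapping the roles of $m$ and $n$ produces the reverse inequality, so $\GA_m=\GA_n=:\GA$.

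It then remains to identify the vertex. Both $\langle\Sigma_{\GA,a_m}\rangle=\langle\Sigma_{\GA,\er}\rangle+u_{a_m}$ and $\langle\Sigma_{\GA,a_n}\rangle=\langle\Sigma_{\GA,\er}\rangle+u_{a_n}$ are translates of the same linear subspace, and both pass through the common point $m$; they therefore coincide as affine subspaces. Since $\Sigma_\GA$ is described as the union of exactly $d_\GA+1$ pairwise distinct parallel translates indexed by the vertices of $\GA$, this forces $a_m=a_n$, so $u_{a_m}=u_{a_n}$ and $m-\er(m)=n-\er(n)$. The only subtle step in this argument is the passage from $m\in\langle\Sigma_{\GA_n,a_n}\rangle$ to $\GA_m\geq\GA_n$: it relies essentially on the absence of red edges in the graphs of $\mathcal G$, which ensures that the affine span of a stratum truly captures all of its defining combinatorial constraints and not merely their linear shadow.
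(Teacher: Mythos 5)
Your argument is correct and follows essentially the same route as the paper: both apply Lemma~\ref{dimelle} to place $A$ inside each of $\langle\Sigma_{\GA_m,a_m}\rangle$ and $\langle\Sigma_{\GA_n,a_n}\rangle$, use the cross-inclusions to force $\GA_m\geq\GA_n$ and $\GA_n\geq\GA_m$ by the "satisfying the linear stratum equations implies membership in a dominating stratum" principle, and then identify the vertex from the resulting coincidence of affine spans. The only stylistic difference is in the last step, where you argue via distinctness of the $d_\GA+1$ parallel translates while the paper directly invokes the characterization of $\Sigma_{\GA,a}$ as the complement of the bigger strata inside $\langle\Sigma_{\GA,a}\rangle$; these are equivalent.
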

\begin{proof}
If both $m,n$ form an isolated component then they belong to the open stratum. Assume that $m\in \Sigma_{\GA,a}$  with $d_\GA>0$. Thus $m$ satisfies the equations $[w_i;q_i]_{d_\GA}$. By the previous Lemma  since $n\in [v_i,p_i]_\ell$ we know that $n\in [w_i;q_i]_{d_\GA}$. This implies  $n\in \Sigma_{\GA' }$ where $\GA'$ contains   $\GA$ so that $d_{\GA'}\geq d_\GA$.

Exchanging the roles of $m,n$ we see that $m\in \Sigma_{\GA'' }$ where $\GA''$ contains   $\GA'$.   This implies that $\GA=\GA'$.  Since the equations $[w_i;q_i]_{d_\GA}$ define the affine space spanned by $\Sigma_{\GA,a}$ the claim follows. 
\end{proof}

   \section{Functions \label{toppa}}

 \subsection{T\"oplitz approximation} \subsubsection{Piecewise T\"oplitz functionsn} Given a parameter  $ N> N_0 $ we
will call {\em low momentum variables} relative to   $N$,   denoted by $w^L$,   the $z_j^\sigma$ such that $|\er(j)|< \mathtt C N^3$. Similarly  we call {\em high momentum variables},  denoted by
$w^H$,  the $z_j^\sigma$ such
that $|\er(j)|> \mathtt c N^{\tau_1} $. Notice that  by \eqref{itau} the low and high variables are
separated. Furthermore all variables $z_j$ belonging to blocks with red edges are low by choice of the parameters.
The remaining variables will be denoted by $w^R$. Given any set $X$ of conjugate variables  by $\{\cdot,\cdot\}^X$ we mean the Poisson bracket performed only respect to the variables $X$ (keeping the other variables as parameters).
\smallskip

Our definitions will depend on several parameters which in turn depend upon the particular problem treated. We shall denote them by a compact symbol $\underline p$.

Recall first that a monomial $
e^{\ii (k, x)} y^i z^\alpha {\bar z}^{\b}
$ has  momentum $$\ii(\sum_{i=1}^nk_i\mathtt j_i+\sum_{j\in S^c}\sigma(j)\er(j)(\alpha_j-\beta_i)).$$ Thus we make a:
\begin{definition}\label{LM} {\bf (Low-momentum)}
A monomial
$
e^{\ii (k, x)} y^i z^\alpha {\bar z}^{\b}
$
is $(N,\mu)$-low momentum if\footnote{For $k\in\Z^n$, by $|k|$ we always mean the $L^1$ norm $\sum_i|k_i|$. In $\Z^d$ instead we use the $L^2$ norm.}
\begin{equation}\label{zerobis}
\sum_{j \in S^c} |\er(j)| (\alpha_j + {\b}_j ) < \mu N^3 \, ,  \quad |k| < N \, .
\end{equation}
We denote by  
$$
{\mathcal  L}_{s,r}(N,\mu) \subset {\mathcal  H}_{s,r}
$$
 the subspace of functions
\begin{equation}\label{g}
g = \sum g_{k,i,\a,\b}e^{\ii (k, x)} y^i z^\a {\bar z}^{\b} \in  {\mathcal  H}_{s,r}
\end{equation}
whose monomials are $(N,\mu)$-low momentum.
The corresponding projection
\be\label{proiLNmu}
\Pi^L_{N,\mu} : {\mathcal  H}_{s,r} \to {\mathcal  L}_{s,r}(N,\mu)
\end{equation}
is defined as $ \Pi^L_{N,\mu} := \Pi_I $ 
 where $ I $ is the subset of indexes $(k,\alpha,\beta)$
satisfying \eqref{zerobis} (notice that the exponent $i$ of $y$ plays no role).
Finally, given $h\in\Z^d$, we denote by  
$$
{\mathcal  L}_{s,r}(N,\mu,h) \subset {\mathcal  L}_{s,r}(N,\mu)
$$
the subspace  of functions of momentum $-\ii\,h$, i.e. whose monomials satisfy (cf. \eqref{mome}):
\be\label{vincula}
\pi_\er(k,\alpha,\beta) + h =0  \,.
\end{equation}
\end{definition}
By \eqref{zerobis},  
any function in ${\mathcal  L}_{s,r}(N,\mu)$, $ \cc < \mu < \CC $,  only depends on $ x, y, w^L $
and therefore
\begin{equation}\label{scamuffo}
g,g'\in {\mathcal  L}_{s,r}(N,\mu) \ \
\Longrightarrow\ \
g g' ,\  \{g,g'\},\  \{g,g'\}^{x,y}, \ \{g,g'\}^L \  \ {\rm do\ not\ depend \ on \ } w^H\,.
\end{equation}Moreover  if
\begin{equation}\label{narsete}
|h| \geq \mu N ^3+ \kappa  N \quad
\Longrightarrow \quad {\mathcal  L}_{s,r}(N,\mu,h)=0 \, .
\end{equation}
\begin{definition}\label{taubilinear}
Given $N$ and allowable parameters $\underline p=(N,\theta,\mu,\tau)$    we
say that a monomial
\begin{equation}
\label{monb}\mathfrak m=\mathfrak m_{k,l,\alpha,\beta,m,n,\s,\s'}:= e^{\ii (k,x)}y^lz^\alpha{\bar z}^\beta z_m^\sigma z_n^{\sigma'}
\end{equation}  is $\underline p=(N,\theta,\mu,\tau)$--bilinear 
 if:
 
 1)\quad  it satisfies momentum conservation, $\pi_\er(\mathfrak m)=0$, \eqref{mome}  and:
\begin{equation}\label{zerouno}
|k|<  N\,,\qquad |\er(n)|,|\er(m)| > \theta N^{\tau_1} \,,\qquad \sum_j |\er(j)|
(\alpha_j+\beta_j)< \mu N^3 \,.
\end{equation}

2)\quad  There is an $0<\ell<d$     so that both $ m,n$ have an $\ell$ cut with parameters $N,\theta,\mu,\tau$.
\smallskip

To the high variables $m,n$ of the monomial $\mathfrak m$ we associate the two affine subspaces $A,B $ associated to their $\ell$--cut.  By reordering the variables if necessary,  we may assume that $A\prec B$ and associate to the monomial, or equivalently to the pair $m,n$, only $A$.

A monomial which is  $\underline p$ bilinear with associated affine space $A$  is also called {\em $A,\underline p $ restricted}.\end{definition}

We shall often write $m\in \underline p-{\rm cut}$ to mean that there is an $0<\ell<d$     so that
  $ m$ has an $\ell$ cut with parameters $\underline p=(N,\theta,\mu,\tau)$.  If we want to stress the affine space $A$ associated to the cut we write $m\in (A,\underline p)-{\rm cut}$. \begin{remark}
 Note that by momentum conservation and Remark \ref{types}
 $$
 0=\! |\pi_\er(\mathfrak m)|\!= \!|\s \mathtt \er(m)+\s' \mathtt \er(n) + \sum_j \s(j)\mathtt \er(j) (\a_j-\b_j) +\pi(k)| \geq | \s m +\s' n| - (2 d\kappa+\mu N^3 + \kappa   N)
 $$
we deduce 
 \begin{equation}\label{limmn}
 | \s m+\s' n| \leq  \mu N^3 +3 d\kappa N.
\end{equation}
 \end{remark}
Thus the monomial $\mathfrak m=g z_m^\sigma z_n^{\sigma'}$ of Formula \eqref{monb} has $g\in {\mathcal  L}_{s,r}(N,\mu, \s \er(m) +\s'\er(n))$.

\begin{remark}\label{dueta2}Note that   under condition 1), in condition  2) it is sufficient to assume that $m,n$ have a cut with the same parameters $N,\theta,\mu,\tau$. The fact that the cut is at the same $\ell$ follows from Corollary \ref{dueta} since Formula \eqref{limmn} implies Formula \eqref{edueta}. 
 
   By Theorem \ref{Lostra}, for all $A,\underline p$ restricted monomials $\mathfrak m$ with given $\s m+\s'n ,\s,\s'$ we may deduce $\er(m)- m$ from $A$.  By Corollary \ref{dueta} and  \eqref{limmn},    we deduce  
\begin{equation}\label{ricci}
 -\s\s' A + \s'(\s m+\s' n)=B.
\end{equation} Note that, by hypothesis, $m\in A^g_{\underline p},\  n\in B^g_{\underline p}$.  $B$ in turn fixes $\er (n)- n$ and hence the {\em type} of the monomial $u(\mathfrak m)$, defined as
\begin{equation}\label{pru}
 u(\mathfrak m):= u(A, \s m+\s'n,\s,\s')= \s(\er(m)-m)+\s' (\er(n)-n),
\end{equation}
  depends only on the elements $A, \s m+\s'n,\s,\s'$.\end{remark}

  \begin{definition}
Set $\underline p= (s,r,N,\theta,\mu,\tau)$, in ${{\mathcal H}}_{s,r}$ we consider the  space ${{\mathcal B}}_{\underline p}$ of
$(N,\theta,\mu,\tau)$--bilinear functions, that is whose monomials are all $(N,\theta,\mu,\tau)$--bilinear.  We call
$$\Pi_{\underline p}:=\Pi_{(N,\theta,\mu,\tau)}:{{\mathcal H}}_{s,r}\to {{\mathcal B}}_{\underline p}$$ the projection onto this subspace. A function $f\in {{\mathcal B}}_{\underline p}$ is of the form:
\be\label{pri}
f(x,y,z,\bar z)= \sum_{\s,\s'=\pm} \quad \sum_{|\er(m)|,|\er(n)|>\theta N^{\tau_1}\,,\;  \atop m,n\in \underline p-{\rm cut}  }f_{m,n}^{\s,\s'}(x,y,w^L) z_m^\s z_n^{\s'}
\ee
with $f_{m,n}^{\s,\s'}(x,y,w^L)\in \mathcal L_{s,r}(N,\mu,\s \er (m)+\s' \er (n))$.

By convention we assume $f_{m,n}^{\s,\s'}(x,y,w^L)=f_{n,m}^{\s',\s}(x,y,w^L)$.

Note that, by    Definition \ref{taubilinear},  to each element $f_{m,n}^{\s,\s'}(x,y,w) z_m^\s z_n^{\s'}$  is associated an affine subspace $A$.
\end{definition}

\begin{remark}\label{varlm}
Of course, if    we take less restrictive parameters $\theta',\mu'$   with $\theta\leq
\theta',\ \mu'\leq \mu$ we have that the set of  $(N,\theta,\mu,\tau)$--bilinear monomials  contains  the set of  $(N,\theta',\mu',\tau)$--bilinear monomials.  In particular we have, for each $s,r$:
$$\Pi_{(N,\theta',\mu',\tau)}\Pi_{(N,\theta,\mu,\tau)}=\Pi_{(N,\theta,\mu,\tau)}\Pi_{(N,\theta',\mu',\tau)}=\Pi_{(N,\theta',\mu',\tau)}\,. $$ 
\end{remark} 
 
 \vskip20pt

\begin{definition}\label{topa} Given parameters $\underline p$ and an affine space $A\in\mathcal H_{\underline p}$ (cf. Definition \ref{isotb}), 
the space $\mathcal T^{\sigma,\sigma'}_{A,\underline p}$  of  $A,\underline p$-restricted {\em
T\"oplitz} bilinear functions  of signature $ {\sigma,\sigma'}$  is formed by the functions $g\in{\mathcal B}_{\underline p}$ where the coefficients of Formula \ref{pri} satisfy:
\begin{equation}\label{nzo}
g  =\sum_{m,n}^{(A,\underline p)} {\mathtt g}(\s m+\s' n) z_m^\s z_n^{\s'}\,.\quad \end{equation}  
The apex $(A,\underline p)$, with $\underline p={(N,\theta,\mu,\tau)}$, means that the sum is on the $A,\underline p$--restricted monomials (cf. \ref{taubilinear}), with bilinear part $z_m^\s z_n^{\s'}$. Notice that, by definition of  $A,\underline p$--restricted monomials, the space 
$\mathcal T^{\sigma,\sigma'}_{A,\underline p}$ is non--zero only if $A\frec{N}[v_i;p_i]_\ell $ is of codimension $\ell$ with $0<\ell<d$ and $p_\ell <\mu N^\tau$.  
 
For all $h= \s m+\s' n $ we have: 
\begin{equation}\label{ceci}
\mathtt g(h)\in \mathcal L_{s,r}(N,\mu, h + u(A,h,\s,\s') ) 
\end{equation}
 where $u(A,h,\s,\s')$ is the type, see Formula \eqref{pru}. 
 \end{definition}  
\begin{remark}
i) Notice that we have a translation invariance property (which justifies the name restricted T\"oplitz). Indeed given $A,\s,\s',h$ one can choose arbitrarily an element $\mathtt g^{\s,\s'}(A,h)$ satisfying \eqref{ceci}, and use formula \eqref{nzo} to define a  function  in ${{\mathcal B}}_{\underline p}$. One easily sees that indeed such an expression defines a function in $\mathcal H_{s,r}$.

\noindent ii) Note that condition \eqref{ceci} implies that $\mathtt g(s m+\s' n)$  in \eqref{nzo} has momentum $\ii(\s \er(m)+\s'\er(n))$ hence $g$ has zero momentum (as required).

 \end{remark}

  Finally we define
   \begin{definition}\label{topp}
 The space  $\mathcal T_{\underline p }$  of  {\em piecewise 
T\"oplitz} bilinear functions  \begin{equation}\label{stro} g= \sum_{A\in  \mathcal H_{N},\sigma,\sigma'=\pm 1} g^{\sigma,\sigma'}(A),\quad  \ g^{\sigma,\sigma'}(A)\in  \mathcal T^{\sigma,\sigma'}_{A,{\underline p}}. \end{equation}
\end{definition}
Of particular significance are the  piecewise 
T\"oplitz diagonal  functions \begin{equation}\label{diagtf}
 {\mathcal Q}(z)= \sum_{A \in \mathcal H_N }\sum^{(A,\underline p)}_{m } \mathcal Q(A) z_{m}\bar z_m=  \sum_{A \in \mathcal H_N }\sum_{m\in  (A,\underline p)-{\rm cut} } \mathcal Q(A) z_{m}\bar z_m\,,\end{equation} in this formula  the elements $m$ run over  all vectors which have an $\ell$-cut with $0<\ell<d$ and $A$ is the corresponding affine spaces. 
 \medskip

By definition  $\mathcal T_{\underline p }\subset \mathcal B_{\underline p }$ is a subspace of the  $(N,\theta,\mu,\tau)$ bilinear
functions. 
 
\begin{lemma}\label{poisb}
Consider   $ f, g \in {\mathcal T}_{\underline p}$ and  $q \in {\mathcal  L}_{s,r}(N,\mu_1,0) $,
$\cc<\mu,\mu_1<\CC$. 
Given any $\underline p'= (s',r',N,\teta',\mu',\tau)$ with
$s/2<s'<s\,, \, r/2<r'<r$,  $ \teta'  \geq \teta , \mu' \leq \mu $, one has
\be\label{pro11}
\Pi_{N,\teta',\mu',\tau} \{ f , q \}^L \, ,
\ \Pi_{N,\teta',\mu',\tau} \{ f , q \}^{x,y}
  \in {\mathcal T}_{\underline p'} \, .
\end{equation}
If moreover 
\begin{equation}\label{ugo}\kappa (\mu N^3+3d\kappa N) <( \teta' - \teta) N^{4d\tau-1}, (\mu-\mu')N^{\tau -1}
\end{equation}
then \be\label{pr2}
\Pi_{N,\teta',\mu',\tau} \{f,g \}=\Pi_{N,\teta',\mu',\tau} \{f,g \}^H \in {\mathcal T}_{\underline p'} \, .
\end{equation}

\end{lemma}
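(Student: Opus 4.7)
The plan is to decompose every Poisson bracket into the three pieces $\{\cdot,\cdot\}^{x,y}+\{\cdot,\cdot\}^L+\{\cdot,\cdot\}^H$ coming from the three groups of variables, and to track how the restricted T\"oplitz form \eqref{nzo}--\eqref{ceci} transforms under each. For \eqref{pro11}, since $q$ depends only on $x,y,w^L$ we have $\{f,q\}^H\equiv 0$ and the remaining two brackets act only on the coefficient $\mathtt g(A,h)$ of each T\"oplitz summand of $f$, leaving the bilinear factor $z_m^{\sigma}z_n^{\sigma'}$ untouched:
\[
\{f,q\}^{\sharp}=\sum_{A,\sigma,\sigma'}\sum_{m,n}^{(A,\underline p)}\bigl\{\mathtt g(A,\sigma m+\sigma' n),\,q\bigr\}^{\sharp}\,z_m^{\sigma}z_n^{\sigma'},\qquad \sharp\in\{x,y;\,L\}.
\]
Because $q$ has zero momentum the new coefficient remains a momentum eigenvector with the same eigenvalue, so \eqref{ceci} persists (with $\mu$ replaced by $\mu+\mu_1$); and because $\theta'\ge\theta$, $\mu'\le\mu$, every $(N,\theta',\mu',\tau)$-cut is a fortiori a $(N,\theta,\mu,\tau)$-cut. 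Hence the indices $(m,n)$ surviving $\Pi_{N,\theta',\mu',\tau}$ form a subset of those appearing in $f$ with the same associated affine space $A\in\mathcal H_{\underline p'}$, which yields \eqref{pro11}.

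For \eqref{pr2}, both $f$ and $g$ have high-variable degree $2$, so $\{f,g\}^{x,y}$ and $\{f,g\}^L$ have high-variable degree $4$, whereas $\mathcal T_{\underline p'}$ consists of monomials of high-variable degree exactly $2$; hence $\Pi_{N,\theta',\mu',\tau}$ annihilates them, giving the first equality in \eqref{pr2}. To handle $\{f,g\}^H$, I would write $f,g$ in the form \eqref{nzo} and use $\{z_k^{\sigma},z_k^{-\sigma}\}=\mp\ii\sigma$ to contract one high index of $f$ with an opposite-sign one of $g$. The result is a sum over the contracted index $k$ of bilinear monomials $z_n^{\sigma'}z_{n'}^{\sigma_2}$ with coefficient
\[
\pm\ii\sum_{k}\mathtt g(A_k,\sigma k+\sigma' n)\,\mathtt g'(B_k,-\sigma k+\sigma_2 n'),
\]
where $A_k,B_k$ are the affine spaces of the pairs containing $k$ in $f$ and $g$. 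By Remark \ref{dueta2}, once the signs and $\sigma k+\sigma' n$ are fixed the type of $k$, hence $A_k$, is determined by the initial space $A$, and similarly for $B_k$; so the coefficient depends on $(n,n')$ only through $\sigma' n+\sigma_2 n'$. A simultaneous integer translation of $(n,n')$ parallel to the common affine space $\tilde A$ of the surviving pair is absorbed by the same translation of $k$, producing the restricted T\"oplitz form \eqref{nzo} attached to $\tilde A$.

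The remaining and hardest step is to verify that after the bilinear projection the surviving pair $(n,n')$ actually admits a $(N,\theta',\mu',\tau)$-cut and that the product of two low coefficients projects into $\mathcal L_{s',r'}(N,\mu',\cdot)$; this is exactly where \eqref{ugo} enters. By \eqref{limmn}, the difference $|\sigma k+\sigma' n|$ is bounded by $\mu N^3+3d\kappa N$, which by \eqref{ugo} is strictly smaller than both $(\theta'-\theta)N^{4d\tau-1}$ and $(\mu-\mu')N^{\tau-1}$; Lemma \ref{mah} then propagates the $(\theta,\mu,\tau)$-cut of $k$ to a $(\theta',\mu',\tau)$-cut of $n$, and likewise for $n'$, with the associated affine spaces translated according to \eqref{traslo}. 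The main obstacle is precisely this compatibility: the restricted T\"oplitz structure would be destroyed if the contraction ejected the surviving indices outside the $\underline p'$-cut regime, and \eqref{ugo} is engineered as the sharp quantitative neighborhood bound that, via Lemma \ref{mah}, rules this out.
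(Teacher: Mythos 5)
Your proposal follows the same overall scheme as the paper: for \eqref{pro11} the bracket acts only on the low coefficient (since $q$ is independent of $w^H$, so $\{f,q\}^H=0$), momentum conservation preserves the sector, and $\Pi^L_{N,\mu'}$ pushes the coefficient into ${\mathcal L}_{s',r'}(N,\mu',\cdot)$; for \eqref{pr2} you contract one high variable, and translation-invariance of the surviving coefficient is verified with Lemma \ref{mah}. This is exactly the paper's skeleton.

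However, the last paragraph contains a genuine confusion about what has to be checked and about the direction of Lemma \ref{mah}. You write that the $(\theta,\mu,\tau)$-cut of the contracted index $k$ ``propagates to a $(\theta',\mu',\tau)$-cut of $n$''. This is backwards in two senses. First, the surviving indices $n,n'$ (the paper's $m,n$) already have cuts with the \emph{more restrictive} parameters $\underline p'$, simply because they appear in the image of the projection $\Pi_{N,\theta',\mu',\tau}$ — there is nothing to prove there. Second, Lemma \ref{mah} only flows from more restrictive to less restrictive parameters for nearby points: from an $m$ with a $(\theta',\mu',\tau)$-cut, any $r$ with $|r\pm m|$ small enough inherits a $(\theta,\mu,\tau)$-cut. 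What actually has to be controlled is the set of \emph{intermediate} indices $l$ (your $k$) over which the contraction is summed: a priori the constraint ``$m,l$ is a $\underline p$-bilinear pair'' could depend on $m$ itself and not only on the affine space $A$ and $j:=m+l$, which would break T\"oplitz invariance. The fix is to observe (via \eqref{limmn}) that $|j|<\mu N^3+3d\kappa N$, then (via \eqref{ugo}) that $\kappa|j|<(\theta'-\theta)N^{4d\tau-1},\,(\mu-\mu')N^{\tau-1}$, so that Lemma \ref{mah} (applied \emph{from} $m$ with its $\underline p'$-cut \emph{to} $l$, in the $|m+l|$ variant) guarantees every such $l$ automatically has a $\underline p$-cut with associated space $-A+j$. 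Once the range of $j$ and all the associated affine spaces are expressed purely in terms of $A$, $\sigma m+\sigma' n$, and $j$, the sum is translation invariant. The paper also splits by the $\prec$-position of $A'=-A+j$ relative to $A$ and $B=A+(n-m)$; this bookkeeping you omit, but it is routine once the neighborhood argument is in place.
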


\begin{proof}
 Write $ f \in \mathcal T_{\underline p} \subset \mathcal B_{\underline p}$ as in \eqref{pri} where
\be
f_{m,n}^{\s,\s'}= f^{\s,\s'}(A,\sigma m+\sigma' n)
\in{\mathcal L}_{s,r}(N,\mu,\sigma m+\sigma' n+u)={\mathcal L}_{s,r}(N,\mu,\sigma \er(m)+\sigma' \er(n)) \, ,
\end{equation}
similarly for $ g $ (recall that $u= u(A,\sigma m+\sigma' n ,\s,\s')$), with $f_{m,n}^{\s,\s'}=f_{n,m}^{\s',\s}$.
\\[1mm] {\sc Proof of \eqref{pro11}.}
Since the variables $ z_m^\s $, $ z_n^{\s'} $, $|\er(m)|,|\er(n)|> \teta N^{\tau_1} $, are high momentum,
$$
\{f^{\s,\s'}(A,\sigma m+\sigma' n)z_m^\s z_n^{\s'}\,, \, q  \}^L
=
\{f^{\s,\s'}(A,\sigma m+\sigma' n), \, q  \}^L \, z_m^\s z_n^{\s'}.
$$
The function  $\{f^{\s,\s'}(A,\sigma m+\sigma' n)\,, \ q  \}^L$ in in $\mathcal H_{s',r'}$ by \eqref{commXHK} and does not depend on
 $ w^H $ by  \eqref{scamuffo}. 
   Hence the coefficient of
 $z_m^\sigma z_n^{\sigma'}$ in $\Pi_{N,\teta',\mu'}\{f,q\}^{L}$ is,
$$
 \Pi^L_{N,\mu'}\{f^{\s,\s'}(A,\s m +\s' n)\,, \, q  \}^L
\in
{\mathcal L}_{s',r'}(N,\mu',\s \er(m)+\s'\er(n))
$$
since $\pi_\er(q)=0$, $\pi_\er( f^{\s,\s'}(A,\s m +\s' n))= - \s \er(m)-\s'\er(n)$.

The proof that
$\Pi_{N,\teta',\mu'}\{f,q\}^{x,y}
 \in \mathcal{T}_{s',r',N, \teta', \mu'}$
is analogous.
\\[1mm]
{\sc Proof of \eqref{pr2}.}
A direct computation gives
$$
\{ f , g\}^H=\sum_{|\er(m)|,|\er(n)|> \teta N^\tau_1,\atop  m,n \in \underline p-{\rm cut}}\sum^{\s,\s'=\pm} p_{m,n}^{\s,\s'}z_m^\s z_n^{\s'}
$$
with
\begin{equation}\label{valentiniano}
p_{m,n}^{\s,\s'}=
-2\ii \sum_{l\,,\ \s_1=\pm}
\s_1 \Big(
f_{m,l}^{\s,\s_1} g_{l,n}^{-\s_1,\s'} +
f_{n,l}^{\s',\s_1} g_{l,m}^{-\s_1,\s}
\Big)\,,
\end{equation} 
of course $l$ gives a contribution only if suitably restricted by the bilinearity constraint.

By \eqref{scamuffo} the coefficient $p_{m,n}^{\s,\s'}$ does not depend on $w_H.$
Therefore
\begin{equation}\label{giustiniano}
\Pi_{N, \teta',\mu'} \{ f , g\}^H=
\sum_{|\er(m)|,|\er(n)|> \teta' N^{\tau_1},\, \s,\s'=\pm} q_{m,n}^{\s,\s'}z_m^\s z_n^{\s'}
\quad \mbox{with} \quad
q_{m,n}^{\s,\s'} := \Pi^L_{N,\mu'} p_{m,n}^{\s,\s'}\,.
\end{equation}
 It results
$ q_{m,n}^{\s,\s'}\in {\mathcal L}_{s',r'} (N,\mu',\s \er(m) +\s' \er(n)) $
by \eqref{giustiniano}, \eqref{valentiniano},  and  momentum conservation.

It remains to prove the $ (N,\teta',\mu') $-T\"oplitz property:     \begin{equation}\label{valente}
q^{\s,\s'}_{m,n} =  q^{\s,\s'}\big(A,\s m+\s' n\big)
\quad
{\rm for\ some } \quad  q^{\s,\s'}(A, h)\in{\mathcal L}_{s,r}(N,\mu',h+u))\,,
\end{equation} where $A$ is the affine space associated to the pair $m,n$ (cf. \ref{taubilinear}, 2)).\smallskip

  Let us consider  in \eqref{valentiniano}-\eqref{giustiniano} the term with  $m,n$ fixed  and $\s=+1,\s'= -1, \s_1= +1$ (the other cases are analogous)
\be\label{sommal}
\Pi^L_{N,\mu'}
\sum_{l}
f_{m,l}^{+,+} g_{l,n}^{-,-}\,,
\end{equation}
 Since by hypothesis $f,g\in {\mathcal T}_{\underline p}$ we have that  the  $l$ that give a contribution have   a $\underline p$-cut at $\ell$ and $|\er(l)|>\theta N^{\tau_1}$. By Remark \ref{dueta2}  the  affine space associated to the cut of $l$ is $A'=-A +l+m$; (while the affine space  associated to the cut of $n$ is $B=A +n-m$).  We may assume without loss of generality that $A\prec B$.

We then divide
\eqref{sommal} in 3 parts according to the relative position of $A'$ in the $\prec $ order with respect to  $A\prec B$. Note that all these constraints depend only upon $A, m-n,j:=l+m$.  We   treat the case  $A\prec A'\prec B$, the other 2 cases are similar.

Since $f,g\in {\mathcal T}_{\underline p}$ we have  
\be\label{oggi}
f_{m,l}^{+,+}=f^{+,+}\big( A, m+ l \big) \in {\mathcal L}_{s,r}(N, \mu,  \er(m)  + \er( l))
\end{equation}
\begin{equation}\label{lalla}
g_{l,n}^{-,-}
=g^{-,-}\big(A',- l- n \big) \in {\mathcal L}_{s,r}(N, \mu, -\er( l) -\er(n)) \,,
\end{equation}
for all $m,n,l$ which satisfy the bilinearity constraints with parameters $\underline p$. 

By construction $m,n$ satisfy the bilinearity constraints with the more restrictive  parameters $\underline p'$. Set  $j:= m+l$,  by formula \eqref{limmn} we have that the elements $j$, which come from the elements $l$ which contribute,  have $|j|< \mu N^3+3d\kappa N$.  On the other hand  
 by condition \eqref{ugo}  we see that these $j$  satisfy: \begin{equation}
\label{vicin2}|j|<{\kappa}   ^{-1}(\mu-\mu')N^{\tau-1},\ {\kappa}   ^{-1}(\theta'-\theta)N^{4d\tau-1} .
\end{equation}
  thus, by Lemma \ref{mah} the fact that $j$  satisfies \eqref{vicin2} implies that $m,n,l$ satisfy the bilinearity constraints with parameters $\underline p$. 

Then
\begin{eqnarray*}
\Pi^L_{N,\mu'}
\sum_{A\prec A'\prec B}
f_{m,l}^{+,+} g_{l,n}^{--}
&\stackrel{\eqref{ricci}}=&
\Pi^L_{N,\mu'}
\sum_{j\in\Z^d: |j|< \mu N^3+3d\kappa N \atop
A\prec A+j\prec A+n-m }
f^{++}\big(A, j \big)
g^{--}\big(- A + j,m-n-j \big)
\end{eqnarray*}
depends only on $A$ and $m - n$, i.e. \eqref{valente}.
\end{proof}

\subsection{Quasi--T\"oplitz functions}
 
Given $f\in {\mathcal H}_{s,r}$  and $\mathcal F\in\mathcal T_{\underline p }$, we
define     \begin{equation}\label{limi} \bar f=\bar f(\mathcal F):=N^{4
d\tau}\big( \Pi_{(N,\theta,\mu,\tau)}f- \mathcal F\big),
\end{equation}
and  for $\Pappa > N_0$  set   \begin{equation}\label{unobisbis} \|X_f \|_{s,r}^{  {\Pappa},\theta,\mu} 
:= \sup_{N,\tau : N\geq {\Pappa}\atop
\underline p=(s,r, N,\theta,\mu,\tau) \ }[\inf_{ {\mathcal F}\in\mathcal  T_{\underline p}}( \max(\| X_{\mathcal F}\| _{s,r},\|
X_{\bar f}\|  _{s,r} )) ] 
 \end{equation}\begin{remark}\label{varlm1}
If    we take new parameters ${\Pappa}', \theta',\mu' $   with ${\Pappa}\leq {\Pappa}', \theta\leq
\theta',\ \mu'\leq \mu$ we have by Remark \ref{varlm} that  $$ \|X_{f }  \|_{s,r}^{{\Pappa}',\theta',\mu'} \leq \|X_{f }  \|_{s,r}^{{\Pappa},\theta,\mu}. $$
\end{remark}

 \begin{definition} \label{topbis}
We say that $f\in {\mathcal H}_{s,r}$  is {\em quasi- T\"oplitz} of  parameters
$({\Pappa},\theta,\mu)$ if  $ \|X_f\|_{s,r}^{{\Pappa},\theta,\mu }<\infty$ and we call this number  the {\em quasi-T\"oplitz norm} of $f$.

 \end{definition}
 \begin{remark}
 Given $f \in {\mathcal H}_{s,r}$  with finite quasi-T\"oplitz norm and  parameters $\underline p$ we say that  a function $\mathcal F\in\mathcal T_{\underline p }$,  approximates $f$ at order $\e$ if
 \begin{equation}
 \|X_{\mathcal F}\|_{s,r}, N^{4
d\tau}\|X_{\Pi_{(N,\theta,\mu,\tau)}f- \mathcal F}\|_{s,r} < (1+\e) \|f\|^{K,\theta,\mu}_{s,r}.
\end{equation}
Note that by our definitions such approximating functions exist for all allowable parameters $\underline p$ and for all positive $\e$.  
\end{remark}
 
Since in our algorithm we deal with functions which depend in a Lipschitz way on some parameters $\xi \in \mathcal O$ (a compact set) we take finally a norm which includes also the weighted  Lipschitz norm (cf. \eqref{weno})  with $\lambda$ a positive number: 
\begin{definition}
 We set $\vec p= (s,r,\PaPi,\theta,\mu,\lambda,\mathcal O)$.
 We define
 \begin{equation}\label{finale}
 \|X_{f }  \|^T_{\vec p}:= \max(\|X_{f }  \|_{s,r}^{  {\Pappa},\theta,\mu}, \| X_f\|_{s,r}^\lambda)
\end{equation}

We denote by $\mathcal Q^T_{\vec p} \subset {\mathcal H}_{s,r}$ the set of functions with finite norm $ \|X_{f }  \|^T_{\vec p}$.
\end{definition}

\begin{remark}\label{dupro}

 Notice that our definition includes the T\"oplitz and anti-T\"oplitz
functions, setting $\mathcal F = \Pi_{(N,\theta,\mu,\tau)} f$ and
hence $\bar f=0$.
  In the case of  T\"oplitz functions one trivially has $\|X_f\|_{\vec p }^{T }= \| X_f\|^\lambda_{s,r}$.\smallskip
  
  The definition includes also functions with   fast decay in the coefficients so that, taking always as  T\"oplitz approximation $\mathcal F=0$,  we still have   $\sup_NN^{4
d\tau}\| X_{\Pi_{(N,\theta,\mu,\tau)}f}\|^\lambda_{s,r}<\infty$.
\end{remark}
 
 \subsubsection{Some basic properties} 
The following Lemmas are proved in \cite{BBP1}.

\begin{lemma}{\bf (Projections 1)}\label{proJ1}
Set $\vec p= (s,r,K, \teta, \mu,\lambda,\mathcal O)$ and $\underline p= (s,r,N,  \teta, \mu,\tau)$ with $N\geq K$. Consider a subset of monomials $ I $  
such that the projection (see \eqref{caligola}) maps
\be\label{tototo}
\Pi_I :  \Ta_{\underline p}  \to  \Ta_{\underline p}
  \, , \quad \forall N \geq  \PaPi \, .
\ee
Then $ \Pi_I :  {\mathcal Q}^T_{\vec p}  \to  {\mathcal Q}^T_{\vec p}  $ and
\be\label{lessore}
\| X_{\Pi_I F} \|_{\vec p} ^T \leq \| X_F \|_{\vec p}^T  \, , \quad \forall   F \in {\mathcal Q}^T_{\vec p}  \, .
\ee
Moreover, if $  F\in {\mathcal Q}^T_{\vec p}  $ satisfies
$\Pi_I F =F $, then, $ \forall N \geq  \PaPi $, $ \forall \e > 0 $,
there exists a  decomposition
$\Pi_{N,\theta,\mu,\tau} F=
\tilde F+ N^{-4d\tau}\hat F$
with a
 T\"oplitz approximation $\tilde F\in \Ta_{\underline p}  $
satisfying  $\Pi_I \tilde F = \tilde F $,
 $\Pi_I \hat F = \hat F $
 and $ \|  X_{\tilde F}\|_{s,r },  \|  X_{\hat F}\|_{s,r } <   \|X_F\|^T_{\vec p} +\e $.
 \end{lemma}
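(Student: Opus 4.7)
The plan is to exploit the fact that both $\Pi_I$ and $\Pi_{N,\theta,\mu,\tau}$ are coordinate projections onto subsets of monomials, so they commute; combined with the smoothing/monotonicity estimate \eqref{caligola} for the majorant norm, the statement will reduce to bookkeeping.

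First I would bound the Lipschitz piece of the T-norm. By \eqref{caligola} applied to $X_{\Pi_I F}$ (both for $F$ and for its difference quotients in $\xi$) one gets $\|X_{\Pi_I F}\|^\lambda_{s,r,\mathcal O}\leq \|X_F\|^\lambda_{s,r,\mathcal O}$. It then remains to control the quasi-T\"oplitz seminorm \eqref{unobisbis}. Fix $N\geq K$ and allowable $\underline p = (s,r,N,\theta,\mu,\tau)$, and pick $\e>0$. Choose a T\"oplitz approximation $\mathcal F\in\Ta_{\underline p}$ of $F$ realising $\|X_F\|^T_{\vec p}$ up to $\e$, i.e.\ satisfying $\|X_{\mathcal F}\|_{s,r}, \|X_{\bar F(\mathcal F)}\|_{s,r}\leq \|X_F\|^T_{\vec p}+\e$ with $\bar F(\mathcal F):=N^{4d\tau}(\Pi_{N,\theta,\mu,\tau}F-\mathcal F)$. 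Set $\mathcal F':=\Pi_I\mathcal F$. By hypothesis \eqref{tototo} we have $\mathcal F'\in\Ta_{\underline p}$, and since $\Pi_I$ and $\Pi_{N,\theta,\mu,\tau}$ commute,
\[
N^{4d\tau}\bigl(\Pi_{N,\theta,\mu,\tau}(\Pi_I F)-\mathcal F'\bigr) = \Pi_I \bar F(\mathcal F).
\]
Applying \eqref{caligola} to both $X_{\mathcal F'}=X_{\Pi_I\mathcal F}$ and $X_{\Pi_I\bar F(\mathcal F)}$ gives $\|X_{\mathcal F'}\|_{s,r}, \|X_{\Pi_I\bar F(\mathcal F)}\|_{s,r}\leq \|X_F\|^T_{\vec p}+\e$. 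Taking $\e\to 0$ yields $\|X_{\Pi_I F}\|_{s,r}^{K,\theta,\mu}\leq \|X_F\|_{s,r}^{K,\theta,\mu}$, which together with the Lipschitz bound gives \eqref{lessore}.

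For the second assertion, assume $\Pi_I F = F$ and fix $N\geq K$, $\e>0$. Pick a T\"oplitz approximation $\mathcal F\in\Ta_{\underline p}$ as above and define
\[
\tilde F := \Pi_I \mathcal F, \qquad \hat F := N^{4d\tau}\bigl(\Pi_{N,\theta,\mu,\tau}F - \tilde F\bigr).
\]
Then $\tilde F\in\Ta_{\underline p}$ by hypothesis, and $\Pi_I\tilde F=\tilde F$ is automatic from $\Pi_I^2=\Pi_I$. Using $\Pi_IF=F$ and commutation of the two monomial projections,
\[
\Pi_I\hat F = N^{4d\tau}\bigl(\Pi_{N,\theta,\mu,\tau}\Pi_I F - \tilde F\bigr) = \hat F.
\]
The norm bounds follow exactly as above: $\|X_{\tilde F}\|_{s,r}=\|X_{\Pi_I\mathcal F}\|_{s,r}\leq\|X_{\mathcal F}\|_{s,r}<\|X_F\|^T_{\vec p}+\e$ and $\|X_{\hat F}\|_{s,r}=\|X_{\Pi_I\bar F(\mathcal F)}\|_{s,r}\leq\|X_{\bar F(\mathcal F)}\|_{s,r}<\|X_F\|^T_{\vec p}+\e$, both by \eqref{caligola}.

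There is no real obstacle here beyond verifying that the two monomial projections commute and that the assumption \eqref{tototo} is used precisely to keep $\Pi_I\mathcal F$ inside $\Ta_{\underline p}$, since a priori T\"oplitz functions need not be stable under arbitrary monomial projections. The rest is the standard use of \eqref{caligola} together with the definition \eqref{unobisbis}.
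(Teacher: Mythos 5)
Your proof is correct. The paper does not give its own proof of this lemma (it cites \cite{BBP1}), but your argument is precisely the natural one: the two monomial projections commute because both are coordinate projections onto sets of monomials, hypothesis \eqref{tototo} keeps $\Pi_I\mathcal F$ inside $\Ta_{\underline p}$ so that it remains a valid T\"oplitz approximant for $\Pi_I F$, and \eqref{caligola} makes every norm involved nonincreasing under $\Pi_I$; the second assertion then drops out by idempotence of $\Pi_I$ and the identity $\hat F = \Pi_I\bar F(\mathcal F)$.
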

\begin{lemma} {\bf (Projections 2)}\label{proJ}
For all $ l \in \N $,  $ K \in \N $, $ N \geq  \PaPi $, the projections
\begin{equation}\label{margherita}
 \Pi^{(l)},  \Pi_{|k| < K}, \Pi_{\rm diag} : \Ta_{\underline p} \to \Ta_{\underline p} \, .
\end{equation}
here $ \Pi^{(l)}$ maps to the space of homogeneous functions of degree $l$,  $\Pi_{\rm diag}:= \Pi^{(2)} \Pi_{ k=0} $.

If $ F\in {\mathcal Q}^T_{\vec p}  $
 then,
\be\label{fhT}
\| X_{\Pi^{(l)} F} \|^{T}_{\vec p}  \, , \ \|  X_{\Pi_{|k| < K}F} \|^T_{\vec p}  \, , \ \| X_{\Pi_{\rm diag} F} \|^T_{\vec p}  \leq \|  {X_F} \|^T_{\vec p}  \, ,
\ee
\be\label{fhle2T}
\| {X_{F^{(\leq 2)}}} \|^{T}_{\vec p}  \, , \ \| {X_F} - {X_{F^{(\leq 2)}_{|k|<K}}} \|^{T}_{\vec p}  \leq  \| {X_F} \|^{T}_{\vec p}  \, .
\ee
Moreover, $ \forall\, 0 < s' < s $, set $\vec p\,{}':=(s',r ,  \PaPi, \teta,\mu,\lambda,\mathcal O)$:
\begin{equation}\label{smoothT}
\| X_{\Pi_{|k|\geq K}F} \|_{\vec p\,{}'}^T
\leq
e^{-K(s-s')}\frac{s}{s'} \|  {X_F} \|_{\vec p}^T
\end{equation}
Finally, $ \forall\, 0 < r' < r $, set $\vec p\,{}':=(s,r' ,  \PaPi, \teta,\mu,\lambda,\mathcal O)$:
\begin{equation}\label{smoothT2}
\| X_{\Pi^{(l \geq D)}F} \|_{\vec p\,{}'}^T
\leq
(\frac{r'}{r})^{D-2} \|  {X_F} \|_{\vec p}^T
\end{equation} 
\end{lemma}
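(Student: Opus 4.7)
The strategy is to reduce everything to Lemma \ref{proJ1} and the standard majorant--norm estimates \eqref{smoothl} and \eqref{degrl} of Proposition \ref{mpomn}. The only genuine work is to verify that each of the projections listed in \eqref{margherita} maps $\Ta_{\underline p}$ to itself; once this is done, \eqref{fhT}, \eqref{fhle2T}, \eqref{smoothT}, \eqref{smoothT2} follow by a short chase.

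First I would verify the preservation claim \eqref{margherita}. A generic element of $\Ta_{\underline p}$ has, by \eqref{stro}, \eqref{nzo}, the form $g=\sum_{A,\s,\s'}\sum_{m,n}^{(A,\underline p)}\mathtt g^{\s,\s'}(A,\s m+\s' n)\,z_m^\s z_n^{\s'}$, with the coefficients $\mathtt g^{\s,\s'}(A,h)$ living in $\mathcal L_{s,r}(N,\mu,h+u(A,h,\s,\s'))$ and depending only on $(x,y,w^L)$. The key point is that each of the three projections acts only on the $(x,y,w^L)$ part of the coefficients (or on the index $k$), without coupling different choices of the data $(A,m,n,\s,\s')$ that label the T\"oplitz structure. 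Concretely: $\Pi^{(l)}$ selects the $(l-2)$-homogeneous part of $\mathtt g^{\s,\s'}(A,h)$ (viewed in $(y,w^L)$) times $z_m^\s z_n^{\s'}$; $\Pi_{|k|<K}$ truncates the Fourier expansion of $\mathtt g^{\s,\s'}(A,h)$ in $x$; and $\Pi_{\rm diag}=\Pi^{(2)}\Pi_{k=0}$ keeps only the constant (in $x,y,w^L$) part of $\mathtt g^{\s,\s'}(A,0)$ on the diagonal $m=n$, $\s'=-\s$ (which is automatically compatible with \eqref{ceci} since the type $u(A,0,\s,-\s)$ vanishes by \eqref{pru}). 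In each case the resulting function retains the defining structure \eqref{nzo}, hence lies in $\Ta_{\underline p}$.

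With this in hand, \eqref{fhT} is an immediate application of Lemma \ref{proJ1} to the index sets defining $\Pi^{(l)}$, $\Pi_{|k|<K}$, $\Pi_{\rm diag}$. For \eqref{fhle2T} note that $F^{(\leq 2)}=\Pi_I F$ for $I=\{(k,i,\a,\b):\,2i+|\a|+|\b|\leq 2\}$, and $F-F^{(\leq 2)}_{|k|<K}=\Pi_{I^c\cup\{|k|\geq K\}}F$; both index sets preserve $\Ta_{\underline p}$ by the same argument as above (the complement set is just $\{(k,i,\a,\b):\,2i+|\a|+|\b|\geq 3 \text{ or } |k|\geq K\}$), so Lemma \ref{proJ1} applies again.

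Finally, for the smoothing and degree bounds \eqref{smoothT} and \eqref{smoothT2}, fix $N\geq\PaPi$, $\tau$, and $\e>0$. By Definition \ref{topbis} pick a T\"oplitz approximation $\Pi_{N,\theta,\mu,\tau}F=\tilde F+N^{-4d\tau}\hat F$ with $\tilde F\in\Ta_{\underline p}$ and $\max(\|X_{\tilde F}\|^\l_{s,r},\|X_{\hat F}\|^\l_{s,r})<\|X_F\|^T_{\vec p}+\e$. Since the Fourier projection $\Pi_{|k|\geq K}$ and the degree projection $\Pi^{(l\geq D)}$ commute with $\Pi_{N,\theta,\mu,\tau}$, we have $\Pi_{N,\theta,\mu,\tau}\Pi_{|k|\geq K}F=\Pi_{|k|\geq K}\tilde F+N^{-4d\tau}\Pi_{|k|\geq K}\hat F$, and $\Pi_{|k|\geq K}\tilde F\in\Ta_{\underline p}$ by the first step; thus this decomposition is a valid T\"oplitz approximation for $\Pi_{|k|\geq K}F$. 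Applying the majorant smoothing \eqref{smoothl} to both summands gives a factor $e^{-K(s-s')}\tfrac{s}{s'}$ on each, and taking the supremum over $N,\tau$ and the infimum over approximants yields \eqref{smoothT}; the Lipschitz part of $\|\cdot\|^\l_{s,r}$ is controlled identically. The argument for \eqref{smoothT2} is word-for-word the same, replacing \eqref{smoothl} by the degree estimate \eqref{degrl}.

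The only non-routine step is the preservation claim \eqref{margherita}: one has to be sure that the $A$- and $(\s,\s')$-decorated bilinear structure of \eqref{nzo}, together with the low-momentum constraint \eqref{ceci}, is undisturbed by the projections. Everything else is a mechanical application of Lemma \ref{proJ1} and Proposition \ref{mpomn}.
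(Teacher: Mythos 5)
Your proposal is correct in substance and matches the natural route: identify each projection as a $\Pi_I$ that preserves $\Ta_{\underline p}$, reduce the norm bounds \eqref{fhT} and \eqref{fhle2T} to Lemma~\ref{proJ1}, and obtain \eqref{smoothT}, \eqref{smoothT2} by applying \eqref{smoothl} and \eqref{degrl} to a T\"oplitz approximation and its defect separately, noting the projections commute with $\Pi_{N,\theta,\mu,\tau}$. (The paper itself does not give a proof --- it cites \cite{BBP1} --- so I cannot compare step-by-step, but your argument is the standard one.) One small inaccuracy: $\Pi_{\rm diag}=\Pi^{(2)}\Pi_{k=0}$ does not actually force $m=n$, $\s'=-\s$ --- momentum conservation only requires $\s\s(m)\er(m)+\s'\s(n)\er(n)=0$, which the diagonal satisfies but is not the only possibility; the remark about $u(A,0,\s,-\s)=0$ is therefore not the right justification. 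The clean argument here is simply that $\Pi_{\rm diag}$ is a composition of $\Pi^{(2)}$ and $\Pi_{k=0}=\Pi_{|k|<1}$, both of which you have already shown to map $\Ta_{\underline p}$ to itself, so their composition does too. This does not affect the validity of your proof.
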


\begin{lemma}\label{diago} 
Let $ Q(z)=\sum_mQ_mz_{m}\bar z_m$ be a  quasi-T\"oplitz
 {\em diagonal quadratic function} in the variables $z,\bar z$ with constant coefficients.
For all allowable choices of $\underline p$ and $\e>0$,  there exists a  {\em diagonal quadratic function} ${\mathcal Q}(z)\in \mathcal T_{\underline p}$ which approximates $Q$ at order $\e$:
\begin{equation}\label{pargol}
 {\mathcal Q}(z)= \sum_{A \in \mathcal H_{\underline p} }\sum^{(A,\underline p)}_{m } \mathcal Q(A) z_{m}\bar z_m\,,\end{equation} 
so that setting   $\bar Q(z)$: $$  N^{-4d\tau} \bar Q(z) = \Pi_{(N,\theta ,\mu,\tau)}Q(z)- \mathcal Q(z) \,,$$
for all $m$ which have a cut at $\ell$ with parameters $(N,\theta ,\mu,\tau)$   associated to $A$ one has\begin{equation}\label{appr1}
 Q_m = \mathcal Q(A) + N^{-4d \tau} \bar Q_m ,
\end{equation}
 and
\begin{equation}\label{appr2}
 |Q_m|,|\mathcal Q(A)|,|\bar Q_m|\leq (1+\e) |X_Q|^T_{\vec p}\,.
\end{equation}
For all $m,m'$ with a cut at $\ell$ with parameters $(N,\theta ,\mu,\tau)$ associated to $A$ we have
 \begin{equation}\label{lastima}
|Q_m  -  Q_{m'}| \leq N^{ -4d\tau} 2\|X_Q\|^{T  }_{\vec p }.
\end{equation}
\end{lemma}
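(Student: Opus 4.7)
The plan is to apply the quasi-T\"oplitz hypothesis to produce a general T\"oplitz approximant of $Q$, and then project it onto the diagonal quadratic subspace to obtain the desired $\mathcal Q$. Everything will then follow from Remark \ref{diaH} (which identifies the majorant norm of a diagonal quadratic with the sup of its coefficients) and the projection estimates of Lemma \ref{proJ}. Concretely, fix allowable parameters $\underline p=(s,r,N,\theta,\mu,\tau)$ and $\e>0$. By the very definition of $\|X_Q\|^T_{\vec p}$ (cf.\ \eqref{unobisbis}), there exists $\mathcal F \in \mathcal T_{\underline p}$ satisfying
\[
\|X_{\mathcal F}\|_{s,r},\ N^{4d\tau}\|X_{\Pi_{(N,\theta,\mu,\tau)}Q-\mathcal F}\|_{s,r} \ \leq\ (1+\e)\,\|X_Q\|^T_{\vec p}.
\]

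Next I would set $\mathcal Q := \Pi_{\mathrm{diag}}\mathcal F$. By Lemma \ref{proJ} the projection $\Pi_{\mathrm{diag}}$ sends $\mathcal T_{\underline p}$ into itself and is norm--contracting, so $\mathcal Q\in\mathcal T_{\underline p}$ with $\|X_{\mathcal Q}\|_{s,r}\leq\|X_{\mathcal F}\|_{s,r}$. Since $Q$ is already diagonal, $\Pi_{\mathrm{diag}}\Pi_{(N,\theta,\mu,\tau)}Q=\Pi_{(N,\theta,\mu,\tau)}Q$, so
\[
\Pi_{(N,\theta,\mu,\tau)}Q-\mathcal Q = \Pi_{\mathrm{diag}}\bigl(\Pi_{(N,\theta,\mu,\tau)}Q-\mathcal F\bigr),
\]
and the norm is still $\leq(1+\e)\|X_Q\|^T_{\vec p}/N^{4d\tau}$. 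A diagonal element of $\mathcal T_{\underline p}$ has bilinear signature $(\sigma,\sigma')=(+,-)$ and satisfies $m=n$, hence $\sigma m+\sigma' n=0$: Definition \ref{topa} then forces the coefficient at $z_m\bar z_m$ to depend only on the affine space $A$ associated to the cut of $m$, giving exactly the expansion \eqref{pargol}.

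With this in place the three estimates are almost immediate. By Remark \ref{diaH}, the $\|\cdot\|_{s,r}$ norm of a diagonal quadratic equals the $\ell^\infty$ norm of its coefficients, so $|Q_m|\leq\|X_Q\|_{s,r}\leq\|X_Q\|^T_{\vec p}$, $|\mathcal Q(A)|\leq\|X_{\mathcal Q}\|_{s,r}\leq (1+\e)\|X_Q\|^T_{\vec p}$, and, setting $\bar Q := N^{4d\tau}(\Pi_{(N,\theta,\mu,\tau)}Q-\mathcal Q)$, also $|\bar Q_m|\leq\|X_{\bar Q}\|_{s,r}\leq(1+\e)\|X_Q\|^T_{\vec p}$, which gives \eqref{appr2}. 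Comparing the coefficients of $z_m\bar z_m$ on both sides of $\Pi_{(N,\theta,\mu,\tau)}Q=\mathcal Q+N^{-4d\tau}\bar Q$ yields \eqref{appr1}. Finally, if $m,m'$ share the cut space $A$, the Töplitz contribution $\mathcal Q(A)$ cancels in $Q_m-Q_{m'}=N^{-4d\tau}(\bar Q_m-\bar Q_{m'})$ and the triangle inequality delivers \eqref{lastima} (the factor $1+\e$ absorbed by taking $\e$ small from the start).

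The main conceptual point, and the only step that is not bookkeeping, is recognising that $\Pi_{\mathrm{diag}}$ preserves the T\"oplitz subspace $\mathcal T_{\underline p}$ and that on this subspace the diagonal monomials $z_m\bar z_m$ are indexed precisely by the good points of the affine spaces $A\in\mathcal H_{\underline p}$, with coefficient $\mathcal Q(A)$ independent of $m$. Once this structural identification is in hand, the statement reduces to estimating coefficients of a diagonal quadratic by its majorant norm, which is exactly the content of Remark \ref{diaH}.
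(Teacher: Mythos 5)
Your strategy — take a T\"oplitz approximant $\mathcal F$ realizing the infimum in \eqref{unobisbis} up to a factor $(1+\e)$, then project onto the diagonal — is exactly the paper's, which simply asserts ``since $Q$ is quadratic and diagonal\dots we may choose $\mathcal F$ of the same form'' without naming the projection. You are more explicit, and your use of Remark \ref{diaH} for the coefficient bound replaces the paper's direct evaluation of the majorant norm at a delta-like $z^{(j)}$; both give the same conclusion, and your factor $(1+\e)$ is actually closer to the literal statement of \eqref{appr2} than the ``$\leq 2$'' the paper writes in its proof. The letting-$\e\to 0$ trick at the end for \eqref{lastima} is fine since $|Q_m-Q_{m'}|$ is a fixed quantity.

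There is, however, a small but genuine gap in the projection step. You set $\mathcal Q:=\Pi_{\mathrm{diag}}\mathcal F$ and cite Lemma \ref{proJ}, where $\Pi_{\mathrm{diag}}:=\Pi^{(2)}\Pi_{k=0}$. This projection keeps every degree-$2$ monomial $z_m^\s z_n^{\s'}$ with $k=0$, and the conservation laws only force (for high modes, where $\s(m)=\s(n)=1$) $\s=-\s'$ and $\er(m)=\er(n)$; they do \emph{not} force $m=n$. So $\Pi_{\mathrm{diag}}\mathcal F$ can retain off-diagonal terms $z_m\bar z_n$ with $m\neq n$ lying in the same block of $\Gamma_S$, and your $\mathcal Q$ is then not of the strictly diagonal form \eqref{pargol}, so the sentence ``A diagonal element of $\mathcal T_{\underline p}\dots$ satisfies $m=n$'' presupposes strict diagonality that $\Pi_{\mathrm{diag}}$ alone does not deliver. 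The repair is easy and stays entirely within your framework: replace $\Pi_{\mathrm{diag}}$ by the finer projection $\Pi_I$, where $I$ is the set of indices of monomials $z_m\bar z_m$ (i.e. $k=0$, $i=0$, $\alpha=\beta=e_m$), and verify via Lemma \ref{proJ1} that $\Pi_I$ preserves $\mathcal T_{\underline p}$. That verification is precisely the structural observation you already made: the strictly diagonal part of an element of $\mathcal T_{\underline p}$ is $\sum_{A}\mathtt g^{+,-}(A,0)\sum^{(A,\underline p)}_m z_m\bar z_m$, which is again piecewise T\"oplitz and of the form \eqref{diagtf}. With this substitution the rest of your argument goes through unchanged.
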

\begin{proof}
Since  $Q$ is quasi-T\"oplitz  we may approximate it by a function $\mathcal F\in \mathcal T_{\underline p} $; moreover since $Q$ is quadratic and diagonal by the previous discussion we may choose $\mathcal F$ of the same form.

Hence we can we can find a  quadratic and diagonal function $\mathcal Q\in \mathcal T_{\underline p} $   so that, with  $\bar Q= N^{4d\tau} (\Pi_{N,\theta ,\mu,\tau} Q- \mathcal Q)$,  we have $\|X_{\mathcal Q}\|_r, \|X_{\bar Q}\|_r\leq 2 \|X_Q\|^T_r$.
To conclude, by Formula \eqref{diagtf}, we have that a quadratic, diagonal and piecewise T\"oplitz  $\mathcal Q$ is of the form \eqref{pargol}.  

Our last statement is proved by  noting that (see \eqref{normadueA} for the norm of a vector field)
$$
 \|X_{Q}\|_{r}^2= 2 \sup_{\|z \|_{a,p}\leq r}\sum_{h\in S^c}
 |Q_h|^2 \frac{|z_h|^2}{r^2} e^{2a|h|} |h|^{2p} \geq |Q_j|^2
$$
by evaluating at  $z^{(j)}_h:=   \delta_{jh} e^{-a |j|}|j|^{-p}r/\sqrt{2}$.
The same holds for $\mathcal Q$ and $\bar Q$.
\end{proof}
\begin{corollary}\label{pirlino}
 Let $Q$ be as above. Given parameters $\underline p$ for every $A\in \mathcal H_{\underline p}$ choose a point $m_A\in A^g_{\underline p}$.  The the function ${\mathcal Q}(z)\in \mathcal T_{\underline p}$ defined as in formula \eqref{pargol} with $\mathcal Q(A)= Q_{m_A}$ is a T\"oplitz approximant of order $\e=1$ for $Q$.
\end{corollary}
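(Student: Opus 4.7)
The plan is to verify directly the two inequalities defining a T\"oplitz approximant of order $\varepsilon = 1$ for $\mathcal{Q}$, namely that both $\|X_\mathcal{Q}\|_{s,r}$ and $N^{4d\tau}\|X_{\Pi_{(N,\theta,\mu,\tau)}Q-\mathcal{Q}}\|_{s,r}$ are controlled by $2\|X_Q\|^T_{\vec p}$. The whole statement is essentially Lemma \ref{diago} specialized to the natural choice in which the constant value on each piece is taken to be the sampled value of $Q$ at the chosen representative $m_A \in A^g_{\underline p}$, so once the setup is in place there is nothing new to prove.

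First I would check that $\mathcal{Q}$ as defined in \eqref{pargol} actually lies in $\mathcal{T}_{\underline p}$. Since $\mathcal{Q}$ is a diagonal quadratic with constant coefficients, it has zero momentum, and the momentum constraint \eqref{ceci} holds trivially (the ``type'' $u$ that appears there is irrelevant for diagonal monomials $z_m\bar z_m$). The sum in \eqref{pargol} runs over all $m \in A^g_{\underline p}$ for $A \in \mathcal{H}_{\underline p}$, and by Lemma \ref{minko} this coincides with the set of $m$ having an $\ell$-cut associated to $A$; hence $\mathcal{Q}$ has exactly the same support as $\Pi_{(N,\theta,\mu,\tau)}Q$ restricted to the diagonal part, and the piecewise-T\"oplitz structure of Definition \ref{topp} is respected. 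The first bound is then immediate from the elementary estimate for diagonal quadratic Hamiltonians used at the end of the proof of Lemma \ref{diago}, namely $\sup_h |F_h| \leq \|X_F\|_{s,r} \leq 2\sup_h|F_h|$: we obtain
\[
\|X_\mathcal{Q}\|_{s,r} \leq 2 \sup_A |Q_{m_A}| \leq 2\sup_m |Q_m| \leq 2\|X_Q\|_{s,r} \leq 2\|X_Q\|^T_{\vec p}.
\]

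The key step is the bound on $\|X_{\bar Q}\|_{s,r}$ for $\bar Q := N^{4d\tau}(\Pi_{(N,\theta,\mu,\tau)}Q - \mathcal{Q})$. By construction $\bar Q$ is again diagonal quadratic, with coefficients $\bar Q_m = N^{4d\tau}(Q_m - Q_{m_A})$ for $m$ with cut associated to $A$, and $\bar Q_m = 0$ otherwise. Here I would apply the oscillation estimate \eqref{lastima} of Lemma \ref{diago} to the specific pair $m, m_A$: since by Lemma \ref{minko} the good point $m_A \in A^g_{\underline p}$ has a cut at the same $\ell$ and the same associated affine space $A$ as $m$, the hypothesis of \eqref{lastima} is satisfied, yielding $|Q_m - Q_{m_A}| \leq 2 N^{-4d\tau}\|X_Q\|^T_{\vec p}$. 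Hence $|\bar Q_m| \leq 2\|X_Q\|^T_{\vec p}$ uniformly in $m$, and the diagonal quadratic norm estimate gives $\|X_{\bar Q}\|_{s,r} \leq 2 \sup_m |\bar Q_m| \leq C\|X_Q\|^T_{\vec p}$, with the constant absorbed into the ``order $\varepsilon=1$'' tolerance.

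There is no substantive obstacle: the corollary is a bookkeeping consequence of Lemma \ref{diago}. The only genuinely delicate point is verifying that the sampling point $m_A$ shares the cut data of every other $m$ in the same ``cell'', which is exactly the content of Lemma \ref{minko} and the definition of $A^g_{\underline p}$.
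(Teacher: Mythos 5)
Your proof is essentially correct and takes the route one would expect the authors to have in mind: once $\mathcal Q$ is recognized as a diagonal piecewise-T\"oplitz function, the first bound $\|X_{\mathcal Q}\|_{s,r}\leq \sup_A|Q_{m_A}|\leq \|X_Q\|^T_{\vec p}$ is elementary, and the oscillation estimate \eqref{lastima} from Lemma \ref{diago}, applied to the pair $m,m_A$ lying in the same cell, gives exactly $|\bar Q_m|=N^{4d\tau}|Q_m-Q_{m_A}|\leq 2\|X_Q\|^T_{\vec p}$, which is the order-$1$ bound.

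One small inaccuracy: you invoke Lemma \ref{minko} twice, both times to conclude that $m_A$ (and the other points $m$ in the sum \eqref{pargol}) have an $\ell$-cut with parameters $\underline p$ and associated space $A$. Since the Corollary explicitly chooses $m_A\in A^g_{\underline p}$, that fact is the \emph{definition} of $A^g_{\underline p}$ (the paragraph preceding Definition \ref{isotb}), not Lemma \ref{minko}. Lemma \ref{minko} would be the right reference only if $m_A$ were taken in the \emph{unparameterized} set $A^g$ of \eqref{pippo}, since it upgrades membership in $A^g$ to being a $(N,\theta,\mu,\tau(p_\ell))$-good point. You should also be slightly more explicit that the multiplicative constants in the diagonal-quadratic norm identity (Remark \ref{diaH} together with the evaluation at the end of Lemma \ref{diago}) are precisely what keep the final bound within the $(1+\e)=2$ tolerance; writing ``the constant absorbed into the order-$1$ tolerance'' is vague, since order $1$ is a fixed threshold, not an arbitrary one.
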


 \section{Poisson bracket}
\subsection{Poisson bracket estimate}
Analytic quasi-T\"oplitz functions are closed under Poisson bracket and respect the Cauchy estimates.

More precisely fix allowable $\vec p=(s,r,{\PaPi},\theta,\mu,\lambda,\mathcal O)$:
\begin{proposition}\label{main}

(i) Given  $f^{(1)},f^{(2)}\in\mathcal Q^T_{\vec p }$, quasi-T\"oplitz with parameters $\vec p$    we have that  $\{f^{(1)},f^{(2)}\}\in Q^T_{\vec p\,{}'}$,   for all  allowable parameters $\vec p\,{}':=(s',r',{\PaPi}',\theta',\mu',\lambda,\mathcal O)$, with $\vec p\,{}'$  satisfying :
$$  r/2< r'<r,\quad s/2<s'< s  ,\quad { 2\kappa} < (\mu-\mu'){{\PaPi}'}^{2} \,,\;    \kappa\mathtt C  <  (\theta'-\theta) { {\PaPi}'}^{ 4d\tau_0-4},$$ \begin{equation}\label{pois1}
  e^{-(s-s'){ {\PaPi}'}} {{\PaPi}'}^{{\tau_1}}<1.
\end{equation}We have the bound

 \begin{equation}\label{poisbound}
 \| X_{\{f^{(1)},f^{(2)}\}}\|^T_{\vec {p'}} \leq 12\, 2^{2n+3}\delta^{-1} \|X_{f^{(1)}}\|^T_{\vec { p}}\|X_{f^{(2)}}\|^T_{\vec { p}}
\end{equation}  where $\delta := \min(1-\frac{s'}{s},1-\frac{r'}{r})<1$.\vskip5pt
 (ii) Given $f^{(1)},f^{(2)}$  as in item (i),   assume that  
 \begin{equation}\label{funm}
3\, 2^{2n+8}  \, e   \delta^{-1}    \|X_{f^{(1)}}\|^T_{\vec { p}}< 1/2,
\end{equation} then the function  $f^{(2)}\circ \phi_{f^{(1)}}^t:= e^{t \,ad(f^{(1)} ) }(f^{(2)})$, for $|t|\leq 1$, is  quasi-T\"oplitz for the parameters $\vec p\,{}'$. More precisely  $f^{(2)}\circ \phi_{f^{(1)}}^t\in\mathcal Q^T_{\vec p\,{}' }$  for all parameters $({ {\PaPi}'},\theta',\mu')$ for which
\begin{equation}\label{nubo}  e^{-(s-s')\frac{ {\PaPi}'}{(\ln {\PaPi}')^2}} {{\PaPi}'}^{{\tau_1}}<1\,,\quad { 2\kappa} < (\mu-\mu'){{\PaPi}'}^{2} \ln(\PaPi')^{-2}\,,\;    \kappa\mathtt C  <  (\theta'-\theta) { {\PaPi}'}^{ 4d\tau_0-4}\ln(\PaPi')^{-2},
\end{equation}
Finally we have
\begin{equation}\label{flussoT}
 \|X_{f^{(2)}\circ \phi_{f^{(1)}}^t}\|^T_{\vec  p\,{}'}\leq2\|X_{f^{(2)}}\|_{\vec p}^T 
\end{equation}

\begin{equation}
\label{flussoT1}
 \|X_{f^{(2)}\circ \phi_{f^{(1)}}^t}-X_{f^{(2)}}\|^T_{\vec  p\,{}'}\leq 
2 \|X_{f^{(1)}}\|^T_{\vec { p }}\|X_{f^{(2)}}\|_{\vec p}^T 
\end{equation}

\end{proposition}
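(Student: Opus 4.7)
The plan is to build a Töplitz approximant for $\{f^{(1)}, f^{(2)}\}$ from Töplitz approximants of $f^{(1)}$ and $f^{(2)}$ and to read off both the majorant norm and the quasi-Töplitz remainder from the bilinear decomposition. Fix $N \geq \PaPi'$ and allowable parameters $\underline p = (s,r,N,\theta,\mu,\tau)$, $\underline p' = (s',r',N,\theta',\mu',\tau)$, and choose near-optimal Töplitz approximants $\mathcal F^{(i)} \in \mathcal T_{\underline p}$, so that $\Pi_{\underline p} f^{(i)} = \mathcal F^{(i)} + N^{-4d\tau}\bar f^{(i)}$ with $\max(\|X_{\mathcal F^{(i)}}\|_{s,r}, \|X_{\bar f^{(i)}}\|_{s,r}) \leq (1+\varepsilon)\|X_{f^{(i)}}\|^T_{\vec p}$. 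Decomposing $f^{(i)} = \mathcal F^{(i)} + g^{(i)}$ with $g^{(i)} := N^{-4d\tau}\bar f^{(i)} + (I - \Pi_{\underline p})f^{(i)}$, expand
\begin{equation*}
\{f^{(1)}, f^{(2)}\} = \{\mathcal F^{(1)}, \mathcal F^{(2)}\} + \{\mathcal F^{(1)}, g^{(2)}\} + \{g^{(1)}, \mathcal F^{(2)}\} + \{g^{(1)}, g^{(2)}\}.
\end{equation*}

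The candidate Töplitz approximant at parameters $\underline p'$ is $\mathcal F := \Pi_{\underline p'}\{\mathcal F^{(1)}, \mathcal F^{(2)}\}$. Hypothesis \eqref{pois1} is exactly \eqref{ugo} of Lemma \ref{poisb}, so $\mathcal F \in \mathcal T_{\underline p'}$, and the commutator bound \eqref{commXHK} together with \eqref{inclu} yield $\|X_\mathcal F\|_{s',r'} \leq 2^{2n+3}\delta^{-1}(1+\varepsilon)^2\|X_{f^{(1)}}\|^T_{\vec p}\|X_{f^{(2)}}\|^T_{\vec p}$. The bound on $\|X_{\{f^{(1)},f^{(2)}\}}\|^\lambda_{s',r'}$ for the Lipschitz part of \eqref{finale} follows directly from \eqref{commXHK} applied to $f^{(1)}, f^{(2)}$ themselves.

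The crux is to bound the remainder $\bar f := N^{4d\tau}\bigl(\Pi_{\underline p'}\{f^{(1)}, f^{(2)}\} - \mathcal F\bigr)$. Three kinds of contribution appear. Terms involving $N^{-4d\tau}\bar f^{(i)}$: the $N^{-4d\tau}$ factor exactly cancels the outer $N^{4d\tau}$ and \eqref{commXHK} provides the desired bound, with the cumulative constant from the four products totalling $12 \cdot 2^{2n+3}\delta^{-1}$ in \eqref{poisbound}. Terms of the form $\{(I-\Pi_{\underline p})f^{(i)}, \cdot\}$ must be shown to be negligible after the $\Pi_{\underline p'}$ projection: the smoothing estimate \eqref{smoothl} together with $e^{-(s-s')\PaPi'}(\PaPi')^{\tau_1} < 1$ from \eqref{pois1} absorbs the high-Fourier part $|k| \geq N$, and the neighborhood Lemma \ref{mah} (sharpened by the gap $\theta' > \theta$, $\mu > \mu'$ converted into a spatial radius via the second and third inequalities of \eqref{pois1}) shows that a Poisson bracket can produce a $\underline p'$-bilinear monomial only when both factors were already $\underline p$-bilinear; the remaining mixed contributions are controlled via the weighted degree bound \eqref{degrl}. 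The Lipschitz-in-$\xi$ piece of the norm is handled by the same chain of inequalities applied to the difference quotient, as in \cite{BBP1}.

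For item (ii), expand $f^{(2)} \circ \phi^t_{f^{(1)}} = \sum_{k \geq 0}\frac{t^k}{k!}ad(f^{(1)})^k f^{(2)}$ and iterate (i) with telescoping parameters $(s_k, r_k, \theta_k, \mu_k, \PaPi_k)$ shrinking from $(s,r,\theta,\mu,\PaPi)$ to $(s',r',\theta',\mu',\PaPi')$ along a $\sum 1/k^2$ schedule, so that the parameter loss at step $k$ satisfies $\delta_k^{-1} \leq C k^2/\delta$. A single application of (i) contributes a factor $12 \cdot 2^{2n+3}\delta_k^{-1}\|X_{f^{(1)}}\|^T_{\vec p}$, and smallness \eqref{funm} makes $\sum_k \frac{1}{k!}\prod_{j<k}(C j^2 \delta^{-1}\|X_{f^{(1)}}\|^T_{\vec p}) \leq 2$, producing \eqref{flussoT} and \eqref{flussoT1}. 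The main obstacle is that every intermediate parameter set must remain allowable in the sense of Definition \ref{allpa}, so the infinitely many shrinkings must all fit inside the fixed gaps $(s-s', \mu-\mu', \theta'-\theta)$; this forces precisely the $\ln(\PaPi')^2$-weakened conditions of \eqref{nubo}. The recurring technical bottleneck throughout is the localization step in the proof of (i), i.e.\ ruling out that the unprojected parts of $f^{(i)}$ contribute to $\Pi_{\underline p'}\{f^{(1)}, f^{(2)}\}$ without paying an exponential smoothing cost.
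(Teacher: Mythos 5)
Your choice of T\"oplitz approximant is too small, and this breaks the remainder estimate. You set $\mathcal F := \Pi_{\underline p'}\{\mathcal F^{(1)},\mathcal F^{(2)}\}$ and then claim that the contributions of $\{(I-\Pi_{\underline p})f^{(i)},\,\cdot\,\}$ to $\Pi_{\underline p'}\{f^{(1)},f^{(2)}\}$ are negligible, on the grounds that ``a Poisson bracket can produce a $\underline p'$-bilinear monomial only when both factors were already $\underline p$-bilinear.'' This is false, and it is precisely what the paper's splitting lemma is designed to handle. The Poisson bracket of a $\underline p$-bilinear monomial $\mathcal F^{(1)}$ with a $(N,3\mu')$-low-momentum monomial of $f^{(2)}$ (degree $0$ in high variables), taken with respect to the $x,y$ or low variables, does not contract any high variable; it remains bilinear in the high variables and survives the projection $\Pi_{\underline p'}$. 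This is case \textit{ii.a)} of the paper's case analysis, and the resulting term $\Pi_{\underline p'}\{\mathcal F^{(1)},\Pi^L_{N,3\mu'}f^{(2)}\}^{(x,y)+L}$ has no $N^{-4d\tau}$ prefactor. In your decomposition it lands inside $\bar f := N^{4d\tau}(\Pi_{\underline p'}\{f^{(1)},f^{(2)}\}-\mathcal F)$ with the bare $N^{4d\tau}$ weight still attached, so $\|X_{\bar f}\|_{s',r'}$ blows up as $N\to\infty$ and the quasi-T\"oplitz norm is infinite.

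The fix, which is what the paper does, is to enlarge the approximant to
$\mathcal F^{(1,2)}:=\Pi_{\underline p'}\bigl(\{\mathcal F^{(1)},\mathcal F^{(2)}\}^H+\{\mathcal F^{(1)},\Pi^L_{N,3\mu'}f^{(2)}\}^{(x,y)+L}+\{\Pi^L_{N,3\mu'}f^{(1)},\mathcal F^{(2)}\}^{(x,y)+L}\bigr)$,
where the split of the bracket into the high/low/$(x,y)$ pieces is essential: $\{\cdot,\cdot\}^H$ between two restricted T\"oplitz functions is T\"oplitz by \eqref{pr2} of Lemma \ref{poisb}, while the two cross brackets with low-momentum functions are T\"oplitz by \eqref{pro11} of the same lemma. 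Only after subtracting these order-one cross contributions does the remainder carry the $N^{-4d\tau}$ factor (from $\bar f^{(i)}$) or the exponential decay (from $\Pi^U_N$ via \eqref{smoothl} and \eqref{pois1}) needed to make the quasi-T\"oplitz norm finite. Your use of the smoothing estimate for the high-Fourier tail and of the neighborhood Lemma \ref{mah} to restrict the summation index is on the right track, but those devices only control the $\Pi^U_N$ part and the range of the internal index $l$ in $\{f,g\}^H$, not the low-momentum cross terms; they cannot make those terms small because those terms are genuinely not small.
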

\begin{proof}

  In order to prove this proposition we need some preliminaries.\end{proof}

\subsection{A technical Lemma}\quad
We take allowable parameters $\underline p:=(N,\theta,\mu,\tau)$  and $\underline p':=(N,\theta',\mu',\tau)$ such that 
  \begin{equation}\label{pois2}   
 {\kappa} \mathtt C<   (\theta'-\theta)N^{4d\tau-4} \,,\quad   2\kappa< (\mu-\mu')N^2 .
\end{equation}

  Let us  set up some notation.   In Definition \ref{LM} we have introduced the notion of 
   a monomial  $\mathfrak m=e^{i(k,x)}y^l z^\alpha\bar z^\beta$ of low momentum with  respect to the parameters $\underline p$ and denoted by $\Pi^L_{N,\mu}$ the projection on this subspace. Recall that one of the conditions is also that $|k|<N$, that is it has also a {\em low frequency}.

 We shall say instead that  $\mathfrak m$ is of $N$--{\em high frequency} if $|k|\geq N$ and denote $\Pi^U_N$ the projection on this subspace.

We denote its degree  in the  high   variables to be 
 $ d_H(\mathfrak m) $. We further  set  $$m (M):=\sum |\er(j)|(\alpha_j+\beta_j),\quad m_L(M):=\sum_{j\ \text{low}} |\er(j)|(\alpha_j+\beta_j).$$
 
 The projection symbol $\Pi_{N,\theta,\mu,\tau}$  is given in definition \ref{taubilinear}.

We use a mixed decomposition   $f= \Pi_{N,\theta,\mu,\tau} f+ \Pi_{N,3\mu'}^L f + \Pi_N^U f + \Pi_{R} f$ where $ \Pi_{R} f$ is by definition the projection on those monomials which are neither $(N,\theta,\mu,\tau)$ bilinear nor of $(N,3\mu')$-low momentum nor of $N$-high frequency. 

We hence divide the Poisson bracket in four terms:
 $\{\cdot,\cdot\}= \{\cdot,\cdot\}^{y,x}+\{\cdot,\cdot\}^{L}+\{\cdot,\cdot\}^{H}+\{\cdot,\cdot\}^{R}$ where the apices identify the variables in which we are performing the derivatives.

We need the following   technical lemmas and definitions.

\begin{lemma}
\label{incom} Consider a monomial $M=e^{\ii \langle k,x\rangle}A z_m^\sigma=e^{\ii \langle k,x\rangle}y^jz^\alpha\bar z^\beta z_m^\sigma$ which, with respect to some allowable parameters $\underline p$,  has   $|k|<N,$   $m(A)<\mu N^3+\kappa N$. Assume that   $z_m$ is a {\em high variable} i.e. $|\er(m)|>\mathtt c N^{\tau_1}$. Then $M$ cannot satisfy conservation of momentum.
\end{lemma}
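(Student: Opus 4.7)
The plan is to derive a contradiction directly from momentum conservation via the triangle inequality.

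First I would recall the momentum formula \eqref{mome}: for the monomial $M = e^{\ii (k,x)} y^j z^\alpha \bar z^\beta z_m^\sigma$, we have
\[
\pi_\er(M) = \pi(k) + \sum_{j\in S^c}\s(j)\er(j)(\alpha_j - \beta_j) + \s\,\er(m).
\]
Conservation of momentum means $\pi_\er(M) = 0$, which I would rewrite as
\[
\s\,\er(m) = -\pi(k) - \sum_{j\in S^c}\s(j)\er(j)(\alpha_j-\beta_j).
\]

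Next I would estimate the right-hand side. Since $\pi$ has operator norm $\kappa$ with respect to the $L^1$ norm on $\Z^n$ (Formula \eqref{essec1}) and $|k|<N$, we get $|\pi(k)| \leq \kappa N$. For the second term, each $|\s(j)| = 1$, so the sum is bounded in absolute value by $\sum_j |\er(j)|(\alpha_j+\beta_j) = m(A) < \mu N^3 + \kappa N$ by hypothesis. Thus
\[
|\er(m)| \leq \kappa N + \mu N^3 + \kappa N = \mu N^3 + 2\kappa N.
\]

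On the other hand the hypothesis that $z_m$ is a high variable gives $|\er(m)| > \mathtt c N^{\tau_1}$. The contradiction would then follow from $\mathtt c N^{\tau_1} > \mu N^3 + 2\kappa N$, which I would verify using the allowability $\mu < \mathtt C$, $N \geq N_0 = (d+1)!\,\kappa^{d+1}\mathtt C\mathtt c^{-1}$ and the fact that $\tau_1 = (4d)^{d+1}(\tau_0+1) \geq 12$ by \eqref{itau}, so $N^{\tau_1 - 3} \geq N^9$ is enormous compared to $\mathtt C/\mathtt c$ and $2\kappa/(\mathtt c N^2)$. This is the only quantitative step and it is essentially routine; there is no serious obstacle, the content of the lemma is simply that a high-momentum mode cannot be cancelled by finitely many low-momentum contributions plus a bounded frequency shift.
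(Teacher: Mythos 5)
Your proof is correct and follows essentially the same route as the paper: write out momentum conservation, isolate $\s\,\er(m)$, bound the right-hand side by $m(A)+\kappa N<\mu N^3+2\kappa N$ via the triangle inequality, and contradict the high-variable lower bound $|\er(m)|>\mathtt c N^{\tau_1}$ using $\tau_1>3$ and the choice of $N_0,\mathtt c,\mathtt C$ in \eqref{itau}.
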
 \begin{proof}
If we have conservation of momentum $\pi(k)+\sum_j\sigma(j)\er(j)(\alpha_j-\beta_j)+\sigma \er(m)=0$. By the hypothesis and the  triangle inequality we have  \begin{equation}
\label{nohl}\mathtt c N^{\tau_1 }  < |\er(m)|\leq m(A )+ \kappa N<\mu N^3+ 2\kappa N  <(\mathtt C+\kappa)N^3
\end{equation}
 a contradiction to Formula \eqref{itau}.
\end{proof}
In order to simplify the notations let us set  $\Pi':=\Pi_{N,\theta',\mu',\tau},\ \Pi:= \Pi_{N,\theta,\mu,\tau} $. Assume $f^{(1)},f^{(2)} $ are two functions satisfying conservation of momentum.
\begin{lemma}
  The following splitting formula holds:
\begin{equation}\label{split}
\Pi'\{f^{(1)},f^{(2)}   \}=
\Pi' (  \{\Pi
f^{(1)},\Pi f^{(2)}\}^H \end{equation}
$$+ \{\Pi  f^{(1)},\Pi^L_{N,3\mu'}
f^{(2)}\}^{y,x}+\{\Pi
 f^{(1)},\Pi^L_{N,3\mu'} f^{(2)}\}^L$$ $$  \{\Pi^L_{N,3\mu'}
f^{(1)},\Pi
f^{(2)}\}^{y,x}+\{\Pi^L_{N,3\mu'}
f^{(1)},\Pi f^{(2)}\}^L+ $$$$ + \{\Pi^U_N
f^{(1)},f^{(2)}\}+\{f^{(1)},\Pi^U_N
f^{(2)}\}  -\{\Pi^U_Nf^{(1)},\Pi^U_N
f^{(2)}\} ) $$
\end{lemma}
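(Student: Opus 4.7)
The plan is a direct bookkeeping expansion. First I use the elementary identity
\begin{equation*}
\{\Pi^U_N f^{(1)},f^{(2)}\}+\{f^{(1)},\Pi^U_N f^{(2)}\}-\{\Pi^U_N f^{(1)},\Pi^U_N f^{(2)}\}=\{f^{(1)},f^{(2)}\}-\{(1{-}\Pi^U_N)f^{(1)},(1{-}\Pi^U_N)f^{(2)}\}
\end{equation*}
to peel off the high--frequency contributions: these are exactly the three $\Pi^U_N$--terms on the right--hand side of the splitting formula, so the problem reduces to showing
$\Pi'\{(1{-}\Pi^U_N)f^{(1)},(1{-}\Pi^U_N)f^{(2)}\}=\Pi'\bigl(\text{first five terms}\bigr)$.

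Next I would expand $(1{-}\Pi^U_N)f^{(i)}=\Pi f^{(i)}+\Pi^L_{N,3\mu'} f^{(i)}+\Pi_R f^{(i)}$ bilinearly to obtain nine brackets, and further split each via $\{\cdot,\cdot\}=\{\cdot,\cdot\}^{y,x}+\{\cdot,\cdot\}^L+\{\cdot,\cdot\}^H+\{\cdot,\cdot\}^R$, giving thirty--six elementary summands. The key classification is by high/low/R variable content: monomials in $\Pi f^{(i)}$ carry exactly two high and no R variables (since one R variable alone would contribute at least $\mathtt C N^3$ to the non--high sum, violating the $<\mu N^3$ bilinearity bound); monomials in $\Pi^L_{N,3\mu'} f^{(i)}$ carry zero high variables (as $3\mu' N^3\ll\mathtt c N^{\tau_1}$); monomials in $\Pi_R f^{(i)}$ can carry any number of high variables, but Lemma \ref{incom} combined with momentum conservation forces any one with exactly one high variable to satisfy $m(A)\ge\mu N^3+\kappa N$. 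Since $\{\cdot,\cdot\}^{y,x},\{\cdot,\cdot\}^L,\{\cdot,\cdot\}^R$ preserve the high--count of the output while $\{\cdot,\cdot\}^H$ lowers it by two, matching these counts against the bilinearity requirement ``two high, no R, $|k|<N$, non--high $\er$--momentum $<\mu' N^3$'' eliminates most summands automatically: in $\{\Pi,\Pi\}$ only $\{\cdot,\cdot\}^H$ survives (the others give four high, and $\{\cdot,\cdot\}^R$ vanishes by absence of R variables); in $\{\Pi,\Pi^L\}$ and $\{\Pi^L,\Pi\}$ only $\{\cdot,\cdot\}^{y,x}$ and $\{\cdot,\cdot\}^L$ survive (the $H$ bracket vanishes on zero--high factor, the $R$ bracket by absence of R); and $\{\Pi^L,\Pi^L\}$ always yields zero high in the output and dies under $\Pi'$. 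This leaves exactly the five terms in the statement.

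The main obstacle is ruling out the six brackets $\{\Pi,\Pi_R\},\{\Pi_R,\Pi\},\{\Pi^L,\Pi_R\},\{\Pi_R,\Pi^L\},\{\Pi_R,\Pi_R\}$. I would argue by a case analysis on the number $h$ of high variables of the $\Pi_R$--factor. The case $h=1$ is immediate: the Poisson bracket then produces an output with an odd high--count, so it cannot be bilinear. For $h=0$ the non--low hypothesis $\sum|\er(j)|(\alpha_j+\beta_j)\ge 3\mu' N^3$, combined with the maximal derivative loss $2|\er(k)|\le 2\mathtt C N^3$ from $\{\cdot,\cdot\}^L$, the hypothesis $2\kappa<(\mu{-}\mu')N^2$, and the momentum conservation of the output, forces either a surviving R--variable in the result or pushes the non--high $\er$--momentum above the threshold $\mu' N^3$ required by $\Pi'$. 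The case $h\ge 2$ is only consistent with $\{\cdot,\cdot\}^H$ on the $\Pi_R$ side; here I would invoke Lemma \ref{dueta} together with the bound $\kappa\mathtt C<(\theta'-\theta)N^{4d\tau-4}$ to show that the cuts carried by the two high variables of the $\Pi_R$ monomial cannot upgrade to cuts with the stricter parameters $(\theta',\mu')$ required by $\Pi'$, so the output fails bilinearity. Collecting only the surviving five contributions reproduces the right--hand side.
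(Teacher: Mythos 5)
Your structure is right and the opening identity $\{\Pi^U_N f^{(1)},f^{(2)}\}+\{f^{(1)},\Pi^U_N f^{(2)}\}-\{\Pi^U_N f^{(1)},\Pi^U_N f^{(2)}\}=\{f^{(1)},f^{(2)}\}-\{(1-\Pi^U_N)f^{(1)},(1-\Pi^U_N)f^{(2)}\}$ is a clean way to peel off the high--frequency line; after that you are doing essentially the same monomial--by--monomial degree count as the paper. But the elimination of the $\Pi_R$ brackets — which is the whole point of the lemma — is not carried out correctly in either of the nontrivial cases.

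For $h=0$ you argue that the worst derivative loss from $\{\cdot,\cdot\}^L$ is $2\mathtt C N^3$, so the output $\er$--momentum stays at least $3\mu'N^3-2\mathtt C N^3$. Since the parameters are allowable we have $\mu'<\mathtt C$, so $3\mu'-2\mathtt C<\mu'$ and this bound does \emph{not} push the output above the threshold $\mu'N^3$: your estimate is too weak by itself. The missing ingredient (which is what the paper actually uses) is momentum conservation of the $\Pi_R$ \emph{factor}, not of the output: writing $A_1=B_1 z_h^{\pm}$ with $z_h$ the contracted variable, conservation of momentum of $M_1=e^{\ii(k_1,x)}y^{a_1}B_1z_h^{\pm}$ gives $|\er(h)|\le m(B_1)+\kappa N$, and since the surviving low part of the output forces $m(B_1)<\mu'N^3$, one gets $m(A_1)=m(B_1)+|\er(h)|\le 2m(B_1)+\kappa N<3\mu'N^3$, contradicting $M_1\in\Pi_R$. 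Your phrase ``momentum conservation of the output'' points at the wrong object.

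For $h\ge 2$ the argument is also off. First, the claim that the case is ``only consistent with $\{\cdot,\cdot\}^H$ on the $\Pi_R$ side'' is false: $\{\Pi_R f^{(1)},\Pi^L_{N,3\mu'}f^{(2)}\}^{y,x}$ or $\{\cdot,\cdot\}^L$ with $d_H(\Pi_R f^{(1)})=2$ produces exactly two high variables in the output. Second, the cut--upgrading argument covers only one of the reasons a two--high monomial can lie in $\Pi_R$. A monomial with $d_H=2$, $|k|<N$, whose two high indices $m,n$ \emph{do} have $(\theta,\mu,\tau)$--cuts and satisfy $|\er(m)|,|\er(n)|>\theta N^{\tau_1}$, still sits in $\Pi_R$ if its non--high $\er$--momentum satisfies $m(A)\ge\mu N^3$. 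Nothing about cuts rules this out; again one needs momentum conservation of the monomial (and of its partner) to force $m(A)<\mu N^3$, exactly as the paper does (see the bound $|j\pm m|\le\mu'N^3+2\kappa N<\mu N^3$ in its case i a), and $m(A_2)<2\mu'N^3+\kappa N<3\mu'N^3$ in its case ii a)). Third, Lemma~\ref{dueta} is about two points sharing a common cut; the relevant lemma for transporting a cut from a nearby point is the Neighborhood Lemma~\ref{mah}. Until these three points are repaired the classification of the thirty--one ``dead'' summands is incomplete.
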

\begin{proof}
We 
perform a case analysis: we replace each $f^{(i)}$ with  a single
monomial to show which terms may contribute  non trivially to the
projection $\Pi'\{f^{(1)},f^{(2)}   \}$.

 Consider the expression
  \begin{equation}
\label{ilP} \Pi'\{e^{i (k^{(1)},x)}y^{l^{(1)}} z^{\alpha^{(1)}}\bar z^{\beta^{(1)}},e^{i (k^{(2)},x)}y^{l^{(2)}} z^{\alpha^{(2)}}\bar z^{\beta^{(2)}}\}.
\end{equation}
 If one or both of the $|k^{(i)}|>N$ then one or both monomials are of high frequency and we obtain a term in the last line of \eqref{split}.

 Suppose now that  $|k^{(1)}|,|k^{(2)}|\leq N$ we wish to understand under which conditions on the $\alpha^{(i)},\beta^{(i)}$ the expression \eqref{ilP} is not zero. 
  
 For a monomial $M:=e^{i(k,x)}y^az^{\alpha }\bar z^{\beta }$  if $\Pi'(M)\neq 0$  we must have  $d_H(M)=2 $ (plus further conditions).    For two monomials $M_1,M_2$  we see that each term of $\{ M_1,M_2\}$  has as degree   $d_H(\{ M_1,M_2\})$ equal to:
 \begin{enumerate}
\item $ d_H(M_1)+d_H(M_2)-2 $ if we have contracted conjugate high variables $z_j^\s, z_j^{-\s}$.
\item $ d_H(M_1)+d_H(M_2) $ otherwise.
\end{enumerate}

In case  i) in order to have  $\Pi'\{ M_1,M_2\}\neq 0$ we must have $  d_H(M_1)+d_H(M_2)=4$, this happens        either if  a) $d_H(M_1)= d_H(M_2)=2$  or b) $d_H(M_1)=1,d_H(M_2)=3$ (resp. $d_H(M_1)=3,d_H(M_2)=1$). Let us show that, by conservation of momentum,   case b) is not possible.  Let us denote by $A_i, i=1,2$ the part of the monomials $M_i$ in the variables $z_h$ which are not  high i.e. $M_1=e^{i(k_1,x)}y^{a_1}A_1 z_j^{\pm},\ M_2=e^{i(k_2,x)}y^{a_2}A_2 z_j^{\mp}z_m^\s z_n^{\s'}$. In $\{ M_1,M_2\}$  the part of the monomial  in the variables $z_h$ which are not  high is $A_1A_2$  so, if $\Pi'\{ M_1,M_2\}\neq 0$ we must have $m(A_1A_2)=m(A_1)+m(A_2)<\mu' N^3$, we are thus  in the hypotheses of Lemma \ref{incom} for $M_1$  a contradiction.
  
 In case a)  we claim that both monomials $M_1,M_2$ are $N,\mu,\theta,\tau$ bilinear, so that this contribution comes from the first line of  \eqref{split}.  For this we only need to verify that, if $z_j$ is the variable we contract, then $j$ has a cut at  $\ell$   for the parameters $N,\theta,\mu,\tau$.   Write $M_1=e^{i(k_1,x)}y^{a_1}A_1 z_j^{\pm}z_m^\s,\ M_2=e^{i(k_2,x)}y^{a_2}A_2 z_j^{\mp}z_n^{\s'}$. Since $\Pi'\{ M_1,M_2\}\neq 0$ we have $m(A_1)+m(A_2)\leq \mu' N^3$. By conservation of momentum $| \er(j)\pm \er(m)|\leq \mu' N^3+\kappa N$ hence $|   j \pm  m |\leq \mu'N^3+\kappa N+4d\kappa$,  using \eqref{pois2}  we have 
 $$|   j \pm  m |\leq \mu'N^3+\kappa N+4d\kappa<  \mu'N^3+2\kappa N<  \mu N^3.$$
   We have that  $m$ has a $\ell$ cut for the parameters $\mu',\theta',\tau$ and the hypotheses in Formula \eqref{vicin} of Lemma \ref{mah}  
 are satisfied,  hence $j$ has the cut and 
we obtain the first term in formula \eqref{split}.\medskip

In case ii)   we can have $d_H(M_1)+d_H(M_2)=2$ either if a) $d_H(M_1)=2,d_H(M_2)=0$ (resp.   $d_H(M_1)=0,d_H(M_2)=2$)   or b) $d_H(M_1)= d_H(M_2)=1$.

We claim that in case a) we obtain the contributions of lines 2,3 of Formula \eqref{split}.

In fact say that $d_H(M_1)=2,d_H(M_2)=0$ and $M_1= e^{i(k_1,x)}y^aA_1z_m^{\s } z_n^{\s' } ,\ M_2=e^{i(k_2,x)}y^bA_2$ where $A_i$ do not contain high  variables. We have that $|k_i|<N$ by hypothesis,  the high variables of $M_1$  have a $N,\mu',\theta',\tau$ cut  also by hypothesis and so also a  $N,\mu,\theta,\tau$ cut, finally if we contract variables $x,y$  we have $m(A_1), m(A_2) \leq m_L\{ M_1,M_2\} <\mu'N^3 $.  Assume we contract conjugate variables $z_h$ which are not high, let $A_i=B_i z_h^{\pm}$ so that $B_1B_2$ is the part of  $\{ M_1,M_2\}$   in the low variables and $m(B_1)+m(B_2)< \mu'N^3$.  By conservation of momentum for $M_2$ we have $|\er(h)|\leq m(B_2)+\kappa N$, hence $m(A_2)\leq 2m(B_2)+ \kappa N  <2\mu'N^3+ \kappa N$. In both cases we deduce that we  we obtain the contributions of lines 2,3  of Formula \eqref{split}.
 \smallskip

 Let us show finally that, by conservation of momentum  case b) is not possible.    We are now assuming that for instance  $M_1=e^{i(k_1,x)}y^{a_1}B_1 z_hz_m^{\pm}  $ with $z_m$  high  while $z_h$ not high, hence $|\er(h)|\leq \mathtt c N^{\tau_1}$ and  $m(B_1)<\mu' N^3$.  
 We have $m(B_1 z_h)\leq \mu' N^3+\mathtt c N^{\tau_1} $. So by conservation of momentum  we have.
 $$\theta'  N^{\tau_1}< |\er(m)|\leq m(B_1)+|\er(h)|+\kappa N<\mu' N^3 +\theta N^{\tau_1}+\kappa N$$ which implies $(\theta'  -\theta)N^{\tau_1} <\mu' N^3  +\kappa N$ contradicting \eqref{pois2}.
\end{proof}

\subsection{The proof of Proposition \ref{main}} We use all the notations and hypotheses of  \ref{main}.
We can first use the standard estimates \eqref{commXHK} and obtain
 \begin{equation} 
\|X_{\{f^{(1)},f^{(2)}\}}\|^\lambda_{s',r'} <  2^{2n+3}\delta^{-1}\|X_{f^{(1)}}\|_{s,r}^\lambda\|X_{f^{(2)}}\|_{s,r}^\lambda\leq 2^{2n+3}\delta^{-1}\|X_{f^{(1)}}\|_{\vec p}^T\|X_{f^{(2)}}\|_{\vec p}^T,
\end{equation}
here $\delta$ is defined in \eqref{diffusivumsui}.\smallskip

Since the   allowable parameters ${\Pappa}',\theta',\mu' $ satisfy  \eqref{pois1} we have that $ N ,\theta',\mu',\tau$ satisfy \eqref{pois2} for all $N>{\Pappa}',\tau\geq \tau_0$.  
In order to show that $\{f^{(1)},f^{(2)}\}$ is quasi--T\"oplitz (with respect to the chosen parameters), it is enough to provide, for
all $N>{\Pappa}'$ and the  allowable parameters   $\underline p':=(N,\theta',\mu',\tau)$   a decomposition
$$\Pi' \{f^{(1)},f^{(2)}\}=\Pi_{N,\theta',\mu',\tau} \{f^{(1)},f^{(2)}\}= \mathcal F^{(1,2)}+ N^{-4d \tau} \bar f^{(1,2)}$$  so that  $\mathcal F^{(1,2)}\in \mathcal  T_{\underline p'}$ and also
\begin{equation}\label{stizza}
 \|X_{\mathcal F^{(1,2)}}\|_{s',r'}, \|X_{\bar f^{(1,2)}}\|_{s',r'} <
12\, 2^{2n+3}\delta^{-1}  \|X_{f^{(1)}}\|_{s,r}^T\|X_{f^{(1)}}\|_{s,r}^T.
\end{equation}
\smallskip

For $\epsilon>0$ take $\mathcal F^{(i)}\in \mathcal
T_{\underline p}$, ${\underline p}=(s,r,N,\theta,\mu,\tau)$ so that  setting
     \begin{equation} \bar f^{(i)} :=N^{4
d\tau}\big( \Pi_{(N,\theta,\mu,\tau)}f^{(i)}- \mathcal F^{(i)}\big),
\end{equation}
we have   \begin{equation} \label{epsilon}  \max( \| X_{\mathcal F^{(i)}}\|_{s,r},\|
X_{\bar f^{(i)} }\| _{s,r} ))< \|X_{f ^{(i)}}  \|_{\vec p}^{T}+\epsilon  .
 \end{equation}
 We thus define the function $$\mathcal F^{(1,2)}:=
\Pi_{N,\theta',\mu',\tau}\!\left( \! \{ \mathcal F^{(1)},\mathcal
F^{(2)}\}^H+\! \{\mathcal F^{(1)},\Pi^L_{N,3\mu'}
f^{(2)}\}^{(y,x)+L}\!+\!\{\Pi^L_{N,3\mu'} f^{(1)},\mathcal
F^{(2)}\}^{(y,x)+L} \right)$$ where we have denoted
$\{\cdot,\cdot\}^{(y,x) +L}=
\{\cdot,\cdot\}^{(y,x)}+\{\cdot,\cdot\}^{L}$.  We need to show that it satisfies the required conditions.

\begin{lemma}
(i) One has
$\mathcal F^{(1,2)}\in \mathcal  T_{\underline p'}$. 

(ii) Setting $\bar
f^{(1,2)}= N^{4d\tau}(\Pi'
\{f^{(1)},f^{(2)}\}-\mathcal F^{(1,2)})$ one has that the bounds
\eqref{stizza} hold.
\end{lemma}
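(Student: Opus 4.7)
The plan is to handle (i) and (ii) simultaneously by treating each of the three summands in the definition of $\mathcal F^{(1,2)}$ in turn, and then expanding the excess $\Pi'\{f^{(1)},f^{(2)}\}-\mathcal F^{(1,2)}$ via the splitting formula \eqref{split}. The starting observation is that the hypotheses on $\vec p\,{}'$ culminating in \eqref{pois1} imply the compatibility condition \eqref{ugo} of Lemma \ref{poisb} for the pair $\underline p=(s,r,N,\theta,\mu,\tau)$, $\underline p'=(s',r',N,\theta',\mu',\tau)$, uniformly in $N\geq\PaPi'$ and $\tau\geq\tau_0$: since $\tau_0\geq 12$ by \eqref{itau}, both $(\theta'-\theta)N^{4d\tau-1}$ and $(\mu-\mu')N^{\tau-1}$ easily dominate $\kappa(\mu N^3+3d\kappa N)$.

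For (i), the first summand $\Pi_{N,\theta',\mu',\tau}\{\mathcal F^{(1)},\mathcal F^{(2)}\}^H$ lies in $\mathcal T_{\underline p'}$ by \eqref{pr2}, while the other two summands of the form $\Pi_{N,\theta',\mu',\tau}\{\mathcal F^{(i)},\Pi^L_{N,3\mu'}f^{(j)}\}^{(y,x)+L}$ lie in $\mathcal T_{\underline p'}$ by \eqref{pro11}, after noting that $\Pi^L_{N,3\mu'}f^{(j)}$ inherits momentum zero from $f^{(j)}$ and hence belongs to $\mathcal L_{s,r}(N,3\mu',0)$ (the parameter range on $\mu'$ is chosen so that $3\mu'<\mathtt C$). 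The bound on $\|X_{\mathcal F^{(1,2)}}\|_{s',r'}$ in \eqref{stizza} then follows immediately by applying \eqref{commXHK} to each of the three brackets, using \eqref{caligola} to absorb the outer projections, and \eqref{epsilon} to control $\|X_{\mathcal F^{(i)}}\|_{s,r}$ and $\|X_{\Pi^L_{N,3\mu'}f^{(i)}}\|_{s,r}$ by $\|X_{f^{(i)}}\|^T_{\vec p}+\epsilon$; the three terms produce the factor $3\cdot 2^{2n+3}\delta^{-1}$.

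The heart of the argument is the bound on $\bar f^{(1,2)}=N^{4d\tau}(\Pi'\{f^{(1)},f^{(2)}\}-\mathcal F^{(1,2)})$. I substitute $\Pi f^{(i)}=\mathcal F^{(i)}+N^{-4d\tau}\bar f^{(i)}$ in the splitting formula \eqref{split} and cancel the terms already present in $\mathcal F^{(1,2)}$; the residue splits into three families: (a) cross terms carrying exactly one factor $N^{-4d\tau}\bar f^{(i)}$; (b) a quadratic correction $N^{-8d\tau}\{\bar f^{(1)},\bar f^{(2)}\}^H$; and (c) the three high-frequency summands $\Pi'\{\Pi_N^U f^{(1)},f^{(2)}\}$, $\Pi'\{f^{(1)},\Pi_N^U f^{(2)}\}$, $-\Pi'\{\Pi_N^U f^{(1)},\Pi_N^U f^{(2)}\}$ from the last line of \eqref{split}. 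After multiplication by $N^{4d\tau}$ the pieces of types (a) and (b) are bounded one by one by $2^{2n+3}\delta^{-1}(\|X_{f^{(1)}}\|^T_{\vec p}+\epsilon)(\|X_{f^{(2)}}\|^T_{\vec p}+\epsilon)$ via \eqref{commXHK} and \eqref{epsilon}. For (c) the smoothing estimate \eqref{smoothl} gives $\|X_{\Pi_N^U f^{(i)}}\|_{s',r}\leq (s/s')e^{-N(s-s')}\|X_{f^{(i)}}\|_{s,r}$, and since $\tau_1>4d\tau_0\geq 4d\tau$ by \eqref{itau} together with \eqref{pois1} one obtains $N^{4d\tau}e^{-N(s-s')}<1$ uniformly in $N\geq\PaPi'$, so each high-frequency contribution also fits the required estimate. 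A count of the resulting terms produces at most twelve summands of the claimed size, and letting $\epsilon\to 0$ yields \eqref{stizza} with constant $12\cdot 2^{2n+3}\delta^{-1}$.

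The main obstacle I anticipate is step (c): one must verify that each high-frequency remainder is genuinely absorbed by the $N^{-4d\tau}$ rescaling, and this is precisely where the parameter hierarchy $\tau_1\gg 4d\tau_0$ in \eqref{itau} and the exponential condition \eqref{pois1} become indispensable. Without the exponential gain provided by the smoothing estimate, the $\Pi_N^U$ pieces would overwhelm any polynomial $N^{4d\tau}$ weight and no T\"oplitz approximation of $\{f^{(1)},f^{(2)}\}$ of the required strength could be extracted.
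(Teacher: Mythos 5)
Your proposal is correct and follows essentially the same route as the paper's proof: verify the hypotheses of Lemma \ref{poisb} for the three summands of $\mathcal F^{(1,2)}$ to get (i), obtain the $3\cdot 2^{2n+3}\delta^{-1}$ bound on $\|X_{\mathcal F^{(1,2)}}\|_{s',r'}$ from \eqref{commXHK} and \eqref{epsilon}, then substitute $\Pi f^{(i)}=\mathcal F^{(i)}+N^{-4d\tau}\bar f^{(i)}$ into the splitting formula \eqref{split}, and handle the remainder term-by-term via \eqref{commXHK}, \eqref{epsilon}, and the smoothing estimate \eqref{smoothl} combined with $N^{4d\tau}e^{-N(s-s')}<1$, which is exactly what the paper does. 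One small slip: you wrote ``$\tau_1>4d\tau_0\geq 4d\tau$,'' but since $\tau\geq\tau_0$ on the allowable range the chain should read $\tau_1\geq 4d\tau\geq 4d\tau_0$; the conclusion $N^{4d\tau}\leq N^{\tau_1}$ you then use is correct.
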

 \begin{proof} 
(i) The constraints \eqref{pro11} are satisfied and we just apply Lemma \ref{poisb}.

(ii) The estimate \eqref{stizza} for $\mathcal F^{(1,2)}$ follows by  Cauchy  estimates since
 $$ \|X_{\mathcal F^{(1,2)}}\|_{s',r'}\leq  \|X_{\{\mathcal F^{(1)},\mathcal F^{(2)}\}}\|_{s',r'}+ \|X_{\{\mathcal F^{(1)},f^{(2)}\}}\|_{s',r'}+ \|X_{\{\mathcal F^{(2)},f^{(1)}\}}\|_{s',r'} $$
 $$\leq 2^{2n+3}\delta^{-1}\big[\|X_{ \mathcal F^{(1)}}\|_{r ,s }\| X_{\mathcal F^{(2)} }\|_{r ,s } +\|X_{ \mathcal F^{(1)}}\|_{r ,s }\| X_{ f^{(2)} }\|_{r ,s } +\|X_{ f^{(1)}}\|_{r ,s }\| X_{\mathcal F^{(2)} }\|_{r ,s }\big ]$$

 $$\leq  3\, 2^{2n+3}\delta^{-1}   \|X_{ f^{(1)}}\|^T_{\vec p} \| X_{ f^{(2)} }\|^T_{\vec p } .$$

We now estimate $  \|X_{\bar f^{(1,2)}}\|_{s',r'} $. We have $\bar
f^{(1,2)}=$
$$ N^{4d\tau} \Pi'(
\{f^{(1)},f^{(2)}\}- \{ \mathcal F^{(1)},\mathcal
F^{(2)}\}^H- \{\mathcal F^{(1)},\Pi^L_{N,3\mu'}
f^{(2)}\}^{(y,x)+L}-\{\Pi^L_{N,3\mu'} f^{(1)},\mathcal
F^{(2)}\}^{(y,x)+L})$$ We substitute in  formula \eqref{split} $
\Pi_{\underline p}f^{(i)}= \mathcal F^{(i)}+
N^{-4d\tau}\bar f^{(i)}$.
\medskip
 
Thus $\bar
f^{(1,2)}=\Pi'(\Xi)$ with
\begin{equation}\label{split2}
 \Xi=\big[ \{ \mathcal F^{(1)}+ N^{-4d\tau}\bar f^{(1)},  \bar f^{(2)}\}^H+ \{\bar f^{(1)}),  \mathcal F^{(2)}+N^{-4d\tau}\bar f^{(2)}\}^H+ N^{-4d\tau}\{\bar f^{(1)},  \bar f^{(2)}\}^H\end{equation}
$$+ \{  \bar f^{(1)},\Pi^L_{N,3\mu'}
f^{(2)}\}^{y,x+L}+   \{\Pi^L_{N,3\mu'}
f^{(1)},  \bar f^{(2)})\}^{y,x+L} $$$$ +N^{4d\tau}\big[ \{\Pi^U_N
f^{(1)},f^{(2)}\}+\{f^{(1)},\Pi^U_N
f^{(2)}\}  -\{\Pi^U_Nf^{(1)},\Pi^U_N
f^{(2)}\} \big]$$ 

 In  order to estimate the norm $  \|X_{\bar f^{(1,2)}}\|_{s',r'} $ we estimate the norm   $  \|X_{\Xi}\|_{s',r'} $.

  If we have chosen $\epsilon$ sufficiently small  in Formula \eqref{epsilon}  the first  two lines of $\Xi$ can be  estimated by  $9\, 2^{2n+3}\delta^{-1}   \|X_{ f^{(1)}}\|^T_{\vec p } \| X_{ f^{(2)} }\|^T_{\vec p }$ and the last by the smoothing estimates \eqref{smoothl}.
 $$ \|X_{\Pi^U_N f}\|_{s',r'}\leq  2e^{-N(s-s')}\|X_f\|_{s,r} ,$$
 $$\|  X_{ \{\Pi^U_N f^{(1)},f^{(2)}\}+ \{f^{(1)},\Pi^U_N
f^{(2)}\}}\|_{s',r'} \leq  8\, 2^{2n+3} e^{-N(s-s')} \delta^{-1} \|X_{ f^{(1)}}\|_{s,r}\|X_{f^{(2)}}\|_{s,r}. $$  Since by \eqref{pois1} $N^{4d\tau } e^{-N(s-s')}<1$, the estimate \eqref{stizza} follows. 
 \end{proof}

 \begin{proof}{\it Conclusion of the proof of (\textbf{proposition \ref{main}})}
 Proposition \ref{main}(i) follows from the previous Lemma.
The proof of (ii)   is identical to that of Proposition  5 (ii) of \cite{PX}.
 \end{proof}

   \part{The KAM algorithm}\label{rottura}
\section{An abstract KAM theorem}
 The starting point for our KAM Theorem is a class of Hamiltonians $H$, variation of the Hamiltonians  considered in \cite{PX}:
 \begin{equation}\label{hamH0}  H:=\mathcal N +P\,, \quad P=
P(x,y,z,\bar z, \xi).
\end{equation}$$\mathcal N:= (\omega (\xi),y)+\sum_{k\in S^c_r} \Omega_k(\xi) |z_k|^2+ \sum_{(h,k)\in S^c_i} a_{h,k} (|z_h|^2-|z_k|^2) + b_{h,k}(z_h z_k +\bar z_h\bar z_k)
, $$
defined in $ D(s,r) \times  \mathcal O$, where we take $\mathcal O\subseteq \ro \mathfrak K_\alpha
$   a compact domain of diameter of order $\varepsilon^2$ contained in one of the components of Theorem \ref{xixi} and  subject to the restriction given in Remark \ref{lemme}. The functions $\omega(\xi),\Omega_n(\xi),a_{h,k} ,b_{h,k}$ are well defined for
$\xi\in \mathcal O$.  In our examples the set $S^c_i$ of {\em complex eigenvalues} or  {\em hyperbolic  terms}  is finite.\smallskip

It is well known that, for each $\xi\in \mathcal O$, the Hamiltonian equations of motion
for the unperturbed $\mathcal N$
admit the special solutions  $(x, 0, 0, 0)\to (x+\omega(\xi) t,
0,0,0)$ that  correspond  to
  invariant tori in the phase space.

\smallskip  Our aim is to prove that,  under suitable hypotheses, there is a set   $\mathcal O_\infty\subset \mathcal O$   of positive Lebesgue measure, so that,   for all
 $\xi \in \mathcal O_\infty$  the Hamiltonians $H$ still admit
 invariant tori (close to the ones of the unperturbed system) with some frequency $\omega^\infty(\xi)$ (close to  $\omega (\xi)$). \smallskip
 
 Given a value $\xi$ of the parameters   we  have the torus  given by the equations $y=z=0$.  If the Hamiltonian vector field $X_{H}$ of a Hamiltonian $H$ is tangent to this torus, and if on this torus it coincides with $\sum_{i=1}^n\omega^\infty_i(\xi)\pd{}{x_i}$  then the Hamiltonian evolution is quasi--periodic on this torus, which is called a {\em KAM torus}  for $H$. 
 
 This condition depends only on the terms $H^{\leq 2}$ of   $H$ of degree $\leq 2$. Denote by $H^{(i,j)}$   the part of degree $2i$ in $y$ and $j$ in $z$, recall that we give degree $0$ to the angles $x$, $2$ to $y$ and $1$ to $w$.:
 \begin{equation}\label{ordine}
H^{\leq 2}= H^{0}(x)+ H^{0,1}(x; w) +H^{2}(x;y,w),\quad H^{2}(x;y,w)=H^{1,0}(x;y)+H^{0,2}(x; w)
\end{equation}
 For a value $\xi$ giving a KAM torus for $H$ we have that the term $H^{0}$ must be constant (and we usually drop it), the term $H^{(0,1)}=0$ and finally $H^{(1,0)}=\sum_{i=1}^n\omega^\infty_i(\xi)y_i $ (there is no condition on $H^{0,2}(x; w)$).  

Therefore our goal is to find a change of variables (possibly in a smaller domain)  so that we have  a large set $\mathcal O_\infty$ of parameters  defining KAM tori for $H$.  The precise statement is contained in Theorem \ref{KAM}.

We start by describing the class of Hamiltonians to which the method applies.
 \medskip
 
\subsection{Compatible Hamiltonians\label{CaH}}\quad We consider a class of Hamiltonians stable under the KAM algorithm. 

In the construction there will appear   parameters $$\underline p=(r,s,{\Pappa},\theta,\mu, \O, a,{\esse}, M, L),\quad (\e,\g,\c) $$  playing different roles, where $\O\subset  \ro \mathfrak K_\alpha$ is a compact set of positive measure (of order $\e^{2n}$)  while all the others are positive numbers such that ${\Pappa}>N_0$  will play the role of a {\em frequency cut}  and will grow to $\infty$ in the recursive algorithm, $\g<1$ is an auxiliary parameter which we fix at the end of the algorithm and should be thought of as of order smaller than $\e^2$ but larger of the order of the perturbation,   and \footnote{note that the condition $LM<4$ is added only in order to simplify notations, any $\e^2,r,\Pappa$ independent constat would be acceptable.}: 
$$  (\mu-\mathtt c){\Pappa}^{\tau_0}, (\mathtt C-\theta){\Pappa}^{4d\tau_0}>\kappa {\Pappa}^4,  $$\begin{equation}\label{palleK} \gamma \leq 2 \varepsilon^2 M<1/6\,,\;  (8 M\e^2)^{-1}>{\esse}> 4\sqrt{n}ML \,,\quad a \leq M \,,\quad LM<4.
\end{equation} 
Recall that $\kappa= \max(|\mathtt j_i|)$ and see \eqref{itau} for the definition of $N_0,\mathtt c,\mathtt C, \tau_0$.  \smallskip

 We consider Hamiltonians, defined in $ D(s,r) \times  \mathcal O$,  of the form: 
 \begin{equation}\label{hamH}  H:=\mathcal N +P,\, \mathcal N:= (\omega(\xi),y)+\sum_{k\in S^c_r} \Omega_k(\xi) |z_k|^2+\mathcal C, \quad P=
P(x,y,z,\bar z, \xi),\end{equation}$$\mathcal C= \sum_{(h,k)\in S^c_i} a_{h,k} (|z_h|^2-|z_k|^2) + b_{h,k}(z_h z_k +\bar z_h\bar z_k),\quad  (\omega(\xi),y)=\sum_{i=1}^n\omega_i(\xi)y_i.
$$ 
We also may use for the complex eigenvalues    the complex coordinates, as explained in \S \ref{coco}. In that case unless there is a risk of confusion we may write the Hamiltonian in full diagonal form   $\mathcal N:= (\omega(\xi),y)+\sum_{k\in S^c} \Omega_k(\xi) |z_k|^2$ with the understanding that finitely many $z_k$ are complex coordinates  which come in pairs and that the corresponding $\Omega_k(\xi)$ are   complex and in conjugate pairs.

  \begin{definition}\label{buone} We say that a Hamiltonian \eqref{hamH} is {\em compatible with the parameters $\underline p$} if the following conditions $(A1)$--$(A5)$ are satisfied:\smallskip 

\noindent $(A1)$\quad {\it Non--degeneracy:} The map $\xi\to
\omega(\xi)$ is a lipeomorphism\footnote{in our applications all maps will actually be analytic} from  $\mathcal O$ to
its image with $ |\ome^{-1}|^{lip}_\infty \leq L$. Setting $\vgot_i:= |\mathtt j_i|^2$, for $i= 1,\dots,n$ we have $ |\ome(\xi)-\vgot|_\infty\leq M \varepsilon^2$.
\bigskip

\noindent $(A2)$\quad  {\it Asymptotics of normal frequency:}
 For all $n\in S^c$ we have a decomposition:
\begin{equation}\label{asymp1} \Omega _n(\xi)=\s(n)(|\er(n)|^2+2\val_n(\xi))+ \tilde
\Omega _n(\xi).
\end{equation} 
 We assume that the $ \vartheta _n(\xi)$ are chosen in a finite list of analytic functions which are homogeneous of degree one in $\xi$, moreover  the $\tilde\Ome:=\{ \tilde\Ome_n\}_{n\in S^c}$ are Lipschitz functions from $\mathcal O \to l_\infty$ with\footnote{recall that on $\R^n$ we use the $l^\infty$ norm.}   \begin{equation}\label{omelip}
|\ome|^{lip}_\infty+|\Ome|^{lip}_\infty \leq M\quad{\rm and }\quad 2|\val|^{lip}_\infty \leq M\,,\quad  2|\val|_\infty \leq M \e^2\,. \end{equation}  
\bigskip 

\noindent$(A3)$\quad  {\it Regularity and Quasi--T\"oplitz property:} the functions $P$, $ \val(z):=\sum_j\vartheta_j |z_j|^2$  and $ \tilde\Omega (z):=\sum_j\tilde\Omega _j|z_j|^2$ are $M$--regular, preserve momentum as in \eqref{mome},  are Lipschitz in the parameters $\xi$  and  quasi-T\"oplitz
with parameters $({\Pappa},\theta,\mu)$  (cf. Definition \ref{topbis}). Moreover for all $N \geq K$, $\tau_0\leq \tau\leq \tau_1/4d$ we have
$\Pi_{(N,\theta,\mu,\tau)} \sum_j\val_j |z_j|^2 \in \mathcal T_{(N,\theta,\mu,\tau)}$.
 \smallskip
 
We need to control the norms of the above functions, we use the free parameter $\gamma$, whose purpose is to estimate the measure of  the various Cantor sets which will appear, and  set  $\vec p= (s,r,\PaPi,\theta,\mu,\lambda=\gamma M^{-1},\mathcal O)$.

We define:
\begin{equation}
\label{glieps}\gamma^{-1}\|X_{P^{(i)}}\|^{T}_{\vec p}:= \ix^{(i)},\;i=0,1,2\,,\quad \vec{\ix}=(\ix^{(0)},\ix^{(1)},\ix^{(2)}),\quad \gamma^{-1}\|X_{ P}\|^{T}_{\vec p}:=\Theta\,.
\end{equation}     
\medskip
\quad We require that 

\noindent $(A4)$\quad{ \it Smallness condition}:\footnote{by  $|\vec\ix|:=|\ix^{(0)}|+|\ix^{(1)}|+|\ix^{(2)}|$ we mean its $L^1$ norm  note that $|\vec{\ix}|<3 \Theta$.} 
\begin{equation}\label{topo}
 \Theta<{1} \,, \; \gamma^{-1}\| X_{ \tilde\Omega   }\|^{T}_{\vec p}<  1\,,\; 2^{2n+15}  |\vec{\ix}| {\Pappa}^{ {4d\tau_1} } < 1.
\end{equation}
Note that the definition of T\"oplitz norm for diagonal matrices and \eqref{topo} imply 
	  \begin{equation}
\label{Omeinf}|\tilde\Ome|_{\infty}\leq \| X_{ \tilde\Omega   }\|^{T}_{\vec p} < \gamma\leq 2M \varepsilon^2 .
\end{equation}

We  note moreover that from condition $(A3)$ and  by  Remarks \ref{dupro},\ref{diaH}, we have that 
$$\| X_{ \val (z)}\|^{T}_{\vec p}= \|X_{ \val (z)}\|^{\lambda}_{s,r}= |\val|_\infty + \lambda |\val|_\infty^{lip}=|\val|_\infty + \gamma M^{-1} |\val|_\infty^{lip}. $$  Then \eqref{palleK}  and \eqref{omelip} imply $\| X_{\val (z)}\|^{T}_{\vec p}<   2M\e^2$.
 
 \bigskip 

\noindent$(A5)$\quad {\it Non--degeneracy ({\em Melnikov conditions}):}  We denote by $\Delta_{\xi,\varrho}f=\frac{|f(\xi)-f(\varrho \xi)|}{(1-\varrho)|\xi|}$  the variation of $f$ in the radial direction:  \begin{equation}\label{pota}
\inf_{\varrho \in \R^+\!\!\!\!\!,\, \,\,\xi\in \O\,\atop
 \varrho\neq 1, \, \varrho\xi\in \O } |\Delta_{\xi,\varrho}(\langle \ome, k\rangle  +  \Ome \cdot l) |>a    \, ,
 \quad \forall k \in {\mathbb Z}^n, \, l \in {\mathbb Z}^{S^c} \, , |l| \leq 2 \,,\ |k|\leq {\esse}\,,\ \ (k,l) \neq 0\,,
\end{equation}  for all $(k,l)$ compatible with momentum conservation and such that, setting $\Vgot:=(\Vgot_n)_{n\in S^c},\,$ with $ \Vgot_n:=\sigma(n)|\er(n)|^2$, one has (see $(A1)$):  
\begin{equation}\label{argh}
\langle \vgot, k\rangle  +  \Vgot \cdot l=0.
\end{equation}
Observe that $\langle \vgot, k\rangle  +  \Vgot \cdot l\in\Z$.
\bigskip
\end{definition}\begin{remark}\label{lapalisse}
If $H$ is compatible with the parameters $\underline p$ it is also compatible with all choices of parameters $\underline p'$ where
$$s'\leq s\,,\;r'\leq r\,,\;{\Pappa}'\geq {\Pappa}\,,\;\O'\subseteq \O\,, $$ provided that \eqref{topo} still holds.
\end{remark}
  In working with the  cubic NLS  we will also have (at the first step of the algorithm): 
\smallskip

\noindent $(A2*)$\quad{ \it Homogeneity}:\quad  The functions  $\ome(\xi)-\vgot, \Ome_m - \s(m)|\er(m)|^2$ are analytic and homogeneous of degree $1$.\smallskip
\begin{remark}\label{eind}
By the homogeneity of $\ome(\xi)$ and $\val$ we may easily see that $M,L,a$ can be taken $\e$ independent. These 3 paramters will remain bounded away from $0,\infty$ in the course of the algorithm.   If the condition $(A2*)$ were that the functions are homogeneous of degree $q>1$ then we would only have $ML, M a^{-1}$ as $\e$ independent constants. 
\end{remark}
 
\bigskip
\subsubsection{Infinite dimensional KAM theorem}
 \noindent Now we state our infinite dimensional KAM theorem.  We use the symbol $\lessdot$ to mean that we have  $\leq cost  $ where $cost$ depends only on $n,d,\kappa$. We use the same symbols $*$ as in the previous paragraph but with a $*_0$. %\marginpar{non si capisce}
 \begin{theorem}\label{KAM}

Assume  that a Hamiltonian $\mathcal N_0+P_0$ in (\ref{hamH}) is compatible  with the parameters $p_0= (r_0,s_0,{\Pappa}_0,\theta_0,\mu_0,\O_0=\ro\mathfrak K_\alpha,a_0, {\esse}, M_0, L_0) $,  i.e. satisfies  $(A1-A5), \  (A2*)$,  and    that:
\begin{equation}
\label{normal}\mathtt c\leq  \frac{\mu_0}{2}\,,\quad \mathtt C\geq 2\theta_0 \,,\quad M_0L_0=2\,,\
\quad  \esse= 8 \sqrt{n}M_0L_0=16 \sqrt n
\end{equation} 
and that $\Pappa_0$ is sufficiently large (depending only on the remaining  parameters $p_0$).

  There exists $0<\c<1$ and a positive  constant $B:=B(\Pappa_0)$ such that if furthermore $\Theta_0<\c$ (cf. \eqref{glieps}) and for all $ \g$ such that $B  \varepsilon^{ 2(n-1)} \g <|\O_0|a_0$, 
we may construct: 
\vskip15pt $ \bullet $ {\bf (Frequencies)}
Lipschitz functions $ \o_\infty: \O_0 \to {\mathbb R}^n $,
$ \Ome^{(\infty)} : \O_0 \to  \ell_\infty $, satisfying
\begin{equation}\label{muflone}
| \o_\infty - \o_0 |^\lambda \, , \ | \Ome^{(\infty)} -  \Ome^{(0)} |_\infty^\lambda \leq  \gamma |\vec{\ix_0}|{\c}^{-1}
\end{equation}
where $\vec{\ix_0}$ is defined by Formula \eqref{glieps} and $ | \o_\infty|^{\rm lip} $, $ | \Ome^{(\infty)} |^{\rm lip}_{\infty}  \leq 2 M_0 $.
\vskip15pt $ \bullet $  {\bf (Cantor set)}
A Cantor set ${\mathcal O}_\infty$ 
\begin{equation}
\label{finca}| \mathcal O_0 \setminus {\mathcal O}_\infty|\leq B \varepsilon^{ 2(n-1)} \frac{\gamma}{a_0} ,
\end{equation} the smallness condition on $\g$   ensures that $\O_\infty$ has positive Lebesgue measure.
\vskip15pt $ \bullet $  {\bf (KAM normal form)}
A Lipschitz family of analytic symplectic maps
\be\label{eq:Psi}
\Phi : D(s/4, r/4)\times {\mathcal O}_\infty   \mapsto  D(s, r)
\end{equation}
of the form $ \Phi = {\rm I} +  \Psi $ with $\| \Psi \| _{r/4,s/4} \lessdot |\vec{\ix}_0|$,
where  $ {\mathcal O}_\infty $ is defined in the previous item,
such that,
\be\label{Hnew}
H^\infty (\cdot ; \xi ) :=  H \circ \Phi (\cdot ; \xi) =
\o_\infty(\xi) y_\infty + \Ome^{(\infty)}(\xi)z_\infty \bar z_\infty + P^\infty
\quad {has} \quad P^\infty_{\leq 2} = 0. 
\end{equation}
\end{theorem}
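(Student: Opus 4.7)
The plan is to prove Theorem \ref{KAM} via a standard quadratic (Newton-type) KAM iteration adapted to the class of compatible Hamiltonians, following closely the scheme of \cite{BB} and \cite{PX}. I would construct sequences of parameters $p_\nu$ and Hamiltonians $H_\nu = \mathcal{N}_\nu + P_\nu$ for $\nu \geq 0$, starting from $p_0$ and $H_0 = \mathcal{N}_0 + P_0$, such that $s_\nu, r_\nu$ decrease geometrically with limits $s_\infty > s_0/4$, $r_\infty > r_0/4$; the frequency cut $\Pappa_\nu$ grows super-exponentially; the quasi-T\"oplitz parameters $(\theta_\nu,\mu_\nu)$ vary monotonically within the bounds allowed by \eqref{nubo}; and the smallness $\Theta_\nu$ decays quadratically in $\nu$. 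At step $\nu$, the KAM step constructs a symplectic map $\Phi_\nu = e^{\mathrm{ad}(F_\nu)}$ so that $H_{\nu+1} := H_\nu \circ \Phi_\nu$ is compatible with $p_{\nu+1}$ on a restricted parameter subset $\O_{\nu+1}\subseteq \O_\nu$, with $\Theta_{\nu+1}$ quadratically smaller.

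The core of each step is the homological equation: find $F_\nu$ supported on monomials of total degree $\leq 2$ in $(y,w)$ and Fourier modes $|k| < K_\nu$ such that
\be
\{\mathcal{N}_\nu, F_\nu\} + \Pi^{(\leq 2)}\Pi_{|k|<K_\nu}P_\nu \;=\; [\Pi^{(\leq 2)}\Pi_{|k|<K_\nu}P_\nu],
\ee
where $[\,\cdot\,]$ denotes projection onto $\ker\,\mathrm{ad}(\mathcal{N}_\nu)$. By Proposition \ref{Pocom}, when $\mathcal{N}_\nu$ is non-degenerate the right-hand side has the same structural form as $\mathcal{N}_\nu$, so it can be absorbed into $\mathcal{N}_{\nu+1} = \mathcal{N}_\nu + [\,\cdot\,]$, producing the updated frequencies $\omega_{\nu+1}, \Omega^{(\nu+1)}$ satisfying \eqref{muflone}. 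Solving the homological equation monomial-by-monomial introduces small denominators $\langle\omega_\nu,k\rangle + \Omega^{(\nu)}\cdot l$ with $|l|\leq 2$, and the subset $\O_{\nu+1}$ is obtained by imposing the first and second Melnikov inequalities with threshold $\gamma_\nu K_\nu^{-\tau_1}$.

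The preservation of the quasi-T\"oplitz property along the iteration relies on Proposition \ref{main}: $P_{\nu+1}$ is a finite sum of iterated Poisson brackets and flows of quasi-T\"oplitz functions, so \eqref{flussoT}--\eqref{flussoT1} yield that it is quasi-T\"oplitz for parameters $(\Pappa_{\nu+1},\theta_{\nu+1},\mu_{\nu+1})$ with deterioration controlled by \eqref{nubo} and \eqref{pois1}. For the second Melnikov condition not to produce an uncontrollable union of exclusions indexed by pairs $(i,j)\in S^c\times S^c$, I would exploit Lemma \ref{diago}: on each equivalence class of cut-affine space $A \in \mathcal{H}_{\underline p_\nu}$, the eigenvalues $\Omega_j^{(\nu)}$ differ from a common T\"oplitz value $\mathcal{Q}_\nu(A)$ by at most $\Pappa_\nu^{-4d\tau}$, reducing the Melnikov exclusions to a finite union indexed by $(A,k,l)$. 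Using the counting bound of Remark \ref{numero} together with (A5), this gives $|\O_\nu\setminus \O_{\nu+1}| \lessdot \varepsilon^{2(n-1)}\gamma_\nu/a_0$, and summation over $\nu$ yields \eqref{finca}.

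The main obstacle is closing the induction, namely verifying that all of (A1)--(A5) propagate to $H_{\nu+1}$. The quasi-T\"oplitz and smallness conditions are handled as above, and (A1), (A2) follow from the asymptotic structure of $[\,\cdot\,]$ together with \eqref{muflone}. The non-trivial point is preservation of (A5), the full radial variation \eqref{pota}. Here I would exploit (A2*): for $\nu = 0$, the expressions $\langle\omega_0,k\rangle + \Omega^{(0)}\cdot l$ are homogeneous of degree $1$ in $\xi$, so their radial variation equals the constant $\langle\vgot,k\rangle + \Vgot\cdot l$, which is a non-zero integer by \eqref{argh}. For $\nu\geq 1$, the perturbation of $\omega,\Omega$ is controlled by \eqref{muflone} and the telescoping of $\gamma_\nu|\vec\ix_\nu|$, so the radial variation stays within a fraction of $a_0$, and one can take $a_\nu\geq a_0/2$ throughout the iteration. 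Finally, the composition $\Phi := \lim_\nu \Phi_0\circ\cdots\circ\Phi_\nu$ converges on $D(s_\infty,r_\infty)\times\O_\infty$, with $\O_\infty := \bigcap_\nu \O_\nu$, by the telescoping bound \eqref{flussoT1}, yielding \eqref{eq:Psi}; by construction $\|X_{P_\nu^{(\leq 2)}}\|^T \to 0$, giving $P^\infty_{\leq 2} = 0$ in \eqref{Hnew}.
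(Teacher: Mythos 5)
Your proposal correctly captures the overall architecture (homological equation, projection onto the kernel of $ad(\mathcal N)$, Melnikov restriction of the parameter set, use of the cut/good-point machinery via Lemma \ref{diago} to reduce the exclusion set to finitely many affine spaces, telescoping of the symplectic maps). However, the phrase ``standard quadratic (Newton-type) KAM iteration'' and the claim that ``the smallness $\Theta_\nu$ decays quadratically in $\nu$'' misread the structure of the scheme in a way that would prevent the induction from closing. In this scheme (following \cite{BB}, \cite{PX}), the full perturbation norm $\Theta_\nu=\gamma^{-1}\|X_{P_\nu}\|^T$ does \emph{not} decay: it stays bounded of order $\Theta_0$ throughout the iteration, see $(S4)_j$ of Lemma \ref{lem:iter}. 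Only the degree-graded quantities $\ix^{(0)}_\nu,\ix^{(1)}_\nu,\ix^{(2)}_\nu$ (the sizes of the degree $\leq 2$ parts) decay, and they obey the coupled recursion \eqref{sonasega}, which contains a term $\mathfrak z\,\Theta\,\eufm L\,\vec\ix$ that is \emph{linear}, not quadratic, in $\vec\ix$. This term is unavoidable because $\{F^{(0)},P^{(3)}\}$ contributes to $P^{(1)}_+$, and $P^{(3)}$ is part of the $O(\Theta_0)$ tail that never shrinks. What saves the scheme is the nilpotency $\eufm L^3=0$ of the triangular matrix in \eqref{lael}: composing three steps produces an effectively quadratic recursion, and making that rigorous is exactly the content of Lemma \ref{pomodorino}. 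Without this degree-stratified bookkeeping and the three-step analysis, the inequalities you would write do not close.

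A second gap: you invoke Proposition \ref{main} to propagate the quasi-T\"oplitz property of $P_{\nu+1}$ through brackets and flows, but that is only useful once one already knows that $F_\nu$ itself is quasi-T\"oplitz, which you do not establish. That is Proposition \ref{submain}, and it is where the cut machinery actually does its work: dividing the T\"oplitz and defect parts of $P^{(2)}$ by the denominators $D_{k,m,n}=\langle\omega,k\rangle+\Omega_m-\Omega_n$ requires comparing $D_{k,m,n}$ with its T\"oplitz value $\mathfrak D_{k,h,A}$ at the reference point $m_A$ and controlling the discrepancy via \eqref{stimO} using the T\"oplitz defect of $\tilde\Omega$; this in turn only works because Proposition \ref{key} converts the finitely many imposed conditions \eqref{third} (one per affine space $A$) into lower bounds on $|D_{k,m,n}|$ valid on the entire good region $A^g$. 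Related to this, your incidental remark on $(A5)$ is muddled: by \eqref{argh} the integer part $\langle\vgot,k\rangle+\Vgot\cdot l$ \emph{vanishes} on the set of $(k,l)$ to which $(A5)$ applies (the nonvanishing case is disposed of separately in Lemma \ref{misu}), and the radial variation $\Delta_{\xi,\varrho}$ of a degree-one homogeneous function is a function of the angular variable, not a constant, so it is precisely here that the assumed lower bound $a_0$ must be fed in and propagated to $a_\nu$.
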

Formula \eqref{Hnew} tells us that the final Hamiltonian has invariant KAM tori parametrized by $\xi\in \mathcal O_\infty$.
\begin{remark}\label{lapa2}
We will use the freedom given by Remark \ref{lapalisse} to choose ${\Pappa}_0$ large enough so that some necessary bounds, which appear in the proof, hold. 

It is important to notice that all the conditions  which we shall impose on $K$  are independent of $r_0$ and $\mathcal O_0$.
\end{remark}
Theorem \ref{KAM} is proved by an iterative procedure, which occupies the rest of this long section. We produce a
sequence of Hamiltonians $H_\nu=\mathcal N_\nu+P_\nu$, each of these Hamiltonians will  satisfy the properties (A*) of Section \ref{CaH} for suitable parameters $p_\nu$. 

 In particular  for the compact sets $\mathcal O_\nu$  and  for the domains $D(s_\nu,r_\nu)$   we have the {\em telescoping} $\mathcal O_\nu\subset \mathcal O_{\nu-1},\ D(s_\nu,r_\nu)\subset D(s_{\nu-1},r_{\nu-1}).$ 

 We set $\mathcal O_\infty=\cap_\nu\mathcal O_\nu,\ D(s_\infty,r_\infty)=\cap_\nu D(s_\nu,r_\nu)$ and the construction will be such that $\mathcal O_\infty$ has positive measure while $s_\infty=s/4,r_\infty=r/4$.

 We have a sequence of symplectic
transformations $\Phi_\nu:H_{\nu-1}:= H_{\nu}$, where $\Phi_\nu$  is  the value at 1  of the flow  generated by the Hamiltonian vector field $X_{F_{\nu-1}}$ associated to the generating function $F_{\nu-1}$  determined as the unique solution of the {\em Homological equation}, and depending on  $P_{\nu-1}^{\leq 2}$. The transformation is well defined on the domain $D(s_\nu,r_\nu)\times \mathcal O_\nu$, with $D(s_\nu,r_\nu)\subset  D(s_{\nu-1},r_{\nu-1})$. At each step, the  perturbation  remains bounded while the part $P_{\nu}^{\leq 2}$ becomes
 smaller.  We also denote $D(s_\nu,r_\nu)=D_\nu$.  The T\"opliz norm  of a function $G$  defined on $D_\nu\times \mathcal O_\nu$ and relative to the parameters  extracted from the $p_\nu$  will be denoted by $\Vert G\Vert_\nu$.
 
 The goal is to  pass to a limit Hamiltonian $H_\infty=\mathcal N_\infty+P_\infty$ with the property that $P_\infty^{\leq 2}=0 $ so that the Hamiltonian vector field on the family of tori parametrized by the parameters $\xi\in \mathcal O_\infty$,  where the normal coordinates are 0,  coincides with the Hamiltonian vector field of $\mathcal N_\infty$.\smallskip
 
 The relevant estimates to be performed are the following.
 \begin{itemize}
\item We have to estimate the norm  of  each $F_{\nu-1}$ so to make sure  that  the value $\Phi_{\nu-1}^1$ at 1  of the flow  generated by the Hamiltonian vector field $X_{F_{\nu-1}}$ is well defined on $D _\nu \times \mathcal O_\nu$.

Here the problem  is that of {\em small denominators}  since we have to divide by eigenvalues. Here the quasi-T\"opliz properties play a major role and the key is Proposition \ref{submain}.
\item While we establish the previous item we have to estimate the measure of the set $\mathcal O_\nu$,  Lemma    \ref{measure}.
\item We have to perform all the estimates on the new parameters $p_\nu$, this is done in \S \ref{s:Kstep}.
\item We have to estimate the norm   of the  part $P_{\nu}^{\leq 2}$, for this we have to control simultaneously the three parts in which it naturally decomposes.
\item We need to prove that the set $\mathcal O_\infty$ has positive measure, Corollary \ref{cor:finale}.
\item We need to prove that on the set $\mathcal O_\infty$ we have a limit change of coordinates $\Phi_{\infty}^1$ giving rise to the limit Hamiltonian \ref{cor:finale}. 
\end{itemize}
 {\bf Warning}\quad In order for this Theorem to give a non--empty statement we need to have conditions  which ensure that the constraints on $\gamma$ can be satisfied. These constraints amount to a smallness condition on the perturbation, (cf. Theorem \ref{gacom}). In the applications to the NLS this condition is satisfied by suitably restricting the domain of definition of $\mathcal H$.   \bigskip

\subsection{KAM step}

\subsubsection{Formal KAM step}
The input of a KAM step is  a Hamiltonian $H$ of the previous form with parameters $\underline p$. Of particular relevance is the parameter   ${\Pappa}\geq \Pappa_0$ which gives a  {\em frequency cut}.   The output must produce a new Hamiltonian $H_+$ of the previous form with parameters $\underline p_+$. Thus we need to start from a  subset $\mathcal O_+\subset\mathcal O$  of the parameters $\xi$, two  new  values for the radii of the domain $s_+\leq s,\ r_+\leq r$  and a  symplectic  transformation  $\Psi:D(s_+,r_+)\times \mathcal O_+\to D(s,r)$   of type  $\Psi=e^{ad(F)}$, so that finally we have a new Hamiltonian  $H_+=H\circ \Psi$ which we expect to be a {\em simplified version of $H$} by evaluating the new parameters $\underline p_+$.  After iterating infinitely many times the KAM step, we hope to arrive at  the desired final Hamiltonian which shows the existence of quasi--periodic orbits as in Theorem  \ref{KAM}.

The function $F$ is obtained by solving the {\em homological equations}. In order to explain this 
it will be convenient to write explicitly  the terms of $ P^{(2)}(x,y,w)$:
  $$P^{(1)}(x;w) = \sum_{m\in S^c,\; \sigma=\pm,
  \; k}P^{(1)}_{k,m,\sigma}  e^{\ii (k, x)} z_m^\sigma$$
$$ P^{(0)}(x)= \sum_{k} P^{(0,0)}_k e^{\ii (k, x)} \,,\quad P^{(1,0)}(x;y)= \sum_{k} P^{(1,0)}_k\cdot y e^{\ii (k, x)},$$ 
$$  P^{(0,2)}(x;w)= \sum_{n,m\in S^c\,,\; \sigma,\sigma'=\pm\,,\; k } P^{(0,2)}_{k,m,\sigma,n,\sigma'} e^{\ii (k, x)}z_m^\sigma z_n^{\sigma'}$$
 Only those terms which satisfy conservation of mass and momentum may appear.
We set
\begin{equation}\label{nucleo}
[P^{\leq 2}]=   (P^{(1,0)}_0  , y )+ \sum_{m\in S^c_r} P^{(0,2)}_{0,m,+,m,-} |z_m|^2+\mathcal P.
\end{equation} Where $\mathcal P$  is diagonal only in the complex notation and arises from the term $\mathcal C$ of the normal form $\mathcal N$ (see \eqref{hamH}).

On the space of quadratic Hamiltonians $ad(\mathcal N)$ has a basis of eigenvectors  described in \S \ref{eiei}. On the space relative to the  non-zero eigenvalues $ad (\mathcal N)$ is formally invertible, hence for those $\xi$ for which  the Melnikov resonances do not occur,    $[P^{\leq 2}]$ is the projection of $P^{\leq 2}_{\leq {\Pappa}}$ on the kernel of $ad(\mathcal N)$. 
We define
\begin{equation}
\label{laF}F:= ad(\mathcal N)^{-1}( P^{\leq 2}_{\leq {\Pappa}}- [P^{\leq 2}])\quad\implies  ad(F)\mathcal N=  [P^{\leq 2}]-P^{\leq 2}_{\leq {\Pappa}}
\end{equation}  and since $ad(\mathcal N)^{-1}$ is diagonal (at least in complex coordinates) this definition can be given degree by degree, thus defining $F^0, F^{(1)},F^{(2)}$. Notice that even if we use complex coordinates $F$ is always real.

\subsubsection{Estimates}
Formula \eqref{laF} defines $F$ as a formal expression. We now
impose a lower bound on the eigenvalues of  $ad(\mathcal N)$ on the space of functions of degree $\leq 2$ which implies that $F$ is analytic.  
Let us restrict our attention,  for instance,  
 to the set ${\mathcal O}'$ of  $\xi\in\mathcal O$ such that:  for all $k\in \Z^n$, $|k|\leq {\Pappa}$ and
$l\in \Z^{S^c}$, $|l|\leq 2$ which satisfy momentum conservation, we have
\begin{equation}\label{zero1}
|\langle \ome,k\rangle +(l,\Omega)|\geq \gamma {\Pappa}^{-{{2d{\tau_1}}}}  .\end{equation}
   \begin{lemma}
 We have:
\begin{equation}\label{effelambda}
\|X_{F^{i}}\|^\lambda_{s,r, {\mathcal O}'}\leq {\Pappa}^{{2d{\tau_1}}}\gamma^{-1}\|X_{P^{i}}\|^\lambda _{s,r,{\mathcal O}} \,,\quad i=0,1,2\,.
\end{equation} 
\end{lemma}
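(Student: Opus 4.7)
The plan is to reduce the estimate to a pointwise division of Fourier/monomial coefficients, using that $ad(\mathcal{N})$ is diagonalizable on the space of Hamiltonians of degree $\leq 2$ satisfying mass and momentum conservation. First, following the setup in \S\ref{eiei} and \S\ref{coco}, I would pass to complex coordinates in which $\mathcal{N} = (\omega(\xi),y) + \sum_{k \in S^c} \Omega_k(\xi)|z_k|^2$ (with complex eigenvalues coming in conjugate pairs), so that each monomial $e^{\ii(k,x)} y^j z^\alpha \bar z^\beta$ is an eigenvector of $ad(\mathcal{N})$ with eigenvalue $\ii(\langle\omega,k\rangle + (l,\Omega))$, where $l := \alpha - \beta$ and $|l| \leq 2$. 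By the definition \eqref{laF}, $F^{(i)}$ is obtained by dividing each such monomial of $(P^{\leq 2}_{\leq K} - [P^{\leq 2}])^{(i)}$ by the corresponding nonzero eigenvalue, so the definition is literally a coefficient-wise division on the finite-dimensional monomial basis.

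Next I would estimate the sup part of the weighted norm. Since the majorant norm is monotone under coefficient-wise multiplication (it merely replaces each coefficient by its absolute value), and since the hypothesis \eqref{zero1} yields
\[
|\langle\omega,k\rangle + (l,\Omega)| \geq \gamma K^{-2d\tau_1}
\]
for every monomial that appears in $F^{(i)}$ (those with $|k|\leq K$, $|l|\leq 2$, and outside the kernel, momentum conservation being automatically inherited from $P$), we get monomial by monomial
\[
\|X_{F^{(i)}}\|_{s,r,\mathcal{O}'} \leq K^{2d\tau_1}\gamma^{-1}\|X_{P^{(i)}}\|_{s,r,\mathcal{O}} \, .
\]

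The Lipschitz part is the point requiring the most care, and is the main obstacle. For a ratio $Q/d$ with $d(\xi) = \langle\omega,k\rangle + (l,\Omega)$ one has
\[
(Q/d)^{\rm lip} \leq |d|^{-1}Q^{\rm lip} + |d|^{-2}|d|^{\rm lip}\,|Q|_\infty \, ,
\]
and property $(A2)$ of Definition \ref{buone} gives $|d|^{\rm lip} \lesssim M$ (using $|l|\leq 2$ and the bounds on $|\omega|^{\rm lip}$ and $|\Omega|^{\rm lip}$). The delicate balancing is exactly the rôle of the weight $\lambda = \gamma/M$: the dangerous term $\lambda|d|^{-2}|d|^{\rm lip}$ becomes of order $K^{4d\tau_1}/\gamma$, which must be absorbed into $K^{2d\tau_1}\gamma^{-1}\|X_P\|^\lambda_{s,r}$ using the second smallness condition in $(A4)$ (which in particular ensures $K^{2d\tau_1}$ is large enough to dominate the $M$--dependent constants) and the choice of $\tau_1$ in \eqref{itau}. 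Combining with the sup estimate gives \eqref{effelambda} for $i=0,1,2$ simultaneously, since the argument is purely term-by-term and the three degree sectors (in $y$ and $w$) do not mix under $ad(\mathcal{N})$, so the projections $\Pi^{(i)}$ commute with the division by eigenvalues and the bounds \eqref{fhT} apply.
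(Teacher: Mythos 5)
You have correctly put your finger on the one genuine subtlety: the eigenvalue $d(\xi)=\langle\omega(\xi),k\rangle+(l,\Omega(\xi))$ is $\xi$--dependent, so dividing coefficient-by-coefficient gives the Lipschitz seminorm of $F$ an extra quotient-rule term proportional to $|d|^{-2}\,|d|^{\rm lip}\,|P|_\infty$. After substituting the lower bound $|d|\geq \gamma K^{-2d\tau_1}$, the bound $|d|^{\rm lip}\leq(|k|+2)M\lesssim KM$, and $\lambda=\gamma/M$, this term contributes $\sim K^{4d\tau_1+1}\gamma^{-1}\|X_{P^{(i)}}\|_{s,r}$ to $\|X_{F^{(i)}}\|^\lambda$. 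That is the right computation.

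Where your argument goes astray is the claim that this $K^{4d\tau_1+1}$ can be ``absorbed'' back into $K^{2d\tau_1}\gamma^{-1}\|X_{P^{(i)}}\|^\lambda$ via $(A4)$ and the choice of $\tau_1$. It cannot: the two exponents differ by a strictly positive power of $K$, and $(A4)$ only constrains the product $|\vec\ix|\,K^{4d\tau_1}$ (i.e.\ the size of the perturbation relative to $K$); it provides no way to trade a power of $K$ against a constant. The honest outcome of the quotient-rule computation is exactly $\|X_{F^{(i)}}\|^\lambda\lesssim K^{4d\tau_1+1}\gamma^{-1}\|X_{P^{(i)}}\|^\lambda$, not the smaller exponent stated in \eqref{effelambda}.

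What the paper actually does is more economical than either version of your argument. The proof of the lemma is a one-line majorant-norm observation which, read strictly, only justifies the \emph{sup-norm} inequality $\|X_{F^{(i)}}\|_{s,r}\leq K^{2d\tau_1}\gamma^{-1}\|X_{P^{(i)}}\|_{s,r}$, and this is the only part of \eqref{effelambda} that is subsequently used (see the sentence at the start of the proof of Proposition \ref{submain}: ``We have given in Formula \eqref{effelambda} a better bound on the norm $\|X_{F^{(i)}}\|_{s,r}$''). The full quasi-T\"oplitz and Lipschitz estimate -- which is precisely where your quotient-rule term appears, as the summand $\lambda M(|k|+2)/D_{k,m,n}^2$ in \eqref{pacchia} -- is carried out in Proposition \ref{submain} and yields the larger constant $\eufm K=5K^{4d\tau_1+1}$. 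So your diagnosis of the obstacle is correct; the repair is not to absorb it but to accept the worse exponent, which the KAM iteration is designed to tolerate via $(A4)$ and the choice of $\delta_\nu,K_\nu$.
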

 \begin{proof}
We first notice that \eqref{zero1} implies that $P^{\leq 2}_{\leq {\Pappa}}- [P^{\leq 2}]$ is a  sum of eigenvectors of $ad(\mathcal N)$ with eigenvalues bounded from below (in absolute value) by $\gamma {\Pappa}^{-{2d{\tau_1}}}$, therefore since we are using the Majorant norm we have 
$$ \|X_{F^{i}}\|^\lambda _{s,r}\leq {\Pappa}^{{2d{\tau_1}}}\gamma^{-1}\|X_{P^{(i)}- [P^{(i)}]}\|^\lambda_{s,r}\leq {\Pappa}^{{2d{\tau_1}}}\gamma^{-1}\|X_{P^{(i)}}\|^\lambda_{s,r} \,,\quad i=0,1,2.$$
\end{proof}  
Then by Proposition \ref{mpomn} $F$ defines a symplectic transformation $e^{(ad(F))}$ on a domain $D(s',r')\times \mathcal O'$, since by \eqref{topo} the condition $ 2^{2n+3}e \delta^{-1}    \|X_{F}\|_{s',r'} < 1$ holds  for a suitable choice of $s',r'$ and possibly restricting to a subset $\mathcal O'$ of the parameters.  More precisely in the next paragraph we will define a set    $\mathcal O_+\subset\mathcal O$  (see Definition \ref{Opiu}) and show in Lemma \ref{measure} that, provided $\gamma$ is sufficiently small,  this set has positive measure. 
On this set    we shall prove in Proposition \ref{key}   that the inequalities \eqref{zero1} hold. 
In order to iterate this procedure we need to be sure that $F$ is quasi-T\"oplitz and estimate its norm, this will be proved in Proposition \ref{submain}.
So for the procedure to succeed we need that the perturbation $P^{(i)}, \ i\leq 2$ be rather small.

\subsubsection{KAM step\label{Kamstep}} For simplicity, below we always use the same symbol  $cost$ to denote 
constants independent on the iteration and on the parameters of the Hamiltonian.

We now  start from a Hamiltonian in the class of Definition \ref{buone}  and describe a procedure which produces a  change of variables under which the Hamiltonian is still in the same class with new parameters which we estimate explicitly.

\begin{step}
(1) We Define in \ref{Opiu},  a compact  set $\mathcal O_+\subset \mathcal O$  such that 
\begin{equation}\label{stimis}
| \O\setminus \O_+ |\leq  \Gamma\frac{\gamma}{a} \varepsilon^{2(n-1)} {\Pappa}^{-\tau_0+n+d/2}, 
\end{equation}  
where $\Gamma$ is a constant depending only on $n,d,\kappa$.

(2) We construct,  by Formula \eqref{laF},  the   function $F$. We prove in   Proposition \ref{submain} that $F$  is quasi-T\"oplitz with parameters ${\Pappa},\theta,\mu$ for all $\xi\in \O_+$.   

For all 
positive numbers $r_+<r$ and $s_+<s$ for which:   
\begin{equation}\label{rpiu}
\,2^{2n+14} (\min( 1- \frac{r_+}{r}, 1-\frac{s_+}{s}))^{-1} |\vec{\ix}| {\Pappa}^{4d\tau_1}< \frac12\,,
\end{equation} we show that $F$  generates a  1--parameter group of  analytic symplectic
transformations $\Phi_F^t: D(s_+,r_+)\to D(s,r)$,   well defined for all $t,\, |t|\leq 1$ and for all $\xi\in\mathcal
O_+ $. 

(3) Applying Proposition \ref{main} {\it ii}) with $\underline p'\rightsquigarrow  \underline p^+= (N_+, \mu_+,\theta_+,s_+,r_+)$ we show that 
$ \Phi_F^1 H:=H_+=\mathcal N_++P_+$ is quasi-T\"oplitz for all choices of parameters ${\Pappa}_+,\theta_+,\mu_+$ 
satisfying \eqref{nubo}. 
 
\end{step}
 \begin{remark}
In order to make sure that  $ \O_+$ is non--empty we need the corresponding constant $B=\Gamma  {\Pappa}^{-\tau_0+n+d/2}$  should satisfy  $B  \varepsilon^{ 2(n-1)} \g <|\O |a $ which we shall satisfy by imposing a smallness condition on $\g$ since $\Pappa\geq \Pappa_0$.
\end{remark}
The core of the construction is to compute the parameters  $M_+,L_+,a_+$, see \eqref{emmepiu}, and  $\Theta_+,\vec{\ix}_+$, see \eqref{sonasega} relative to the new Hamiltonian. The iterative KAM algorithm is based on the fact that if $\c$ in Theorem \ref{KAM} is small enough then $H_+$ is  compatible with the parameters $\vec p_+$ and respects the smallness condition $(A3^*)$, so one may iterate the {\em step}.
 
\subsection{The set $ O_+$} 
\begin{definition}\label{Opiu}  $\mathcal O_+=\mathcal O_{+,\gamma}$ is defined to be the   subset of $\xi\in\mathcal O$ where the following {\em Melnikov non--resonance conditions} are satisfied:
\begin{itemize} 
 \item[i)] For  all $k\in \Z^n,\  |k|\leq {\Pappa} $    and $h\in \Z$, so that  $(h,k)\neq (0,0)$: 
\begin{equation}\label{zero}
|\langle \ome(\xi),k\rangle+ h|\geq 2\gamma \PaPi ^{-\tau_0}\,. \end{equation}
\item[ii)] For all  $ k\in \Z^n,\  |k|\leq {\Pappa} $, $m\in S^c$,  with $\pi(k)\pm \er(m)=0$:
\begin{equation}\label{first}
|\langle \ome(\xi),k\rangle \pm \Omega_m |\geq 2\gamma {\Pappa} ^{-\tau_0}.
\end{equation}  \item[iii)]For all $ |k|\leq {\Pappa}, m,n\in S^c $ such that  $\min(|\er(m)|,|\er(n)|)\leq  \mathtt C {\Pappa} ^{{\tau_1}}$ and $\pi(k)\pm   (\er(m) +\sigma \er(n)) =0 $ with $\sigma =\pm 1$:
 \begin{equation}\label{second}
\ |\langle \ome(\xi),k\rangle \pm ( \Omega_m +\sigma\Omega_n)  | \geq  2\gamma \PaPi ^{-2d{\tau_1}}.
\end{equation}
 For all $ |k|\leq {\Pappa}, m,n\in S^c $   $|\er(m)|,|\er(n)| >  \mathtt C {\Pappa} ^{{\tau_1}}$ and $\pi(k)\pm   (\er(m) + \er(n)) =0 $
 \begin{equation}\label{second2}|\langle \ome(\xi),k\rangle \pm (\Omega_m+\Omega_n) | \geq  2\gamma \PaPi ^{-2d{\tau_1}}
 \end{equation}
\item[iv)] For all affine spaces  $A=[v_i,p_i]_\ell$
 in $\mathcal H_{\Pappa }$ ($1\leq \ell<d$) with $ p_\ell <\mathtt  c \Pappa^{{\tau_1}/4d}$ we choose a point    $m_A\in [v_i;p_i]^g_\ell$ with $|\er(m_A)|> \mathtt C \Pappa^{\tau_1}$. For all
 such $m_A$ and for all  $k$ such that $|k|\leq {\Pappa}, $ we require:
\begin{equation}\label{third}
|  \langle \ome(\xi),k\rangle+\Omega_{m_A }-\Omega_{\bar n }|\geq  2\gamma  \min( \Pappa^{-2d\tau_0},\mathtt c^{2d} p_\ell ^{-2d}),\end{equation}
for all  $\bar n$ such that $\er(\bar n)= \er(m_A)+ \pi(k)$.
\end{itemize}
\end{definition}
In order to analyze $\mathcal O_+$  we need a first Lemma for the measure estimates. We define
$$\mathcal R ^{{\tau}}_{k,l}:= \left\{\xi\in \mathcal O \vert\; |\langle \ome,k\rangle + (l,\Omega)|< \gamma  {\Pappa}^{-{\tau}}\right\}, \quad l\in \Z^{S^c}.$$
\begin{lemma}\label{misu} For all $(k,l)\neq (0,0)$ $|k|\leq {\Pappa}$ and $|l|\leq 2$, which satisfy momentum conservation,   one has
\begin{equation}\label{protesi}
|\mathcal R ^{{\tau}}_{k,l}|\lessdot  \frac{\gamma}{a} \varepsilon^{2(n-1)}  {\Pappa}^{-{\tau}}.
\end{equation}
\end{lemma}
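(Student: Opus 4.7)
\smallskip

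\textbf{Plan.} The argument is the classical ``radial slicing'' estimate: $(A5)$ forces the function $f_{k,l}(\xi):=\langle\ome(\xi),k\rangle + (l,\Omega(\xi))$ to vary at rate at least $a$ along rays through the origin, which then controls the one-dimensional sublevel set; Fubini plus the polar-coordinate Jacobian produce the $\varepsilon^{2(n-1)}$.

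First, using the truncated-cone structure of Definition~\ref{Aro}, write $\mathcal O\subseteq\ro\mathfrak K_\alpha=\mathfrak H\times\ro J$ in polar coordinates $\xi=\varrho\hat\xi$, $\hat\xi\in\mathfrak H\subset S^{n-1}$, $\varrho\in\ro J$, so that by Fubini
\[
|\mathcal R^{\tau}_{k,l}|=\int_{\mathfrak H}d\hat\xi\int_{\{\varrho\in \ro J:\,\varrho\hat\xi\in \mathcal R^{\tau}_{k,l}\}}\varrho^{n-1}\,d\varrho.
\]
Second, fix $\hat\xi\in\mathfrak H$ and consider $\varphi_{\hat\xi}(\varrho):=f_{k,l}(\varrho\hat\xi)$. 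For any two values $\varrho_1<\varrho_2$ in the radial interval, by definition of $\Delta_{\xi,\varrho}$ with $\xi=\varrho_2\hat\xi$ and parameter $\varrho_1/\varrho_2$, assumption $(A5)$ reads $|\varphi_{\hat\xi}(\varrho_2)-\varphi_{\hat\xi}(\varrho_1)|>a|\varrho_2-\varrho_1|$ (using that $\vgot\cdot k+\Vgot\cdot l$ is radially constant, so it drops out of the difference quotient). Hence the one-dimensional level set $\{\varrho:|\varphi_{\hat\xi}(\varrho)|<\gamma\Pappa^{-\tau}\}$ is contained in an interval of length at most $2\gamma\Pappa^{-\tau}/a$.

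Third, bounding $\varrho^{n-1}\le(c_2\varepsilon^2)^{n-1}$ on $\mathcal O$ and using that $\mathrm{vol}(\mathfrak H)\lessdot 1$ is an $\varepsilon$--independent constant, multiplying the two factors gives
\[
|\mathcal R^{\tau}_{k,l}|\;\lessdot\;\varepsilon^{2(n-1)}\,\frac{\gamma}{a}\,\Pappa^{-\tau},
\]
which is the stated bound.

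\emph{Main obstacle.} The estimate above rests on $(A5)$ being available for the $(k,l)$ under consideration, but $(A5)$ is formulated only for $|k|\le\esse$ and $\langle\vgot,k\rangle+\Vgot\cdot l=0$; the statement of the lemma allows $|k|\le\Pappa$. The remaining $(k,l)$ must be shown to give $\mathcal R^{\tau}_{k,l}=\varnothing$: if $\langle\vgot,k\rangle+\Vgot\cdot l\in\Z\setminus\{0\}$, then $|f_{k,l}(\xi)|\ge 1-M\varepsilon^2|k|-4M\varepsilon^2$ by $(A1)$--$(A2)$ together with $|\tilde\Omega|_\infty,|\vartheta|_\infty\le M\varepsilon^2$ (cf.~\eqref{Omeinf} and \eqref{omelip}), and the range $|k|\le\esse<(8M\varepsilon^2)^{-1}$ from \eqref{palleK} makes this lower bound $>\gamma\Pappa^{-\tau}$; the high-$|k|$ sector not covered by these two regimes is excluded by momentum conservation combined with the asymptotics of $\Omega$, which forces the would-be resonant $(k,l)$ back into the range of $(A5)$. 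This compatibility check between the $\esse$-cut in $(A5)$ and the $|k|\le\Pappa$ range of the lemma is the delicate point; the remainder of the argument is just Fubini and one-variable calculus.
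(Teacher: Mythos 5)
Your first two regimes are handled correctly and match the paper: the radial-slicing via polar coordinates and Fubini, with $(A5)$ providing the rate-$a$ lower bound along rays, gives the $\varepsilon^{2(n-1)}\gamma a^{-1}\Pappa^{-\tau}$ estimate when $|k|\le\esse$ and \eqref{argh} holds; and the integer-gap argument (writing $f_{k,l}=\mathfrak n+F_{k,l}$ with $\mathfrak n\in\Z\setminus\{0\}$ and bounding $|F_{k,l}|\le(\esse+4)M\varepsilon^2<1/2$) correctly kills $\mathcal R^\tau_{k,l}$ when \eqref{argh} fails.

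The gap is in the third regime, $\esse<|k|\le\Pappa$. You assert this sector is ``excluded by momentum conservation combined with the asymptotics of $\Omega$, which forces the would-be resonant $(k,l)$ back into the range of $(A5)$,'' but that is not correct. Since $\pi:\Z^n\to\Z^d$ has a nontrivial kernel whenever $n>d$, momentum conservation does not bound $|k|$: there are arbitrarily large $k$ with $\pi(k)$ prescribed (even zero), and $\langle\vgot,k\rangle+\Vgot\cdot l$ can vanish for such $k$, so neither of your first two arguments applies and the set $\mathcal R^\tau_{k,l}$ can genuinely be nonempty. The paper resolves this with a different mechanism: pass to the $\ome$-coordinates via the lipeomorphism $\ome(\xi)$, choose an orthogonal frame whose first axis is $k/|k|$, and observe that the Lipschitz constant of $\ome\mapsto\langle\ome,k\rangle$ in that direction is $\ge|k|/\sqrt n\ge\esse/\sqrt n>4ML$, which dominates the Lipschitz constant $\le ML$ of $(\Ome(\xi(\ome)),l)$ (from $|\Ome|^{\rm lip}_\infty|\ome^{-1}|^{\rm lip}_\infty\le ML$). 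This gives a rate $\ge 3ML$ in one $\ome$-direction, hence a one-dimensional sublevel estimate in $\ome$, which pulls back (at cost $L^n$) to the $\xi$-measure bound; the constraint $ML<4$, $M>a$ from \eqref{palleK} then yields the stated $\gamma a^{-1}\varepsilon^{2(n-1)}\Pappa^{-\tau}$. This is precisely why the lemma's statement allows $|k|\le\Pappa$ even though $(A5)$ only covers $|k|\le\esse$: the large-$|k|$ case is ``self-improving'' because $\langle\ome,k\rangle$ itself supplies the transversality, and $(A5)$ is only needed when $|k|$ is too small for that. You need to add this case; the rest of your proof is fine.
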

\begin{proof}
Let us first state a general fact.
Let $f$  be a Lipschitz function on  the domain $ \mathcal O\subset \ro \mathfrak K$ such that
$$ |f(x,\xi_2,\dots,\xi_n)- f(y,\xi_2,\dots,\xi_n)|>a |x-y|$$ 
for all $x\neq y$ such that $(x,\xi_2,\dots,\xi_n), (y,\xi_2,\dots,\xi_n)\in \mathcal O$.

%\marginpar{non si capisce molto: io ancora non l'ho toccata...}
We consider  the map $F : \xi \mapsto (f(\xi),\xi_2,\ldots,\xi_n)$  which maps $ \mathcal O$ bijectively to some set $B $. $F$ is a lipeomorphism and its   inverse has Lipschitz constant $< \max(1,a^{-1})$.  In $B $, $f$ is a coordinate and the level surfaces of $f$  are contained in a hypercube of volume $\e^{2(n-1)}$ therefore the volume of the set where $|f|<c$ can be estimated by $2\e^{2(n-1)}c$  hence on  $ \mathcal O$  it can be estimated by $2 a^{-1}\e^{2(n-1)}c$. A similar argument is valid if we work in polar coordinates, $x,y$  are radii and the other $\xi_i$  coordinates on the unit sphere. %\marginpar{aggiungo}

If $|k|\leq {\esse}$, we have assumed that,   for all functions $f_{k,l}(\xi)=\langle \ome,k\rangle +(l,\Omega(\xi))$  which satisfy \eqref{argh} we have $\inf_{\xi\neq \eta \in \O} |\Delta_{\xi,\varrho} f_{k,l} |>a $.  So we may apply the previous argument.

If, on the other hand,  $f_{k,l}$ does not satisfy \eqref{argh} (but $|k|\leq \esse$)  then we may write
$$f_{k,l} (\xi)= \mathfrak{n} + F_{k,l} (\xi), $$ where  $\mathfrak{n}$ is the non--zero integer computed in \eqref{argh} and
$$|F_{k,l}|\leq  | \langle \ome(\xi)-\vgot,k\rangle| + 4\sup_{n} |\val_n(\xi)| +2|\tilde\Ome|_\infty,$$
by hypothesis $|k| < {\esse}$ (in particular $\esse>1$) while $ 2|\val|_\infty, |\ome-\vgot |_\infty \leq M \varepsilon^2$.  So, by \eqref{Omeinf} and \eqref{palleK}, we have $ |f_{k,l}|\geq  |\mathfrak{n}| - ({\esse} +4) M  \varepsilon^2 > \frac12 $ and we may deduce that $R ^{{\tau}}_{k,l}$ is empty. 
Finally if  $|k|\geq {\esse}$ then we change the variables form $\xi$ to $\ome$ and study $G_{k,l}(\ome):=\langle \ome,k\rangle + (\Omega(\xi(\ome)) , l)$.
Let $\underline{e}_k$ be the versor of $k$, we may perform an orthogonal change of variables  in $\ome$ so that  $\underline{e}_k$ is the first vector in the standard basis. Then the Lipschitz norm of $\langle \ome,k\rangle$ is the absolute value of the vector $k$ which can be bounded below by  $\frac{|k|}{\sqrt{n}}\geq \frac{S_0}{\sqrt{n}}>4ML$.  Then we repeat our argument  with respect to $\ome $, indeed$$| (\Ome(\xi(\ome)), l)|^{lip}_\infty \leq |\Ome(\xi)|^{lip}_\infty|\ome^{-1}|^{lip}_\infty \leq ML $$ so that 
$$ | \frac{G_{k,l}(x, \ome_2,\dots,\ome_n)-G_{k,l}(y, \ome_2,\dots,\ome_n)}{x-y} |> 4ML  -  ML =3ML \,,$$  for all
  vectors $(x, \ome_2,\dots,\ome_n)\neq (y, \ome_2,\dots,\ome_n)$ in $\ome(\O)$.
  Thus the volume of the set where $|G_{k,l}(\ome)|<c$ can be estimated   by $  (ML)^{-1}(M\e^2)^{ (n-1)}c$.  The corresponding volume in the space of the parameters $\xi$ is therefore estimated by $  L^n(ML)^{-1}(M\e^2)^{(n-1)}c = (LM)^{n-1} M^{-1}\e^{ 2(n-1)} c.$ Since by \eqref{palleK} $ML<4, M>a$ the estimate follows. 
  \end{proof}
\begin{lemma}\label{measure}
 The set $\mathcal O_+$  is compact and  one has \begin{equation}
\label{stivo}|\mathcal O\setminus \mathcal O_+|\leq\Gamma \,  \,\frac{\gamma}{a} \varepsilon^{2(n-1)} K^{-\tau_0+n+d+1}. 
\end{equation} 
\end{lemma}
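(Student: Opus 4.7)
Compactness follows immediately: each of the conditions i)--iv) defining $\mathcal O_+$ is a non--strict inequality $|f(\xi)|\geq c$ with $f$ a Lipschitz function of $\xi$ on $\mathcal O$, so $\mathcal O_+$ is the intersection of $\mathcal O$ with countably many closed sets, hence closed in the compact set $\mathcal O$.

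For the measure estimate, the strategy is to decompose $\mathcal O\setminus \mathcal O_+$ as a union of resonant sets of the form $\mathcal R^{\tau}_{k,l}$ controlled by Lemma \ref{misu}, with a harmless variant for condition i) in which the normal--mode index $l$ is replaced by an integer shift $h\in\Z$---the same Lipschitz argument in the radial direction gives $|\{\xi\in\mathcal O\,:\, |\langle\ome,k\rangle+h|<2\g\PaPi^{-\tau_0}\}|\lessdot \frac{\g}{a}\e^{2(n-1)}\PaPi^{-\tau_0}$. It then remains to count the number of nonempty resonant sets in each category, using that momentum conservation and the bound $|k|\leq \PaPi$ drastically restrict the admissible tuples.

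The counting proceeds category by category. In i) there are $\lessdot \PaPi^{n+1}$ relevant pairs $(k,h)$ (only $|h|\leq \mathrm{const}\cdot\PaPi$ can yield a nonempty set, since $|\langle\ome,k\rangle|\lessdot\PaPi$), contributing $\lessdot \frac{\g}{a}\e^{2(n-1)}\PaPi^{n+1-\tau_0}$. In ii), momentum conservation fixes $\er(m)=\mp\pi(k)$ and the $\er$-fibers have size $\leq d+1$ (the vertices of a single block of $\Gamma_S$), giving $\lessdot \PaPi^n$ resonant sets of exponent $\tau_0$. In iii) first part, the ``low'' index has $\lessdot \PaPi^{d\tau_1}$ choices and the other is determined by momentum, yielding $\lessdot \PaPi^{n+d\tau_1-2d\tau_1}=\PaPi^{n-d\tau_1}$, negligible by \eqref{itau}; the second part of iii) is nonempty only when $\s(m)+\s(n)=0$ and $|\er(m)|=|\er(n)|$ (otherwise either $\Omega_m+\Omega_n$ is too large or the integer parts do not match), a case that reduces, via the choice of T\"oplitz representative per block, to the situation of iv). For iv), by Remark \ref{numero} the number of admissible affine spaces of codimension $\ell$ with $p_\ell\leq p$ is $\lessdot \PaPi^{\ell d}p^{\ell}$; the representative $m_A$ is fixed per $A$ and, for each of the $\lessdot \PaPi^n$ values of $k$, only boundedly many $\bar n$ satisfy $\er(\bar n)=\er(m_A)+\pi(k)$; summing dyadically in $p=p_\ell$ against the Diophantine exponent $\min(\PaPi^{-2d\tau_0},\mathtt c^{2d}p^{-2d})$ and balancing the two factors at $p\sim \PaPi^{\tau_0}$, the total is dominated by $\lessdot \frac{\g}{a}\e^{2(n-1)}\PaPi^{n+d(d-1)-(d+1)\tau_0}$, which is bounded by $\PaPi^{-\tau_0+n+d+1}$ thanks to $\tau_0> d^2+n$. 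Combining all four contributions one obtains the claim with a constant $\Gamma=\Gamma(n,d,\kappa)$.

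The delicate step is the bookkeeping in iv): the number of affine spaces grows with $p_\ell$ while the Diophantine exponent improves with $p_\ell$, and the scaling $\tau_1=(4d)^{d+1}(\tau_0+1)$ fixed in \eqref{itau} is precisely what makes this trade--off close at the exponent stated in the lemma. All other contributions are comparatively routine applications of Lemma \ref{misu} together with elementary counting arguments based on momentum conservation and the finiteness of $\er$-fibers.
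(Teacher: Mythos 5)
Your proposal follows essentially the same route as the paper: apply Lemma \ref{misu} to each resonant set and then count the number of admissible indices $(k,l)$ in each of the categories i)--iv), using momentum conservation and the bound on $|k|$ to control the counts. The details match up to category iv), where the dyadic sum over $p_\ell$ against the exponent $\min(\PaPi^{-2d\tau_0},\mathtt c^{2d}p_\ell^{-2d})$ is indeed what makes the estimate close, as you note. Two small remarks. First, your analysis of the second sub--bullet of iii) in Definition \ref{Opiu} is misdirected: that sub--bullet concerns $l=\pm(e_m+e_n)$ with \emph{both} $|\er(m)|,|\er(n)|>\mathtt C\PaPi^{\tau_1}$, which forces $\s(m)=\s(n)=1$ (the finitely many $m$ with $\s(m)=-1$ have bounded norm), so $\Omega_m+\Omega_n$ is automatically of order $\PaPi^{2\tau_1}$ and the resonant set is empty outright -- there is no subcase $\s(m)+\s(n)=0$ and no reduction to iv). The conclusion you draw (no contribution) is still correct. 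Second, your identification of the dominant term in iv) is slightly off: comparing exponents, $n+d(d-1)-(d+1)\tau_0 < n+d^2-d\tau_0$, so the large--$p_\ell$ tail actually dominates, giving $\PaPi^{n+d^2-d\tau_0}$; fortunately this is also $\leq \PaPi^{-\tau_0+n+d+1}$ under \eqref{itau}, so the final estimate stands. Your compactness argument (intersection of countably many closed sets with a compact set) is simpler than the paper's observation that each family of constraints is in fact finite; both are valid.
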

\begin{proof} 
Since $k$  runs on a finite set the functions  $| \langle \ome(\xi),k\rangle|$ are bounded on $\mathcal O$,  hence the first formula \eqref{zero} is satisfied for $|h|$ large, for instance $|h|>2|\omega|_\infty K$. So \eqref{zero} is actually implied by a finite number of inequalities.  

Formulas \eqref{first} and \eqref{second}  are a finite number of inequalities by definition. Formula \eqref{second2} is a priori an infinite list of inequalities, we note however that  $|(\ome,k)\pm (\Omega_m+\Omega_n)|>|\omega|_\infty K$  is large when $|\Omega_m+\Omega_n|>2 |\omega|_\infty K$.
 
 Next $\Omega_m$ has an integral part $\sigma(m)|\er(m)|^2$, but  $\sigma(m)=1$ as soon as $|\er(m)|> \mathtt C K^{\tau_1}$.    This implies  that  $\Omega_m+\Omega_n$ is large, and hence no condition is imposed,  except possibly for finitely many values of $m,n$. Finally \eqref{third}  is given only for finitely many elements; in fact in {\it iv)}, for each
$[v_i,p_i]_\ell^g$ and $k$, we  impose only  a fixed finite number of condition by
choosing a point $m_A $ and a type $u=\er(\bar n)-\bar n\in\mathcal Z$. Finally by Remark \ref{numero} there
are a finite number of $[v_i,p_i]_\ell^g$. Thus $\mathcal O_+$ is compact.

\medskip
 Let us prove the measure estimates. By definition  $\mathcal O_+$ is obtained from $\mathcal O$  by removing a finite list  of strips $\mathcal R ^{\tau_p}_p$ where $p$ runs in  a  suitable  set of pairs $k,l$. For a given set of indices $I$ denote by $\mathcal R_I^{\tau}:=\cup_{p\in I}\mathcal R ^{\tau}_p$ and, by Lemma \ref{misu}, we estimate $|\mathcal R_I^{\tau}|\leq |I| {\gamma}{a}^{-1}\varepsilon^{2(n-1)} {\Pappa}^{-{\tau}}$. The Lemma will thus follow from an estimate on the cardinality of $I$ for the various cases considered.  

Recall that the elements $m\in\Z^d$ with $\s(m)=-1$  are a finite set of some cardinality  depending only on $\kappa, n$ similarly their norm can be bounded by some $\bar \kappa$ of the order of $ \kappa^2$.  
 By Hypothesis $(A1)$, $|\omega|_\infty < \kappa^2+1 $, and we may assume that $\Pappa$ is large so that $\bar \kappa\leq \sqrt{2(\kappa^2+1){\Pappa}}$.\smallskip

\noindent i)\quad Is previously remarked we have  to impose  \eqref{zero} with $|h|\leq |\omega|_\infty K\leq (\kappa^2 +1)K$ we have:  
$$I_0:=\{(k,h)\,|, |k|\leq  {\Pappa}\,,\,|h|\leq (\kappa^2 +1){\Pappa}\}, \quad |I_0|\leq (\kappa^2 +1))(2\Pappa)^{n+1}$$$$ |\mathcal R_{I_0}^{\tau}|\lessdot \,\varepsilon^{2(n-1)} \gamma a^{-1} {\Pappa}^{-{\tau_0}+n+1}.$$

\noindent ii)\quad In \eqref{first}, by momentum conservation  $l=\pm  e_m$  implies that $\pm \er(m) =-\pi(k) $. Hence to impose \eqref{first} we have to remove the list indexed by $I_1$: $$I_1:=  \{(k,l)\,|, |k|\leq  {\Pappa}\,,\, l=\pm e_m ,\ \exists\, u\in\mathcal Z:   m+u =\mp\pi(k)\}\,,  \quad |I_1|\leq (2{\Pappa})^nd.$$
$$ |\mathcal R_{I_1}^{\tau}|\lessdot\, \varepsilon^{2(n-1)} \gamma a^{-1} {\Pappa}^{-{\tau_0}+n+1}.$$

\noindent iii)\quad If $l= \pm(e_m+e_n)$ and $\s(m)=\s(n)=1$, the index $(k,l)$ can contribute  only if we have  the condition
$$|\pm \langle \ome,k\rangle+ |\er(m)|^2+|\er(n)|^2+2\val_m+2\val_n+\tilde\Omega_m+\tilde\Omega_n|<\frac12. $$ From \eqref{omelip} and \eqref{Omeinf} this condition implies   $|\pm \langle \ome,k\rangle+ |\er(m)|^2+|\er(n)|^2|<1 $ hence $|\er(m)|^2+|\er(n)|^2<2|\omega|_\infty {\Pappa}$. Setting 
$$I_2:=\{(k,l)\,|, |k|\leq  {\Pappa}\,,\, l =\pm (e_m+e_n)\,,|\er(m)|\leq  \sqrt{2(\kappa^2+1){\Pappa}}\,,\,$$$$\exists u,v\in \mathcal Z:\ m+ n+v+u=\mp \pi(k)  \}, $$$$  |I_2 |<  (2\sqrt{2(\kappa^2+1){\Pappa}}+1)^dd^2,\quad   |\mathcal R_{I_2}^{\tau}|\lessdot\, \varepsilon^{2(n-1)} \gamma a^{-1} {\Pappa}^{-{\tau_0}+n+d/2}.$$  
 
 \noindent iv)\quad Setting $$ I_3:=\{(k,l)\,|, |k|\leq  {\Pappa}\,,\, l = e_m-e_n\,, |\er(m)|\leq \mathtt C {\Pappa}^{{\tau_1}}\,,\, \exists u,v\in \mathcal Z:\ m- n+u-v=\mp \pi(k)\} $$
One  has
$$|I_3|\lessdot\,{\Pappa}^{ d{\tau_1}+n }\implies |\mathcal R ^{2d{\tau_1}}_{I_3}|\lessdot \gamma a^{-1} \varepsilon^{2(n-1)}  {\Pappa}^{-d{\tau_1}+n }. $$

\noindent v)\quad To deal with the last case,  for all affine subspaces
$[v_i;p_i]_\ell\in \mathcal H_\Pappa$ with $p_\ell \leq \mathtt C \Pappa^{\frac{\tau_1}{4d}}$,  for all $|k|\leq {\Pappa}$ and for all types $u\in \mathcal Z$ we set $\bar n= \er(m)+ u+\pi(k)$ and define
\begin{eqnarray}\label{resonant set}\mathcal R _{k,[v_i;p_i]_\ell,u}:= \{\xi\;\vert\; |\langle \ome,k\rangle+\Omega_{m_I}-\Omega_{\bar n}| < 2 \gamma
\min(\Pappa^{-2d\tau_0},\mathtt c^{2d} p_\ell ^{-2d})\}\end{eqnarray}
Following Lemma \ref{misu}, $|\mathcal R _{k,[v_i;p_i]_\ell,u}| \lessdot \, \gamma a^{-1}\varepsilon^{2(n-1)} \min(\Pappa^{-2d\tau_0},\mathtt c^{2d} p_\ell ^{-2d})$. 
 
 We distinguish the two cases. First when $\min(\Pappa^{-2d\tau_0},\mathtt c^{2d} p_\ell ^{-2d})= \Pappa^{-2d\tau_0} $ we are in 
 $$I^K_1:= \{(k,[v_i;p_i]_\ell,u)\,| |k|< {\Pappa}\,, [v_i;p_i]_\ell\in\mathcal H_\Pappa:\,  p_\ell \leq \mathtt c \Pappa^{\tau_0}\,,\,u\in\mathcal Z \}$$
 when $\min(\Pappa^{-2d\tau_0},\mathtt c^{2d} p_\ell ^{-2d})= \mathtt c^{2d} p_\ell ^{-2d} $ we are in 
 $$I^K_2:= \{(k,[v_i;p_i]_\ell,u)\,| |k|< {\Pappa}\,, [v_i;p_i]_\ell\in\mathcal H_\Pappa:\, \mathtt c  \Pappa^{\tau_0}<  p_\ell \leq \mathtt C \Pappa^{{\tau_1}\over 4d}\,,\,u\in\mathcal Z \}$$
 
  By Remark
\ref{numero}
  we have $|I^K_1|\lessdot \Pappa^{(d+\tau_0)(d-1)+n}$ hence $$|\mathcal R _{I^\Pappa_1}| \lessdot\,    \gamma a^{-1}\varepsilon^{2(n-1)} \Pappa^{ d (d-1)+ \tau_0 (-d-1) +n }\leq   \gamma a^{-1} \varepsilon^{2(n-1)} \Pappa^{ -  d\tau_0    }$$
as for  $I^K_2$ we use again Remark
\ref{numero} to 
 bound with $(2\kappa \Pappa)^{  d^2} (2p)^{d-1}$ the number of subspaces with given $p      =p_\ell$ for all $\ell$.   
 $$|\mathcal R _{I^\Pappa_2}|\lessdot \, \gamma a^{-1}\varepsilon^{2(n-1)} \sum_{ p_\ell >\mathtt c  \Pappa^{\tau_0}}  p_\ell ^{-2d-1+d}\Pappa^{  d^2}{\Pappa}^{n}\lessdot \,  \gamma a^{-1}\varepsilon^{2(n-1)}    \Pappa^{- d \tau_0+  d^2+n}$$
(from \eqref{itau}).  
Summing all these contributions, since we can bound all   the factors by  $\varepsilon^{2(n-1)} \gamma a^{-1}{\Pappa}^{-{\tau_0}+n+d+1}$,  our Lemma is proved.
\end{proof}
\vskip15pt
We arrive now to the key estimate which handles {\em small denominators} and for which we have introduced all the formalism of cuts and quasi--T\"oplitz functions.
 \begin{proposition}\label{key}
For all $\xi\in\mathcal O_+$, for all $k\in \Z^n$, $|k|\leq {\Pappa}$ and
$l\in \Z^{S^c}$, $|l|\leq 2$ which satisfy momentum conservation,
we have
\begin{equation}
|\langle \ome,k\rangle +(l,\Omega)|\geq \gamma {\Pappa}^{-{{2d{\tau_1}}}}   .\end{equation}
\end{proposition}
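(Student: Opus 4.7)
\emph{Case analysis.} I plan a case analysis on $l\in\mathbb{Z}^{S^c}$ with $|l|\leq 2$: conditions (i)--(iii) of Definition \ref{Opiu} will directly cover all cases except one, which I will reduce to (iv) via the quasi-T\"oplitz structure. For $l=0$ the bound follows from (i) with $h=0$. For $l=\pm e_m$, I decompose $\Omega_m=\sigma(m)|\er(m)|^2+2\sigma(m)\val_m+\tilde\Omega_m$, absorb the integer $\sigma(m)|\er(m)|^2$ into the $h$ of (i), and control the bounded remainder by (ii). For $l=\pm(e_m+e_n)$: either $|\er(m)|^2+|\er(n)|^2\leq 2|\omega|_\infty K+O(1)$, reducing to finitely many pairs covered by (iii), or this integer part is so large that $\Delta$ is automatically much larger than $1$. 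Finally $l=e_m-e_n$ with $\min(|\er(m)|,|\er(n)|)\leq\mathtt{C} K^{\tau_1}$ is (iii) first line.

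\emph{The main case} is $l=e_m-e_n$ with $|\er(m)|,|\er(n)|>\mathtt{C}K^{\tau_1}$ (so $\sigma(m)=\sigma(n)=1$) and $\pi(k)+\er(m)-\er(n)=0$, giving
\[
\Delta=\langle\omega,k\rangle-2\langle\er(m),\pi(k)\rangle-|\pi(k)|^2+2(\val_m-\val_n)+(\tilde\Omega_m-\tilde\Omega_n).
\]
I partition $m$ via the decomposition $\mathbb{Z}^d=\bigcup_{\ell=0}^d A_\ell(K)$ of \S 6.2. If $m\in A_0(K)$, Lemma \ref{dimelle} (whose stratum constants satisfy $q_i\leq\mathtt{c}N_0<\mathtt{C}K^{4d\tau_0}$) forces $m$ to lie in the open stratum, hence isolated in $\Gamma_S$; the subcase $\pi(k)=0$ is then ruled out because it would force $n=m$, $l=0$. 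For $\pi(k)\neq 0$, the defining inequality $|(\pi(k),m)|\geq\mathtt{C}K^{4d\tau_0}$ of $A_0(K)$ together with $|m-\er(m)|\leq d\kappa$ gives $||\er(m)|^2-|\er(n)|^2|\gtrsim K^{4d\tau_0}$, an integer overwhelming all other $O(K)+O(\gamma)$ contributions in $\Delta$. The case $m\in A_d(K)$, i.e., $|\er(m)|\leq\mathtt{C}K^{\tau_1}$, is again (iii).

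\emph{The heart of the argument} is $m\in A_\ell(K)$ with $0<\ell<d$: here $m$ lies in the good portion $A^g$ of $A=[v_i;p_i]_\ell$, $p_\ell\leq\mathtt{c}K^{\tau_1/4d}$. Let $m_A$ be the representative chosen in (iv) and define $\bar n$ by $\er(\bar n)=\er(m_A)+\pi(k)$. Theorem \ref{Lostra} and Corollary \ref{dueta} will place $(m,m_A)$ and $(n,\bar n)$ respectively in the same strata, so $m-\er(m)=m_A-\er(m_A)$, and $\val_m=\val_{m_A}$, $\val_n=\val_{\bar n}$ \emph{exactly}, since $\val$ is determined by the combinatorial block. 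The identity
\[
(|\er(m)|^2-|\er(n)|^2)-(|\er(m_A)|^2-|\er(\bar n)|^2)=-2\langle m-m_A,\pi(k)\rangle
\]
vanishes iff $\pi(k)\in\langle v_i\rangle_\ell$, since $m-m_A\perp\langle v_i\rangle_\ell$. When $\pi(k)\in\langle v_i\rangle_\ell$, Lemma \ref{diago} applied to the quasi-T\"oplitz diagonal $\tilde\Omega$ yields $|\tilde\Omega_m-\tilde\Omega_{m_A}|,|\tilde\Omega_n-\tilde\Omega_{\bar n}|\leq 2\gamma K^{-4d\tau(p_\ell)}$, hence $|\Delta-\Delta_A|\leq 4\gamma K^{-4d\tau(p_\ell)}$ where $\Delta_A:=\langle\omega,k\rangle+\Omega_{m_A}-\Omega_{\bar n}$; since $K^{-4d\tau(p_\ell)}\leq\min(K^{-4d\tau_0},\mathtt{c}^{4d}p_\ell^{-4d})$ is quadratically smaller than the bound $2\gamma\min(K^{-2d\tau_0},\mathtt{c}^{2d}p_\ell^{-2d})\geq 2\gamma K^{-2d\tau_1}$ supplied by (iv), we obtain $|\Delta|\geq\gamma K^{-2d\tau_1}$. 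When $\pi(k)\notin\langle v_i\rangle_\ell$, the good-point inequality $|(\pi(k),m_A)|\geq\mathtt{C}\max(K^{4d\tau_0},\mathtt{c}^{-4d}p_\ell^{4d})$ forces $||\er(m_A)|^2-|\er(\bar n)|^2|$ to be huge, so $|\Delta_A|$, and hence $|\Delta|$ by the same small quasi-T\"oplitz correction, is trivially much larger than $\gamma K^{-2d\tau_1}$.

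\emph{The main obstacle} is exactly this last sub-case: collapsing an infinite family of potential resonances onto the finite list (iv) requires combining \emph{simultaneously} three ingredients --- the geometric cut/good-portion structure (Proposition \ref{key2}, Lemmas \ref{basso} and \ref{minko}); the stratum-rigidity identity $\val_m=\val_{m_A}$ from Theorem \ref{Lostra}; and the quasi-T\"oplitz control of $\tilde\Omega_m-\tilde\Omega_{m_A}$ at the rate $K^{-4d\tau(p_\ell)}$ from Lemma \ref{diago}. The key numerical miracle making the reduction succeed is that the quasi-T\"oplitz error $K^{-4d\tau(p_\ell)}$ is \emph{smaller} than the square of the Melnikov bound $\min(K^{-2d\tau_0},\mathtt{c}^{2d}p_\ell^{-2d})$ supplied by (iv).
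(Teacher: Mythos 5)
Your proposal follows the same architecture as the paper's proof: reduction of every case except $l=e_m-e_n$ with both $|\er(m)|,|\er(n)|>\mathtt{C}K^{\tau_1}$ to the direct Melnikov conditions (i)--(iii) of Definition \ref{Opiu}; the decomposition $\Z^d=\bigcup_\ell A_\ell(K)$; and, in the heart ($0<\ell<d$), the comparison of $\Delta:=\langle\omega,k\rangle+\Omega_m-\Omega_n$ with $\Delta_A:=\langle\omega,k\rangle+\Omega_{m_A}-\Omega_{\bar n}$ via stratum rigidity (Theorem \ref{Lostra}), the exact equalities $\val_m=\val_{m_A}$, $\val_n=\val_{\bar n}$ from (A3), and the quasi-T\"oplitz estimate of Lemma \ref{diago}. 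Your observation that $K^{-4d\tau(p_\ell)}$ is the square of the (iv)-Melnikov scale $\min(K^{-2d\tau_0},\mathtt{c}^{2d}p_\ell^{-2d})$ is exactly the right ``numerical miracle.''

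There is, however, a genuine gap in the sub-case $\pi(k)\notin\langle v_i\rangle_\ell$. You claim $|\Delta_A|$ is huge, ``and hence $|\Delta|$ by the same small quasi-T\"oplitz correction.'' But you have yourself recorded that the $|\er|^2$ part of $\Delta-\Delta_A$ equals $-2\langle m-m_A,\pi(k)\rangle$, which does \emph{not} vanish precisely when $\pi(k)\notin\langle v_i\rangle_\ell$. Both $(\pi(k),m)$ and $(\pi(k),m_A)$ have modulus $\geq\mathtt{C}\max(K^{4d\tau_0},\mathtt{c}^{-4d}p_\ell^{4d})$ (since both lie in $A^g$), but they may carry opposite signs, so $\langle m-m_A,\pi(k)\rangle$ can itself be of size $K^{4d\tau_0}$ and the transfer from $|\Delta_A|$ to $|\Delta|$ fails. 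The fix is immediate: use the good-point inequality for $m$ itself -- $|(\pi(k),m)|\geq\mathtt{C}\max(K^{4d\tau_0},\mathtt{c}^{-4d}p_\ell^{4d})$ makes $|\Delta|$ large directly, without any detour through $\Delta_A$. The paper organizes this slightly differently (it first disposes of $|(\pi(k),\er(m))|\geq K^3$ outright and then shows that in the remainder $\pi(k)\in\langle v_i\rangle_\ell$ is forced), so the $m_A,\bar n$ comparison is only ever invoked when the $|\er|^2$ contributions cancel exactly. One minor further remark: for $l=\pm e_m$ there is no need to ``absorb'' $\sigma(m)|\er(m)|^2$ into the $h$ of (i) and control the remainder by (ii); condition (ii) with $\pi(k)\pm\er(m)=0$ already gives the required bound directly, and a naive triangle inequality around (i) would not survive, because $2|\val_m|+|\tilde\Omega_m|$ is of order $M\e^2$, much larger than $\gamma K^{-\tau_0}$.
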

\begin{proof}
By Definition \ref{Opiu} the cases i), ii), iii)  follow trivially since ${{2d{\tau_1}}}$ is  large
with respect to $\tau_0$.
 
We are left with the case  $\ell= e_m-e_n$  with $|\er(m)|,|\er(n)|> \mathtt C{\Pappa}^{\tau_1}$.  To start we have
$$ | \langle \ome,k\rangle+\Omega_m-\Omega_n | \geq  | \langle \ome,k\rangle+|\er(m)|^2-|\er(n)|^2 |- 4 |\val|_\infty-2|\tilde \Ome|_\infty.$$  We bound the two terms $4 |\val|_\infty+2|\tilde \Ome|_\infty|\leq M \e^2+ 4 M\varepsilon^2 $  by \eqref{omelip}. We need to estimate $ | \langle \ome,k\rangle+|\er(m)|^2-|\er(n)|^2 |$, by momentum conservation $\er(n)= \er(m)+\pi(k)$. First note that, setting $v:=m-\er(m)$ the type of $m$, we have:
\begin{equation}\label{scoccio}
\langle\ome,k\rangle+|\er(m)|^2-|\er(n)|^2=  \langle\ome,k\rangle-|\pi(k)|^2 -2\langle \pi(k),\er(m)\rangle=
\end{equation} 
$$=\langle\ome,k\rangle-|\pi(k)|^2+ 2\langle \pi(k),v\rangle -2\langle \pi(k),m\rangle.
$$
Note that $\pi(k)\in  B_K\cup\{0\}$.   Let $m\frec{\Pappa}[v_i;p_i]$, we distinguish two cases:  $ p_1 \geq  \mathtt C {\Pappa}^{4d\tau_0}$  or $ p_1 < \mathtt C {\Pappa}^{4d\tau_0}$ .
\smallskip

{\bf Case 1:\quad $ p_1 \geq  \mathtt C {\Pappa}^{4d\tau_0}$.}\quad 

If   $ p_1 \geq  \mathtt C {\Pappa}^{4d\tau_0}$ then $m$ has a cut at $\ell=0$. By Lemma \ref{dimelle}  we have that $m$ is on the open stratum and $\er(m)=m, v=0$.
 If $\pi(k)=0$ we have $m=n$  and the denominator is  covered by the bound \eqref{zero} with $h=0$.
 If  $\pi(k)\neq 0$   then by definition $|\langle \pi(k),\er(m)\rangle|=|\langle \pi(k), m )\rangle| \geq p_1$.  $(A1)$ implies $|\ome|_\infty<2\kappa$ so:$$ |  \langle \ome,k\rangle+\Omega_m-\Omega_n | \geq  |  \langle \ome,k\rangle+|\er(m)|^2-|\er(n)|^2 |- 4 |\val|_\infty-2|\tilde \Ome|_\infty>$$
$$|\langle\ome,k\rangle-|\pi(k)|^2  -2\langle \pi(k),m\rangle|- 5 M\varepsilon^2>2  \mathtt C {\Pappa}^{4d\tau_0}- 2\kappa {\Pappa}-\kappa^2{\Pappa}^2  - 5 M\varepsilon^2>1.  $$ 

\smallskip

{\bf Case 2:\quad $ p_1 <  \mathtt C {\Pappa}^{4d\tau_0}$.}\quad
  By hypothesis the point $m$ has
$|\er(m)|>\mathtt C{\Pappa}^{{\tau_1}}$  and $ p_1 < \mathtt C {\Pappa}^{4d\tau_0}$, thus    by Proposition \ref{key2} $m$ belongs to some $A^g$ where $A\frec{\Pappa}[v_i;p_i]_\ell,\ 1\leq\ell<d$. Write $m= \er(m)+v$ and  $n=\er(n) +u =\er(m)+\pi(k)+u $  for   two types $u,v\in \mathcal Z$. \smallskip

Let us first notice
that
 \eqref{zero1} with $l=e_m-e_n$ is surely satisfied  if $|(
\pi(k),\er(m))|\geq  {\Pappa}^3$ because in that case the absolute value of \eqref{scoccio} is greater than $ 2{\Pappa}^3- \kappa^2{\Pappa}^2-|\ome|{\Pappa}-8 d K >{\Pappa}^3$ by assumption \eqref{palleK} ($K> N_0 $) and since $(A1)$ implies $|\ome|_\infty<2\kappa$.

 If on the other hand  $|( \pi(k),\er(m))|<
{\Pappa}^3$, then $\pi(k)\in B_{\Pappa}\cup \{0\}$ is  in $\langle v_i\rangle_\ell$. In fact 
otherwise we would have $|( \pi(k),m)|> \mathtt c  {\Pappa}^{4d\tau_0}$ by
definition of $A^g$, hence $|( \pi(k),\er(m))|> \mathtt c  {\Pappa}^{4d\tau_0}-2d\kappa^2 {\Pappa}>{\Pappa}^3$ by   Formula \eqref{palleK} and $(A1)$,   a contradiction. 

In $A^g$ we have chosen a point $m_A$,  to the points $m,m_A$ we can apply Lemma \ref{minko}, thus they have a cut at $\ell$ for parameters $(\Pappa,\theta,\mu,\tau(p_\ell))$ where $\theta,\mu$ are only restricted to be allowable. Then they satisfy the hypotheses of Theorem \ref{Lostra} hence we have  $m_A= \er(m_A)+v$. 

   Consider then $n= m+\pi(k)-v+u$ and let
$n\frec{{\Pappa}}[w_i;q_i]$. We have $|m-n|= |\pi(k)-v+u|\leq  \kappa(\Pappa+2d)\leq  2\kappa \Pappa$. We now can impose that the allowable $\theta,\mu$ satisfy  the constraints given by Formula \eqref{palleK} hence we have the inequality \eqref{ladisa} for $n$ in place of $r$  hence, by Lemma \ref{lintor},  $n$ has an  $\ell$ cut $[w_i;q_i]_\ell$ with parameters
$N,\theta,\mu,\tau(p_\ell)$ and moreover $[w_i;q_i]_\ell=A+\pi(k)-v+u$. The same argument  shows that, setting $\bar n :=m_A+\pi(k)-v+u= m_A+n-m$, both $n$ and $\bar n$ have a cut with the same parameters and the same associated subspace $[w_i;q_i]_\ell$. 

We thus can apply again Theorem \ref{Lostra}  and see  that $ \er( \bar n)= \er(m_A)+\pi(k) $.  By $(A3)$ we know that $\Pi_{\Pappa,\theta,\mu,\tau}\sum_a\val_a |z_a|^2$ is T\"oplitz, for the chosen parameters  $\Pappa,\theta,\mu,\tau(p_\ell)$,  hence we deduce that  $\val_m= \val_{m_A}$ and   $\val_n= \val_{\bar n}$.
We deduce that if $\pi(k)\in \langle v_i\rangle_\ell$:
 $$|\er(m)|^2-|\er(n)|^2-|\er(m_A)|^2 +|\er(\bar n)|^2\!\!=\!-|\pi(k)|^2 -2\langle \pi(k),\er(m)\rangle+|\pi(k)|^2 +2\langle \pi(k),\er(m_A)\rangle\!=\!0$$
 Finally since $\val_m= \val_{m_A}$ and   $\val_n= \val_{\bar n}$ we have:
$$ |\Omega_m-\Omega_n- \Omega_{m_A}+\Omega_{\bar n}|=  |\tilde\Omega_m-\tilde\Omega_n- \tilde\Omega_{m_A}+\tilde\Omega_{\bar n}|\implies$$
\begin{equation}\label{bohh3}
 |\langle \ome,k\rangle+\Ome_m-\Ome_n|\geq |\langle \ome,k\rangle+\Ome_{m_A}-\Ome_{\bar n}|-  |\tilde\Omega_m-\tilde\Omega_n- \tilde\Omega_{m_A}+\tilde\Omega_{\bar n}|. \end{equation}

By $(A4)$  we also know that $\tilde\Omega(z):=\sum_b \tilde\Omega_b|z_b|^2 $ is quasi-T\"oplitz with parameters $\Pappa,\theta,\mu$ which satisfy \eqref{palleK}
hence, we may apply Lemma \ref{diago} with $Q(z)=\tilde\Omega(z)$.    Lemma \ref{minko}  ensures that, for any allowable $\theta,\mu$,  all $m\in A^g$ satisfy the conditions needed to obtain formula \eqref{appr1} with $N={\Pappa}, \tau= \tau(p_\ell)$, and also          the estimate \eqref{lastima} (with $\tau=\tau(p_\ell)$):
$$ |\tilde\Omega_m- {\tilde\Ome_{m_A}}|<  2 \|\tilde\Omega\|^{T  }_{\vec p } {\Pappa}^{-4d\tau(p_\ell)}.$$
Similarly we have
$$ |\tilde\Omega_n- {  \tilde\Omega}_{\bar n}|<2 \|\tilde\Omega\|^{T  }_{\vec p } {\Pappa}^{-4d\tau(p_\ell)}.$$   In conclusion  when $\pi(k)\in \langle v_i\rangle_\ell$ we have
$$ |\Omega_m-\Omega_n- \Omega_{m_A}+\Omega_{\bar n}|<  4\|\tilde\Omega\|^{T  }_{\vec p }{\Pappa}^{-4d\tau(p_\ell)},$$
where by definition  ${\Pappa}^{\tau(p)}= \max({\Pappa}^{\tau_0},\mathtt c^{-1}
 p )$. We now apply the constraint \eqref{third} and hence:
\begin{equation}\label{bohh2}
 |\langle \ome,k\rangle+\Ome_m-\Ome_n|\geq |\langle \ome,k\rangle+\Ome_{m_A}-\Ome_{\bar n}|- 4\|\tilde\Omega\|^{T  }_{\vec p } {\Pappa}^{-4d\tau(p_\ell)}\geq 
 \end{equation}
 $$ 2\gamma \min( K^{-2d\tau_0},\mathtt c^{2d} p_\ell ^{-2d})- 4\|\tilde\Omega\|^{T  }_{\vec p } \min({\Pappa}^{-4d\tau_0},\mathtt c^{4d}
 p_\ell^{-4d} ). 
  $$
By \eqref{topo}, $\|\tilde\Omega\|^{T  }_{\vec p }<\gamma$, and clearly   $4 \min({\Pappa}^{-4d\tau_0},\mathtt c^{4d}
 p_\ell^{-4d} )<   \min( K^{-2d\tau_0},\mathtt c^{2d} p_\ell ^{-2d})$.  Hence 
$$  |\langle \ome,k\rangle+\Ome_m-\Ome_n|\geq \gamma \min( K^{-2d\tau_0},\mathtt c^{2d} p_\ell ^{-2d}) ,$$  so in order  to we get the desired inequality we need to show that $\min( K^{-2d\tau_0},\mathtt c^{2d} p_\ell ^{-2d})\geq   
{\Pappa}^{-{2d\tau_1}}$ i.e. that  $ \mathtt c^{2d} p_\ell ^{-2d} \geq   
{\Pappa}^{-{2d\tau_1}}$.
Since  $p_\ell\leq \mathtt C K^{\tau_1/4d}\implies \mathtt c^{2d} p_\ell ^{-2d} \geq  (\mathtt c \mathtt C^{-1})^{2d} K^{-\tau_1/2}   $ and (cf. \eqref{itau}) $\Pappa \geq N_0> \mathtt C \mathtt c^{-1}   $ implies $(\mathtt c \mathtt C^{-1})^{2d} K^{-\tau_1/2}\geq  K^{-\tau_1/2-2d} \geq  K^{-2d\tau_1 }  $.

\end{proof}
\vskip15pt

\begin{remark}
This Proposition essentially says that, by imposing \textbf{{\em only
one}} non resonant condition \eqref{third}, we impose \textbf{all}
 the conditions
\eqref{zero1} with $l= e_m-e_n$ such that $m\in [v_i;p_i]_j^g$ and
$n=m+\pi(k)$. 
\end{remark}

\subsection{The T\"oplitz property for  the generating function $F$}\label{s:homol}

The function $F$ has been  obtained by solving the   homological equation for a hamiltonian $H$ compatible with the parameters   $({\Pappa},\theta,\mu)$ and given in Formula \eqref{laF}.  Recall we are using parameters $\vec p= (s,r,\PaPi,\theta,\mu,\lambda=\gamma^{-1}M,\mathcal O)$. We now prove: 
 
\begin{proposition}\label{submain}
For $\xi\in \mathcal O_+$
the solution of the homological equation
$F$ is quasi-T\"oplitz for  parameters $({\Pappa},\theta,\mu)$, moreover one has the bound (cf. \eqref{glieps}:
\begin{equation}\label{booo}\|X_{F^{(i)}}\|^{T  }_{\vec p\,' } \leq   \,\gamma^{-1}\eufm K \|X_{P^{(i)}}\|_{\vec p}^T= \eufm K\ix^{(i)},\quad \eufm K=  5{\Pappa}^{4d{\tau_1}+1}\,,
\end{equation} where $ \vec p\,'= (s,r,\PaPi,\theta,\mu,\lambda=\gamma^{-1}M,\mathcal O_+)$.
\end{proposition}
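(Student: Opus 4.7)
My plan is to decompose $F=F^{(0)}+F^{(1)}+F^{(2)}$ by degree in $(y,w)$ and treat each piece separately. The analytic-Lipschitz estimate
$$\|X_{F^{(i)}}\|^\lambda_{s,r,\mathcal{O}_+}\leq \gamma^{-1}\Pappa^{2d\tau_1}\|X_{P^{(i)}}\|^\lambda_{s,r,\mathcal{O}}$$
is already established exactly as in the derivation of \eqref{effelambda}: Proposition~\ref{key} provides the uniform lower bound $\gamma\Pappa^{-2d\tau_1}$ on the small divisors $\langle\omega,k\rangle+(l,\Omega)$ with $|k|\leq \Pappa$, $|l|\leq 2$ on $\mathcal{O}_+$, so $ad(\mathcal N)^{-1}$ acts with operator norm at most $\gamma^{-1}\Pappa^{2d\tau_1}$ on the relevant subspace; the majorant norm is respected.

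The core of the proof is the quasi-T\"oplitz property. Fix $N\geq \Pappa$ and allowable $(\theta,\mu,\tau)$, and choose a T\"oplitz approximant $\mathcal P^{(i)}\in\mathcal T_{(N,\theta,\mu,\tau)}$ of $P^{(i)}$ at order $\epsilon$. I will define a T\"oplitz approximant $\mathcal F^{(i)}$ of $F^{(i)}$ by dividing $\mathcal P^{(i)}-[\mathcal P^{(i)}]$ (cut to $|k|\leq\Pappa$) by a \emph{model} eigenvalue of $ad(\mathcal N)$. Concretely, for a bilinear monomial $\mathtt g^{\sigma,\sigma'}(A,\sigma m+\sigma' n)\,e^{\ii(k,x)}z_m^\sigma z_n^{\sigma'}$ of $\mathcal P^{(2)}$, the true eigenvalue of $ad(\mathcal N)$ is $\ii[\langle\omega,k\rangle+\sigma\Omega_m+\sigma'\Omega_n]$. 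Using (A2) and Lemma~\ref{diago} applied to $\tilde\Omega$, together with the geometric identity from the proof of Proposition~\ref{key} (momentum conservation and the cut structure force $|\mathtt r(m)|^2+\sigma\sigma'|\mathtt r(n)|^2$ and the $\vartheta$ contribution to coincide with the corresponding quantities at the reference pair $(m_A,\bar n)$), this eigenvalue decomposes as
$$\ii\lambda(A,k,\sigma m+\sigma' n,\sigma,\sigma')+\ii E_{m,n},\qquad |E_{m,n}|\leq 4N^{-4d\tau}\|X_{\tilde\Omega}\|^T_{\vec p},$$
where $\lambda(\cdot)$ depends only on the T\"oplitz data of the monomial. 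Dividing the coefficient of $\mathcal P^{(2)}$ by $\ii\lambda(\cdot)$ defines $\mathcal F^{(2)}\in\mathcal T_{(N,\theta,\mu,\tau)}$, and Proposition~\ref{key} bounds the divisor below by $\gamma\Pappa^{-2d\tau_1}$, giving
$$\|X_{\mathcal F^{(2)}}\|_{s,r}\leq 2\gamma^{-1}\Pappa^{2d\tau_1}\|X_{P^{(2)}}\|^T_{\vec p}.$$

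The T\"oplitz remainder $\bar F^{(2)}:=N^{4d\tau}(\Pi_{(N,\theta,\mu,\tau)}F^{(2)}-\mathcal F^{(2)})$ receives two contributions. The first is the remainder $\bar P^{(2)}$ of the approximation of $P^{(2)}$, divided by the true eigenvalue; this is bounded by $2\gamma^{-1}\Pappa^{2d\tau_1}\|X_{P^{(2)}}\|^T_{\vec p}$ using the same Melnikov estimate. The second is the discrepancy $E_{m,n}/(\lambda\cdot(\lambda+E_{m,n}))$ between dividing by the true eigenvalue and by the model eigenvalue, times $N^{4d\tau}$; this is controlled by $\gamma^{-2}\Pappa^{4d\tau_1}\|X_{\tilde\Omega}\|^T_{\vec p}\|X_{P^{(2)}}\|^T_{\vec p}$, and by the smallness assumption $\|X_{\tilde\Omega}\|^T_{\vec p}\leq\gamma$ in (A4) this collapses to $\gamma^{-1}\Pappa^{4d\tau_1}\|X_{P^{(2)}}\|^T_{\vec p}$. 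Adding up and absorbing constants yields the claimed bound with $\eufm K=5\Pappa^{4d\tau_1+1}$. The cases $i=0,1$ are strictly simpler: the eigenvalues are $\langle\omega,k\rangle$ and $\langle\omega,k\rangle\pm\Omega_m$ respectively, so the $\tilde\Omega$ correction enters at most linearly and the same scheme with the looser Melnikov bound \eqref{first} works.

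The main obstacle is the bilinear case: showing that the true eigenvalue of $ad(\mathcal N)$ on a T\"oplitz monomial is itself T\"oplitz up to a $N^{-4d\tau}$ error. This is exactly the content hidden in the proof of Proposition~\ref{key}, and the whole machinery of cuts, good points, types, and the assumption that $\Pi_{(N,\theta,\mu,\tau)}\sum_j\vartheta_j|z_j|^2\in\mathcal T_{(N,\theta,\mu,\tau)}$ in (A3) is designed precisely to make this rigidity available.
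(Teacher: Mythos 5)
Your overall strategy matches the paper's: for $F^{(2)}$ you build a T\"oplitz approximant by dividing a T\"oplitz approximant $\mathcal P^{(2)}$ of $P^{(2)}$ by a \emph{model} divisor depending only on the T\"oplitz data $(A,k,\sigma m+\sigma'n,\sigma,\sigma')$, you control the model divisor from below via Proposition~\ref{key}, and you estimate the T\"oplitz defect by the difference between true and model divisors using the T\"oplitz approximant of $\tilde\Omega$ (Lemma~\ref{diago}). This is exactly the paper's construction of $\mathfrak D_{k,h,A}$ and the bound \eqref{pacchia}.

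However, there is a gap in the step where you claim that the eigenvalue $\ii[\langle\omega,k\rangle+\sigma\Omega_m+\sigma'\Omega_n]$ decomposes as $\ii\lambda(A,k,\sigma m+\sigma'n,\sigma,\sigma')+\ii E_{m,n}$ with $|E_{m,n}|\lesssim N^{-4d\tau}\|X_{\tilde\Omega}\|^T_{\vec p}$ \emph{for all signatures}. This is only true for $\sigma\sigma'=-$. For $\sigma\sigma'=+$ (the $z_mz_n$ and $\bar z_m\bar z_n$ terms), the leading contribution $|\er(m)|^2+|\er(n)|^2$ to $\sigma\Omega_m+\sigma'\Omega_n$ is not a function of $(A,\sigma m+\sigma'n)$ alone — it depends on $|\er(m)|^2$ itself, which varies over the $\underline p$-good points of $A$ — so there is no such decomposition. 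What saves you there is that $|\er(m)|^2+|\er(n)|^2>2\mathtt c^2 N^{2\tau_1}$ dominates everything else, the divisor is $\gtrsim\mathtt c N^{\tau_1}$, and since $N^{4d\tau-\tau_1}<1$ one can take the trivial T\"oplitz approximant $\mathcal F=0$ for these terms (Remark~\ref{dupro}). The same fallback is needed even for $\sigma\sigma'=-$ when $\pi(k)\notin\langle v_i\rangle_\ell$: the geometric identity you invoke from Proposition~\ref{key} (that $2\langle\pi(k),\er(m)\rangle=2\langle\pi(k),\er(m_A)\rangle$) requires $\pi(k)\in\langle v_i\rangle_\ell$; when $\pi(k)\notin\langle v_i\rangle_\ell$ the divisor is again $\gtrsim\mathtt c N^{4d\tau_0}$ and you set $\mathcal F_k=0$. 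Without these two reductions the scheme as stated does not close.

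A minor point: for $i=0,1$ the elaborate argument is unnecessary. A degree-$0$ or degree-$1$ Hamiltonian has no $(N,\theta,\mu,\tau)$-bilinear monomials, so $\Pi_{(N,\theta,\mu,\tau)}F^{(i)}=0$, the T\"oplitz norm reduces to $\|X_{F^{(i)}}\|^\lambda_{s,r}$, and \eqref{effelambda} already gives $\|X_{F^{(i)}}\|^T_{\vec p'}\le\gamma^{-1}\Pappa^{2d\tau_1}\|X_{P^{(i)}}\|^\lambda_{s,r}\le\eufm K\,\ix^{(i)}$ with nothing further to check.
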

\begin{proof}
We have   given  in Formula  \eqref{effelambda} a better bound on the  norm $\|X_{F^{(i)}}\|_{s,r}$ hence in order to prove our statement we only need to consider the quasi-T\"oplitz norm $\|X_{F^{(i)}}\|^{(K,\theta,\mu)}_{s,r}$ and the Lipschitz norm.

 \ The quasi--T\"oplitz  property is a condition for $N\geq K$, on the
$(N,\theta,\mu,\tau)$--bilinear part of $F^{(i)}$.   
Hence if $i=0,1$ the T\"oplitz norm coincides with the usual majorant norm and \eqref{booo} follows from the bounds \eqref{effelambda}.

 We   are reduced to proving our statement on the quadratic terms:
$$ \Pi_{(N,\theta,\mu,\tau)}F^{(2)}=\!\!\!  \!\!\!  \!\!\!  \sum_{|k|\leq N \,,\atop \min(|\er(n)|,|\er(m)|)> \theta N^{\tau_1 },\ m,n\in\underline p-cut}\!\!\!  \!\!\!  \!\!\!  \!\!\!  \!\!\!  \!\!\!  \!\!\!   \!\!\!  F_{k,m,n}e^{\ii (k,x)}z_m\bar z_n +B_{k,m,n} e^{\ii (k,x)} z_mz_n  \,+\, C_{k,m,n} e^{\ii (k,x)}\bar z_m\bar z_n $$ with
\begin{equation}\label{homol}
F_{k,m,n}=  \frac{P_{k,0,e_m,e_n}}{\langle k,\omega\rangle+\Omega_m-\Omega_n}\,,\  B_{k,m,n}=  \frac{P_{k,0,e_m+e_n,0}}{\langle \ome,k\rangle+\Omega_m+\Omega_n}\,,\  C_{k,m,n}=  \frac{P_{k,0,0,e_m+e_n}}{\langle \ome,k\rangle-\Omega_m-\Omega_n}.
\end{equation}
By hypothesis $\min(|\er(m)|,|\er(n)|)> \theta N^{\tau_1 }$ so in the case of
$B_{k,m,n}$ one has
$$ |B_{k,m,n}|= \frac{|P_{k,0,e_m+e_n,0}|}{|\langle k,\omega\rangle+|\er(m)|^2+|\er(n)|^2 +2\val_m+2\val_n+\tilde\Omega_m+\tilde\Omega_n|}\leq \mathtt c^{-1}|P_{k,0,e_m+e_n,0}| N^{-\tau_1}, $$ since
$$ |\langle k,\omega\rangle+|\er(m)|^2+|\er(n)|^2+2\val_m+2\val_n +\tilde\Omega_m+\tilde\Omega_n|> 2\mathtt cN^{\tau_1 }- |\omega|N-4|\val|_\infty+2|\tilde\Omega|_\infty > \mathtt c N^{\tau_1 }.$$ 
Since $N^{4d\tau-\tau_1}<1$ this means that $\sum_{k,m,n} B_{k,m,n} e^{\ii (k,x)} z_mz_n$ is quasi-T\"oplitz, and we may take    the ``T\"oplitz approximation'' equal to zero (cf. Remark \ref{dupro}).  Since $\mathtt c{\Pappa}^{{2d{\tau_1}}}>1>\gamma $ the final estimate follows by  formula \eqref{effelambda} 
$$\|\sum_{k,m,n} B_{k,m,n}e^{\ii (k,x)} z_mz_n \|^{K,\theta,\mu,\tau}_{s,r}\leq  \mathtt \max({\Pappa}^{{2d{\tau_1}}}\gamma^{-1},\mathtt c^{-1})\|X_{P^{(i)}}\|^\lambda_{s,r}=
{\Pappa}^{{2d{\tau_1}}}\gamma^{-1}\|X_{P^{(i)}}\|^\lambda_{s,r}.$$
  Same argument for $\sum_{k,m,n} C_{k,m,n}e^{\ii (k,x)}\bar z_m\bar z_n$.\smallskip

We thus have to study $\sum_{k,m,n} F_{k,m,n}e^{\ii (k,x)}z_m\bar z_n$.  
 Take $N\geq K$, denote by $\underline p:=N,\theta,\mu,\tau$, we wish to decompose
\begin{equation}
\label{olbF}F_{k,m,n}={\mathcal F}_{k}(m-n,[v_i;p_i]_\ell)+
N^{-4d\tau}\bar F_{k,m,n},
\end{equation}   so that  ${\mathcal F}_{k}$ is the
$k$ Fourier coefficient of a T\"oplitz approximation $\mathcal F\in \mathcal T_{\underline p}$.

 By momentum conservation we have $\pi(k)+\er(m)-\er(n)=0$ hence \begin{equation}
\label{deer}|\er(m)|^2-|\er(n)|^2=   -|\pi(k)|^2- 2\langle \pi(k),\er(m)\rangle  .
\end{equation} For the denominator in the first term of \eqref{homol}  we have $$ \langle k,\omega\rangle+\Omega_m-\Omega_n= \langle k,\omega\rangle+|\er(m)|^2-|\er(n)|^2+2\val_m-2\val_n +\tilde\Omega_m-\tilde\Omega_n $$    
\begin{equation}
\label{deer1}= \langle k,\omega\rangle -|\pi(k)|^2- 2\langle \pi(k),\er(m)\rangle+2\val_m-2\val_n +\tilde\Omega_m-\tilde\Omega_n.
\end{equation} If $\Pappa$ is sufficiently large we can estimate $|\langle k,\omega\rangle -|\pi(k)|^2  +2\val_m-2\val_n +\tilde\Omega_m-\tilde\Omega_n|<\Pappa^3$. From this we see that if $2|(\pi(k),\er(m))|> \mathtt c N^{4d \tau}$  we may again set $\mathcal F_k=0$.

So we are reduced to the case in which $2|(\pi(k),\er(m))|\leq \mathtt c N^{4d \tau}$.
\smallskip

By assumption $m,n$ have a cut at $\ell$ with parameters $(N,\theta,\mu,\tau)$ and  $$
|\er(m)|,|\er(n)|\geq \theta N^{\tau_1}\,,\quad
m\frec{N}[v_i;p_i]\,,\quad n\frec{N}[w_i;q_i]\,,$$ \begin{equation}\label{condi} q_\ell,p_\ell\leq \mu N^{\tau}\,,\quad
q_{\ell+1},p_{\ell+1}\geq \theta N^{4d\tau}\,,\quad A:=[v_i;p_i]_\ell\prec B:= [w_i;q_i]_\ell\,,
\end{equation} 
   hence, by Corollary \ref{dueta}, $\langle v_1,\dots,v_\ell\rangle= \langle w_1,\dots,w_\ell\rangle$. We distinguish two cases:

{\bf Case 1:\quad $\pi(k)\notin \langle v_i\rangle_\ell$.}\quad  If $\pi(k)\notin \langle v_i\rangle_\ell$ then  by the definitions of cut \ref{cut}, and  of optimal presentation 
we have $2|(\pi(k),\er(m))|> \mathtt c N^{4d \tau}$ contrary to our hypothesis. \medskip
   
 {\bf Case 2:\quad $\pi(k)\in \langle v_i\rangle_\ell$.}\quad     We 
  recall that $\val_m$ (resp. $\val_n$) are constant on all the $m$ which have the same affine space  $A=[v_i;p_i]_\ell$ associated to its $\ell$-cut. Moreover, setting $h=n-m$, we know that $n$ has an $\ell$-cut with associated affine space $B=A+h=[w_i;q_i]_\ell$. By lemma \ref{diago}   and $\tilde\Omega$ has a  T\"oplitz approximation, $\it{\tilde \Omega}$ see Formula \eqref{pargol}. By Corollary \ref{pirlino} we can choose a point $m_A\in A^g_{\underline p}$ so that  $m_A+h\in (A+h)^g_{\underline p}$ then we may choose the T\"oplitz approximant of order one  with $\it{\tilde\Ome}(A)=\tilde\Ome_{m_A}$ and $\it{\tilde\Ome}(A+h)=\tilde\Ome_{m_A+h}$:
 \begin{equation}\label{bohH}
 |\tilde\Omega_m- \it{\tilde\Ome}(A)|\,,  |\tilde\Omega_n- \it{\tilde\Ome}(A+h)|<  2 \|\tilde\Omega\|^{T  }_{\vec p } {N}^{-4d\tau}\,.
\end{equation} 
Denote by $D_{k,m,n}=  \langle k,\omega\rangle+\Omega_m-\Omega_n$ the denominator of the term $F_{k,m,n}$ , we define 
  \begin{equation}
\label{zerotr}\mathfrak D_{k,h,A}:= D_{k,m_A,m_A+h}\,,\implies \ 
|\mathfrak D_{k,h,A}|\geq \gamma {\Pappa}^{-{{2d{\tau_1}}}},\ \eqref{zero1}.
\end{equation}
 Finally, since   $P^{(2)}$ is quasi-T\"oplitz,  we may set
  $$ {\mathcal F}_{k}(h,A)=  \frac{{\mathcal P}^{(2)}_{k}(h,A)}{\mathfrak D_{k,h,A}},\quad \bar F  = N^{ 4 d \tau}(F -{\mathcal F})$$ where
  ${\mathcal P}={\mathcal P}^{(2)} \in\mathcal T_{\underline p}$ is a piecewise--T\"oplitz approximation of $P_{k, 0,e_m, e_n}$ so that for $\bar P^{(2)}  = N^{ 4 d \tau}(P^{(2)}-{\mathcal  P}^{(2)}) $  we have the bounds  $\|\mathcal P^{(2)}\|_{r,s},\|\bar P^{(2)}\|_{r,s}\leq \|P^{(2)}\|^T_{\vec p}+\epsilon$ where $\epsilon>0$ can be taken arbitrarily small (see Lemma \ref{diago}).

We notice that by \eqref{bohH}
\begin{equation}
\label{stimO}
N^{4 d\tau}|\mathfrak D_{k,h,A} -D_{k,m,n}|=N^{4 d\tau}  |\tilde\Omega_m- \it{\tilde\Ome}(A)-\tilde\Omega_n+ \it{\tilde\Ome}(A+h)|<4 \|\tilde\Ome\|_{\vec p}^T.
\end{equation}

 If the denominators are  bounded away from  zero then (cf. \eqref{olbF}):
$$\bar F_{k,m,n} = N^{ 4 d \tau}(F_{k,n,m}-{\mathcal F}_{k}(h,A ) ) =  \frac{\bar P_{k,m,n}^{(2)}}{D_{k,m,n}}  + {\mathcal P}^{(2)}_{k}(h,A)\frac{N^{ 4 d \tau}(\mathfrak D_{k,h,A} -D_{k,m,n})}{\mathfrak D_{k,h,A} D_{k,m,n}}, $$ is bounded. 
 
Summing over the indexes $k,m,n$ such that $m$ has a cut with parameters $(N,\theta,\mu)$,  we obtain $$\| \mathcal F \| _{s,r} \leq  \frac{\| \mathcal P^{(2)} \| _{s,r}}{\inf_{k,n,m} \mathfrak D_{k,h,A}},\quad 
\| \bar F \| _{s,r} \leq \frac{ \| \bar P^{(2)} \| _{s,r} }{\inf_{k,n,m} |\mathfrak D_{k,h,A}|}+\frac{4 \|\tilde\Ome\|_{s,r}^T\| \mathcal P^{(2)} \| _{s,r}}{ \inf_{k,n,m} |D_{k,m,n}\mathfrak D_{k,h,A}|}$$

This we may rephrase as
 \begin{equation}\label{pacchia}
\| F^{(2)} \|^{T  }_{\vec p\,' } \leq    \| P^{(2)}\|_{\vec p}^T\sup_{\xi\in \O _+}\sup_{k,n,m: |k|<K}(\frac{1}{ |\mathfrak D_{k,m,n}|}+\frac{ 4\|\tilde\Ome\|_{\vec p}^T }{  |\mathfrak D_{k,m,n} D_{k,m,n}|}+ \lambda \frac{M(|k|+2)}{D_{k,m,n}^2}).
\end{equation} 
   By the { \it Smallness condition},     $(A4)$   formula \eqref{topo}, we have $\|\tilde\Ome\|_{\vec p}^T \leq \gamma$.
  The denominators $|\mathfrak D_{k,m,n}|, | D_{k,m,n}|$  are $ >\gamma {\Pappa}^{-{2d{\tau_1}}}$ uniformly in $\O _+$ by Formulas \eqref{zero1} and \eqref{zerotr}. Recalling that $\lambda= \gamma M^{-1}$, we deduce that 
$$\| F^{(2)} \|^{T  }_{\vec p\,' } \leq    \| P^{(2)}\|_{\vec p}^T\gamma^{-1}({\Pappa}^{ {2d{\tau_1}}}+ {\Pappa}^{ {4d{\tau_1}+1}}+4{\Pappa}^{ {4d{\tau_1}}})\leq   5\| P^{(2)}\|_{\vec p}^T\gamma^{-1} {\Pappa}^{ {4d{\tau_1}+1}}. $$ 

\end{proof}
 \subsection{The new Hamiltonian $H^+$\label{s:Kstep}} Recall we have set $\vec p=(s,r,\PaPi,\theta,\mu,\lambda,\mathcal O)$,  $\vec p_+=(s_+,r_+,\PaPi,\theta_+,\mu_+,\lambda_+,\mathcal O_+)$. By Propositions \ref{main} and \ref{submain}, $F$ defines a M-analytic symplectic quasi-Toplitz  change of variables from $D(s_+,r_+)$ to $D(s,r)$, where $r_+,s_+$ are determined by \eqref{rpiu}. 

The change of variables is of the form $\Phi= I+ \Psi$ with the bounds (cfr. \eqref{flussoT1}) 
\begin{equation}\label{babanu}
\| \Psi \|^T_{\vec p_+} \leq  2\|X_F\|_{\vec { p }}^T   
\end{equation} 
and for any function $ f \in\mathcal T_{\underline p}$ we have that $ e^{ad(F)} f\in\mathcal T_{\vec p_+}$ where $\vec p_+$ satisfy  \eqref{nubo}.
\smallskip
 
We now analyze $H^+:= e^{ad(F)} (H)$,  recall that by definition $ad(F)(\mathcal N)= - P^{\leq 2}_{\leq {\Pappa}}+[P^{\leq 2}]$.
\begin{equation}\label{Pnupiu}
H^+:= e^{ad(F)} (\mathcal N+P)= H+ ad(F)H+\sum_{j\geq 2} \frac{ad(F)^{j }}{j!}(H)= \end{equation}
$$ = \mathcal N+P  - P^{\leq 2}_{\leq {\Pappa}}+[P^{\leq 2}]+ad(F)P+ \sum_{j\geq 2} \frac{ad(F)^{j }}{j!}(H)$$
 
We call $\mathcal N+[P^{\leq 2}] := \mathcal N^+$ and the rest of the Hamiltonian $P_+$, so that
\begin{equation}
\label{Ppi}P_+:= (P-P^{\leq 2}_{\leq {\Pappa}}) + \{F,P\}+
 \sum_{j\geq 2} \frac{ad(F)^j}{j!} P  +  \sum_{j\geq 2} \frac{ad(F)^{j-1}}{j!}(-P^{\leq 2}_{\leq {\Pappa}}+[P^{\leq 2}_{\leq {\Pappa}}]).
\end{equation} 

By formula \eqref{nucleo}, 
\begin{equation}\label{opiu}
\ome^+:= \ome +  P^{1,0}_0 \,,\quad  \tilde\Ome_n^+:= \tilde\Ome_n + P^{0,2}_{0,n,+,n,-}\,,$$$$ \Ome^+_n=\s(n)|\er(n)|^2+ 2\val_n +\tilde\Ome^+_n= \Ome_n+  P^{0,2}_{0,n,+,n,-}\,,\quad\mathcal C^+=\mathcal C+\mathcal P.
\end{equation}    
Recall that $\mathcal C$ is the finite complex part of the normal form, same for $\mathcal P$ as defined in \eqref{nucleo}.
We need to
 \begin{itemize}
\item Prove that $H^+$ satisfies conditions $(A1)-(A5)$.

\item Estimate all the new parameters $\vec p_+$.
\end{itemize}
 
  \subsubsection{Lipschitz estimates $M_+,L_+,\lambda_+ $}     
We have
 $$ |\ome^+|^{lip}+|\Ome_+|^{lip}=|\ome +  P^{1,0}_0|^{lip}+ \sup_{n}|\Ome_n+ P^{0,2}_{0,n,+,n,-}|^{lip} \leq$$ $$ |\ome|^{lip}+ \sup_{n}|\Ome_n|^{lip} + |P^{1,0}_0|^{lip}+ \sup_n |P^{0,2}_{0,n,+,n,-}|^{lip} \leq  M+ |P^{1,0}_0|^{lip}+ \sup_n |P^{0,2}_{0,n,+,n,-}|^{lip}, $$  by definition of $M$. 
 
 In the same way
 $$ (\ome^+)^{-1} =(\ome + P^{1,0}_0)^{-1}=(\ome\circ (Id + P^{1,0}_0\circ \ome^{-1}))^{-1}= (Id + P^{1,0}_0\circ \ome^{-1})^{-1}\circ \ome^{-1}, $$ so that $\ome^+$ is invertible as a Lipschitz function provided that $ L |P^{1,0}_0|^{lip}<1$ with the bound
 \begin{equation}
\label{ompl}|(\ome^+)^{-1} |^{lip}\leq \frac{L}{1- L |P^{1,0}_0|^{lip}}. 
\end{equation} 
In order to estimate $a_+$, defined in Formula  \eqref{pota}, we note 
    \begin{equation}\label{pota1}
   | \Delta_{\xi,\varrho}(\langle \ome^+(\xi ) , k\rangle + ( \Ome_+ (\xi ) , l ))| \end{equation}$$\geq  | \Delta_{\xi,\varrho}(\langle \ome (\xi ) , k\rangle + ( \Ome  (\xi ) , l ))| - |\Delta_{\xi,\varrho}( \langle P^{1,0}_0(\xi ), k \rangle +  (P^{0,2}_{0,n,+,n,-} , l ))| 
$$
$$
 \geq a-  {\esse} | P^{1,0}_0(\xi ) |^{lip} + 2 | P^{0,2}_{0,n,+,n,-}   |^{lip} 
$$
%  so it is enough to show that  $ a_\nu-    |( (P^{1,0}_0(\xi ) , k) +  (P^{0,2}_{0,n,+,n,-} , l) )|^{lip} \geq a_{\nu+1}.$ 

We recall that
  $\|\cdot \|^\lambda = \| \cdot\| +\lambda \| \cdot \|^{lip},$ hence  $$  |P^{1,0}_0|^{lip}, |P^{0,2}_{0,n,+,n,-}|^{lip} \leq \lambda^{-1} \| P^{(2)}\|^\lambda_{s,r}\leq M |\vec\ix|\,$$
since  $\lambda = \gamma M^{-1}$. We define
\begin{equation}\label{emmepiu}
M_+ :=  M( 1+2 |\vec\ix|)  \,, \quad L_+ := 
L( 1- ML |\vec\ix|)^{-1},\quad   a_+:= a- (S_0+2)M |\vec\ix|
\end{equation}
 notice that $L_+$ is well defined since by (A4)  $LM |\vec\ix|<1$. By construction
 $$  |\ome^+|^{lip}+|\Ome_+|^{lip}\leq M_+\,,\quad  | (\ome^+)^{-1} |^{lip} \leq L_+\,, $$
 finally we have:
 $$ |\ome^+ -\vgot|\leq |\ome-\vgot| +|P^{1,0}_0| \leq M \varepsilon^2 + \gamma  |\vec\ix|  \leq M_+ \varepsilon^2,$$ 
 since $\gamma < 2 M \varepsilon^2$. We finally set $\lambda_+ := \gamma (M_+)^{-1}$ we have $\lambda_+< \lambda$ since $M_+>    M          $.

  \subsubsection{T\"oplitz estimates $\vec\ix_+,\Theta_+$}
  We wish to show $P_+$ is quasi--T\"oplitz and   bound $$ \ix^{(h)}_+:= \gamma^{-1} \| X_{P^{h}_+}\|^{T  }_{\vec p_+} \,,\quad {\rm for}\; h=0,1,2 \, \,;\quad \Theta_+:=  \gamma^{-1}\|X_{P_+}\|^{T  }_{\vec p_+}.$$

 We  have that (cf. \eqref{Ppi}) $P_+= (P-P^{\leq 2}_{\leq {\Pappa}})    +A+B$  where
 $$A:= \sum_{j\geq 2} \frac{ad(F)^{j-1}}{j!}(-P^{\leq 2}_{\leq {\Pappa}}+[P^{\leq 2}_{\leq {\Pappa}}]),\quad B= \{F,P\}+
 \sum_{j\geq 2} \frac{ad(F)^j}{j!} P  ,$$
  We argue as in Proposition 5 of \cite{PX} or in Proposition \ref{main}. By Formula \eqref{booo} $\|X_{F }\|^{T  }_{\vec p } \leq    5{\Pappa}^{4d{\tau_1}}|\vec \ix |, $ the hypothesis  \eqref{rpiu} implies that we have the conditions of \eqref{funm}, that is 
$ 2^{2n+14}     \delta^{-1}    \|X_{F}\|^T_{\vec { p}}< 1/2 $, where $\delta = \min(1-\frac{s_+}{s},1-\frac{r_+}{r})$. Therefore 
we have that  
\begin{equation}
\label{pppr}\|X_A\|^{T  }_{\vec p_+}\leq   2  \delta^{-1} \|X_F\|^{T  }_{\vec p }\|X_{P^{\leq 2}}
\|^{T  }_{\vec p },\quad \|X_{B}\|^{T  }_{\vec p_+}\leq  2 \delta^{-1} \|X_F\|^{T  }_{\vec p }\|X_{P}\|^{T  }_{\vec p } 
\end{equation} Using \eqref{booo} we rewrite \eqref{pppr}    as 
 \begin{equation}
\label{Pppr} \gamma^{-1}\|X_A\|^{T  }_{\vec p_+} \lessdot \delta^{-1} \eufm K|\ix|^2\,,\quad  \gamma^{-1} \|X_{B}\|^{T  }_{\vec p_+}\lessdot  \delta^{-1} \eufm K|\ix|\Theta\,,
\end{equation}

  We obtain:
\begin{equation}\label{thetanu} \| X_{P_+}\|_{\vec p_+}^{T }\leq \|X_P\|_{\vec p }^{T } + 4   \delta^{-1} \|X_F\|_{\vec p }^{T } \|X_P\|_{\vec p }^{T }\quad i.e.\quad \Theta_+ - \Theta  \lessdot   \delta^{-1}\eufm K  |\vec \ix| \Theta \,.
\end{equation}
Let us now compute the terms of order $\leq 2$ in $P^+$,
we have:
$$P^{(h)}_+=  P^{(h)}_{> {\Pappa}}+( A+ \{F,P^{>2}\} +\{F, P^{\leq 2}\}+  \sum_{j\geq 2}
 \frac{ad(F)^{j}}{j!} P)^{(h)}. $$ 
Again from \eqref{booo}, denoting $E= \sum_{j\geq 2}
 \frac{ad(F)^{j}}{j!} P$, we have the bounds:
$$\|X_{ \{F, P^{\leq 2}\}}\|^T_{\vec p_+}\leq 2 \delta^{-1} \|X_{ F}\|^{T  }_{\vec p }\ \|X_{  P^{\leq 2}}\|^{T  }_{\vec p }\lessdot \delta^{-1} \eufm K \gamma |\vec \ix|^2,  $$
$$  \|X_E\|^T_{s_+,r_+}\lessdot \delta^{-2} (\|X_F\|^{T  }_{\vec p })^2
\|X_{P}\|^{T  }_{\vec p } \lessdot \delta^{-2}  \eufm K^2|\vec \ix|^2
\Theta $$ 
The contributions from $\{F,P^{>2}\}$ are
$$\Pi_{0} \{F,P^{>2}\}=  0 \,,
\quad \Pi_{1}\{F,P^{>2}\}= \{F^{(0)},P^{(3)}\} \,,$$
$$  \Pi_{2}\{F,P^{>2}\}= \{F^{0},P^{(4)}\}+
 \{F^{(1)},P^{(3)}\}\,,$$

so applying the Cauchy estimates we have, setting $ \mathfrak z:= {\rm const.}(\delta^{-1}  \eufm K)^2$:
$$ \ix^{(0)}_+ \leq \qquad \qquad \qquad\ \ \qquad  {\mathfrak z}|{\vec \ix}|^2(1+ \Theta)+ 2\ix^{(0)}   e^{-{(s-s_+){\Pappa}}}  $$
$$\ix^{(1)}_+  \leq {\mathfrak z}  \big( \Theta\,  \ix^{(0)} +\qquad\quad\quad  \ |{\vec \ix}|^2 (1+ \Theta)\big) +2 \ix^{(1)}  e^{-(s-s_+)\Pappa }$$
$$ \ix^{(2)}_+  \leq  {\mathfrak z}   \big( \Theta ( \ix^{(0)}+\ix^{(1)})  +|{\vec \ix}|^2 (1+ \Theta)\big) \,+2 \ix^{(2)}  e^{-(s-s_+)\Pappa }.$$
Note that the terms $2 \ix^{(h)}  e^{-(s-s_+)\Pappa }$ come from $P^{(h)}_{> {\Pappa}}$ via the smoothing estimates \ref{smoothT}.

We write in matrix form, denoting by $\vec{\ix}$ the three dimensional column vector of coordinates $({\ix}^{(0)},{\ix}^{(1)},{\ix}^{(2)})$ and $\underline 1:=(1,1,1)$ we have
\begin{equation}\label{sonasega}
\begin{array} {lll}\vec{\ix}_+ &\leq &  {\mathfrak z} \big( \Theta \eufm L \vec{\ix} + |\vec \ix|^2(1+\Theta)\underline 1\big)+   2 e^{-(s-s_+)\Pappa } \vec \ix   \\ \Theta_+ & \leq&  \Theta+ {\mathfrak z}  \Theta |\vec \ix| \end{array}
\end{equation} 
where  the matrix $\eufm L$ is
\begin{equation}
\label{lael}\eufm L=\begin{pmatrix} 0&0&0 \\ 1 &0&0 \\ 1 &1&0
\end{pmatrix}\implies \eufm L^2=\begin{pmatrix} 0&0&0 \\ 0 &0&0 \\ 2 &0&0
\end{pmatrix},\quad \eufm L^3=0
\end{equation} 
and the vector inequality means that the inequality is true for all three coordinates.

\subsection{Iteration}

\subsubsection{A useful inequality} Now we need to be able to handle in a recursive way the inequalities obtained so far, we start with a formal inequality   which is a variation of Lemma 5.8 of \cite{BBP1}.\smallskip

We \textsl{fix} $\chi$ such that
\begin{equation}\label{chi}
1 < \chi < 2^{\frac{1}{3}} \, .
\end{equation} and $\mathfrak a,\mathfrak b,\mathfrak c$ three positive numbers satisfying:
\begin{equation}\label{limiti}
\mathfrak a,\mathfrak b,\mathfrak c>1\,,\quad 12  {\mathfrak b } ^2    \leq \min_{i\in\N}  \frac{e^{  {\mathfrak a } 2^{i-2}+ \chi^{i-2}- \chi^{i+1}}}{\mathfrak c^{2i-1} }\,,
\end{equation} and set 
\begin{equation}\label{mini}
\mathfrak i:=\min_{i\in\N} (\frac{e^{\chi^i}}{2{\mathfrak b }  (2\mathfrak c) ^i}, \frac{e^{(2-\chi^3)\chi^{i-2}}}{32 {\mathfrak b } ^3 \mathfrak c^{3i-3}} ) >0.
\end{equation}
 \begin{lemma}\label{pomodorino}  
For $j\in \mathbb N$ consider a  sequence $(\vec{\ix}_j,\Theta_j)$ with $ \vec{\ix}_j:=  (\ix^{(0)}_j,\ix^{(1)}_j,\ix^{(2)}_j)$ a vector and $ \Theta_j $ a number, all with positive components. Set $ |\vec{\ix}_j|:= \ix^{(0)}_j+\ix^{(1)}_j+\ix^{(2)}_j$.

Suppose  that for (\ $\eufm L$ as in \eqref{lael})   we have:\begin{equation}\label{passoj}
\left\{ \begin{array} {lll}\vec{\ix}_{j+1} &\leq &  {\mathfrak b } \mathfrak c^{j}( \Theta_{j} \eufm L \vec{\ix}_{j}  +|\vec{\ix}_{j}|^2(1+\Theta_{j})\underline 1)+  e^{-{\mathfrak a } 2^j} \vec \ix_j \\ \Theta_{j+1} & \leq&  \Theta_{j} +  {\mathfrak b } \mathfrak c^{j} \Theta_{j} |\vec{\ix}_j| . \end{array}\right.
\end{equation}

There exist  $   \Art := \Art (\mathfrak c,\chi, {\mathfrak a } ,{\mathfrak b } )>1$ such that for all $   {\art} >0  $
satisfying \begin{equation}
\label{chee} 2{\art} \Art <\min(1/3,\mathfrak i)
\end{equation}  we have that \begin{equation}\label{Cgotbis}
1/3|\vec{\ix}_0|,  \Theta_0<{\art} \ \ \Longrightarrow \ \
 |\vec{\ix}_j| \leq  \, \Art |\vec{\ix}_0|  e^{-\chi^j} \, ,\quad  \Theta_j  \leq \Theta_0 (1+ \Art  \sum_{0<l\leq j}2^{-l}) \,,\quad  \forall\,  j\geq 0.
\end{equation}
\end{lemma}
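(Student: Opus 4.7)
\medskip

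\noindent\textbf{Proof plan for Lemma \ref{pomodorino}.}

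The proof will proceed by induction on $j$, establishing both conclusions of \eqref{Cgotbis} simultaneously. First I would verify the base cases $j=0,1,2$ directly from the hypotheses $1/3|\vec\ix_0|, \Theta_0<\art$ together with one or two applications of \eqref{passoj}; here choosing $\Art\geq e\,\chi^2$ makes $\Art|\vec\ix_0|e^{-\chi^j}\geq|\vec\ix_0|$ for $j\leq 2$, which covers the three initial steps (and simultaneously $\Theta_{j+1}-\Theta_j$ is tiny because of the smallness assumption on $\art$). A convenient side remark to record is that, as long as the $\Theta$-bound holds for $l\leq j$, we have $\Theta_l\leq 2\Theta_0\leq 2\art$ provided $\Art\sum_{l\geq 1}2^{-l}\leq 1$; this will be used freely in the induction.

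For the inductive step on $\vec\ix_j$, the decisive structural fact is that $\eufm L^3=0$ (cf.\ \eqref{lael}). I would rewrite \eqref{passoj} as $\vec\ix_{l+1}\leq M_l\vec\ix_l+Q_l$, where
\[
M_l:=e^{-\mathfrak a 2^l}I+\mathfrak b\mathfrak c^l\Theta_l\eufm L,\qquad Q_l:=\mathfrak b\mathfrak c^l(1+\Theta_l)|\vec\ix_l|^2\underline 1,
\]
and iterate three steps from $j$ to $j+3$:
\[
\vec\ix_{j+3}\leq M_{j+2}M_{j+1}M_j\,\vec\ix_j\;+\;M_{j+2}M_{j+1}Q_j\;+\;M_{j+2}Q_{j+1}\;+\;Q_{j+2}.
\]
Expanding $M_{j+2}M_{j+1}M_j$ as a sum of $2^3=8$ terms and grouping by the total power of $\eufm L$, the term carrying $\eufm L^3$ vanishes, so every surviving term contains at least one factor $e^{-\mathfrak a 2^l}$ with $l\in\{j,j+1,j+2\}$. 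Using $\Theta\leq 2\art$, the worst term is the one with $\eufm L^2$ and a single exponential factor $e^{-\mathfrak a 2^j}$, which is bounded by $\text{const}\cdot\mathfrak b^2\mathfrak c^{2j+3}\art^2\,e^{-\mathfrak a 2^j}|\vec\ix_j|$. The quantitative constraint \eqref{limiti}, applied with $i=j+2$, is exactly what is needed to dominate this by $\tfrac{1}{4}e^{\chi^j-\chi^{j+3}}|\vec\ix_j|$ (all other terms being strictly smaller). The quadratic forcing terms are handled by the inductive bound $|\vec\ix_l|^2\leq\Art^2|\vec\ix_0|^2 e^{-2\chi^l}$ combined with the smallness condition \eqref{chee}: since $\mathfrak i$ in \eqref{mini} is chosen precisely so that $2\art\Art\leq\mathfrak i$ absorbs the factor $32\mathfrak b^3\mathfrak c^{3j-3}e^{-(2-\chi^3)\chi^{j-2}}$, each $Q$-contribution is bounded by $\tfrac{1}{4}\Art|\vec\ix_0|e^{-\chi^{j+3}}$. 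Summing the four contributions closes the induction: $|\vec\ix_{j+3}|\leq\Art|\vec\ix_0|e^{-\chi^{j+3}}$.

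For the $\Theta$-part of the induction, having just established the bound on $|\vec\ix_j|$ I would use the second inequality of \eqref{passoj} to telescope:
\[
\Theta_{j+1}-\Theta_j\;\leq\;\mathfrak b\mathfrak c^j\Theta_j|\vec\ix_j|\;\leq\;2\mathfrak b\Art|\vec\ix_0|\Theta_0\,\mathfrak c^je^{-\chi^j}.
\]
The first half of the minimum in \eqref{mini}, namely $\mathfrak i\leq e^{\chi^j}/(2\mathfrak b(2\mathfrak c)^j)$, ensures that $\mathfrak c^j e^{-\chi^j}\leq(\Art/2^j)\cdot(\text{small constant})$ after using $2\art\Art<\mathfrak i$, so the increment is bounded by $\Art\Theta_0\cdot 2^{-(j+1)}$. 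Telescoping from $0$ to $j$ yields $\Theta_j\leq\Theta_0(1+\Art\sum_{0<l\leq j}2^{-l})$ as required.

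\medskip

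The main obstacle, as usual in super-exponential iterations with nilpotent couplings, is the bookkeeping in the three-step expansion: one must check that \emph{every} one of the seven surviving terms in $M_{j+2}M_{j+1}M_j$, together with the three quadratic terms $M\cdots Q$, is dominated by the target $\tfrac{1}{4}\Art|\vec\ix_0|e^{-\chi^{j+3}}$. The constants $12$ in \eqref{limiti} and $32$ in \eqref{mini} are precisely what absorb these ten contributions after the choice of $\Art$ (a function of $\mathfrak a,\mathfrak b,\mathfrak c,\chi$) is made. A subtle point is that one may not iterate three steps at the very end if one wants a bound for every $j$: this is handled by doing one- and two-step iterations from the last ``secured'' $j$, which cost at most fixed multiplicative constants and are harmless once $\Art$ has been fixed.
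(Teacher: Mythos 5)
Your proposal follows essentially the same route as the paper: both exploit $\eufm L^3=0$ by unrolling the recursion three steps (you iterate forward from $j$ to $j+3$, the paper substitutes backward from $i+1$ to $i-2$, which is the same expansion), handle the base cases by taking $\Art$ large enough to swallow the early indices, and use the two halves of the definition of $\mathfrak i$ to close the $\Theta$-telescoping and absorb the quadratic forcing, with \eqref{limiti} controlling the $\eufm L^2$-with-one-exponential term. One small fix: your side remark ``$\Theta_l\leq 2\Theta_0\leq 2\art$ provided $\Art\sum_{l\geq1}2^{-l}\leq 1$'' cannot be invoked since $\Art>1$ forces $\Art\sum_{l\geq1}2^{-l}=\Art>1$; what actually holds (and what the paper uses) is $\Theta_l\leq(1+\Art)\Theta_0\leq 2\Art\art<1$ by \eqref{chee}, and carrying that extra factor of $\Art$ through your $\Theta$-telescoping step is exactly what makes the first half of $\mathfrak i$ the right condition.
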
   Let $i_0$ be the value of $i$ for which the minimum $\mathfrak i$ in \eqref{mini} is achieved. One easily sees that, since $|\vec{\ix}_0|,\Theta_0 \leq 3 \art<1$  we can find a value $ \Art $ (depending only on $\mathfrak a,\mathfrak b,\mathfrak c,\chi$)  for which both relations \eqref{Cgotbis} hold for all $i\leq i_0 $ and  $|\vec{\ix}_0| <1$.

We now work by induction and suppose that both relations hold up to some $i\geq i_0$. Then $\Theta_i \leq \Theta_0 (1+ \Art  \sum_{1\leq l\leq i}2^{-l})<2\Theta_0   \Art $ and, assuming $2\Theta_0\Art  \leq 2{\art}   \Art \leq \mathfrak i   $, we have ${\mathfrak b } \mathfrak c^i  2^{i+1}  e^{-\chi^i}\leq  \mathfrak i ^{-1}\leq  (2\Theta_0\Art )^{-1}  $ estimate :
$$\Theta_{i+1} \leq  \Theta_0 (1+ \Art  \sum_{l\leq i}2^{-l}  + {\mathfrak b } \mathfrak c^i  2\Theta_0 A^2_0 e^{-\chi^i})\leq  \Theta_0 (1+ \Art  \sum_{1\leq l\leq i+1}2^{-l}). $$  Notice that the constraint \eqref{chee}  and the inequality \eqref{Cgotbis} imply  $\Theta_j<1$ for all $j$.
We now   substitute $\Theta_j<1$ for all $j\leq i$ in the first relation and  get 
$$\vec\ix_{j+1}\leq {\mathfrak b } \mathfrak c^{j}(  \eufm L \vec{\ix}_{j}  +2|\vec{\ix}_{j}|^2\underline 1)+ e^{-{\mathfrak a } 2^j}  \vec\ix_{j}  $$

we obtain a  bound for
$\vec\ix_{i+1}$ in terms of  $\vec\ix_i$, $\vec\ix_{i-1}$ and $\vec\ix_{i-2}$.

We now assume by induction that the bounds in   \eqref{Cgotbis} are satisfied for all $j\leq i$ and then: 
$$\vec\ix_{i+1}\leq  {\mathfrak b } ^2\mathfrak c^{2i-1}  (\eufm L^2  \vec{\ix}_{i-1}  + 2|\vec{\ix}_{i-1}|^2 \eufm L \underline 1) + {\mathfrak b }  \mathfrak c^i  e^{-{\mathfrak a } 2^{i-1}} \eufm L \vec \ix_{i-1}  +2{\mathfrak b } \mathfrak c^i |\vec{\ix}_{i}|^2\underline 1+  e^{-{\mathfrak a } 2^i}  \vec\ix_{i} \leq $$
$$\vec\ix_{i+1}\leq  {\mathfrak b } ^2\mathfrak c^{2i-1}  \eufm L^2  \vec{\ix}_{i-1}  +2 {\mathfrak b } ^2\mathfrak c^{2i-1}  |\vec{\ix}_{i-1}|^2 \eufm L \underline 1 + {\mathfrak b }  \mathfrak c^i  e^{-{\mathfrak a } 2^{i-1}} \eufm L \vec \ix_{i-1}  +2{\mathfrak b } \mathfrak c^i |\vec{\ix}_{i}|^2\underline 1+  e^{-{\mathfrak a } 2^i}  \vec\ix_{i} \leq $$
$$   2{\mathfrak b } ^3\mathfrak c^{3i-3}   |\vec{\ix}_{i-2}|^2\eufm L^2\underline 1+{\mathfrak b } ^2\mathfrak c^{2i-1}  e^{-{\mathfrak a } 2^{i-2}}  \eufm L^2   \vec\ix_{i-2}+2 {\mathfrak b } ^2\mathfrak c^{2i-1}  |\vec{\ix}_{i-1}|^2 \eufm L \underline 1   $$
$$ + {\mathfrak b }  \mathfrak c^i  e^{-{\mathfrak a } 2^{i-1}} \eufm L \vec \ix_{i-1}  +2{\mathfrak b } \mathfrak c^i |\vec{\ix}_{i}|^2\underline 1+  e^{-{\mathfrak a } 2^i}  \vec\ix_{i}. $$
This in turn implies, since $|\eufm L|=3, |\eufm L^2|=2$, that
$$ | \vec \ix_{i+1}| \leq  4{\mathfrak b } ^3\mathfrak c^{3i-3}   |\vec{\ix}_{i-2}|^2 +6 {\mathfrak b } ^2\mathfrak c^{2i-1}  |\vec{\ix}_{i-1}|^2  +6{\mathfrak b } \mathfrak c^i |\vec{\ix}_{i}|^2 $$
$$ +2{\mathfrak b } ^2\mathfrak c^{2i-1}  e^{-{\mathfrak a } 2^{i-2}} |  \vec\ix_{i-2}|+3 {\mathfrak b }  \mathfrak c^i  e^{-{\mathfrak a } 2^{i-1}} | \vec \ix_{i-1} |  +  e^{-{\mathfrak a } 2^i}  |\vec\ix_{i}| \leq $$
$$| \vec \ix_{i+1}| \leq  |\vec\ix_0|^2 \Art ^2(4 {\mathfrak b } ^3\mathfrak c^{3i-3}  e^{-2 \chi^{i-2}} + 6 {\mathfrak b } ^2\mathfrak c^{2i-1}e^{-2 \chi^{i-1}}+6{\mathfrak b } \mathfrak c^{i}e^{-2 \chi^{i}})+ $$
$$ |\vec\ix_0|\Art (2 {\mathfrak b } ^2\mathfrak c^{2i-1}  e^{-{\mathfrak a } 2^{i-2}- \chi^{i-2}}  + 3{\mathfrak b } \mathfrak c^{i}  e^{-{\mathfrak a } 2^{i-1}- \chi^{i-1}} + e^{-{\mathfrak a } 2^{i}-\chi^i})\leq$$
$$ |\vec\ix_0|  \Art [ 16 |\vec\ix_0|  \Art   {\mathfrak b } ^3\mathfrak c^{3i-3}  e^{-2 \chi^{i-2}} +
 6  {\mathfrak b } ^2\mathfrak c^{2i-1}  e^{-{\mathfrak a } 2^{i-2}- \chi^{i-2}}]  \leq |\vec\ix_0|\Art  e^{-  \chi^{i+1}}$$
 This is achieved provided that,  
 $$[ 16{\art}  \Art   {\mathfrak b } ^3\mathfrak c^{3i-3}  e^{-2 \chi^{i-2}} +
 6  {\mathfrak b } ^2\mathfrak c^{2i-1}  e^{-{\mathfrak a } 2^{i-2}- \chi^{i-2}}]\leq e^{-  \chi^{i+1}}$$ and this in turn is valid if we assume the constraint \eqref{limiti}.
 \bigskip

We will apply this to
\begin{equation}\label{kappa}
 \mathfrak c := 4^{1+4d\tau_1}\,,\quad {\mathfrak b } :=   cost\,  {\Pappa}^{8d\tau_1}\,, \quad \mathfrak a:=    \frac{\Pappa s_0}{32}\,.
\end{equation}
We note that, with this choice of parameters, condition \eqref{limiti}  amounts to a largeness condition on ${\Pappa}$.

\subsubsection{Parameters in the iteration}

Let  $ H_0 = \mathcal N_0 + P_0 : D_0 \times\O_{0} \to {\mathbb C} $ be as in Theorem \ref{KAM}. Define
\begin{equation}
\label{parini}\ix^{(h)}_0 :=  \frac{\|X_{P^{(h)}_0}\|^T_{\vec p_0}}{\gamma} \leq \Theta_0:=  \frac{\|X_{P_0}\|^T_{\vec p_0}} {\gamma}= \frac{\|X_{P_0}\|_0} {\gamma} .
\end{equation}

 We have the estimate  $e^{-1}\leq   \max_\nu   e^{ -\chi^\nu}2^{\nu(1+4d\tau_1 )}<\infty$  since, for any $p>0$, we have that $\lim_{\nu\to\infty}2^{ p\,\nu } e^{ -\chi^\nu}=0$. We define: 
 \begin{equation}\label{cstar}
C_\star=  2^{2n+10}e \Art  M_0 a_0^{-1} \kappa {\Pappa}_0 ^{4d\tau_1}  \max_\nu   e^{ -\chi^\nu}2^{\nu(1+4d\tau_1 )} \,,
\end{equation}
here  $\Art $ is the constant of Lemma \ref{pomodorino} with the choice of parameters \eqref{kappa}, note that it  depends only on $\Pappa_0,\kappa,d,n,\tau_1,\tau_0,s_0$ and $ M_0 a_0^{-1}$. Recall that, as we have stated in Remark \ref{eind}  $M_0 a_0^{-1}>1$ is an $\e$ independent constant of the problem.

We now fix   $\art:= \art(\Pappa_0,\kappa,d,n,\tau_1,\tau_0,s_0,a_0)$ in order to ensure the {\em smallness conditions}:\begin{equation}\label{alcos}
    C_\star \art<(12\,e)^{-1} ,\quad\prod_\nu (1+ C_\star \art e^{-\chi^{\nu-1}})<\sqrt{2}.\end{equation}
 together with the condition \eqref{chee} of Lemma \ref{pomodorino}.
 
We now need to estimate all the parameters in the iteration. For the parameters which increase we exhibit bounds from above and for the ones which decrease bounds from below. Thus for $ \nu \in \mathbb{N}$  we define

\begin{itemize}

\item  \ $  \d_\nu := 2^{-\nu-3} \,, \quad r_{\nu+1} := (1-\d_\nu)r_\nu \,,\quad
s_{\nu+1} := (1-\d_\nu)s_\nu  \,, \quad
D_\nu := D(s_\nu,r_\nu)  \,,$ \vskip15pt

\item   \
$ M_\nu:=M_{\nu-1} (1+ C_\star|\vec\ix_0| e^{-\chi^{\nu-1}})\leq \sqrt 2 M_0$ , $\lambda_\nu:=\frac{\gamma}{M_\nu}$,

\item \  $ L_\nu:=L_{\nu-1} (1- C_\star |\vec\ix_0| e^{-\chi^{\nu-1}})^{-1} \leq \sqrt 2 L_0$ \vskip15pt

\item  \
$ {\Pappa}_\nu:=4^\nu{\Pappa_0} \,,\,,\quad  \theta_\nu = \theta_0 (1+ \sum_{j\leq \nu}2^{-j})\,,\quad   \mu_\nu = \mu_0 (1-  \sum_{j\leq \nu}2^{-j})\,,$\vskip15pt

\item  \
 $a_\nu:=a_0(1-  C_\star |\vec \ix_0| \sum_{j\leq \nu} 2^{-j})$,
\vskip15pt

\item  \ $ \mathfrak z_\nu=  {\rm  const. }  \delta_\nu^{-2}  \eufm K_\nu^2= {\mathfrak b } \mathfrak c^\nu$.\end{itemize}

\vskip35pt

We have made our definitions so that $\min( 1- \frac{r_{\nu+1}}{r_\nu}, 1-\frac{s_\nu}{s_{\nu+1}})=\delta_\nu$. 
Note that  $     r_{\nu+1}  \searrow r_0
\prod_{\nu=0}^\infty (1-\d_\nu)> \frac{r_0}{2} \,, \quad
s_{\nu+1}  \searrow s_0
\prod_{\nu=0}^\infty (1-\d_\nu)> \frac{s_0}{2}$,
$\theta_\nu \nearrow 3\theta_0/2< \mathtt C= 2\theta_0 \,,\quad   \mu_\nu  \searrow 2 \mu_0/3>\mathtt c=\mu_0/2$.
\vskip15pt

For compactness of notation we will denote  
\begin{equation}\label{normaj}
\| \cdot \|_j:= \| \cdot \|^T_{\vec p_j} \,,\quad \underline p_j:= (r_j,s_j,\O_j,{\Pappa}_j,{\esse}=16\sqrt{n},\theta_j,\mu_j, a_j,M_j, L_j,\mathtt c,\mathtt C),
\end{equation} 
where $\O_j$ is defined in 
the course of the proof of Lemma \ref{lem:iter} .

\begin{lemma}{\bf (Iterative Lemma)}\label{lem:iter}
 Let $\Art,C_\star,\art$ be  fixed as in formulas  \eqref{cstar} and \eqref{alcos}. Let $\Gamma$ be as in Formula \eqref{stimis} and $B=4\Gamma K_0^{-\tau_0+n+d/2}$. If for the Hamiltonian $H_0$ we can choose $\gamma$ so that if for $\Theta_0 $ defined in \eqref{parini} we have: 
\begin{equation}\label{tolkien}
\Theta_0 \leq {\art}\,, B\gamma \e^{2n-2}a_0^{-1} <|\O_0|  \end{equation}
are satisfied, then
we can construct recursively      sets $\O_j\subset \O$ and a Hamiltonian   $ H_j = \mathcal N_j + P_j : D_j\times\O_{j} \to {\mathbb C} ,\quad \mathcal N_j:= (\omega^{(j)}(\xi),y)+\sum_{k\in S^c} \Omega^{(j)}_k(\xi) |z_k|^2$ with $\Omega^{(j)} _n(\xi)=\s(n)(|\er(n)|^2+2\val_n(\xi))+ \tilde
\Omega^{(j)}_n(\xi).$ So that, if we 
define\begin{equation}\label{rosamystica}
\ix^{(h)}_j :=  \frac{\|X_{P^{(h)}_{j}}\|_j}{\gamma} \,, \vec\ix_j :=(\ix^{(0)}_j ,\ix^{(1)}_j ,\ix^{(2)}_j ),\, \quad \Theta_j=  \frac{\|X_{P_j}\|_j}{\gamma} ,
\end{equation}
the following properties are satisfied for all $j$:
\vskip15pt
$ {\bf (S1)_{j}} $  For $j>0$, $\O_j\subset \O_{j-1}$ is defined by  
\eqref{zero}-\eqref{third} with $\ome\rightsquigarrow\ome^{(j-1)}$ and $\Ome_n\rightsquigarrow \Ome^{(j-1)}_n$. We have that
$ H_j = H_{j-1} \circ \Phi_j $ where
$ \Phi_j\, : \,D_j \times \O_{j} \to D_{j-1} $
is a Lipschitz family of real analytic symplectic maps of the form 
$  \Phi_j = I + \Psi_j $ with $ \|\Psi_j\|^{\lambda_j}_{D_j}< C_\star {\bar\Theta} 2^{-j} $.
 \vskip15pt
 
$ {\bf (S2)_{j}} $ The Hamiltonian $H_j$ is  compatible with the parameters $\underline p_j$ (Definition \ref{buone}). The parameters   $r_+= r_{j+1},\, s_+= s_{j+1} $ satisfy the hypotheses \eqref{rpiu} of the KAM step
 and the set $\O_{j+1}\subset\O_{j }$ satisfies Formula \eqref{stimis}, namely, using \eqref{itau}:
$$| \O_j\setminus \O_{j+1} |\leq \Gamma  \gamma a_j^{-1}\varepsilon^{2(n-1) }  {\Pappa}_{j }^{-\tau_0+n+d/2}\implies | \O_0\setminus \O_{j+1} |\leq B  \gamma a_0^{-1}\varepsilon^{2(n-1) }  .  $$ 
\vskip15pt
$ {\bf (S3)_{j}} $ 
There exist Lipschitz extensions 
$ {\hat \o}^{(j)} $, $  {\hat \Ome}^{(j)} $ of $ \o^{(j)} $, $ \tilde \Ome^{(j)} $ defined on $ \O_0 $ and, for $ j \geq 1$:
\be\label{hatomeghini}
| {\hat \omega}^{(j)} - {\hat \omega}^{(j-1)} |+ \lambda_j | {\hat \omega}^{(j)} - {\hat \omega}^{(j-1)} |^{\rm lip}  \leq \gamma \ix^{(2)}_j \, , \
\| {\hat \Omega}^{(j)} - {\hat\Omega}^{(j-1)} \|_\infty + \lambda_j
\| {\hat \Omega}^{(j)} - {\hat \Omega}^{(j-1)} \|_\infty^{\rm lip}
\leq \gamma \ix^{(2)}_j
\end{equation}
\be\label{hatomeghini1}
\ |   {\hat\o}^{(j)} |^{\rm lip} + \|   {\hat\Ome}^{(j)} \|^{\rm lip}_\infty\leq M_j \,,\quad | {\hat \omega}^{(j)} - \vgot |<M_j\e^2.
\end{equation}
\vskip15pt
$ {\bf (S4)_{j}} $ \quad $\vec\ix_j ,\Theta_j$
satisfy  (\ref{passoj})
and \eqref{Cgotbis} hence $|\vec{\ix}_j| \leq  \, \Art |\vec{\ix}_0| e^{-\chi^j}$ .
\vskip15pt
$ {\bf (S5)_{j}} $ \quad For $j>0$
the sequence of composed maps $ \tilde\Phi_j:=\Phi_1\circ\Phi_2\circ\cdots\circ\Phi_j  = I + \tilde\Psi_j $ satisfies $  \|\tilde \Psi_{\nu+1}- \tilde \Psi_\nu\|^\lambda_{D_j}\leq C_\star|\vec\ix _0|2^{-\nu } ,\quad\| \tilde\Psi_j \|^\lambda_{D_j} \leq  2 C_\star |\vec\ix _0| $.
\end{lemma}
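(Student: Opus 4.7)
\smallskip

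The plan is to proceed by induction on $j$, applying the KAM step of \S\ref{Kamstep} at each stage. The base case $j=0$ is immediate: by hypothesis $H_0$ is compatible with $\underline p_0$, while the smallness $\Theta_0 \leq \art$ together with \eqref{alcos} and the choice of $\Pappa_0$ large guarantee that the T\"oplitz smallness condition $(A4)$, that is $2^{2n+15}|\vec\ix_0|\Pappa_0^{4d\tau_1} < 1$, holds.

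For the inductive step, assume $(S1)_j$--$(S5)_j$. First I would verify that the KAM step is applicable: the bound \eqref{rpiu} becomes $2^{2n+14}\delta_j^{-1}|\vec\ix_j|\Pappa_j^{4d\tau_1} < 1/2$, and using $|\vec\ix_j|\leq \Art|\vec\ix_0|e^{-\chi^j}$ from $(S4)_j$, the exponential decay (with $\chi > 1$) dominates the geometric growth of $\Pappa_j = 4^j\Pappa_0$ and $\delta_j^{-1}=2^{j+3}$ by the very choice of $C_\star$ in \eqref{cstar}. Similarly \eqref{nubo} is satisfied by the gap between $(\theta_j,\mu_j)$ and $(\theta_{j+1},\mu_{j+1})$ and the growth of $\Pappa_j$. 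I then apply Propositions \ref{submain} and \ref{main}(ii) to construct $F_j$ and the symplectic map $\Phi_{j+1}=e^{\mathrm{ad}(F_j)}$, obtaining $H_{j+1} = H_j\circ\Phi_{j+1}$; the bound $\|\Psi_{j+1}\|^{\lambda_{j+1}}_{D_{j+1}}\leq 2\|X_{F_j}\|^T_{\vec p_j}\leq 2\eufm K_j|\vec\ix_j|\gamma \leq C_\star|\vec\ix_0|2^{-j-1}\gamma$ gives $(S1)_{j+1}$ with the constant $C_\star$ chosen to absorb the worst term $\max_\nu e^{-\chi^\nu}2^{\nu(1+4d\tau_1)}$.

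Next I verify $(S2)_{j+1}$--$(S4)_{j+1}$: the new frequencies $\omega^{(j+1)}=\omega^{(j)}+P^{(1,0)}_{j,0}$, $\tilde\Omega^{(j+1)}_n=\tilde\Omega^{(j)}_n+P^{(0,2)}_{j,0,n,+,n,-}$ from \eqref{opiu} give \eqref{hatomeghini} with shifts bounded by $\gamma\ix^{(2)}_j$, and the Kirszbraun extension theorem provides the required Lipschitz extensions on $\O_0$. The updates $M_{j+1},L_{j+1},a_{j+1}$ of \eqref{emmepiu} telescope: $M_j\leq M_0\prod_{l<j}(1+C_\star|\vec\ix_0|e^{-\chi^l})<\sqrt 2 M_0$ by \eqref{alcos}, similarly for $L_j$ and $a_j\geq a_0/2$, so the compatibility conditions $(A1)$--$(A5)$ hold for $H_{j+1}$ with parameters $\underline p_{j+1}$. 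The key iteration of $(\vec\ix_j,\Theta_j)$ follows from the local estimates \eqref{sonasega} combined with smoothing \eqref{smoothT} applied at scale $\Pappa_j$; after identifying constants as in \eqref{kappa}, Lemma \ref{pomodorino} yields $|\vec\ix_{j+1}|\leq \Art|\vec\ix_0|e^{-\chi^{j+1}}$ and $\Theta_{j+1}<1$, giving $(S4)_{j+1}$. Statement $(S5)_{j+1}$ follows from $(S1)_{j+1}$ by telescoping.

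Finally, the measure estimate in $(S2)_{j+1}$ follows from Lemma \ref{measure} applied with parameters $\underline p_j$, yielding $|\O_j\setminus\O_{j+1}|\leq \Gamma\gamma a_j^{-1}\varepsilon^{2(n-1)}\Pappa_j^{-\tau_0+n+d/2}$; summing the geometric series in $\Pappa_j=4^j\Pappa_0$ and using $a_j\geq a_0/2$ gives the cumulative bound $B\gamma a_0^{-1}\varepsilon^{2(n-1)}$ with $B=4\Gamma\Pappa_0^{-\tau_0+n+d/2}$, which is compatible with the hypothesis $B\gamma\varepsilon^{2n-2}a_0^{-1}<|\O_0|$. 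The main obstacle, and the delicate point of the whole argument, is the simultaneous closure of \emph{all} the smallness conditions along the iteration: specifically, the super-exponential decay $e^{-\chi^j}$ of $|\vec\ix_j|$ granted by Lemma \ref{pomodorino} must dominate the polynomial growth $\eufm K_j\sim \Pappa_j^{4d\tau_1+1}$ coming from the small divisors in Proposition \ref{submain}, as well as the linear growth of $\delta_j^{-1}$. This tradeoff is precisely what dictates the choice $\chi\in(1,2^{1/3})$ in \eqref{chi} and the constraint \eqref{limiti}; the condition \eqref{alcos} on $\art$ then ensures that the inductive hypotheses are preserved at every step.
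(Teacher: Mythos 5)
The proposal is correct and takes essentially the same inductive route as the paper: verify the KAM step hypotheses from $(S4)_j$ via the super-exponential decay $e^{-\chi^j}$ against the polynomial growth $\Pappa_j^{4d\tau_1}\delta_j^{-1}$, apply Propositions \ref{submain} and \ref{main} to get $\Phi_{j+1}$, update the frequencies via \eqref{opiu} with Kirszbraun extension, close the recursion on $(\vec\ix_j,\Theta_j)$ with Lemma \ref{pomodorino}, and sum the geometric series from Lemma \ref{measure} for the Cantor-set estimate. Apart from minor slips (a spurious factor of $\gamma$ in the $\|\Psi_{j+1}\|$ bound, and invoking $C_\star$ where the paper's condition is really on $\art$ and $\Art$ via \eqref{alcos} and \eqref{limiti}), the argument matches the paper's proof in substance and structure.
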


\begin{proof}
We proceed by induction the conditions $(\bf Si)_0$ are satisfied by the hypotheses of Theorem \ref{KAM} except that, for  $(\bf S2)_0$, once we have chosen $\Pappa$ satisfying the constraints of the previous Lemmas,  we have to impose a further smallness condition  on $\bar\Theta$ deduced by formula  \eqref{rpiu}.

Then, by induction, we prove the statements
$ \bf(Si)_{\nu+1} $, $ i = 1, \ldots , 5 $, by assuming the validity of $ \bf (Si)_{j} $ for $j\leq \nu$.
\vskip15pt

{\sc  $(\bf S1)_{\nu+1}$.} We apply the KAM step with $H= H_\nu$. By $(\bf S2)_\nu$, we have that $H_\nu$ is compatible with the parameters $p_\nu$.  
In order to implement the KAM step, and deduce $(\bf S1)_{\nu+1}$ we need to verify  the constraints of Formulas \eqref{pois1}, \eqref{nubo} and \eqref{rpiu} are satisfied for $ r_+,s_+,{\Pappa}_+,\theta_+,\mu_+ = r_{\nu+1},s_{\nu+1},{\Pappa}_{\nu+1},\theta_{\nu+1},\mu_{\nu+1} , {\Pappa}={\Pappa}_\nu,\vec\epsilon=\vec\ix_\nu  $.  Substituting in \eqref{pois1}, \eqref{nubo}  we easily see that this amounts to a lower bound on ${\Pappa}$, depending only on $\tau_0,\tau_1$ and the remaining parameters in $p_0$. This we have imposed at the beginning of the algorithm, as we explained in  Remark \ref{lapa2}.

As for \eqref{rpiu},   we have by induction the inequality on   $|\vec\ix_\nu|$ and we have to verify 
 $
2\kappa  \delta_\nu ^{-1}e \Art  \art e^{-\chi^\nu} {\Pappa_\nu}^{4d\tau_1}< \frac12\,,
$  which is contained in the constraints \eqref{alcos}.

Then  following the KAM step we construct the set $\O_+$ which coincides by definition with $\O_{\nu+1}$.  On $\O_{\nu+1}$, we define the generating function $F=F_{\nu+1}$.  Then we construct  the real analytic symplectic map
$ \Phi_{\nu+1}:D_{\nu+1}\times \O_{\nu+1} \to D_\nu $, Lipschitz in $ \O_{\nu+1} $, generated by $F$. We have:
$$
H_{\nu+1} := H_+= H^\nu\circ \Phi_{\nu+1}=:\mathcal N_{\nu+1}+P_{\nu+1} \, , \quad  \mathcal N_{\nu+1}:=\mathcal N_\nu+ [P_\nu] \, .
$$
and $P_{\nu+1}:= P_+$ defined in \eqref{Pnupiu}. \vskip15pt

{\sc  $(\bf S2)_{\nu+1}$.}  By construction  $  H_{\nu+1}$ is of the form given by Formula \eqref{buone}.   We want to apply the results of \S \ref{s:Kstep} in order to prove that, $ \forall \xi \in \O_{\nu+1} $, the Hamiltonian $ H_{\nu+1},$ is compatible with the parameters $p_{\nu+1}$.  For this it is enough to show that the constraints found on $\underline p_+$ in that section are in this case valid for the parameters $p_{\nu+1}$. First we need to verify \eqref{nubo} which is  a largeness condition on $\Pappa_0$:\begin{equation}   e^{-  s_\nu\frac{ 2^\nu{\Pappa_0} }{(\ln 4^\nu{\Pappa_0})^2}} {4^\nu{\Pappa_0}}^{{\tau_1}}<1\,,\  {  \kappa} < \mu_0 2^{ 3\nu } { {\Pappa_0}}^{2} \ln(4^\nu{\Pappa_0})^{-2}\,,\;    \kappa\mathtt C  <   \theta_0 2^{3\nu+1} {  {\Pappa_0}}^{ 4d\tau_0-4}\ln(4^\nu{\Pappa_0})^{-2}.
\end{equation}

 From Formula \eqref{alcos} we can bound uniformly $M_\nu\leq \sqrt{2} M_0,L_\nu\leq \sqrt{2}  L_0$ so that we have for all $\nu$  that $\esse=8\sqrt{n}M_0L_0>4\sqrt{n} M_\nu L_\nu$. By exploiting \eqref{emmepiu} and  $|\vec\ix_\nu|\leq \Art |\vec\ix_0| e^{-\chi^\nu}$, since $ 2 \Art , 2M_0L_0 \Art  \leq C_\star , $ we verify that $M_+ \leq M_{\nu+1}$ and $L_+\leq L_{\nu+1}$:
 $$ 
M_{+} :=  M_\nu( 1+2 |\vec\ix_\nu|)  \leq M_\nu( 1+2 \Art |\vec\ix_0| e^{-\chi^\nu})\leq M_{\nu+1}$$ $$ L_{+} := 
L_\nu( 1- M_\nu L_\nu \Art |\vec\ix_0| e^{-\chi^\nu})^{-1}\leq L_{\nu+1}.
$$
Finally in order  to prove that $\ome_{\nu+1}$ is a lipeomorphism we argue as for  \eqref{ompl}  since $ L_{\nu+1} |P^{1,0}_0|^{lip}\leq \sqrt{2}L_0 \Art |\vec\ix_0| e^{-\chi^{\n+1}u}<1$.\smallskip

 The estimate on $\O_{\nu+1}$ follows from Lemma \eqref{measure}. 
 We finally show that $a_+$,  defined in Formula \eqref{emmepiu}  is $\geq a_{\nu+1}$, by noting that 
 $$a_\nu-a_+\leq 4   {\esse}M_\nu  |\vec\ix _\nu|\stackrel {\eqref{cstar}}{\leq }  4 \sqrt 2 \esse M_0  \Art |\vec\ix _0| e^{-\chi^\nu}\leq a_0 C_\star |\vec \ix_0| 2^{-\nu-1} = a_\nu- a_{\nu+1}.$$ 
%    \begin{equation}\label{pota1}
%   | \Delta_{\xi,\lambda}(\langle \ome^+(\xi ) , k\rangle + ( \Ome_+ (\xi ) , l ))| \end{equation}$$\geq  | \Delta_{\xi,\lambda}(\langle \ome (\xi ) , k\rangle + ( \Ome  (\xi ) , l ))| - |\Delta_{\xi,\lambda}( \langle P^{1,0}_0(\xi ), k \rangle +  (P^{0,2}_{0,n,+,n,-} , l ))| 
%$$  so it is enough to show that  $ a_\nu-    |( (P^{1,0}_0(\xi ) , k) +  (P^{0,2}_{0,n,+,n,-} , l) )|^{lip} \geq a_{\nu+1}.$ 
% By construction 
% $P=P^\nu$ and $|k|< {\esse}$ thus, since for any $P$ we have by definition $|P|^{lip}\leq \lambda^{-1}\|P\|^\lambda$
%$$|(P^{1,0}_0(\xi ) , k) + ( P^{0,2}_{0,n,+,n,-} , l )|^{lip}  \leq {\esse} | P^{1,0}_0(\xi ) |^{lip} + 2 | P^{0,2}_{0,n,+,n,-}   |^{lip} $$
%$$\leq 4   {\esse}\gamma \lambda_\nu^{-1}  |\vec\ix _\nu|\stackrel {\eqref{cstar}}{\leq }  4 \sqrt 2 \esse M_0  \Art |\vec\ix _0| e^{-\chi^\nu}\leq a_0 C_\star |\vec \ix_0| 2^{-\nu-1} = a_\nu- a_{\nu+1}.$$ 
% 

\vskip15pt
{\sc  $(\bf S3)_{\nu+1} $.} The frequency maps $ \om^{(\nu+1)} $, 
$ \Omega^{(\nu+1)} $ are  defined on $ \O_{\nu+1} $ and, as we have discussed in the previous item, have Lipschitz seminorm bounded by $M_{\nu+1}$.
Then we may apply Formula \eqref{opiu} to deduce the bound \eqref{hatomeghini} (recall that $\ix^{(2)}_\nu= \gamma^{-1}\| P^{(2)}_\nu\|_\nu$),
for $\ome^{(\nu+1)}$ and $\tilde\Ome^{(\nu+1)}$.
By the Kirszbraun theorem (see e.g. \cite{KP}), used component--wise,  $\ome^{(\nu+1)}$ and $\tilde\Ome^{(\nu+1)}$ can be extended to maps-- which we denote by
$ {\hat \omega}^{(\nu+1)} $, $ {\hat \Omega}^{(\nu+1)}  $-- defined on the whole $ \O_{0} $
and preserving the same  sup-norm and Lipschitz seminorms,\eqref{hatomeghini1} follows.
 Moreover this extension may be performed so that ${\hat \omega}^{(\nu+1)}= \hat \omega^{(\nu )} + {\hat P}^{(1,0)}$ where ${\hat P}^{(1,0)}$ is an extension of $P^{(1,0)}$ which preserves the $\lambda$-norm (same for $\Ome^{(\nu+1)}$); this verifies \eqref{hatomeghini}. 

 \vskip15pt
{\sc  $(\bf S4)_{\nu+1}$}  $\vec\ix_{\nu+1}= \vec\ix_+$  satisfies  (\ref{sonasega}), with $\zeta = \zeta_\nu $ and $(s-s_+){\Pappa}/2= \s_\nu {\Pappa}_\nu $. Recalling the    definition of $ {\mathfrak b } ,{\mathfrak a } ,\mathfrak c $ we have that $\vec\ix_{\nu+1},\Theta_{\nu+1}$ satisfy the inequality of \eqref{passoj}.
We are in the Hypotheses of Lemma \ref{pomodorino}, so that also the bounds \eqref{Cgotbis} holds.
\vskip15pt
{\sc  $(\bf S4)_{\nu+1}$}
follows by  (\ref{tolkien}), $\bf (S3)_{\nu}$ and Lemma \ref{pomodorino}.
\vskip15pt
{\sc  $(\bf S5)_{\nu+1}$.} Let us denote by $\mathcal H_{s,r}^{\lambda,\mathcal O}$ the normed space of functions in $\mathcal H_{s,r}$ which depend in a Lipschitz way from parameters in $\mathcal O$ with finite $\|.\|^\lambda$ norm.  The estimate of the norm of the map $\Psi_{\nu+1}:\mathcal H_{s,r}^{\lambda,\mathcal O}\to \mathcal H_{s_\nu,r_\nu}^{\lambda_\nu,\mathcal O_\nu}$ follows, using \eqref{booo}, $\|X_{F _\nu }\|_\nu  \leq     \eufm K_\nu |\vec \ix_\nu | $ with $\eufm K_\nu =5(4^\nu\Pappa)^{4d{\tau_1}} $  and hence from \eqref{Cgotbis}  \eqref{cstar} and $$\|X_{F _\nu }\|_\nu  \leq     5  (4^\nu {\Pappa})^{4d\tau_1}   \Art |\vec\ix _0|  e^{-\chi^\nu}\leq C_\star |\vec\ix _0| 2^{-\nu-2n-7}.$$ From \eqref{alcos} and \eqref{funm}
$$12\,2^{2n+6} e  \delta_\nu^{-1}\|X_{F _\nu }\|_\nu  \leq \frac{1}{2}  \implies \quad \|\Psi_{\nu+1}\|\leq2 \norma  X_{F_\nu} \norma_{s_\nu,r_\nu}^{\l_\nu} . $$ 
  Now    we can estimate 
  $$1+\tilde \Psi_\nu=\prod_{i=1}^\nu(1+\Psi_i),\quad  \|1+\tilde \Psi_\nu\|\leq \prod_{i=1}^\nu(1+\norma \Psi_i\norma ) \leq  \prod_{i=1}^\nu(1+  2^{-i })\leq 2$$\begin{equation}
\label{itps} \tilde \Psi_{\nu+1}= \tilde \Psi_\nu+(1+\tilde \Psi_\nu)\Psi_\nu\implies \|\tilde \Psi_{\nu+1}- \tilde \Psi_\nu\|\leq C_\star|\vec\ix _0| 2^{-\nu }.
\end{equation}

Notice that $\Psi_{\nu}$ also maps $\mathcal Q^T_{\underline p_{\nu-1}}$   
to $\mathcal Q^T_{\underline p_\nu}$ and we have   similar estimates using the T\"oplitz norms.
\end{proof}

\begin{corollary}\label{cor:finale}
For all $ \xi \in \O_{\infty} := \cap_{\nu \geq 0} \O_\nu $ the sequence
$ \tilde \Phi_\nu = I + \tilde\Psi_\nu $ converges uniformly on
$ D(s_0/2,r_0/2)$ to an analytic symplectic map
$ \Phi = I + \Psi $  
such that the essential part of the perturbation
$ P^\infty_{\leq 2}( \cdot , \xi ) = 0 $.
Moreover we have
$$ | \O_0\setminus \O_{\infty}|\leq cost \, \g \varepsilon^{2n-2 }. $$
\end{corollary}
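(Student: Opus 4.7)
The plan is to deduce the three assertions (convergence of the $\tilde\Phi_\nu$, vanishing of $P^\infty_{\leq 2}$, measure of $\mathcal O_\infty$) directly from the properties $\mathbf{(S1)_\nu}$--$\mathbf{(S5)_\nu}$ of the Iterative Lemma~\ref{lem:iter}, without any new analytic input.

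First I would establish convergence of the composed maps. By $\mathbf{(S5)_\nu}$ one has the telescoping estimate $\|\tilde\Psi_{\nu+1}-\tilde\Psi_\nu\|^{\lambda_{\nu+1}}_{D_{\nu+1}}\leq C_\star|\vec{\ix}_0|2^{-\nu}$, so on the fixed domain $D(s_0/2,r_0/2)\subset \bigcap_\nu D_\nu$ the sequence $\tilde\Psi_\nu$ is Cauchy in the $M$-analytic norm. Its uniform limit $\Psi$ is therefore $M$-analytic on $D(s_0/2,r_0/2)$, Lipschitz in $\xi\in\mathcal O_\infty$, with $\|\Psi\|\leq 2C_\star|\vec{\ix}_0|$. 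Since each $\tilde\Phi_\nu=I+\tilde\Psi_\nu$ is symplectic (being a composition of time-one Hamiltonian flows) and symplecticity is preserved under uniform limits of analytic maps, $\Phi=I+\Psi$ is symplectic; this yields \eqref{eq:Psi} and the bound on $\Psi$ claimed in Theorem~\ref{KAM}.

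Next I would analyze the limit Hamiltonian $H^\infty:=H_0\circ\Phi$. By $\mathbf{(S4)_\nu}$, $|\vec{\ix}_\nu|\leq \art|\vec{\ix}_0|e^{-\chi^\nu}\to 0$ super-exponentially, and hence $\|X_{P_\nu^{(h)}}\|_\nu\leq\gamma\art|\vec{\ix}_0|e^{-\chi^\nu}\to 0$ for $h=0,1,2$. Combining this with $\mathbf{(S1)_\nu}$, which gives $H_\nu=H_0\circ\tilde\Phi_\nu$, and the uniform convergence $\tilde\Phi_\nu\to\Phi$, the essential part $P_\nu^{\leq 2}$ converges to $P^\infty_{\leq 2}$ in the majorant norm, so $P^\infty_{\leq 2}\equiv 0$. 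The frequency estimates \eqref{muflone} follow by telescoping \eqref{hatomeghini} in $\mathbf{(S3)_\nu}$: $|\hat\omega^{(\infty)}-\omega_0|^\lambda\leq \sum_\nu\gamma\ix^{(2)}_\nu\leq \gamma|\vec{\ix}_0|\sum_\nu \art e^{-\chi^\nu}\lessdot \gamma|\vec{\ix}_0|$, and similarly for $\hat\Omega^{(\infty)}$. This identifies the normal form $\omega^\infty y+\Omega^{(\infty)}z\bar z$ appearing in \eqref{Hnew}.

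Finally, the measure estimate is essentially bookkeeping. By $\mathbf{(S2)_\nu}$, $|\mathcal O_0\setminus\mathcal O_{\nu+1}|\leq B\gamma a_0^{-1}\varepsilon^{2(n-1)}$ uniformly in $\nu$, with $B=4\Gamma K_0^{-\tau_0+n+d/2}$. Since the $\mathcal O_\nu$ are compact and nested, $\mathcal O_\infty=\cap_\nu\mathcal O_\nu$ satisfies $|\mathcal O_0\setminus\mathcal O_\infty|=\lim_\nu|\mathcal O_0\setminus\mathcal O_\nu|\leq B\gamma a_0^{-1}\varepsilon^{2(n-1)}\lessdot\gamma\varepsilon^{2(n-1)}$, which is \eqref{finca} with a constant absorbing the $\Pappa_0$-dependence into $B$.

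The only delicate point, and the main obstacle, is the convergence of the $X_{P_\nu^{(h)}}$: one must be sure that the norms $\|\cdot\|_\nu$ computed on the shrinking domains $D_\nu\times\mathcal O_\nu$ still control the relevant quantities on the fixed limit domain $D(s_0/2,r_0/2)\times\mathcal O_\infty$. This is where the uniform lower bound $s_\nu,r_\nu\geq s_0/2,r_0/2$ and the telescoping of $\tilde\Psi_\nu$ are essential, together with the fact that $\Theta_\nu\leq 1$ uniformly (from \eqref{Cgotbis} and \eqref{alcos}), preventing any blow-up of the majorant norms during the passage to the limit.
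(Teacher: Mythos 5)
Your proposal follows essentially the same route as the paper: Cauchy convergence of $\tilde\Psi_\nu$ from the telescoping bound $\mathbf{(S5)}$, $P^\infty_{\leq 2}=0$ from $\mathbf{(S4)}$ (the paper phrases this through \eqref{rosamystica} and $(S4)_\nu$, which is the same content), and the measure estimate from $\mathbf{(S2)}$ and monotone compact nesting (which the paper leaves implicit). The estimates and the use of the uniform lower bound $s_\nu,r_\nu\geq s_0/2,r_0/2$ on the domains are all in order.

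The one place where you gloss over something the paper handles more explicitly is the statement ``symplecticity is preserved under uniform limits of analytic maps, hence $\Phi$ is symplectic.'' Passing to the limit in the symplectic identity for the differential $D\Phi_\nu$ is fine, but calling $\Phi$ a symplectic map presupposes it is an invertible coordinate change, and that does not follow from convergence alone. The paper instead views $\tilde\Phi_\nu$ as Poisson-bracket-preserving homomorphisms between the function spaces $\mathcal H_{s,r}\to\mathcal H_{s_\infty,r_\infty}$, passes to the limit homomorphism, and then shows it is induced by an actual coordinate change by constructing the local inverse directly as $\lim\tilde\Theta_\nu$, where $\Theta_\nu$ is the time-$(-1)$ flow of $F_\nu$. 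Your route can be repaired either by invoking the smallness $\|\Psi\|\leq 2C_\star|\vec{\ix}_0|\ll 1$ to conclude $I+\Psi$ is a near-identity diffeomorphism, or by the paper's inverse-flow construction; as written, the step is asserted rather than justified.
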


\begin{proof}
The fact that the $\tilde \Psi_\nu$ give a Cauchy sequence follows from Formula \eqref{itps}, therefore the sequence $ \tilde \Phi_\nu $ converges  as a sequence of Poisson bracket preserving homomorphisms from $\mathcal H_{s,r}$  to $\mathcal H_{s_\infty,r_\infty}$ to a Poisson bracket preserving homomorphism $\tilde \Phi_\infty$. The fact that this is induced by a coordinate change  follows from the fact that we can construct the {\em local inverse}, $\lim \tilde \Theta_\nu$ where   $\tilde \Theta_\nu= \Theta_\nu\circ\Theta_{\nu-1}\circ\cdots\circ\Theta_1  $ and $ \Theta_\nu$ is the flux  at time $-1$.

Finally $ P^\infty_{\leq 2}( \cdot , \xi ) = 0 $, $ \forall \xi \in \O_\infty $,
follows by  (\ref{rosamystica}) and $ (S4)_{\nu} $.
\end{proof}

With this Corollary we finish the proof of Theorem \ref{KAM}.

We  can finally conclude that
\begin{theorem}\label{gacom}
If for the Hamiltonian $H_0$ we have on the domains  $\ro \mathfrak K_\alpha\times D(s,r)$ a uniform  estimate for the perturbation as in  \eqref{bonls}, i.e. $\|X_{P_0}\|^T_{\vec p_0}, \| X_{ \tilde\Omega^0   }\|^{T}_{\vec p} \leq C\e^{\beta}$ with $\beta>2$, then for  $\e$ sufficiently small, the conditions on $\gamma$ of the the iterative Lemma can be satisfied.  
\end{theorem}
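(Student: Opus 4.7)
The strategy is to show that, under the hypothesis $\|X_{P_0}\|^T_{\vec p_0}, \|X_{\tilde\Omega^0}\|^T_{\vec p_0}\leq C\e^{\beta}$ with $\beta>2$, there exists a non-empty window of values of $\gamma$ compatible with all the constraints appearing in the iterative Lemma \ref{lem:iter} and in the definition \ref{buone} of compatible Hamiltonians. Once $\Pappa_0$ has been fixed large enough so that the ``geometric'' constraints in \eqref{pois1}, \eqref{nubo}, \eqref{rpiu} and \eqref{limiti} hold, the only remaining conditions on $\gamma$ are: the upper bound $B\,\e^{2(n-1)}\gamma < |\O_0|\,a_0$ from \eqref{finca}/\eqref{tolkien} ensuring that $\O_\infty$ has positive measure; the lower bounds $\Theta_0=\gamma^{-1}\|X_{P_0}\|^T_{\vec p_0}\leq \art$ and $\|X_{\tilde\Omega^0}\|^T_{\vec p_0}\leq \gamma$ needed for $(A4)$; and the auxiliary bound $\gamma\leq 2\e^2 M_0$ from \eqref{palleK}.

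The key point is the scaling of the parameters in $\e$. By Remark \ref{eind} the constants $M_0,L_0,a_0$ as well as $\art,B,C_\star$ depend only on $d,n,\kappa,\Pappa_0$ and not on $\e$; in particular they remain bounded below and above as $\e\to 0$. On the other hand $|\O_0|=|\rro\mathfrak K_\al|$ is of order $\e^{2n}$, so the upper constraint on $\gamma$ reads
\begin{equation*}
\gamma < \frac{|\O_0|\,a_0}{B\,\e^{2(n-1)}}\;=\;c_1\,\e^{2}
\end{equation*}
for a constant $c_1>0$ independent of $\e$. The lower constraints instead read
\begin{equation*}
\gamma\;\geq\;\max\!\left(\frac{\|X_{P_0}\|^T_{\vec p_0}}{\art},\;\|X_{\tilde\Omega^0}\|^T_{\vec p_0}\right)\;\leq\;c_2\,\e^{\beta},
\end{equation*}
with $c_2:=C\max(\art^{-1},1)$, again $\e$-independent.

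Since $\beta>2$, the window $c_2\,\e^{\beta}\leq \gamma\leq c_1\,\e^{2}$ is non-empty as soon as $\e^{\beta-2}<c_1/c_2$, i.e. for all sufficiently small $\e$. One then simply picks, say, $\gamma:=\e^{(\beta+2)/2}$, which lies strictly between the two thresholds and automatically satisfies $\gamma\leq 2\e^2 M_0$ as well as the condition $\gamma<1/6$ in \eqref{palleK}. With this choice of $\gamma$ the assumptions $(A1)$--$(A5)$ of Definition \ref{buone} are fulfilled for $H_0$ (the non-degeneracy $(A5)$ being guaranteed by Proposition \ref{nodeno} together with Remark \ref{lemme}) and the smallness hypotheses $\Theta_0\leq \art$ and $B\gamma\e^{2n-2}a_0^{-1}<|\O_0|$ of the iterative Lemma \ref{lem:iter} both hold. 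The KAM iteration therefore applies, and the conclusions of Theorem \ref{KAM} follow, with the Cantor set $\O_\infty$ of asymptotic density $1$ at the origin because $|\O_0\setminus\O_\infty|\lesssim \gamma\,\e^{2(n-1)}=\e^{(\beta+2)/2+2(n-1)}=o(|\O_0|)$ as $\e\to 0$. The only ``obstacle'' is thus bookkeeping: one has to check that all the constants produced by the Iterative Lemma (in particular $B,\art,C_\star$) are genuinely $\e$-independent, which is precisely the content of Remark \ref{eind} and follows from the homogeneity condition $(A2*)$.
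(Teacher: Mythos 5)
Your argument is correct and follows the paper's own proof: you isolate the same set of constraints on $\gamma$ (lower bounds from $\Theta_0\leq\art$ and $\|X_{\tilde\Omega^0}\|^T\leq\gamma$, upper bounds from $\gamma\leq 2\e^2 M_0$ and $B\e^{2(n-1)}\gamma<|\O_0|a_0$ with $|\O_0|\sim\e^{2n}$), observe that the lower threshold scales like $\e^\beta$ and the upper like $\e^2$, and conclude that $\beta>2$ opens a non-empty window for small $\e$. Your explicit choice $\gamma=\e^{(\beta+2)/2}$ and the remark that the $\e$-independence of $M_0,L_0,a_0,\Art,B,C_\star$ rests on the homogeneity condition $(A2*)$ via Remark \ref{eind} are consistent additions; the only minor slip is attributing the bound $\Theta_0\leq\art$ to $(A4)$, whereas it is the stronger hypothesis \eqref{tolkien} of the iterative Lemma ($(A4)$ only requires $\Theta<1$).
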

\begin{proof}The conditions we have imposed on $\g$ are:
$\g<1, \gamma \leq 2 \varepsilon^2 M,\quad  \| X_{ \tilde\Omega^0   }\|^{T}_{\vec p} \leq \gamma$
 $B  \varepsilon^{ 2(n-1)} \g <|\O_0|a_0$,   
$\Theta_0:=  \frac{\|X_{P_0}\|_0}{\gamma} <\bar\Theta$. We have taken $ \O_0 =\ro \mathfrak K_\alpha$ (for some component of the complement of the discriminant)  hence $|\O_0|$ can be estimated by   $C_1\varepsilon^{2n}$, hence we impose on $\g$:
\begin{equation}
\label{smeg}( \bar\Theta)^{-1} C \e^{\beta -2}<   \gamma <  \min(2 M ,B^{-1}a_0C_1)      
\end{equation}  as soon as $  \e^{\beta-2}<     \bar\Theta  C^{-1}\min(2 M ,B^{-1}a_0C_1)      $.
\end{proof}

\part{The   NLS}
{\em In this final part we prove that the NLS is a compatible Hamiltonian (in suitable coordinates) according to Definition \ref{buone} and therefore we can apply to it the KAM algorithm and arrive at the conclusions of Theorem \ref{KAM}.  Most of our work will be in showing the T\"oplitz  property of the NLS.}
\section{The T\"oplitz  property of the NLS\label{tpnls}}

In fact  $A_d$ is  T\"oplitz  so  $\|X_{A_d}\|^T_{(R,K,\theta,\mu)}= \|X_{A_d}\|_{R} $ follows from Remark \ref{dupro}.

\subsection{Semi normal form}
We now analyze the Birkhoff normal form change of variables defined in \eqref{birkof} with the purpose of proving that it maintains the quasi-T\"oplitz property.  Note that the initial variables $u,\bar u \in \ell^{a,p}_{S=\emptyset}= \bar\ell^{a,p}\times \bar\ell^{a,p} $. So all the definitions of quasi-T\"oplitz functions of Part. 2  hold with $S=\emptyset$ and hence $n=0$ (i.e. there are no action-angle variables $x,y$). Moreover at this step we assume that $\er(m)=m$ for all $m$.

We fix the parameters $N_0,\mathtt c,\mathtt C$ as in \eqref{itau}.
We start by noting that for all $d>0$ 
\begin{equation}\label{albet}
A_d:=\sum_{k_i\in \Z^n:\; \sum (-1)^i k_i=0} u_{k_1}\bar u_{k_2}u_{k_3}\bar u_{k_4}\ldots u_{k_{2d-1}}\bar u_{k_{2d}}= \sum_{\alpha,\beta\in (\Z^n)^\N: \atop {|\alpha|=|\beta|=d}} \binom{d}{\alpha}\binom{d}{\beta}u^\alpha\bar u^\beta, 
\end{equation}   is quasi T\"oplitz for all allowable $(K,\theta,\mu)$. 

In fact  $A_d$ is  T\"oplitz  so  

\begin{equation}
\|X_{A_d}\|^T_{(R,K,\theta,\mu)}= \|X_{A_d}\|_{R} \leq C(d) R^{d-2}
\end{equation}
 follows from Remark \ref{dupro} and usual dimensional arguments, see \eqref{degrl}.
   Notice that now the parameters $\vec p=(R,K,\theta,\mu)$ do not involve  $\lambda, s,\mathcal O$.
\begin{proposition}\label{qtop00}    
For all choices of parameters $0<R<\epsilon_0$  and  for allowable $(K,\theta,\mu)$, the generating function $F_{Birk}$ defined in \eqref{birkof}   is quasi--T\"oplitz with $\|X_{F_{Birk}}\|^T_{(R/2,K,\theta,\mu)}\leq  \|X_{A_{2}}\|^T_{(R,K,\theta,\mu)} \leq$ const $ R^2$.  Then for all $(K',\theta',\mu')$ which respect \eqref{nubo} with  $(K,\theta,\mu)$ we have that $H \circ \Psi^{(1)}=  H_{Birk}+ P^{(4)} + P^{(6)}$ with $\|X_{P^{(i)}}\|^T_{(R/4,K,\theta,\mu)} \leq$ const $ R^{i-2}$,  $i=4,6$ and $\|X_{H_{Birk}-\mathbb K}\|^T_{(R/4,K,\theta,\mu)} \leq$ const $ R^2$ (recall that $\mathbb K$ is defined in \eqref{kappabb})  \end{proposition}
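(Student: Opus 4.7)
The plan is to exploit two facts: first, that $A_d$ is manifestly T\"oplitz (hence trivially quasi-T\"oplitz) with norm controlled by $R^{d-2}$; second, that the denominators $D(\alpha,\beta):=\sum_k(\alpha_k-\beta_k)|k|^2$ appearing in \eqref{birkof} are \emph{non-zero integers}, so that passing from $A_2$ to $F_{Birk}$ introduces no small divisor at all in the naive estimate $\|X_{F_{Birk}}\|_{R/2}\leq \|X_{A_2}\|_{R/2}$. What requires work is the quasi-T\"oplitz estimate, since the T\"oplitz structure of $A_2$ is destroyed by the division unless one decomposes the restricted bilinear part carefully.

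First I would show quasi-T\"oplitz for $F_{Birk}$. Fix allowable $\underline p=(N,\theta,\mu,\tau)$ with $N\geq K$ and consider the $\underline p$-bilinear part of $F_{Birk}$. The restriction $|\alpha_2|+|\beta_2|\leq 2$ means that in every bilinear monomial exactly two variables are high, say $z_m^\s,z_n^{\s'}$ with $|m|,|n|>\theta N^{\tau_1}$, and the remaining two variables are low momentum. Momentum conservation together with the excluded zero-divisor constraint forces $\sigma=-\sigma'$, so in fact $n=m+h-l$ for some low-momentum $h,l$ with $|h|,|l|\lesssim \mu N^3$. A direct computation gives
\[
D(\alpha,\beta)=-2(m,h-l)-|h-l|^2+|h|^2-|l|^2,
\]
i.e.\ the denominator depends on $m$ only through the scalar $(m,h-l)$. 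I would then split the sum according as $h-l\in\langle v_i\rangle_\ell$ or not, where $[v_i;p_i]_\ell$ is the affine space associated to the cut of $m$. In the first case the linear equations $(v_i,m)=p_i$ determine $(m,h-l)$ as a function of $A$ and $h-l$ alone; collecting these terms yields a piecewise-T\"oplitz approximant $\mathcal F\in\mathcal T_{\underline p}$ whose coefficient $\mathtt f(A,h-l)$ depends only on the stated data, as required by Definition~\ref{topa}. In the second case, by Lemma~\ref{minko} and the defining inequality of $A^g$, $|(m,h-l)|$ is bounded below by (a constant times) $N^{4d\tau}$ after absorbing the $|h-l|\leq c\mu N^3$ factor into the parameter choices, so the corresponding piece of $F_{Birk}$ is of order $N^{-4d\tau}$ in $X$-norm and contributes only to the error term $\bar F=N^{4d\tau}(\Pi_{\underline p}F_{Birk}-\mathcal F)$, with the right bound. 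This gives $\|X_{F_{Birk}}\|^T_{(R/2,K,\theta,\mu)}\leq\|X_{A_2}\|^T_{(R,K,\theta,\mu)}\lesssim R^2$.

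Once $F_{Birk}$ is shown to be quasi-T\"oplitz, the statement on $H\circ\Psi^{(1)}$ follows from Proposition~\ref{main}(ii) applied with $f^{(1)}=F_{Birk}$ and $f^{(2)}=H$: the smallness condition \eqref{funm} holds because $\|X_{F_{Birk}}\|\lesssim R^2$ is small for small $R$, and the compatibility of the parameters $(K,\theta,\mu)\rightsquigarrow(K',\theta',\mu')$ with \eqref{nubo} is exactly the hypothesis. The expansion $H\circ\Psi^{(1)}=\mathbb K+(A_2-\{F_{Birk},\mathbb K\})+\sum_{j\geq 1}\tfrac{1}{(j+1)!}\mathrm{ad}(F_{Birk})^{j}(A_2)+\sum_{j\geq 0}\tfrac{1}{j!}\mathrm{ad}(F_{Birk})^j A_3$ (with $A_3$ the degree-six piece) rearranges, by construction of $F_{Birk}$, to $H_{Birk}+P^{(4)}+P^{(6)}$, where $P^{(4)}$ collects the terms of degree $4$ that are cubic or quartic in $w^H$ and $P^{(6)}$ the terms of higher total degree. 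The bounds $\|X_{P^{(i)}}\|^T_{(R/4,K,\theta,\mu)}\lesssim R^{i-2}$ come from iterating \eqref{poisbound} together with the degree estimate \eqref{smoothT2}; the bound on $H_{Birk}-\mathbb K$ is immediate from its expression \eqref{Ham2} as a sub-collection of monomials of $A_2$.

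The main obstacle is the bilinear analysis of $F_{Birk}$: it requires a careful accounting of which bilinear monomials actually survive the condition $|\alpha_2|+|\beta_2|\leq 2$ with both high variables present, together with the verification that in the ``in span'' regime the divisor is genuinely a function of $A$ and $h-l$, while in the complementary regime the good-point lower bound from Definition~\ref{papilla} can be applied after rescaling $N\to cN^3$ to accommodate the low-momentum radius. Everything else is a routine application of the machinery already developed in Parts~2--3.
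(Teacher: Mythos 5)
Your proposal follows essentially the same route as the paper: $A_2$ is T\"oplitz with $\|X_{A_2}\|_R\leq C R^2$; the denominators in $F_{Birk}$ are nonzero integers, giving the crude bound $\|X_{F_{Birk}}\|_R\leq\|X_{A_2}\|_R$; the quasi-T\"oplitz estimate is obtained by analyzing the bilinear projection, splitting according as the momentum vector coming from the low variables lies in $\langle v_i\rangle_\ell$ (the T\"oplitz approximant) or not (the defect, bounded by the good-point lower bound); and the statement for $H\circ\Psi^{(1)}$ is reduced to Proposition~\ref{main}(ii) plus the degree estimate~\eqref{smoothT2}. All of this matches the paper.

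There is, however, one concrete error. You assert that momentum conservation together with the zero-divisor exclusion forces $\sigma=-\sigma'$ in the bilinear monomials. This is false: the signatures $(+,+)$ and $(-,-)$ do occur. Taking $\alpha^{(2)}=e_m+e_n$, $\beta^{(2)}=0$ and $\beta^{(1)}$ supported on $S$ with $|\beta^{(1)}|=2$, the momentum constraint reads $m+n=\sum_{j\in S}\beta^{(1)}_j\,j$, which forces only $|m+n|\leq2\kappa$ (i.e.\ $m\approx -n$), not the absence of the term; and the zero-divisor exclusion is vacuous here, since the denominator $|m|^2+|n|^2+O(\kappa^2)$ is automatically of size $N^{\tau_1}$ and in particular nonzero. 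So the $(+,+)$ piece of $\Pi_{(N,\theta,\mu,\tau)}F_{Birk}$ is nonzero and must be handled. The remedy is precisely what the paper does: because this denominator is much larger than $N^{4d\tau}$, one sets $\mathcal F^{+,+}=0$ and absorbs the whole $(+,+)$ contribution into the defect $\bar F$ with the right bound --- the same device you already use for the out-of-span part of $(+,-)$. The fix is short, but it cannot be omitted, and in particular the phrase ``forces $\sigma=-\sigma'$'' should be removed.

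A smaller imprecision: since $|\alpha^{(2)}|+|\beta^{(2)}|\leq2$ is already saturated by the two high variables, the remaining variables must lie in $S$, so $h,l\in S$ and $|h-l|\leq 2\kappa$, not merely $\leq c\mu N^3$. Your concern about ``rescaling $N\to cN^3$ to accommodate the low-momentum radius'' is therefore a red herring: $h-l$ is automatically in $B_N$ as soon as $N>2$, and the lower bound $|(m,h-l)|\geq\theta N^{4d\tau}$ of Remark~\ref{oboe} applies directly for the out-of-span terms with no rescaling of $N$.
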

\begin{proof}
 We need to compute the projection of $F_{Birk}$ on the space  of $N,\theta,\mu,\tau$--bilinear functions, namely following formula \eqref{pri} we compute  $F^{\pm,\pm}_{m,n}$ for all
  $m,n$ such that $m\frec{N} [v_i;p_i]$ and $m,n$ have the cut $\ell$ with parameters $\theta,\mu,\tau$.
  
   By symmetry and reality we may consider just $+,+$ and $+,-$. We need to exhibit for them T\"oplitz approximations $\mathcal F^{+.+},\mathcal F^{+.-}$. 
   
   In the case $+,+$  we write $\alpha=\alpha_0+e_m+e_n,\beta=\beta_0$, in the case $+,-$  we write  $\alpha=\alpha_0+e_m,\beta=\beta_0+e_n$, by definition $\alpha_0,\beta_0$ are the exponents of the low variables, and in our case, since $|\alpha_2|+|\beta|_2=2$, the support of $\alpha_0,\beta_0$ is in $S$ and 
we have
\begin{equation}\label{piffero1}
F^{+,\pm }_{m,n}= -\ii \sum^*_{\alpha^0,\beta^0\in \N^S} c_{\alpha,\beta}\frac{u^\alpha\bar u^\beta}{\sum_{\mathtt j \in S} (\alpha^0_{\mathtt j}-\beta^0_{\mathtt j})|\mathtt j |^2+|m|^2\pm |n|^2}.  
\end{equation}
here $c_{\alpha,\beta}:=\binom{2}{\alpha}\binom{2}{\beta}$  for simplicity of notation, while 
the symbol $\sum^*$ summarizes the  conditions of \eqref{birkof}, namely:
\begin{equation}\label{lodicoio}
\sum_{\mathtt j \in S}(\alpha^0_{\mathtt j}-\beta^0_{\mathtt j})|\mathtt j |^2+|m|^2\pm |n|^2\neq 0\,,\quad \sum_{\mathtt j \in S}(\alpha^0_{\mathtt j}-\beta^0_{\mathtt j})\mathtt j +m\pm n=0\,,\quad |\alpha_0|+|\beta_0|=2
\end{equation}

In the case $+,+$ we claim that the denominator is {\em big}  so that we can choose $\mathcal F^{+,+}_{m,n}=0$.

Indeed we have $|m|^2+ |n|^2>2\mathtt c N^{\tau_1}$  while $|\sum_{\mathtt j \in S} (\alpha^0_{\mathtt j}-\beta^0_{\mathtt j})|\mathtt j |^2|< 2 \kappa^2$ where $\kappa:=\sup_{\pluto \in S}|\pluto|$. Since $ N$ is large all these denominators are bounded below by $\mathtt  c N^{\tau_1}.$ So for $\bar F^{+,+}_{m,n}:=N^{4d\tau} F^{+,+}_{m,n}$ we bound $\|X_{\bar F}\|_R\leq N^{4d\tau-\tau_1}\mathtt  c^{-1}  \|X_{A_2}\|_R\leq  \|X_{A_2}\|_R$.  

In the case $+,-$ we  notice that $n-m=\pi(\alpha^0,\beta^0):=\sum_{\mathtt j \in S} (\alpha^0_{\mathtt j}-\beta^0_{\mathtt j})\mathtt j \in B_N\cup\{0\}$. If  $m=n$  the denomiators in \eqref{piffero1} are $m,n$ independent, we can take $\mathcal F^{+,+}_{m,n}=F^{+,+}_{m,n}$.  When $m\neq n$   we write 
\begin{equation}\label{ovvio}
|m|^2-|n|^2= (m-n,m+n)=  (m-n,2m+n-m)=-2( \pi(\alpha^0,\beta^0),m)- |\pi(\alpha^0,\beta^0)|^2.
\end{equation}  We have to distinguish two types of terms in the sum, that is the ones in which $\pi(\alpha^0,\beta^0):=\sum_{\mathtt j \in S} (\alpha^0_{\mathtt j}-\beta^0_{\mathtt j})\mathtt j  \notin \langle v_i\rangle_\ell$  and the other terms.

$$ F^{+-}_{m,n}=  -\ii \sum^*_{\alpha^0,\beta^0\in \N^S\atop \pi(\alpha^0,\beta^0)\notin \langle v_i\rangle_\ell } c_{\alpha,\beta}\frac{u^\alpha\bar u^\beta}{\sum_{\mathtt j \in S} (\alpha^0_{\mathtt j}-\beta^0_{\mathtt j})|\mathtt j |^2-2( \pi(\alpha^0,\beta^0),m)- |\pi(\alpha^0,\beta^0)|^2}$$
\begin{equation}\label{nonome}
-\ii \sum^*_{\alpha^0,\beta^0\in \N^S\atop \pi(\alpha^0,\beta^0)\in \langle v_i\rangle_\ell } c_{\alpha,\beta}\frac{u^\alpha\bar u^\beta}{\sum_{\mathtt j \in S} (\alpha^0_{\mathtt j}-\beta^0_{\mathtt j})|\mathtt j |^2-2( \pi(\alpha^0,\beta^0),m)- |\pi(\alpha^0,\beta^0)|^2}  . 
\end{equation} 
In the first terms, since $m$ has a cut at $\ell$, we have, by Remark \ref{oboe},  $|(v,m)|>\theta N^{4d\tau}  $  for all $v\notin  \langle v_i\rangle_\ell$ hence the denominator is big and
we proceed as in the case of $F^{+,+}$.

In the second terms, the  right hand side of formula \eqref{ovvio}  depends  only upon $m-n=\pi(\alpha^0,\beta^0)$ and on the cut $ [v_i;p_i]_\ell$. This implies that the constraints in the sum \eqref{lodicoio} and the denominators in Formula \eqref{nonome}  depend only on   $m-n$ and on the cut $ [v_i;p_i]_\ell$ so  the second summand of formula 	\eqref{nonome}  is  in $\mathcal T_{N,\theta,\mu}$.  The bounds follow by recalling  that the denominators are non-zero  integers. Then the bounds on the transformed Hamiltonian follow from Proposition \ref{main} and by the degree considerations \eqref{smoothT2}.
\end{proof}

We fix 
\begin{equation}\label{primomu}
 \theta= \mathtt C (1-\frac 1{16})\,,\quad \theta'= \mathtt C (1-\frac 1{8})\,,\quad\mu=  \mathtt c (1+\frac 1{16})\,,\quad \mu'= \mathtt c (1+\frac 1{8})
\end{equation}
 so that \eqref{nubo} holds for all $K=K'> N_0$.
\subsection{Action angle variables}  The results we need are mostly contained in \cite{BBP1}, although there are some small notational differences and the results in that paper are stated for $\Z$ instead of $\Z^d$, but the proofs follow verbatim in our case.\medskip

We introduce action-angle variables on the  tangential sites
$ {S } := \{\pluto_1,\dots, \pluto_n \} $  via the
analytic and symplectic map
\be\label{variableAA}
\Phi_\xi (x,y,z,\bar z ) := (u,\bar u)
\end{equation}
defined by
\be\label{actionangle}
u_{\pluto_l } := \sqrt{\xi_l +y_l} \, e^{{\rm i} x_l}, \, {\bar u}_{\pluto_l } := \sqrt{\xi_l +y_l} \, e^{- {\rm i} x_l}, \,
 l =1, \dots, n \, , \ \ u_j := z_j \, ,  \  \bar u_j := \bar z_j \, ,
\, j \in \Z^d \setminus {S} \, .
\end{equation}
Let  us consider for $\ro>0$  the set $\ro \mathfrak K_\alpha$ as in Theorem \ref{xixi}

\begin{lemma} {\bf (Domains)}
Let $s,r,\e,R > 0 $ satisfy
\begin{equation}\label{condro}
2c_1 r < \varepsilon\,,\quad  \
R= C_* \varepsilon    \quad {\rm with} \quad C_* := 4 c_2\sqrt{n} \kappa^{p} e^{ ( s+ a\kappa)} \, .
\end{equation} 
Then, for all $\xi \in \ro \mathfrak K_\alpha \cup 2\ro \mathfrak K_\alpha $, the map
\be\label{inclusio}
\Phi_\xi(\,\cdot\,;\xi) : D(s,2r) \to {\mathcal D}(R/2) := B_{R/2} \times B_{R/2}  \subset \bar\ell^{a,p} \times \bar\ell^{a,p}
\end{equation}
is well defined
and analytic (recall  that $D(s,2r)$ is defined in \eqref{domain} and $\kappa:=\sup_{\pluto \in S}|\pluto|$$)$.
\end{lemma}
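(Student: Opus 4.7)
The plan is to verify the two conclusions of the lemma separately: first that $\Phi_\xi$ is well defined as an analytic map on $D(s,2r)$ for every $\xi$ in $\rho\mathfrak{K}_\alpha\cup 2\rho\mathfrak{K}_\alpha$, and then that its image actually lands inside the ball $\mathcal D(R/2)$. Only the first component $u_{\mathtt j_l}=\sqrt{\xi_l+y_l}\,e^{\ii x_l}$ can be singular, so the entire analyticity question reduces to understanding where $\xi_l+y_l$ lives when $(x,y,w)\in D(s,2r)$.

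For the well-definedness, I would use Definition \ref{Aro}: on $\rho\mathfrak K_\alpha\cup 2\rho\mathfrak K_\alpha$ each real positive number $\xi_l$ satisfies $\xi_l\geq c_1^2\rho=c_1^2\varepsilon^2$, while the domain constraint gives $|y_l|\leq|y|_1\leq(2r)^2=4r^2$. The hypothesis $2c_1 r<\varepsilon$ then immediately yields $|y_l|<\xi_l$, so $\xi_l+y_l$ lies in the open disc of radius $\xi_l$ around the positive real number $\xi_l$. In particular it avoids the cut $(-\infty,0]$ of the principal square root, which makes $\sqrt{\xi_l+y_l}$ a well-defined analytic function of $y_l$. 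Analyticity in $x_l$ is trivial, and for $j\in S^c$ the map is just the identity $u_j=z_j,\bar u_j=\bar z_j$, which is obviously analytic.

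For the image bound I would simply estimate the norm \eqref{scale} of $u=\Phi_\xi(x,y,z,\bar z)$ by splitting the sum into the tangential and the normal pieces. On each tangential site, $|u_{\mathtt j_l}|^2=|\xi_l+y_l|\,|e^{\ii x_l}|^2\leq(\xi_l+|y_l|)e^{2|\mathrm{Im}\,x_l|}\leq 2c_2^2\,\varepsilon^2\,e^{2s}$, using $\xi_l\leq 2c_2^2\varepsilon^2$ on $2\rho\mathfrak K_\alpha$ and $|y_l|\leq 4r^2<\xi_l$; weighting by $e^{2a|\mathtt j_l|}|\mathtt j_l|^{2p}\leq e^{2a\kappa}\kappa^{2p}$ and summing over the $n$ tangential indices gives
\begin{equation*}
\sum_{l=1}^n |u_{\mathtt j_l}|^2 e^{2a|\mathtt j_l|}|\mathtt j_l|^{2p}\;\leq\;2 n\,c_2^2\,\kappa^{2p}\,e^{2(s+a\kappa)}\,\varepsilon^2\;=\;\tfrac{1}{8}\,C_*^2\,\varepsilon^2\;=\;\tfrac{R^2}{8}\,.
\end{equation*}
The remaining piece is $\sum_{j\in S^c}|z_j|^2 e^{2a|j|}|j|^{2p}\leq\|w\|_{a,p}^2\leq(2r)^2\leq\varepsilon^2/c_1^2$, which by the definition of $C_*$ is again absorbed by $R^2/8$; adding and extracting the square root gives $\|u\|_{a,p}\leq R/2$, and the identical estimate holds for $\bar u$.

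I do not expect any real obstacle in this lemma: the only delicate point is the positivity check needed to choose a single analytic branch of the square root, and the inequality $2c_1 r<\varepsilon$ in \eqref{condro} is precisely tailored to guarantee $|y_l|<\xi_l$. The rest is the routine weighted-$\ell^2$ bookkeeping that produces the constant $C_*=4c_2\sqrt n\,\kappa^p e^{s+a\kappa}$ in \eqref{condro}.
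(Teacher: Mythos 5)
The paper does not give its own proof of this lemma; it simply cites \cite{BBP1} Lemma 7.5. Your reconstruction follows the only natural strategy — use the lower bound on $\xi_l$ and the $|y|$-constraint to keep $\xi_l+y_l$ off the branch cut, then tally the weighted $\ell^2$ norm — so at the level of ideas there is nothing to contrast.

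However, two of your numerical deductions do not actually follow from the stated hypothesis, and you present them as if they were automatic. For the branch-cut bound you write that $2c_1 r<\varepsilon$ ``immediately yields'' $|y_l|<\xi_l$. From $|y|_1\leq(2r)^2$ and $\xi_l\geq c_1^2\varepsilon^2$ (which holds on all of $\rho\mathfrak K_\alpha\cup 2\rho\mathfrak K_\alpha$), what is needed is $4r^2<c_1^2\varepsilon^2$, i.e.\ $2r<c_1\varepsilon$; but $2c_1 r<\varepsilon$ only gives $4r^2<\varepsilon^2/c_1^2$, and these agree only when $c_1\geq 1$, which is nowhere assumed. So either the inequality in \eqref{condro} is a misprint for $2r<c_1\varepsilon$ (consistent with the earlier remark ``for all $r<c_1\varepsilon$'' around \eqref{domain}), or you need an extra assumption on $c_1$; either way the step should be flagged rather than asserted. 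Second, in the image bound you state $\xi_l+|y_l|\leq 2c_2^2\varepsilon^2$, but on $2\rho\mathfrak K_\alpha$ one has $\xi_l\leq 2c_2^2\varepsilon^2$, so the correct upper bound is $\xi_l+|y_l|<2\xi_l\leq 4c_2^2\varepsilon^2$. With that correction the tangential sum is bounded by $4nc_2^2\kappa^{2p}e^{2(s+a\kappa)}\varepsilon^2=(R/2)^2$ on the nose, not $R^2/8$, and your ``absorbed by $R^2/8$'' argument for the normal part no longer closes: the constant $C_*$ as written has no slack to spare for the $\|w\|_{a,p}^2\leq 4r^2$ contribution. This is not a flaw in your method, but the bookkeeping needs the factor-of-two corrected and an explicit argument (either a slightly larger $C_*$, a smaller radius than $2r$ in the $y$ constraint, or a further smallness condition tying $r$ to $\varepsilon$) before the conclusion $\|u\|_{a,p}\leq R/2$ is actually obtained.
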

For the proof see \cite{BBP1} Lemma 7.5.
\smallskip

Given a function $ F : \, {\mathcal D}(R/2)\to \mathbb C $,
the previous Lemma shows that the composite map  $ F \circ \Phi_\xi :
D(s, 2r) \to \mathbb C $ is well defined and  regular. 
 The main result of  this section is Proposition
\ref{qtop0}:
if $ F $ is  quasi-T\"oplitz in the variables $ (u, \bar u) $ then
the composite $ F \circ \Phi_\xi $ is  quasi-T\"oplitz in the
variables $ (x,y,z, \bar z ) $  (see Definition \ref{topbis}).

We write
\be\label{defFab} F = \sum_{\a,\b} F_{\a,\b} \mathfrak m_{\a,\b} \, ,
\quad
\mathfrak m_{\a,\b}  :=
(u^{(1)})^{\alpha^{(1)}}(\bar u^{(1)})^{\beta^{(1)}}(u^{(2)})^{\alpha^{(2)}}(\bar u^{(2)})^{\beta^{(2)}} \, ,
\end{equation}
where
$$
u= (u^{(1)}, u^{(2)}) \, , \quad  u^{(1)}:= \{u_j\}_{j\in S}
\, , \ u^{(2)}:= \{u_j\}_{j\in \Z^d\setminus S}  \, , \quad {\rm similarly \ for}  \ \bar u \, ,
$$
\be\label{defa1b1}
 (\a^{(1)},\b^{(1)}):= \{\a_j,\b_j\}_{j\in S} \, , \ \
(\a^{(2)},\b^{(2)}):= \{\a_j,\b_j\}_{j\in S^c} \, .
\end{equation}
We define 
\begin{equation}\label{ananas}
{\mathcal H}^D_R := \Big\{  F\in \mathcal H_R\  :  \ F
= \sum_{ \a^{(2)}+\b^{(2)}  \geq D} F_{\a,\b}  u^\a\bar u^\b \Big\}\, .
\end{equation}

\begin{proposition}{\bf (Quasi--T\"oplitz)} \label{qtop0}
Let  $\vec p= (r,s,\PaPi,\theta,\mu,\lambda, \ro \mathfrak K_\alpha) $, with $ \PaPi,\theta,\mu,\mu' $ be admissible parameters and
\begin{equation}\label{fuso}
(\mu'-\mu){\PaPi}^3 > \PaPi\,,\qquad  \PaPi^{\tau_1}
2^{-\frac{\PaPi}{2\kappa}+1}<1 \, .
\end{equation}
If
$ F \in \mathcal Q^T_{R/2,\PaPi,\theta,\mu'} \cap {\mathcal H}_{R/2}^{D} $,
then $ f := F \circ \Phi_\xi\in \mathcal Q_{\vec p}^T$
and
\begin{equation}\label{topstim}
 \|X_f\|_{\vec p}^T
 \lessdot (  8r/R )^{D-2}\frac{\lambda}{\e^2}\|X_F\|_{R/2, \PaPi,\theta,\mu'}^T \, .
 \end{equation}
\end{proposition}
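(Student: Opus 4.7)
The plan is to prove this by taking a T\"oplitz approximation of $F$ in the $(u, \bar u)$ variables and showing that composing it with $\Phi_\xi$ yields a T\"oplitz approximation of $f = F \circ \Phi_\xi$ in the $(x, y, z, \bar z)$ variables, with the required norm bounds. The crucial structural observation is that $\Phi_\xi$ acts as the identity on the high variables $u^{(2)} = z$, $\bar u^{(2)} = \bar z$ and only substitutes $u_{\pluto_l} = \sqrt{\xi_l + y_l} e^{\ii x_l}$ in the tangential (low) variables. Hence the bilinear structure in the high $z$-variables, which is the content of the T\"oplitz definition, is preserved verbatim.

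First I would reduce to the bilinear-in-high part. Given allowable parameters $\underline p = (N,\theta,\mu,\tau)$ with $N \geq \PaPi$, pick a T\"oplitz approximation $\mathcal F \in \Ta_{N,\theta,\mu',\tau}$ of $F$ at order $\varepsilon$. Writing $\mathcal F = \sum_{A,\sigma,\sigma'} \sum_{m,n}^{(A,\underline p')} \mathfrak g^{\sigma,\sigma'}(A, \sigma m + \sigma' n)\, z_m^\sigma z_n^{\sigma'}$, I would define the candidate approximation $\mathcal G := \mathcal F \circ \Phi_\xi$ in the new variables. The coefficient of $z_m^\sigma z_n^{\sigma'}$ in $\mathcal G$ is $\mathfrak g^{\sigma,\sigma'}(A, \sigma m + \sigma' n) \circ \Phi_\xi$, which depends only on $A$, on $\sigma m + \sigma' n$, and on $(x,y,w^L)$ — exactly the restricted T\"oplitz form required by Definition \ref{topa}. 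Momentum conservation is automatic since $\Phi_\xi$ intertwines the two momentum operators.

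Next I would address the projection onto $(N,\theta,\mu,\tau)$-bilinear monomials: the substitution introduces Fourier modes $k \in \Z^n$ with $|k| \leq |\alpha^{(1)}| + |\beta^{(1)}|$, and the corresponding low-momentum bound in $(u,\bar u)$ coordinates is $\sum_{j \in S^c}|\er(j)|(\alpha_j + \beta_j) < \mu' N^3$; the hypothesis $(\mu' - \mu) \PaPi^3 > \PaPi$ then guarantees that all Fourier frequencies produced indeed satisfy $|k| < N$ and the $(N,\mu)$-low-momentum condition in the new variables. I would Taylor-expand $\sqrt{\xi_l + y_l} = \sqrt{\xi_l}(1 + y_l/\xi_l)^{1/2}$, valid on $D(s,r)$ since $|y_l| \leq r^2 \ll \xi_l$; the remainder after truncating this series at some order $j_0 \sim \PaPi/(2\kappa)$ is controlled by $2^{-\PaPi/(2\kappa)}$, and combined with the $N^{4d\tau} \leq \PaPi^{\tau_1}$ weight in the quasi-T\"oplitz remainder this gives a negligible contribution by \eqref{fuso}. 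This reduces the check to finitely many terms of bounded degree in $y$, for which T\"oplitz-ness is preserved coefficient-by-coefficient.

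For the norm bounds, I would use the scaling: a monomial of degree $D$ in $u^{(2)}, \bar u^{(2)}$ (the definition of $\mathcal H_{R/2}^D$) produces $D$ factors of $z, \bar z$; evaluating on $D(s,r)$ with $R = C_* \varepsilon \gg r$ yields a factor $(r/R)^{D-2}$ beyond what is absorbed in the majorant norm (cf.\ \eqref{degrl}), while Lipschitz differentiation in $\xi$ of $\sqrt{\xi_l + y_l}$ gives an extra $\varepsilon^{-1}$, which combined with the weight $\lambda = \gamma/M$ produces the $\lambda/\varepsilon^2$ prefactor. The hard part will be verifying the T\"oplitz estimate uniformly over all allowable $(N,\theta,\mu,\tau)$ with $N \geq \PaPi$: one must simultaneously control $\|X_{\mathcal G}\|_{s,r}^\lambda$ and $N^{4d\tau}\|X_{\Pi_{N,\theta,\mu,\tau}f - \mathcal G}\|_{s,r}^\lambda$, and the latter requires separating the contributions coming from (a) the remainder $\bar F$ of the original quasi-T\"oplitz decomposition of $F$, whose $N^{4d\tau}$ weight is already built in, and (b) the tail of the $\sqrt{\xi + y}$ Taylor expansion, which is where the exponential smallness condition $\PaPi^{\tau_1} 2^{-\PaPi/(2\kappa)+1} < 1$ plays its decisive role. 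Once both parts are bounded by $\|X_F\|_{R/2,\PaPi,\theta,\mu'}^T$ times the stated scaling factor, the estimate \eqref{topstim} follows.
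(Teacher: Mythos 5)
Your proposal has a genuine gap in the central step, namely the claim that $\mathcal G := \mathcal F \circ \Phi_\xi$ is already in the restricted T\"oplitz class for the new variables. The coefficient $\mathfrak g^{\s,\s'}(A,\s m + \s' n)\circ\Phi_\xi$ does have the right translation-invariance in $\s m + \s' n$ and $A$, but it is not automatically in $\mathcal L_{s,2r}(N,\mu,h)$: the substitution $u_{\pluto_l} = \sqrt{\xi_l + y_l}\,e^{\ii x_l}$ produces the Fourier mode $k = \a^{(1)} - \b^{(1)}$, and in the $(u,\bar u)$ picture (where $S=\emptyset$) the low-momentum constraint only gives $\sum_j|j|(\a_j+\b_j)<\mu' N^3$, which allows $|\a^{(1)}|+|\b^{(1)}|$ — and hence $|k|$ — to be of order $N^3$, far above the required bound $|k|<N$. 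Your invocation of $(\mu'-\mu)\PaPi^3>\PaPi$ to guarantee $|k|<N$ does not work; that condition has a different job (it absorbs the tangential contribution $\mathfrak p(\mathfrak m_{\a,\b})<N$ when one shows the pre-image in the $(u,\bar u)$ variables is $(N,\theta,\mu',\tau)$-bilinear, as in the proof of the paper's Lemma \ref{ciccio}).

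The paper's proof resolves exactly this issue by interposing the tangential-degree cutoff $\Pi_{\mathfrak p < N}$ before composing with $\Phi_\xi$: the candidate approximation is $\tilde f := \Pi_{N,\theta,\mu,\tau}\big((\Pi_{\mathfrak p < N}\tilde F)\circ\Phi_\xi\big)$, not $\tilde F\circ\Phi_\xi$. The key fact that the truncated composition remains T\"oplitz is Lemma \ref{top:aa}, whose hypothesis $\Pi_{\mathfrak p\geq N}F=0$ is precisely what forces $|\a^{(1)}-\b^{(1)}|\leq\mathfrak p(\mathfrak m_{\a,\b})<N$ after substitution. The discarded tail $\Pi_{\mathfrak p\geq N}\tilde F$ is then bounded by Lemma \ref{cometichiami}, which gives the $2^{-N/(2\kappa)+1}$ factor; this must beat the weight $N^{4d\tau}\leq N^{\tau_1}$ in the T\"oplitz defect, and that is where the second inequality of \eqref{fuso} enters. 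You sensed that a truncation of order $\sim\PaPi/(2\kappa)$ is needed, but you attached it to the Taylor expansion of $\sqrt{\xi+y}$ rather than to the tangential degree $\mathfrak p$, and you did not perform it before composing, so the structure of your candidate approximation is wrong as stated. The remaining pieces of your outline (degree counting to produce $(8r/R)^{D-2}$, the $\e^{-2}$ from the Lipschitz derivative in $\xi$) are in line with the paper's Lemma \ref{artaserse} and would go through once the decomposition of $f$ is set up correctly.
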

For the proof we will need several Lemmas. First let us deduce the main consequence of Proposition \ref{qtop0}. 
\begin{corollary}\label{NLS00}
For all $\e>0$,  $c_1\e/2> r >\e^3$and $s>0$ satisfying \eqref{condro}, the perturbation $P$ of Definition \ref{puzza} is in $\mathcal Q^T_{\vec p}$  for the parameters $\vec p= (s,r,\theta= \mathtt C (1-\frac14), \mu= \mathtt c(1+\frac14), \lambda, \ro \mathfrak K_\alpha)$.
Moreover $P$ satisfies the bounds
 \begin{equation}\label{scossa}
 \norma X_P\norma_{\vec p}^T \leq  C\frac{\lambda}{\e^2}(\e r + \e^5 r^{-1})\,, 
\end{equation}
where $C$ does not depend on $\e,r$. 
\end{corollary}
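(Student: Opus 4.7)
The plan is to assemble the corollary from the two main technical propositions already established, namely the quasi-T\"oplitz property of the Birkhoff-normalized Hamiltonian in $(u,\bar u)$ coordinates (Proposition \ref{qtop00}) and the fact that composition with the action-angle map $\Phi_\xi$ preserves quasi-T\"oplitz-ness (Proposition \ref{qtop0}). The majorant-norm bound \eqref{bonls} already established (by invoking Theorem 1 of \cite{PP}) gives the Lipschitz half of the $T$-norm; the real content is upgrading it to a quasi-T\"oplitz bound with the right parameters and the right $(\e,r)$ scaling.

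First I would check compatibility of the parameters. The choice $\theta=\mathtt{C}(1-1/4)$, $\mu=\mathtt{c}(1+1/4)$ paired with the auxiliary $\theta'=\mathtt{C}(1-1/8)$, $\mu'=\mathtt{c}(1+1/8)$ of \eqref{primomu} makes $(\theta,\mu)$ less restrictive than $(\theta',\mu')$ so that \eqref{nubo} holds for the intermediate composition in Proposition \ref{qtop00}; similarly \eqref{fuso} is satisfied provided $\Pappa \geq N_0$ is chosen large enough, which is legitimate by Remark \ref{lapa2}. The relation \eqref{condro} between $R$ and $\e$ is already exactly what the lemma before Proposition \ref{qtop0} requires for $\Phi_\xi$ to land in $B_{R/2}\times B_{R/2}$, so $\Phi_\xi$ is defined and analytic on our domain $D(s,2r)$.

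Next I would decompose. By Definition \ref{puzza}, $P=(H_{Birk}+P^{(4)}+P^{(6)})\circ\Phi_\xi-\mathcal N$, and $\mathcal N$ is exactly the projection onto new-degree $\leq 2$ of $H_{Birk}\circ\Phi_\xi$. So
\[
P \;=\; \Pi^{(\geq 3)}\big((H_{Birk}-\mathbb K)\circ\Phi_\xi\big)+P^{(4)}\circ\Phi_\xi+P^{(6)}\circ\Phi_\xi,
\]
with $\mathbb K$ absorbed into $\mathcal N$ since it is quadratic. Proposition \ref{qtop00} gives $H_{Birk}-\mathbb K\in \mathcal Q^T_{R/2,\Pappa,\theta,\mu'}$ with norm $\lessdot R^2$, while $P^{(4)}\in \mathcal H^3_{R/2}$ with norm $\lessdot R^2$ (it is at least cubic in $u_2$) and $P^{(6)}\in \mathcal H^0_{R/2}$ with norm $\lessdot R^4$. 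To apply \eqref{topstim} efficiently I would further split by the degree $D$ in the normal variables, then use Lemma \ref{proJ} to conclude that each summand remains quasi-T\"oplitz under the projection $\Pi^{(\geq 3)}$, and use Proposition \ref{qtop0} to pass through $\Phi_\xi$. The estimate \eqref{topstim} converts $R^{d-2}$ bounds in the $(u,\bar u)$ side into $(8r/R)^{D-2}R^{d-2}(\lambda/\e^2)$ bounds in the $(x,y,z,\bar z)$ side; since $R\sim\e$, the contribution with $D=3,4$ from the quartic pieces ($d=4$) produces the $\e r$ term, while the sextic pieces and the pieces of $H_{Birk}\circ\Phi_\xi$ with low $D$ produce $\e^5/r$ (after paying a factor of $(r/R)^{-2}$ compensated by two extra powers of $\e$ coming from $\sqrt{\xi}$ in the polar substitution).

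The main obstacle is the bookkeeping of powers of $\e$ and $r$: the polar substitution expands each factor $u_{j_i}$ as $\sqrt{\xi_i}(1+\frac{y_i}{2\xi_i}+\dots)e^{\ii x_i}$, so a single quartic monomial of $H_{Birk}$ spawns a whole series in $y/\xi$, and one must make sure that for each term the extra powers of $\sqrt\xi\sim\e$ exactly offset the $(r/R)^{D-2}$ blowup for small $D$. In practice this amounts to checking that whenever $D-2$ is negative the total new degree compensates; this is essentially the same power-counting that underlies the existing majorant bound \eqref{bonls}, so it will come out with the same $\e r+\e^5 r^{-1}$ scaling, only now with the additional $\lambda/\e^2$ factor of Proposition \ref{qtop0}. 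Gluing together the three pieces via the triangle inequality then gives \eqref{scossa}.
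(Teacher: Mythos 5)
Your decomposition into the three pieces $\Pi^{(\geq 3)}\bigl((H_{Birk}-\mathbb K)\circ\Phi_\xi\bigr)$, $P^{(4)}\circ\Phi_\xi$, $P^{(6)}\circ\Phi_\xi$, and the reliance on Propositions \ref{qtop00} and \ref{qtop0} with the parameter choice \eqref{primomu}, is exactly the paper's route; the structure of your argument is correct. However, there is a genuine gap in the treatment of the low--$D$ part of $P^{(6)}$, and the fix you propose does not work.

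The problematic term is the projection of $P^{(6)}$ onto $D=0$ normal degree, i.e.\ $P^{(6)}\in\mathcal H^0_R\setminus\mathcal H^1_R$. Applying \eqref{topstim} naively to a degree-$6$ function with $D=0$ gives
\[
\frac{\e^2}{\lambda}\,\|X_{\,\cdot\,\circ\Phi_\xi}\|^T_{\vec p}
\ \lessdot\ (8r/R)^{-2}\,R^{\,6-2}
\ \sim\ \frac{R^6}{r^2}\ \sim\ \frac{\e^6}{r^2}\,,
\]
and since the corollary must hold down to $r\sim\e^3$, the ratio $\e^6/r^2$ can be as large as $1$, not $O(\e^5/r)$. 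Your claim that this blow-up is ``compensated by two extra powers of $\e$ coming from $\sqrt\xi$ in the polar substitution'' is not valid: the $\sqrt\xi$ scaling of the tangential variables is already entirely absorbed into the factor $(8r/R)^{D-2}$ and the $\lambda/\e^2$ in \eqref{topstim}; there is no further $\e$-gain available from that source. The paper's actual resolution is a \emph{vanishing} statement: by momentum conservation the $D=0$ part of $P^{(6)}$ has degree at least $8$ in $u,\bar u$ (not $6$), after which $(r/R)^{-2}R^{8-2}\sim\e^8/r^2\leq C\e^5/r$ follows from $r>\e^3$. This observation, and the accompanying split $P^{(6)}=\mathcal R+\mathcal Q$ with $\mathcal Q\in\mathcal H^1_R$ and $\mathcal R$ of degree $\geq 8$, is the one piece of content your proposal is missing and without which the bound \eqref{scossa} fails in the regime $r\sim\e^3$.

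A secondary inaccuracy: you attribute a share of the $\e^5r^{-1}$ term to the low--$D$ pieces of $(H_{Birk}-\mathbb K)\circ\Phi_\xi$. In fact all of item 3) (the quartic $H_{Birk}$ contribution, after projecting out degree $\leq 2$) lands in the $\e r$ budget, and the $\e^5r^{-1}$ term comes only from $P^{(6)}$.
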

\begin{proof}
 We choose $\mu'= \mathtt c(1+\frac18)$, the perturbation $P$ has contributions from three terms:  1) The term $  P^{(4)}\circ \Phi_\xi$,  2) The term  $P^{(6)} \circ \Phi_\xi$ and finally 3) the terms of degree $>2$ in $(H_{Birk}-\mathbb K)\circ \Phi_\xi $. By proposition \ref{qtop00} with the choice of parameters \eqref{primomu} all the terms above are quasi-T\"oplitz with the parameter $\mu'$ and the Bounds of Proposition \ref{qtop00} hold. 
 
 In item 1) we note that, by definition, $P^{(4)}\in \mathcal H^3_R$ so, by Propositions \eqref{qtop0} and  \eqref{qtop00}, we have $\frac{\e^2}{\l}\norma X_{P^{(4)}}\norma_{\vec p}^T \leq
 (r/R) R^2\leq C  \e \, r $.  In item 2)  we recall that by momentum conservation the first term of $ P^{(6)}$ of degree $D=0$ ( $P^{(6)}\in \mathcal H^0_R\setminus \mathcal H^1_R$) is actually of degree at least $8$.  Then we divide  $ P^{(6)}=  R + Q$ where $Q\in \mathcal H^1_R$ and $R$ is of degree at least $8$ in $u,\bar u$.  
 By Propositions \eqref{qtop0} and  \eqref{qtop00}, we have $\norma X_{R}\norma_{\vec p}^T \leq
 (r/R)^{-2} R^6 \leq C  \e^{8} \, r^{-2} \leq C \e^5 r^{-1}$. In the same way $\norma X_{Q}\norma_{\vec p}^T \leq
 (r/R)^{-1} R^4 \leq C  \e^{5} \, r^{-1} $.
 
 Finally in 3) we collect the terms of degree  $ 3$  and $4$ in  formula \eqref{Ham2}, we get the estimates  $ \frac{\e^2}{\l} |X_{\Pi^{\geq 3}(H_{Birk}-\mathbb K)}|_{\vec p\,{}'}^T \leq  C\e r $.  
   \end{proof}
The rest of this section is devoted to the proof of Proposition  \ref{qtop0}.
Introducing the action-angle variables \eqref{actionangle}
 in \eqref{defFab}, and using the Taylor expansion
\be\label{fractional}
(1 + t )^g = \sum_{h \geq 0} \binom{g}{h} t^h \, , \quad   \binom{g}{0} := 1 \, ,
\ \ \binom{g}{h} := \frac{g (g-1) \ldots (g - h + 1)}{h!} \, , \ h \geq 1 \, ,
\end{equation}
we get
\be\label{fnewc}
 f := F\circ \Phi_\xi  = \sum_{k,i,\a^{(2)},\b^{(2)}}f_{k,i,\a^{(2)},\b^{(2)}} e^{ \ii (k
, x)} y^i z^{\a^{(2)}} \bar z^{\b^{(2)}} \end{equation}
with   Taylor--Fourier coefficients
\be\label{coeffit} f_{k,i,\a^{(2)},\b^{(2)}} :=
\sum_{\a^{(1)}-\b^{(1)}=k} F_{\a,\b}
\prod_{l=1}^n\xi_l^{\frac{\a^{(1)}_l+\b^{(1)}_l}{2} -i_l}
\binom{\frac{\a^{(1)}_l +\b^{(1)}_l}{2}}{i_l} \, .
\end{equation}

\begin{lemma}\label{artaserse}
{\bf ($ M$-regularity)} If $ F \in {\mathcal H}_{R/2}^D $ then $ f := F
\circ \Phi_\xi\in \mathcal H_{s,2r}$  and
\begin{equation}\label{coord}
\|X_f\|_{s,2r,\ro \mathfrak K_\alpha\cup {2\ro}\mathfrak K_\alpha }
\lessdot  (8r/R)^{D-2} \|X_F\|_{R/2} \,, \quad \|X_f\|^{lip}_{s,2r,\ro \mathfrak K_\alpha}
\lessdot  \e^{-2}(8r/R)^{D-2} \|X_F\|_{R/2} \, .
\end{equation}
Moreover if $F$ preserves momentum then so does $F\circ \Phi_\xi$.
\end{lemma}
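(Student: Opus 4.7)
The plan is to work directly with the explicit Taylor--Fourier expansion \eqref{fnewc}--\eqref{coeffit} of $f=F\circ\Phi_\xi$ and dominate its majorant vector field by the majorant of $X_F$. Starting from $F=\sum F_{\a,\b}\mathfrak m_{\a,\b}\in\mathcal H^D_{R/2}$, I would expand each factor $u_{\mathtt j_l}=\sqrt{\xi_l+y_l}\,e^{\ii x_l}$ using \eqref{fractional}, which gives a sum, with explicit combinatorial weights, of monomials $e^{\ii(k,x)}y^i z^{\a^{(2)}}\bar z^{\b^{(2)}}$ whose coefficients are written in \eqref{coeffit}. Passing to the majorant and evaluating on the domain $D(s,2r)$ with weighted norms, one checks that the map $\Phi_\xi$ sends $D(s,2r)$ into $B_{R/2}$ (by the domain lemma, since $R=C_*\e$ and $2c_1r<\e$), so that each individual evaluation of $F$ at $\Phi_\xi(v)$ lies in the ball of analyticity of $F$. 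The degree restriction $F\in\mathcal H^{D}_{R/2}$ forces $|\a^{(2)}|+|\b^{(2)}|\geq D$, and the bookkeeping of the $r,R$-weights in the definitions of $\|\cdot\|_{R/2}$ and $\|\cdot\|_{s,2r}$ produces exactly $(8r/R)^{D-2}$ as the gain factor for the vector field.

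Concretely, I would treat each of the four types of partial derivatives separately. For $\partial_{z},\partial_{\bar z}$ the estimate is directly inherited from the corresponding components of $X_F$ after substituting action-angle coordinates, because low-momentum variables are simply reweighted by constants depending on $\xi_l$ (bounded in $\ro\mathfrak K_\alpha\cup 2\ro\mathfrak K_\alpha$ by $c_2\e$). For $\partial_{x_l}$, the $e^{\ii x_l}$ factors turn into $\ii k_l$ after summing over Fourier modes, and the summation $\sum_{\a^{(1)}-\b^{(1)}=k}$ is absorbed into the majorant of $X_F$ evaluated at $|u_{\mathtt j_l}|=\sqrt{\xi_l+|y_l|}\leq \sqrt{\xi_l}+|y_l|/\sqrt{\xi_l}$. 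For $\partial_{y_l}$, the factor $y_l$ in the weighted norm $|y|_1/r^2$ together with $\xi_l^{-1}$ from differentiating $\sqrt{\xi_l+y_l}$ combines to give the correct $r^{-2}$ weight. Summing these four contributions and using $D\geq 0$ and $|\a^{(2)}|+|\b^{(2)}|\geq D$ yields the first bound in \eqref{coord}.

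The Lipschitz estimate follows the same template but with one $\xi_l^{-1}\sim\e^{-2}$ loss, which is the sole source of the factor $\e^{-2}$ in the second inequality of \eqref{coord}. Indeed, $\partial_{\xi_l}\sqrt{\xi_l+y_l}=\tfrac12(\xi_l+y_l)^{-1/2}$, and on $\ro\mathfrak K_\alpha$ we have $\xi_l\sim \e^2$; the rest of the analysis is identical since the structural weights on the $D(s,2r)$-domain are unchanged. This is the step that I expect to require the most care: one has to carry the Lipschitz derivative through the binomial series \eqref{fractional}, uniformly in the exponents $h$, and keep the resulting series convergent. The standard trick is to rewrite the difference $\Phi_{\xi}-\Phi_{\xi'}$ as an integral of $\partial_{\xi}\Phi$ along the segment joining $\xi,\xi'$ (both in $\ro\mathfrak K_\alpha$, which is convex when chosen as a truncated cone), which reduces the Lipschitz estimate to a sup estimate on the derivative and avoids combinatorial issues.

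Finally, momentum conservation of $f$ is immediate: the substitution \eqref{actionangle} yields $u_{\mathtt j_l}\bar u_{\mathtt j_l}=\xi_l+y_l$, which is invariant, while the phase $e^{\ii x_l}$ carries the momentum contribution $\mathtt j_l$, so the total Fourier exponent $k$ in \eqref{fnewc} enters the momentum precisely through $\pi(k)=\sum k_l\mathtt j_l$. Hence the original momentum conservation of $F$ (an identity on exponents of $u,\bar u$) translates exactly into the conservation law \eqref{mome} for $f$. This completes the plan.
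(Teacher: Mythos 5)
The paper's own "proof" of this lemma is a bare citation to \cite{BBP1} Lemma 7.7, so there is no in-text argument to compare against; I will assess your proposal on its own merits. Your overall plan is the right one — dominate $\|X_f\|_{s,2r}$ directly from the Taylor--Fourier coefficients \eqref{coeffit}, using the domain inclusion $\Phi_\xi:D(s,2r)\to B_{R/2}$ and the degree restriction $|\a^{(2)}|+|\b^{(2)}|\ge D$ to extract the rescaling factor $(8r/R)^{D-2}$, and then track the $\xi$-dependence for the Lipschitz bound. That matches the natural route the cited reference must take.

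Two points in your write-up are genuinely off and would need repair before the argument closes. First, your derivation of the $\e^{-2}$ factor is not correct as stated: $\partial_{\xi_l}\sqrt{\xi_l+y_l}=\tfrac12(\xi_l+y_l)^{-1/2}$ is of size $\e^{-1}$, not $\e^{-2}$, when $\xi_l\sim\e^2$. The actual source of $\xi_l^{-1}\sim\e^{-2}$ is the chain rule applied at the level of the monomials: $\partial_{\xi_l}f=\tfrac1{2(\xi_l+y_l)}\bigl(u_{\mathtt j_l}\partial_{u_{\mathtt j_l}}F+\bar u_{\mathtt j_l}\partial_{\bar u_{\mathtt j_l}}F\bigr)\circ\Phi_\xi$, i.e.\ a degree operator (which preserves the $\mathcal H^D_{R/2}$ structure up to a Cauchy estimate) multiplied by $(\xi_l+y_l)^{-1}\sim\e^{-2}$. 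Equivalently, differentiating the explicit coefficients \eqref{coeffit} in $\xi_l$ produces the factor $\bigl(\tfrac{\a_l^{(1)}+\b_l^{(1)}}{2}-i_l\bigr)\xi_l^{-1}$. The combinatorial prefactor must also be controlled, which is why the degree-operator viewpoint (plus a Cauchy loss, absorbed into $\lessdot$) is the cleaner route.

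Second, a truncated cone $\mathfrak H\times J$ in polar coordinates is \emph{not} convex in general (e.g.\ a quarter-annulus in $\R^2$), so the claim that $\ro\mathfrak K_\alpha$ is convex, used to reduce the Lipschitz estimate to a sup over $\partial_\xi$ via integration along segments, does not hold as stated. The fix is standard: replace $\ro\mathfrak K_\alpha$ by its convex hull, which is still contained in $(\R_+)^n$ with the same lower bounds $\xi_l\ge c_1^2\ro$ (each coordinate is a linear functional, hence attains its minimum over the hull on the original set), and on which $\Phi_\xi$ and $f$ remain $M$-analytic. With these two corrections the argument goes through; the treatment of the four components of $X_f$ and the momentum computation are fine.
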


\begin{proof} See \cite{BBP1} Lemma 7.7.
\end{proof}
 
\begin{definition}
For a monomial
$ \mathfrak m_{\a,\b}  :=
(u^{(1)})^{\alpha^{(1)}}(\bar u^{(1)})^{\beta^{(1)}}(u^{(2)})^{\alpha^{(2)}}(\bar u^{(2)})^{\beta^{(2)}} $
$($as in \eqref{defFab}$)$
we set
\be\label{tange}
 {\mathfrak p} ( \mathfrak m_{\a,\b} ):= \sum_{l = 1}^n
 \langle \pluto_l \rangle (\alpha^{(1)}_{\pluto_l} +  \beta^{(1)}_{\pluto_l} ) \,  , \quad \langle j \rangle :=
  \max \{1, |j| \} \, .
\end{equation}
For any  $ F $ as in \eqref{defFab}, $ K \in \N $, we define the projection
\be\label{protang}
\Pi_{\mathfrak p \geq  K} F :=
\sum_{{\mathfrak p} ( \mathfrak m_{\a,\b} ) \geq K} F_{\a,\b}  \mathfrak
m_{\a,\b} \, , \quad \Pi_{\mathfrak p <  K} := I - \Pi_{\mathfrak p \geq  K} \, .
\end{equation}
\end{definition}

\begin{lemma}\label{cometichiami}
Let $ F \in {\mathcal H}_{R/2} $. Then
\be\label{stimaPX1}
\| X_{(\Pi_{\mathfrak p \geq  K}F)\circ
\Phi_\xi}\|_{s,r,\ro \mathfrak K_\alpha}
\leq
2^{- \frac{K}{2\kappa }+1} \|X_{F\circ \Phi_\xi}\|_{s,2r,{2\ro}\mathfrak K_\alpha }\,.
\end{equation}
\end{lemma}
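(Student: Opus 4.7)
The plan is a one-parameter scaling of the tangential variables combined with a monotone geometric-series bound in the majorant norm; the argument is formally the one of Lemma~7.8 of \cite{BBP1}, to which one may appeal directly since $\Phi_\xi$, the weight $\mathfrak p$, and the domain geometry here are the same.

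First I replace the awkward projection $\Pi_{\mathfrak p\geq K}$ by a genuine tangential-degree projection. Since every tangential site satisfies $\langle\pluto_l\rangle\leq\kappa$, any monomial with $\mathfrak p(\mathfrak m_{\alpha,\beta})\geq K$ has total tangential degree $|\alpha^{(1)}|+|\beta^{(1)}|\geq K/\kappa$. Setting $D:=\lceil K/\kappa\rceil$ and letting $\Pi^{\mathrm{tan}}_{\geq D}$ denote the projection onto monomials of tangential degree at least $D$, we have $\Pi_{\mathfrak p\geq K}=\Pi_{\mathfrak p\geq K}\,\Pi^{\mathrm{tan}}_{\geq D}$, so by the majorant monotonicity \eqref{caligola} it suffices to prove the analogous bound with $\Pi^{\mathrm{tan}}_{\geq D}F$ in place of $\Pi_{\mathfrak p\geq K}F$.

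Next, for $t\in[0,\sqrt 2]$ I introduce the scaled Hamiltonian
\[
h(t;x,y,z,\bar z;\xi):=(F\circ\Phi_{t^2\xi})(x,t^2 y,z,\bar z).
\]
Since $\Phi_{t^2\xi}(x,t^2 y)_l=t\sqrt{\xi_l+y_l}\,e^{ix_l}$, $h$ is exactly what one obtains by the substitution $u^{(1)}\mapsto tu^{(1)}$, $\bar u^{(1)}\mapsto t\bar u^{(1)}$ in $F$ followed by $\Phi_\xi$; by \eqref{fnewc}--\eqref{coeffit} this gives $h(t)=\sum_{d\geq 0}t^d\,(F_d\circ\Phi_\xi)$ with $F_d:=\Pi^{\mathrm{tan}}_d F$. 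For $(x,y,z,\bar z;\xi)\in D(s,r)\times\ro\mathfrak K_\alpha$ and $t\in[0,\sqrt 2]$ one has $|t^2 y|\leq 2r^2<(2r)^2$ and $t^2\xi\in 2\ro\mathfrak K_\alpha$ (as $\mathfrak K_\alpha$ is a truncated cone in $\rho$), so the preceding Domains Lemma ensures that $h(t)$ is well defined on the original domain and that $\|X_{h(\sqrt 2)}\|_{s,r,\ro\mathfrak K_\alpha}$ is controlled by $\|X_{F\circ\Phi_\bullet}\|_{s,2r,2\ro\mathfrak K_\alpha}$, up to the absolute constant produced by the $y\mapsto t^2 y$ rescaling of the weights entering $\|\cdot\|_{V(s,r)}$ versus $\|\cdot\|_{V(s,2r)}$.

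The crucial step is then a pointwise monotone bound. In the majorant every coefficient in $MX_{h(t)}=\sum_d t^d\,MX_{F_d\circ\Phi_\xi}$ is non-negative, so choosing $t_0=\sqrt 2$,
\[
MX_{(\Pi^{\mathrm{tan}}_{\geq D}F)\circ\Phi_\xi}=\sum_{d\geq D}MX_{F_d\circ\Phi_\xi}\leq t_0^{-D}\sum_{d\geq 0}t_0^d\,MX_{F_d\circ\Phi_\xi}=t_0^{-D}\,MX_{h(t_0)}
\]
pointwise on $D(s,r)\times\ro\mathfrak K_\alpha$. Taking the sup over this domain and using $t_0^{-D}=2^{-D/2}\leq 2^{-K/(2\kappa)}$ together with the bound on $\|X_{h(\sqrt 2)}\|$ yields \eqref{stimaPX1}, the extra factor $2$ in $2^{-K/(2\kappa)+1}$ comfortably absorbing both the ceiling slack $D\geq K/\kappa$ and the weight-mismatch constant. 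The only genuinely technical point is the componentwise validity of the monotone inequality for the vector-field majorant, since $\partial_{y_l}$ together with the weight $|y|_1/r^2$ scales differently from $\partial_{x_l}, \partial_{z_j}, \partial_{\bar z_j}$ under $y\mapsto t^2 y$; this is the main obstacle and is handled exactly as in Lemma~7.8 of \cite{BBP1}.
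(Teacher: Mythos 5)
Your scaling idea is the right instinct and is indeed the organizing principle behind this type of bound, but the majorant bookkeeping goes wrong at exactly the step you flag as ``the crucial step.'' The claimed identity
\[
MX_{h(t)}=\sum_{d\geq 0}t^{d}\,MX_{F_d\circ\Phi_\xi}
\]
is false, and so is the first equality $MX_{(\Pi^{\mathrm{tan}}_{\geq D}F)\circ\Phi_\xi}=\sum_{d\geq D}MX_{F_d\circ\Phi_\xi}$. The majorant is taken coefficient by coefficient \emph{in the $(x,y,z,\bar z)$ variables}, and by \eqref{coeffit} a single Taylor index $(k,i,\alpha^{(2)},\beta^{(2)})$ of $F\circ\Phi_\xi$ receives contributions from all $(\alpha^{(1)},\beta^{(1)})$ with $\alpha^{(1)}-\beta^{(1)}=k$, hence from infinitely many tangential degrees $d=|\alpha^{(1)}|+|\beta^{(1)}|$ simultaneously. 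These contributions carry arbitrary signs (the $F_{\alpha,\beta}$ and the binomial factors $\binom{g}{i_l}$ with half-integer $g$ are not sign-definite), so $|\sum_{d}c_d|\neq\sum_d|c_d|$. In particular $MX_{h(t)}\leq\sum_d t^d MX_{F_d\circ\Phi_\xi}$ with $\leq$, not $=$, which is the \emph{wrong} direction for the chain you wrote. A concrete counterexample: take $n=1$ and $F=u_{\mathtt j_1}\bar u_{\mathtt j_1}+c\,(u_{\mathtt j_1}\bar u_{\mathtt j_1})^2$ with $c<0$; the $y_1$-coefficient of $F\circ\Phi_\xi$ is $1+2c\xi_1$, strictly smaller in modulus than the $y_1$-coefficient $1$ of $F_2\circ\Phi_\xi$ alone. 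The same problem invalidates your reduction to $\Pi^{\mathrm{tan}}_{\geq D}$ via \eqref{caligola}: that estimate applies to projections acting on the $(k,i,\alpha,\beta)$ indices \emph{after} composing with $\Phi_\xi$, whereas $\Pi_{\mathfrak p\geq K}$ and $\Pi^{\mathrm{tan}}_{\geq D}$ act on the $(u,\bar u)$-monomials \emph{before} composing, and the composition mixes them.

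What actually makes the scaling argument work is to pass to majorants on the $(u,\bar u)$ side before composing. Replace $F$ by its $(u,\bar u)$-majorant $M^uF:=\sum_{\alpha,\beta}|F_{\alpha,\beta}|\mathfrak m_{\alpha,\beta}$ and $\Phi_\xi$ by its majorant $M\Phi_\xi$ (binomial coefficients in \eqref{fractional} replaced by their absolute values); $M$-analyticity (cf.\ the Remark after \eqref{normadueA}) gives the pointwise bound $M\bigl[(\Pi_{\mathfrak p\geq K}F)\circ\Phi_\xi\bigr]\leq(\Pi_{\mathfrak p\geq K}M^uF)\circ M\Phi_\xi$. Now every coefficient on the right is nonnegative, $\Pi_{\mathfrak p\geq K}$ commutes with $M^u$, and the tangential-degree monotone bound
\[
(\Pi_{\mathfrak p\geq K}M^uF)\bigl(|u^{(1)}|,|\bar u^{(1)}|,\cdot\bigr)\leq 2^{-K/(2\kappa)}\,(M^uF)\bigl(\sqrt 2\,|u^{(1)}|,\sqrt 2\,|\bar u^{(1)}|,\cdot\bigr)
\]
is legitimate term by term. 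Composing with $M\Phi_\xi$ and using $\sqrt{2\xi_l+2y_l}=\sqrt 2\sqrt{\xi_l+y_l}$ then identifies the right-hand side with (the majorant of) $F\circ\Phi_{2\xi}(x,2y,z,\bar z)$ on $D(s,2r)\times 2\varepsilon^2\mathfrak K_\alpha$, and only at \emph{this} point does one compare the weights in $\|\cdot\|_{V,s,r}$ versus $\|\cdot\|_{V,s,2r}$ to absorb the remaining constants. So the fix is not the weight-mismatch issue you defer at the end, but the earlier interchange of the majorant with the tangential-degree decomposition; without going through the $(u,\bar u)$-side majorant your pointwise inequality does not hold.
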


\begin{proof}
See \cite{BBP1} Lemma 7.8. \end{proof}

Let $ \PaPi, \theta, \mu,\mu',\tau$ be as in Proposition \ref{qtop0}.
For $ N \geq \PaPi $  and $ F \in {\mathcal H}_{R/2} $ we set
\begin{equation}\label{adamo}
 f^*:= \Pi_{N,\theta,\mu,\tau}\Big( (F-\Pi_{N,\theta,\mu',\tau }F)\circ
\Phi_\xi \Big) \, .
\end{equation}
Note that $\Pi_{N,\theta,\mu',\tau}$ is the projection on the
bilinear functions in the variables $u,\bar u$,
while $\Pi_{N,\theta,\mu,\tau}$ 
in the variables $x,y,z,\bar z $.

The next Lemma corresponds to  Lemma 7.9 of \cite{BBP1}.
\begin{lemma}\label{ciccio} We have
\begin{equation}\label{cojo2}
\| X_{ f^* }\|_{s,r,{\ro}\mathfrak K_\alpha }
 \leq
2^{- \frac{N}{2 \kappa} + 1 }  \|X_{F\circ\Phi_\xi}\|_{s,2r,{2\ro}\mathfrak K_\alpha } \, .
\end{equation}

\end{lemma}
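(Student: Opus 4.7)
The plan is to show that only $u,\bar u$-monomials of $F$ carrying ``large tangential degree'' $\mathfrak p\geq N$ can survive both subtractions in the definition of $f^*$, and then apply Lemma \ref{cometichiami}.

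First I would analyze, monomial by monomial, which $u,\bar u$-monomials $\mathfrak m_{\alpha,\beta}$ of $F$ can contribute a non-zero $(N,\theta,\mu,\tau)$-bilinear term in $(x,y,z,\bar z)$ after composition with $\Phi_\xi$. Substituting \eqref{actionangle} and expanding via \eqref{fractional}, $\mathfrak m_{\alpha,\beta}\circ\Phi_\xi$ produces monomials with Fourier index $k=\alpha^{(1)}-\beta^{(1)}$, $y$-degree bounded by $|\alpha^{(1)}+\beta^{(1)}|/2$, and normal degrees $\alpha^{(2)},\beta^{(2)}$. For the $(N,\theta,\mu,\tau)$-bilinear projection in $(x,y,z,\bar z)$ to pick something up we must have (i) $|\alpha^{(1)}-\beta^{(1)}|<N$; (ii) exactly two among $\alpha^{(2)},\beta^{(2)}$ correspond to normal sites with $|\er(\cdot)|>\theta N^{\tau_1}$ having a common $(N,\theta,\mu,\tau)$-cut; and (iii) the remaining (low, normal) indices satisfy $\sum_{j\in S^c\,\text{low}}|\er(j)|(\alpha^{(2)}_j+\beta^{(2)}_j)<\mu N^3$.

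Next I would compare this with the condition that the original $\mathfrak m_{\alpha,\beta}$ is $(N,\theta,\mu',\tau)$-bilinear as a $u,\bar u$-monomial: the cut and high-factor conditions (ii) are identical, so the only way to fail bilinearity in $(u,\bar u)$ while satisfying (i)--(iii) in $(x,y,z,\bar z)$ is the inequality
\[
\sum_{l=1}^n|\pluto_l|(\alpha^{(1)}_l+\beta^{(1)}_l)\,+\,\sum_{j\in S^c\,\text{low}}|\er(j)|(\alpha^{(2)}_j+\beta^{(2)}_j)\;\geq\;\mu' N^3.
\]
Subtracting (iii) this forces $\sum_l|\pluto_l|(\alpha^{(1)}_l+\beta^{(1)}_l)\geq(\mu'-\mu)N^3$, hence, since $|\pluto_l|\leq\langle\pluto_l\rangle$, the tangential weight satisfies $\mathfrak p(\mathfrak m_{\alpha,\beta})\geq(\mu'-\mu)N^3$, and by the hypothesis \eqref{fuso} we obtain $\mathfrak p(\mathfrak m_{\alpha,\beta})\geq N$. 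Consequently
\[
f^*\;=\;\Pi_{N,\theta,\mu,\tau}\Bigl[\bigl(\Pi_{\mathfrak p\geq N}(F-\Pi_{N,\theta,\mu',\tau}F)\bigr)\circ\Phi_\xi\Bigr].
\]

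The norm bound then follows in three steps: (a) the projection $\Pi_{N,\theta,\mu,\tau}$ does not increase the majorant norm of a vector field, (b) Lemma \ref{cometichiami} applied with $K=N$ and with $F$ replaced by $G:=F-\Pi_{N,\theta,\mu',\tau}F$ gives a factor $2^{-N/(2\kappa)+1}$, and (c) since $G$ is $F$ with some $(u,\bar u)$-coefficients zeroed, the coefficient-wise comparison $|G_{\alpha,\beta}|\leq|F_{\alpha,\beta}|$ propagates through the explicit Taylor-Fourier formula \eqref{coeffit} to yield $\|X_{G\circ\Phi_\xi}\|_{s,2r,2\ro\mathfrak K_\alpha}\leq \|X_{F\circ\Phi_\xi}\|_{s,2r,2\ro\mathfrak K_\alpha}$. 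Chaining these three estimates produces exactly \eqref{cojo2}.

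The main obstacle will be step (c), namely verifying cleanly that the projection in $(u,\bar u)$-space, when transported through $\Phi_\xi$ (which mixes tangential variables nontrivially), is compatible with the majorant norm in $(x,y,z,\bar z)$. Everything else is a careful book-keeping of how the bilinearity conditions transform between the two sets of coordinates, with the quantitative gap between $\mu$ and $\mu'$ exactly calibrated by \eqref{fuso} to pay for the tangential sum.
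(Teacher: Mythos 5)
Your proposal is essentially the paper's proof, reorganized slightly. The paper first establishes, monomial by monomial, a dichotomy: if $\mathfrak p(\mathfrak m_{\alpha,\beta})<N$ then either $\mathfrak m_{\alpha,\beta}$ is $(N,\theta,\mu',\tau)$-bilinear in $(u,\bar u)$ (so the subtraction $F-\Pi_{N,\theta,\mu',\tau}F$ kills it), or it is not, in which case $\mathfrak m_{\alpha,\beta}\circ\Phi_\xi$ cannot be $(N,\theta,\mu,\tau)$-bilinear in $(x,y,z,\bar z)$ (so $\Pi_{N,\theta,\mu,\tau}$ kills it). You fold these two cases into one direct computation: if a monomial survives both projections, then failing $(u,\bar u)$-bilinearity for $\mu'$ while having $(x,y,z,\bar z)$-bilinearity for $\mu$ forces $\sum_l|\pluto_l|(\alpha^{(1)}_l+\beta^{(1)}_l)\geq(\mu'-\mu)N^3>N$, hence $\mathfrak p\geq N$. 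The quantitative input is identical — the gap $(\mu'-\mu)N^3>N$ from \eqref{fuso} — and both proofs then conclude by applying Lemma \ref{cometichiami} to $\Pi_{\mathfrak p\geq N}(I-\Pi_{N,\theta,\mu',\tau})F$ and invoking the fact that projections reduce norms.

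The one place where your write-up is actually sharper than the paper's one-line conclusion is your step (c): you correctly flag that $\|X_{((I-\Pi_{N,\theta,\mu',\tau})F)\circ\Phi_\xi}\|\leq\|X_{F\circ\Phi_\xi}\|$ is not a tautology, because a $(u,\bar u)$-projection is not a projection in the $(x,y,z,\bar z)$-monomial basis once $\Phi_\xi$ has rearranged the tangential variables, and the Taylor--Fourier coefficients \eqref{coeffit} involve binomials of possibly mixed sign. This is glossed over in the paper as ``projections may only reduce the norm.'' The resolution is that the estimates of Lemmas \ref{artaserse} and \ref{cometichiami} (BBP1, Lemmas 7.7--7.8) are in fact proved by passing to the majorant of $F$ in the $(u,\bar u)$-variables before composing, so they are monotone in $|F_{\alpha,\beta}|$ monomial by monomial; under that reading, zeroing some $F_{\alpha,\beta}$ does reduce the right-hand side. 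You should say this explicitly rather than leave it as an acknowledged obstacle, but the overall strategy is exactly the paper's.
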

\begin{proof}
We first claim that
if $ F =  \mathfrak m_{\a,\b} $  is  a  monomial as in \eqref{defFab} with
$ \mathfrak p( \mathfrak m_{\a,\b} ) < N $ then   
$ f^* = 0 $.

{\sc Case $ 1 $}: $  \mathfrak m_{\a,\b} $ is $( N,\theta,\mu',\tau)$--bilinear, see Definition \ref{taubilinear}. Then
$ \Pi_{N,\theta,\mu',\tau }  \mathfrak m_{\a,\b} =  \mathfrak m_{\a,\b} $ and $ f^*= 0 $, see \eqref{adamo}.

{\sc Case $ 2 $}:  $  \mathfrak m_{\a,\b} $ is {\em not }$(N,\theta,\mu',\tau)$--bilinear.
Then we have $\Pi_{N,\theta,\mu',\tau }  \mathfrak m_{\a,\b} = 0 $ and hence $ f^*=\Pi_{N,\theta,\mu,\tau}(  \mathfrak m_{\a,\b} \circ \Phi_\xi) $,
see \eqref{adamo}.
We claim that $ \mathfrak m_{\a,\b} \circ \Phi_\xi $
is not $(N,\theta,\mu,\tau)$--bilinear, and so $  f^* = \Pi_{N,\theta,\mu,\tau}(  \mathfrak m_{\a,\b} \circ \Phi_\xi) = 0 $.
Indeed,
\be\label{mabco}
\mathfrak m_{\a,\b} \circ \Phi_\xi = (\xi + y)^{\frac{\alpha^{(1)} + \beta^{(1)}}{2}}
 e^{\ii ((\alpha^{(1)}-\beta^{(1)}), x)} z^{\alpha^{(2)}} \bar
z^{\beta^{(2)}}
\end{equation}
is  $(N,\theta,\mu,\tau)$--bilinear if and only if  (see  Definitions \ref{taubilinear} and \ref{LM})
$$
z^{\alpha^{(2)}} {\bar z}^{\beta^{(2)}}=
z^{\tilde \alpha^{(2)}} {\bar z }^{\tilde \beta^{(2)}} z_m^\sigma
z_n^{\sigma '} \, ,
$$
\begin{equation}\label{rompic}
\sum_{j\in \Z^d\setminus S} |j| (\tilde \alpha^{(2)}_j+ \tilde \beta^{(2)}_j)<
\mu N^3 \,,\quad  |m|,|n|> \theta N^{\tau_1}\,,
\quad
|\a^{(1)}-\b^{(1)}|< N\,,
\end{equation}
and $n,m$ have a cut at $\ell$ with parameters $\theta,\mu,\tau$.

We deduce the contradiction that  $ \mathfrak m_{\a,\b} =
(u^{(1)})^{\alpha^{(1)}}(\bar u^{(1)})^{\beta^{(1)}}
(u^{(2)})^{\tilde \alpha^{(2)}}(\bar u^{(2)})^{\tilde \beta^{(2)}} u_m^\sigma u_n^{\sigma '} $
is  $(N,\theta,\mu',\tau)$-bilinear  
because (recall that we suppose $ \mathfrak p(  \mathfrak m_{\a,\b} ) < N $)
$$
\sum_{l =1}^n |\pluto_l |(\alpha^{(1)}_{\pluto_l}
+ \beta^{(1)}_{\pluto_l})+ \sum_{j\in \Z^d\setminus
S} |j| (\tilde \alpha^{(2)}_j +  \tilde \beta^{(2)}_j)\!\! \stackrel{\eqref{tange}, \eqref{rompic}}{<} \!\!
\mathfrak p(  \mathfrak m_{\a,\b} )+ \mu N^3 < N + \mu N^3
\stackrel{\eqref{fuso}} < \mu' N^3 \, .
$$
For the general case, we divide $ F= \Pi_{\mathfrak p <  N} F + \Pi_{\mathfrak p \geq   N}F
$. By the above claim
$$
 f^*=\Pi_{N,\theta,\mu,\tau}\Big(
\big((Id-\Pi_{N,\theta,\mu',\tau})\Pi_{\mathfrak p \geq   N} F\big)\circ
\Phi_\xi\Big)=\Pi_{N,\theta,\mu,\tau}\Big(
\big(\Pi_{\mathfrak p \geq   N}(Id-\Pi_{N,\theta,\mu',\tau}) F\big)\circ
\Phi_\xi\Big) \, .
$$
Finally, \eqref{cojo2} follows by  applying Lemma \ref{cometichiami} to $ \big(\Pi_{\mathfrak p \geq   N}(Id-\Pi_{N,\theta,\mu'}) F\big)\circ
\Phi_\xi $ and using the fact that projections may only reduce the norm.
\end{proof}
\begin{lemma}\label{top:aa}
Let 
 $ F\in {\mathcal T}_{R/2,N,\theta,\mu',\tau}$ with $\Pi_{\mathfrak p \geq   N}F=0 $. Then $ F \circ \Phi_\xi(\cdot;\xi)\in {\mathcal T}_{s,
2r,N,\theta,\mu',\tau}$, $\forall\, \xi \in {\rro}\mathfrak K_\alpha\cup  {2\rro}\mathfrak K_\alpha$ .
\end{lemma}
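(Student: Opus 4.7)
The plan is to decompose $F$ using the piecewise T\"oplitz structure of Definition \ref{topp} and verify that each constituent T\"oplitz block survives the composition with $\Phi_\xi$ as a T\"oplitz block in the new variables with the same parameters $(N,\theta,\mu',\tau)$. Write
\[
F=\sum_{A\in\mathcal H_{N,\theta,\mu',\tau}}\sum_{\sigma,\sigma'=\pm}\ \sum_{m,n}^{(A,\underline p)}\mathtt g^{\sigma,\sigma'}(A,\sigma m+\sigma' n)\,u_m^\sigma u_n^{\sigma'},
\]
where $\mathtt g^{\sigma,\sigma'}(A,h)\in\mathcal L_{R/2}(N,\mu',h+u(A,h,\sigma,\sigma'))$. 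Since $|m|,|n|>\theta N^{\tau_1}$ and every $\pluto\in S$ satisfies $|\pluto|\leq\kappa\ll\theta N^{\tau_1}$, the indices $m,n$ automatically lie in $S^c$. Hence under $\Phi_\xi$ the bilinear factor $u_m^\sigma u_n^{\sigma'}$ transforms into $z_m^\sigma z_n^{\sigma'}$ while the cut structure (affine space $A$, index $\ell$, associated cut parameters) depends only on $m,n$ and is therefore unchanged.

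Next I would check that each coefficient $\mathtt g^{\sigma,\sigma'}(A,h)\circ\Phi_\xi$ lies in $\mathcal L_{s,2r}(N,\mu',h+u(A,h,\sigma,\sigma'))$, which is the content of Definition \ref{topa}. Analyticity on $D(s,2r)$ is ensured by Lemma \ref{artaserse}. For the three remaining structural conditions:
(i) Fourier cut-off in $x$: expanding any monomial $u^\alpha\bar u^\beta$ of $\mathtt g^{\sigma,\sigma'}(A,h)$ via \eqref{actionangle} produces a factor $e^{\ii(\alpha^{(1)}-\beta^{(1)},x)}$, so $k=\alpha^{(1)}-\beta^{(1)}$ and $|k|\leq\sum_l(\alpha^{(1)}_l+\beta^{(1)}_l)\leq\mathfrak p(\mathfrak m_{\alpha,\beta})<N$, where the last inequality is precisely the hypothesis $\Pi_{\mathfrak p\geq N}F=0$.
(ii) Low-momentum bound on $(z,\bar z)$: the old low-momentum condition $\sum_{j\in\mathbb Z^d}|j|(\alpha_j+\beta_j)<\mu'N^3$ immediately implies $\sum_{j\in S^c}|\er(j)|(\alpha_j+\beta_j)=\sum_{j\in S^c}|j|(\alpha_j+\beta_j)<\mu'N^3$, since for $j\in S^c$ we have $\er(j)=j$ (the roots coincide with the indices in the present context).
(iii) Momentum conservation: $\Phi_\xi$ is symplectic and intertwines the mass/momentum maps, so if $\mathtt g^{\sigma,\sigma'}(A,h)\,u_m^\sigma u_n^{\sigma'}$ satisfies the old momentum constraint then its image satisfies the new one \eqref{mome}, and the momentum of $\mathtt g^{\sigma,\sigma'}(A,h)\circ\Phi_\xi$ equals the original one, namely $\ii(h+u(A,h,\sigma,\sigma'))$.

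Assembling these three facts, $\mathtt g^{\sigma,\sigma'}(A,h)\circ\Phi_\xi$ satisfies all requirements of \eqref{ceci}, so each block
\[
\sum_{m,n}^{(A,\underline p)}\bigl(\mathtt g^{\sigma,\sigma'}(A,\sigma m+\sigma'n)\circ\Phi_\xi\bigr)z_m^\sigma z_n^{\sigma'}
\]
belongs to $\mathcal T^{\sigma,\sigma'}_{A,\underline p}$ in the new variables (Definition \ref{topa}). Summing over $A,\sigma,\sigma'$ gives $F\circ\Phi_\xi\in\mathcal T_{s,2r,N,\theta,\mu',\tau}$ as required.

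The main obstacle is step (i): controlling the Fourier frequency $k$ in the new $x$ coordinates. Without the hypothesis $\Pi_{\mathfrak p\geq N}F=0$, a monomial of $F$ could carry arbitrarily many tangential factors $u_{\pluto_l}$ and thus generate a Fourier mode with $|k|\geq N$, which would place the resulting composite outside the low-momentum class and destroy the T\"oplitz structure in the new variables. It is precisely the assumption on $\mathfrak p$ that converts the tangential content into a bound $|k|<N$ and closes the argument; all other items are either automatic consequences of the definition of the change of coordinates or follow from the trivial inclusion of the low-momentum index sets.
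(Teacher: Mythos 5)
Your proof is correct and follows essentially the same route as the paper's: decompose $F$ into its restricted T\"oplitz blocks, observe that the high bilinear factor $u_m^\sigma u_n^{\sigma'}$ transforms trivially to $z_m^\sigma z_n^{\sigma'}$ (since $m,n\notin S$) and that the cut data for $m,n$ is untouched, then reduce to showing the low-momentum coefficient $\mathtt g(h)\circ\Phi_\xi$ lands in $\mathcal L_{s,2r}(N,\mu',h)$, with the $|k|<N$ Fourier bound being exactly where $\Pi_{\mathfrak p\geq N}F=0$ enters. The only blemish is a sign slip in step (iii): the coefficient has momentum $-\ii(h+u)$, not $\ii(h+u)$, but since $u=0$ in this pre-$\Psi$ context and the point is just that the composite satisfies \eqref{vincula}, this does not affect the argument.
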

\begin{proof}
Recalling Definition \ref{topa} we have $$
F = \sum_{A\in \mathcal H_N} \sum_{|m|,|n|> \theta N^{\tau_1}, \s,\s'=\pm}^*
F^{\s,\s'}(A,\s m+\s' n) u_m^\s u_n^{\s'} \ \ {\rm with} \ \ F^{\s,\s'}(A, h)
\in {\mathcal L}_{R/2}(N,\mu', h ) \, .
$$
denoting $A=[v_i;p_i]_\ell$ the apex $*$ means the sum restricted to those $n,m$ which have a cut  at $\ell$ with parameters $(\theta,\mu,\tau)$ and $m$ has associated space $A$.
 
Composing with the map $ \Phi_\xi $ in \eqref{actionangle}, since $ m, n \notin {S } $,  we get
$$
F\circ \Phi_\xi = \sum_{\s,\s'=\pm\,; |m|,|n|> \theta N^{\tau_1}}
F^{\s,\s'}(A,\s m+\s' n)\circ \Phi_\xi \, z_m^\s z_n^{\s'} \, .
$$
Each coefficient
$ F^{\s,\s'}(A,\s m+\s' n)\circ \Phi_\xi$  depends on $n,m,\s,\s'$ only
through $A,\s m+\s' n,\s,\s' $. Hence, in order to conclude that  $ F\circ \Phi_\xi
 \in {\mathcal T}_{s, 2r,N,\theta,\mu',\tau}
$
 it remains only to prove
 that  $ F^{\s,\s'}(A,\s m+\s' n) \circ\Phi_\xi\in {\mathcal L}_{s,2r}(N,\mu',\s m+\s' n) $,
 see Definition \ref{LM}.
Each monomial $ \mathfrak m_{\a,\b} $ of $F^{\s,\s'}( A , \s m+\s' n ) \in {\mathcal L}_{R/2}(N,\mu',\s m+\s' n) $
satisfies
$$
\sum_{l=1}^n (\a_{\pluto_l}+\b_{\pluto_l} )|\pluto_l |
+ \sum_{j\in \Z^d\setminus S}(\a_j+\b_j)|j| < \mu' N^3  \ \quad
{\rm and } \quad \ \mathfrak p(\mathfrak m_{\a,\b} ) < N
$$
by the hypothesis $ \Pi_{\mathfrak p \geq   N}F =0 $. Hence 
 $ \mathfrak m_{\a,\b} \circ \Phi_\xi$ (see \eqref{mabco}) is $(N,\mu')$-low momentum,
 in particular $  | \a^{(1)} - \b^{(1)} | \leq
\mathfrak p(\mathfrak m_{\a,\b} ) < N $.
\end{proof}

\smallskip

\begin{proof}{\sc of Proposition \ref{qtop0}}.
Since $  F \in {\mathcal Q}^T_{R/2,\PaPi,\theta,\mu' } $ (see Definition \ref{topbis}), for all $ N\geq \PaPi $,
there is a T\"oplitz approximation  $ \tilde F \in {\mathcal
T}_{R/2,N,\theta,\mu',\tau }$  of $ F $, namely
\be\label{Pmu1}
\Pi_{N, \theta, \mu',\tau} F= \tilde F+N^{-4d\tau}\hat F \,\quad{\rm  with }
\quad   \|X_F \|_{R/2}, \|X_{\tilde F}\|_{R/2},\|X_{\hat F}\|_{R/2} <
2\|F\|_{R/2, \PaPi ,\theta,\mu'}^T \, .
\end{equation}
In order to prove that $ f := F \circ \Phi_\xi \in  {\mathcal Q}_{s,r, \PaPi,\theta,\mu }^T  $
  we define
its candidate T\"oplitz approximation
\be\label{tildefap}
\tilde f:=
\Pi_{N,\theta,\mu,\tau}( (\Pi_{ {\mathfrak p} < N} \tilde F)
\circ \Phi_\xi) \, ,
\end{equation}
see \eqref{protang}.
Lemma \ref{top:aa} applied to
$  \Pi_{ {\mathfrak p} < N} \tilde F \in  {\mathcal T}_{R/2,N,\theta,\mu' ,\tau}  $  
implies that
$ (\Pi_{ {\mathfrak p} < N} \tilde F) \circ \Phi_\xi  \in {\mathcal T}_{s,2r,N,\theta,\mu',\tau} $ and
then, applying the projection $ \Pi_{N,\theta, \mu,\tau}$ we get
$ \tilde f \in {\mathcal T}_{s,2r,N,\theta,\mu,\tau } \subset  {\mathcal T}_{s,r,N,\theta,\mu ,\tau} $.
Moreover,  by \eqref{tildefap} and applying Lemma \ref{artaserse} to
$ \Pi_{ {\mathfrak p} < N} \tilde F $ (note that $  \Pi_{ {\mathfrak p} < N} \tilde F $ is either zero or it
is in $  {\mathcal H}^{D}_{R/2} $ with $ D \geq 2 $ because it is bilinear),
we get  
\begin{eqnarray}\label{hofame}
\|X_{\tilde f}\|_{s,r,\ro \mathfrak K_\alpha} \leq  \|X_{ (\Pi_{ {\mathfrak p} < N} \tilde F)
\circ \Phi_\xi) }\|_{s,r,\ro \mathfrak K_\alpha}
& \stackrel{\eqref{coord}} \lessdot & (8r/R)^{D-2}
\|X_{\Pi_{ {\mathfrak p} < N} \tilde F} \|_{R/2} \nonumber \\
&  \stackrel{  \eqref{Pmu1}} \lessdot  & (8r/R)^{D-2} \| F \|_{R/2, \PaPi, \theta, \mu',\tau }^T \, .
\end{eqnarray}
Moreover the T\"oplitz defect is
\begin{eqnarray*}
\hat f
&:=&
N^{4d\tau}(\Pi_{N,\theta,\mu,\tau} f-\tilde f)
\stackrel{\eqref{tildefap}} = N^{4d\tau} \, \Pi_{ N,\theta,\mu,\tau }\big(( F - \Pi_{{\mathfrak p}< N} \tilde F)\circ \Phi_\xi\big)
\\
&=&
 N^{4d\tau}\Pi_{N,\theta,\mu,\tau}\big((F-\tilde F)\circ \Phi_\xi\big) + N^{4d\tau}\Pi_{ N,\theta,\mu,\tau }\big(( \tilde F - \Pi_{{\mathfrak p} < N} \tilde F)\circ \Phi_\xi\big)
\\
& \stackrel{\eqref{Pmu1}, \eqref{protang}} =&
\Pi_{ N,\theta,\mu,\tau }(\hat F\circ \Phi_\xi)
+ N^{4d\tau}\Pi_{N,\theta,\mu,\tau} \Big( \big(F-\Pi_{N,\theta,\mu'}F\big)\circ \Phi_\xi\Big)\\&&\qquad
+ N^{4d\tau}\Pi_{ N,\theta,\mu,\tau
} \big((\Pi_{{\mathfrak p} \geq N} \tilde F)\circ \Phi_\xi\big)
\\
&\stackrel{\eqref{adamo}}=&
\Pi_{ N,\theta,\mu ,\tau}(\hat F\circ \Phi_\xi)
+ N^{4d\tau}f^*
+ N^{4d\tau}\Pi_{ N,\theta,\mu,\tau
} \big((\Pi_{{\mathfrak p} \geq N} \tilde F)\circ \Phi_\xi\big)
\,.
\end{eqnarray*}
 Lemmata \ref{cometichiami} and
\ref{ciccio}
 imply that, since $N^{4d\tau} 2^{-\frac{N}{2\kappa}+1}\leq 1$ 
  $\forall\, N\geq \PaPi$  by \eqref{fuso},
\begin{eqnarray}
\|X_{\hat f}\|_{s,r,{\ro}\mathfrak K_\alpha }
&\leq&
\|X_{\hat F\circ \Phi_\xi}\|_{s,r,{\ro}\mathfrak K_\alpha } + N^{4d\tau}
2^{-\frac{N}{2\kappa}+1}
(\|X_{F\circ \Phi_\xi}\|_{s,2r,{2\ro}\mathfrak K_\alpha }
+\|X_{ \tilde F\circ \Phi_\xi}\|_{s,2r,{2\ro}\mathfrak K_\alpha })
\nonumber
 \\
 &\lessdot &
  \|X_{\hat F\circ \Phi_\xi}\|_{s,2r,{\ro}\mathfrak K_\alpha }+
 \|X_{ F\circ \Phi_\xi}\|_{s,2r,{2\ro}\mathfrak K_\alpha }
 +\|X_{ \tilde F\circ \Phi_\xi}\|_{s,2r,{2\ro}\mathfrak K_\alpha } \nonumber
  \\
&  \stackrel{\eqref{coord}}{\lessdot} &
(8r/R)^{D-2}
(\|X_{\hat F}\|_{R/2}+\|X_{F}\|_{R/2}+\|X_{ \tilde F}\|_{R/2})
\label{ultimopassa}
  \\
& \stackrel{\eqref{Pmu1}} \lessdot & (8r/R)^{D-2} \|F\|_{R/2,\PaPi,\theta,\mu'}^T
 \label{hofamebis}
 \end{eqnarray}
(to get  \eqref{ultimopassa}  we also note that we can choose $ \hat F , \tilde F$ so that they belong to the same $ {\mathcal H}^{D}_{R/2} $ as $F$.
The bound \eqref{topstim} follows by \eqref{coord}, \eqref{hofame}, \eqref{hofamebis}.
\end{proof}

\subsection{Reduction to constant coefficients}

 For all  $k\in S^c$ set $\er(k):=\er(A)$ to be the root of the component $A$  of $\Gamma_S$ to which $k$ belongs (this is chosen once in one of the graphs in the same translation class).  We have thus associated to each $k$  an element $L(k)\in \Z^n$ , see Theorem \ref{teo1} and formula \eqref{defL}:
\begin{equation}\label{labella}
  z_k= e^{-\ii L(k).x}z_k' ,\ y=y'+\sum_{k\in S^c}  L(k)  |z_k'|^2,\ x=x'.
\end{equation} to define  a  symplectic change  of variables $\Psi:D(s,r/2)\to D(s,r)$ in which the normal form has constant coefficients. One may trivially check  that 
$$
\|X_{F\circ \Psi}\|^\lambda_{s,r}\le 4 e^{2d\kappa s}\|X_{F}\|^\lambda_{s,2r}.
$$

  We need to see what happens to   $(N,\theta,\mu,\tau)$--bilinear monomials first.
Note that  the momentum of $z_k'$  is $\er(k)$.
Take     a monomial
 $$ \mathfrak m= \mathfrak m_{\alpha,\beta,k}= e^{\ii (k,x)}y^lz^\alpha{\bar z}^\beta .$$ 

  We have $$ \mathfrak m \circ\Psi=  e^{\ii (k',x)}(y'+\sum_{k\in S^c}  L(k)  |z_k'|^2)^l{z'}^\alpha \bar {z }'{}^\beta \,,\quad k'=k-\sum_jL(j)(\alpha_j-\beta_j).$$ 
 
 Hence we obtain a sum of monomials 
 \begin{equation}
\label{monok}
 e^{\ii (k',x)} (y')^h (z')^{\alpha}(\bar z')^{\beta}|z'|^{2  g} \,,  \quad h\leq l\,,\quad 
 |h|+|  g|=|l|
\end{equation} all with momentum:
\begin{equation}
\label{mome2}\pi_\er(k',\alpha,\beta)=\pi(k') +\sum_j(\alpha_j-\beta_j)\er(j)=\pi (k',\alpha,\beta) +\sum_j(\alpha_j-\beta_j)(\er(j)-j). 
\end{equation}
As in the previous section we define a cut off parameter $$\mathfrak p(\mathfrak m_{\alpha,\beta,k}):= |k|+ 2d\kappa (|\alpha|+|\beta|), $$
and set
\be\label{protang1}
\Pi_{\mathfrak p \geq  K} F :=
\sum_{{\mathfrak p} ( \mathfrak m_{\a,\b,k} ) \geq K} F_{\a,\b,k}  \mathfrak
m_{\a,\b,k} \, , \quad \Pi_{\mathfrak p <  K} := I - \Pi_{\mathfrak p \geq  K} \, .
\end{equation}
In the following Lemmas we assume that  $s> (2d\kappa)^{-1}$.
\begin{lemma}
For all $ F \in {\mathcal H}_{s,r} $ we have
\be\label{stimaPX}
\| X_{(\Pi_{\mathfrak p \geq  K}F)\circ
\Psi}\|_{s,r}
\leq
2^{- \frac{K}{8d\kappa }+2} \|X_{F\circ \Psi}\|_{2s,2r}\,.
\end{equation}
\end{lemma}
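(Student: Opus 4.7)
The plan is to follow the scheme of Lemma \ref{cometichiami} (Lemma 7.8 of \cite{BBP1}), with the change of variables $\Phi_\xi$ replaced by the translation-type map $\Psi$ of \eqref{labella}. By the majorant property of $\|\cdot\|_{s,r}$ and the linearity/additivity of the majorant norm on non-negative Taylor series (as exploited in \cite{BBP1}), the argument reduces to a monomial-by-monomial estimate: for each $\mathfrak m=e^{\ii(k,x)}y^l z^\alpha \bar z^\beta$ with $\mathfrak p(\mathfrak m)=|k|+2d\kappa(|\alpha|+|\beta|)\geq K$, I would prove
$$\|X_{\mathfrak m\circ\Psi}\|_{s,r}\leq 2^{-K/(8d\kappa)+2}\,\|X_{\mathfrak m\circ\Psi}\|_{2s,2r}\,,$$
and then sum over the monomials of $\Pi_{\mathfrak p\geq K}F$, dominating by the corresponding sum for $F$ on the doubled domain.

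Substitution gives $\mathfrak m\circ\Psi=e^{\ii(k',x)}\bigl(y+\sum_{j\in S^c}L(j)|z_j|^2\bigr)^l z^\alpha\bar z^\beta$ with $k'=k-\sum_j L(j)(\alpha_j-\beta_j)$ and $|k'|\geq|k|-(d+1)(|\alpha|+|\beta|)$ by Remark \ref{stiL}. Expanding the binomial produces finitely many monomials as in \eqref{monok}, each of total $(y,z,\bar z)$-degree $2|l|+|\alpha|+|\beta|$ and angle frequency $k'$. A direct computation on $D(\sigma,\rho)$—using $|L(j)|\leq d+1$ and the standard bound $\sum_j|z_j|^2\leq\|z\|_{a,p}^2$ (valid because $e^{2a|j|}|j|^{2p}\geq1$ for $|j|\geq1$)—yields a homogeneous scaling of the form $\|X_{\mathfrak m\circ\Psi}\|_{\sigma,\rho}\lessdot e^{\sigma|k'|}\rho^{2|l|+|\alpha|+|\beta|-2}$ with an implicit constant depending only on $d,\kappa$. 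Comparing $(s,r)$ with $(2s,2r)$ then gives the explicit ratio bound
$$\frac{\|X_{\mathfrak m\circ\Psi}\|_{s,r}}{\|X_{\mathfrak m\circ\Psi}\|_{2s,2r}}\leq 4\,e^{-s|k'|}\,2^{-(|\alpha|+|\beta|)}\,.$$

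The required decay $2^{-K/(8d\kappa)}$ is then extracted by a dichotomy on $|\alpha|+|\beta|$. If $|\alpha|+|\beta|\geq K/(8d\kappa)$, the $z$-factor $2^{-(|\alpha|+|\beta|)}$ already suffices. Otherwise $|\alpha|+|\beta|<K/(8d\kappa)$, so $\mathfrak p(\mathfrak m)\geq K$ forces $|k|\geq 3K/4$; using $(d+1)/(8d\kappa)\leq 1/(4\kappa)\leq 1/4$ (valid since $d\geq1$, $\kappa\geq1$), one has $|k'|\geq|k|-(d+1)(|\alpha|+|\beta|)\geq K/2$, and then the standing hypothesis $s>(2d\kappa)^{-1}$ together with $\log_2 e>1$ yields $e^{-s|k'|}\leq e^{-K/(4d\kappa)}\leq 2^{-K/(8d\kappa)}$. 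The single-monomial estimate follows, and summing concludes the proof. The main obstacle I expect is the first step: verifying precisely that the combinatorial blow-up from expanding $(y+\sum_j L(j)|z_j|^2)^l$ can be absorbed into a constant independent of the multidegrees, so that the scaling $\rho^{2|l|+|\alpha|+|\beta|-2}$ really controls the majorant norm; this is the direct analog of the corresponding bookkeeping in \cite{BBP1} Lemma 7.8 and requires some care with the weighted $\bar\ell^{a,p}$ estimates, but no essentially new idea.
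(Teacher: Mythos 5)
Your proof is correct and uses essentially the same strategy as the paper's: a dichotomy on whether $|\alpha|+|\beta|\geq K/(8d\kappa)$ (in which case the normal-variable degree is preserved under $\Psi$ and the degree estimate \eqref{degrl} supplies the decay) or $|\alpha|+|\beta|<K/(8d\kappa)$ (in which case $\mathfrak p\geq K$ forces $|k|\geq 3K/4$, hence $|k'|\geq K/2$, and the smoothing estimate \eqref{smoothl} together with $s>(2d\kappa)^{-1}$ supplies the decay). Packaging the two mechanisms into a single multiplicative per-monomial ratio bound is a presentational variant, not a different argument, so this matches the paper's proof.
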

\begin{proof}
When $\mathfrak p(\mathfrak m_{\alpha,\beta,k}):= |k|+ 2d\kappa (|\alpha|+|\beta|)>K$ we distinguish two cases:

1. $2d\kappa (|\alpha|+|\beta|)>K/4$.  We note that $\Psi$ may only increase the degree in the normal variables of monomials so the total degree in the new variables is $> K/(8d\kappa)$ and the bound follows by the degree bounds \ref{degrl}.

2. Otherwise $|k'|> K/2$ and the bound follows by the ultraviolet bounds \ref{smoothl}.
\end{proof}
Fix parameters $$ (\mu'-\mu) N^3 > N \,, (\theta-\theta')N^{\tau_1}>2d\kappa .$$

  \begin{lemma}
Take a function $F\in \mathcal H_{s,r}$, assume that 
$$\Pi_{\mathfrak p\ge N} F= \Pi_{N,\mu'}^L F=0. $$ 
Then we have
$$f^*:= \Pi_{N,\theta,\mu,\tau}(F-\Pi_{N,\theta',\mu',\tau} F)\circ \Psi =0 $$
\end{lemma}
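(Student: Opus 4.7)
My plan is to follow the strategy of Lemma~\ref{ciccio}, suitably adapted to the new transformation $\Psi$ of \eqref{labella}. By linearity of every projection involved, I would first reduce to the case that $F = \mathfrak m = e^{\ii (k,x)}y^l z^\alpha \bar z^\beta$ is a single monomial; the hypotheses $\Pi_{\mathfrak p\geq N}F=0$ and $\Pi^L_{N,\mu'}F=0$ then say $|k|+2d\kappa(|\alpha|+|\beta|)<N$ and $\sum_j|\er(j)|(\alpha_j+\beta_j)\geq \mu' N^3$ respectively.

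A first dichotomy: if $\mathfrak m$ is $(N,\theta',\mu',\tau)$-bilinear, then $\Pi_{N,\theta',\mu',\tau}\mathfrak m=\mathfrak m$ and $f^*=0$ trivially. Otherwise $\Pi_{N,\theta',\mu',\tau}\mathfrak m=0$ and the task is to show $\Pi_{N,\theta,\mu,\tau}(\mathfrak m\circ\Psi)=0$. Expanding $\mathfrak m\circ\Psi$ by \eqref{labella} produces a sum of monomials of type \eqref{monok}, each of the form $e^{\ii(k',x)}(y')^h(z')^{\alpha+g}(\bar z')^{\beta+g}$ with $k'=k-\sum_j L(j)(\alpha_j-\beta_j)$ and $|h|+|g|=|l|$. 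Suppose for contradiction that such a monomial admits a $(N,\theta,\mu,\tau)$-bilinear decomposition with extracted factor $z_m^\sigma z_n^{\sigma'}$, $|\er(m)|,|\er(n)|>\theta N^{\tau_1}$, and residual degrees $a,b$ satisfying $\sum_j|\er(j)|(a_j+b_j)<\mu N^3$. Since $\tau_1\gg 3$, no index $j$ with $|\er(j)|>\theta N^{\tau_1}$ can appear in $a+b$; in particular $a_m=b_m=a_n=b_n=0$.

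Matching degrees at index $m$ (and similarly at $n$) leaves exactly two possibilities: either $g_m=0$ and $\alpha_m+\beta_m\geq 1$ (so $m$ is a genuine $z$-index of $\mathfrak m$), or the ``spurious'' case $m=n$, $\sigma\neq\sigma'$, $g_m=1$, $\alpha_m=\beta_m=0$ (where the extracted $|z_m|^2$ originates entirely from the $y$-expansion $y=y'+\sum_k L(k)|z'_k|^2$). In the spurious case the residual bound yields $\sum_j|\er(j)|(\alpha_j+\beta_j) \leq \sum_j|\er(j)|(a_j+b_j+2g_j) < \mu N^3<\mu' N^3$, so $\mathfrak m$ is $(N,\mu')$-low momentum, contradicting the hypothesis $\Pi^L_{N,\mu'}F=0$. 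In the genuine case I would extract the same $z_m^\sigma z_n^{\sigma'}$ directly from $\mathfrak m$ and verify $(N,\theta',\mu',\tau)$-bilinearity: $|k|<N$ is automatic; $|\er(m)|,|\er(n)|>\theta N^{\tau_1}>\theta' N^{\tau_1}$; the cut of $m,n$ at $\ell$ persists with the laxer parameters $(\theta',\mu',\tau)$ by Remark~\ref{varlm0}.i (using $\theta'<\theta$ and $\mu<\mu'$); and the residual sum $\sum_j|\er(j)|(\tilde\alpha_j+\tilde\beta_j)\leq \sum_j|\er(j)|(a_j+b_j+2g_j)-|\er(m)|-|\er(n)|<\mu' N^3$. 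This contradicts Case~2.

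The hard part will be the clean identification of the spurious case together with the realisation that precisely the assumption $\Pi^L_{N,\mu'}F=0$ eliminates it; the bookkeeping between $(\alpha,\beta)$, the $y$-expansion exponents $g$ and the extracted bilinear becomes delicate when $m=n$. The quantitative buffers $(\mu'-\mu)N^3>N$ and $(\theta-\theta')N^{\tau_1}>2d\kappa$ will enter to absorb the shift $|k'-k|\leq(d+1)(|\alpha|+|\beta|)$ coming from the bound $|L(j)|\leq d+1$ of Remark~\ref{stiL}, and to ensure cut stability in passing between the two parameter triples via Lemma~\ref{mah}.
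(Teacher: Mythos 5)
Your strategy coincides exactly with the paper's: reduce to a monomial $\mathfrak m$; if $\mathfrak m$ is already $(N,\theta',\mu',\tau)$-bilinear, $f^*$ vanishes trivially; otherwise suppose some monomial of $\mathfrak m\circ\Psi$ is $(N,\theta,\mu,\tau)$-bilinear and derive a contradiction, splitting according to whether the extracted high variable is created by the $y$-shift (your ``spurious'' case, which contradicts $\Pi^L_{N,\mu'}F=0$) or was already present in $\alpha,\beta$ (your ``genuine'' case, which contradicts the assumed non-bilinearity). You correctly isolate the spurious possibility as $m=n,\ \sigma\neq\sigma',\ g_m=1,\ \alpha_m=\beta_m=0$, and you correctly see that this is the only place where the second hypothesis is used. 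So the structure is right. However, the estimates as written are broken. In the spurious case the middle inequality $\sum_j|\er(j)|(a_j+b_j+2g_j)<\mu N^3$ is \emph{false}: the low-momentum constraint on the residual of the bilinear decomposition only gives $\sum_j|\er(j)|(a_j+b_j)<\mu N^3$, and the $j=m$ term you have artificially introduced contributes $2|\er(m)|g_m=2|\er(m)|>2\theta N^{\tau_1}\gg\mu N^3$. The $+2g_j$ is both incorrect and unnecessary: since $a_j+b_j=\alpha_j+\beta_j+2g_j\geq\alpha_j+\beta_j$ for $j\neq m$ and $\alpha_m+\beta_m=0=a_m+b_m$ in the spurious case, the direct chain $\sum_j|\er(j)|(\alpha_j+\beta_j)\leq\sum_j|\er(j)|(a_j+b_j)<\mu N^3$ already does the job. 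The same extraneous $2g_j$ reappears in your genuine-case bound, where again the correct comparison is with $a_j+b_j$ alone.

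You also misattribute the role of the hypotheses and of the buffers. The low-momentum bound on the residual of $\mathfrak m\circ\Psi$ is naturally expressed with $|\er(j)|$ (the momenta of $z'_j$), while the target conclusion ``$\mathfrak m$ is $(N,\mu')$-low'' lives in the pre-$\Psi$ variables, whose momenta are $|j|$. Since $|j-\er(j)|\leq 2d\kappa$, converting costs a term $2d\kappa(|\alpha|+|\beta|)$, and it is precisely the hypothesis $\mathfrak p(\mathfrak m)=|k|+2d\kappa(|\alpha|+|\beta|)<N$ that bounds this by $N$, with the buffer $(\mu'-\mu)N^3>N$ absorbing it to yield $\mu N^3+N<\mu'N^3$. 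Likewise, $(\theta-\theta')N^{\tau_1}>2d\kappa$ is what bridges $|\er(m)|>\theta N^{\tau_1}$ to $|m|>\theta'N^{\tau_1}$ in the genuine case. By contrast, the shift $|k'-k|$ you invoke at the end plays no role in this lemma: $|k'|<N$ is already part of the contradiction hypothesis (the transformed monomial is assumed bilinear), and the $|k|<N$ you need for $\mathfrak m$ comes directly from $\mathfrak p(\mathfrak m)<N$, with no appeal to the buffers.
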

\begin{proof}
We may assume that $F=\mathfrak m_{\alpha,\beta,k}$ is a monomial. If $F$ is $(N,\theta',\mu',\tau)$--bilinear the statement is clear. Otherwise $f^*$ is a sum a monomials described by formula \ref{monok}.   If one of these monomials is bilinear its high variables either  come from one of the new exponents $  g$  or already appear in $\alpha,\beta$. In the first case this is possible only if  $\mathfrak m$ is $(N,\mu')$--low contrary to our hypothesis. In fact suppose that $  g=\bar g+e_m$, where $m=n$ is the high variable with $|\er(m)|>\theta N^{\tau_1}$, and that
$$ \sum_j |\mathtt \er(j)|(\alpha_j+\beta_j+\bar   g_j)< \mu N^3. $$
Then since $\bar   g_j\geq 0$ and $|j-\mathtt \er(j)|<2d\kappa$ we have 
$$\sum_j|j|(\alpha_j+\beta_j)< \mu N^3 +2d\kappa (|\alpha|+|\beta|) < \mu N^3+N< \mu' N^3.$$
Finally since $\mathfrak p<N$ we have  $|k|<N$, we deduce that $\mathfrak m$ is low.

 In the other case  the two high variables $m,n$ such that  $|\er(m)|,|\er(n)|>\theta N^{\tau_1}$ already appear in $\mathfrak m_{\alpha,\beta,k}$. We claim that this implies 
$\mathfrak m_{\alpha,\beta,k}$ $(N,\theta',\mu',\tau)$-bilinear contrary to the hypothesis. In fact write $\alpha=\bar\alpha+e_m,\beta=\bar\beta+e_n$. Applying $\Psi$  the monomials  appearing in $f^*$ are of the form $ \mathfrak m_{\bar\alpha+  g,\bar\beta+  g,k'}z_m\bar z_n$ with $|k'|<N$ and $\sum_j |\mathtt \er(j)|(\bar\alpha_j+\bar\beta_j+    2g_j)< \mu N^3. $ Then  $\sum_j |  j |(\bar\alpha_j+\bar\beta_j )< \mu N^3+2d\kappa (|\alpha|+|\beta|)< \mu' N^3. $ Since $|j-\mathtt \er(j)|<2d\kappa$ we have  
$$|m|,|n|> \theta N^{\tau_1}-2d\kappa >\theta'N^{\tau_1}.$$
The fact that $m,n$ have the correct cut is trivial, see Remark \ref{varlm}.
Finally we are assuming that $\mathfrak p(m_{\alpha,\beta,k}):= |k|+ 2d \kappa(|\alpha|+|\beta|)\leq N $ hence $|k|<N$ and we have that $\mathfrak m_{\alpha,\beta,k}$ is $(N,\theta',\mu',\tau)$-bilinear.

\end{proof}
 We next analyze a function $F$ with $ \Pi_{N,\mu'}^L F=F$ and again we may assume that it is a monomial $F=\mathfrak m_{\alpha,\beta,k}$, in this case $f^*:= \Pi_{N,\theta,\mu,\tau}(F-\Pi_{N,\theta',\mu',\tau} F)\circ \Psi =  \Pi_{N,\theta,\mu,\tau} F \circ \Psi   $ is a sum of monomials $ \mathfrak m_{\alpha+\bar  g,\beta+\bar  g,k'}|z_m|^2$  arising from the terms  \ref{monok} with   $g=\bar  g+e_m$.
 \begin{lemma}\label{bir}
Given a function $F$ with $ \Pi_{N,\mu'}^L F=F$ then $f^*  =  \Pi_{N,\theta,\mu,\tau} F \circ \Psi   $ is piecewise T\"oplitz and diagonal.
\end{lemma}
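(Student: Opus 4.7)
The plan is to reduce to a single monomial $F = e^{\ii(k,x)} y^l z^\alpha \bar z^\beta$ with $\Pi^L_{N,\mu'} F = F$, and to track the combinatorics of the change of variables \eqref{labella} composed with the bilinear projection $\Pi_{N,\theta,\mu,\tau}$. First I would expand
\[
  y_i^{l_i} = \Bigl(y'_i + \sum_{j\in S^c} L^{(i)}(j)\,|z'_j|^2\Bigr)^{l_i}
\]
and write $F\circ\Psi$ as a sum of the monomials \eqref{monok}. Since $F$ is $(N,\mu')$--low, the original exponents $\alpha,\beta$ are supported on low indices only, so they contribute no high variables. A monomial in the expansion is therefore bilinear in high variables if and only if the two high factors come from a single $|z'_m|^2$ inside the $y^l$--substitution; that is $g = \bar g + e_m$ with $m\in S^c$ satisfying $|\er(m)|>\theta N^{\tau_1}$ and $\bar g$ supported on low indices. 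Consequently every surviving bilinear contribution is of the form (low-momentum function)$\,\cdot\, z'_m \bar z'_m$, so $f^*$ is automatically diagonal.

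Next I would collect the coefficient of $|z'_m|^2$. Choosing the factor $L^{(i)}(m)|z'_m|^2$ from the $i$-th binomial expansion gives the weight $l_i L^{(i)}(m)$, so after summing over $i$ one obtains
\[
  f^* \;=\; \sum_{m}\Bigl(\sum_{i=1}^n l_i\, L^{(i)}(m)\Bigr)\,G_{\alpha,\beta,k,l}(x,y',w^L)\,z'_m\bar z'_m,
\]
where the outer sum is over $m\in S^c$ admitting an $(N,\theta,\mu,\tau)$--cut with $|\er(m)|>\theta N^{\tau_1}$, and $G_{\alpha,\beta,k,l}$ collects the $m$--independent part (the factor $e^{\ii(k',x)}$ with $k' = k - \sum_j L(j)(\alpha_j-\beta_j)$, the low pieces $(z')^\alpha(\bar z')^\beta|z'|^{2\bar g}$ and the remaining $(y')^{l-e_i}$). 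The fact that $G_{\alpha,\beta,k,l}\in \mathcal L_{s,r}(N,\mu,0)$ follows from the $(N,\mu')$--low hypothesis on $F$ together with the parameter gaps $(\mu'-\mu)N^3 > N$ and $(\theta-\theta')N^{\tau_1}>2d\kappa$ stated just before the lemma, combined with the bound $|j-\er(j)|\leq 2d\kappa$; these ensure that passing from the $|j|$--weights to the $|\er(j)|$--weights in the momentum estimate costs less than the available slack, and also that $|k'|<N$.

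The decisive step is to verify that the scalar $\sum_i l_i L^{(i)}(m)$ depends on $m$ only through the affine space $A$ associated to the cut of $m$. This is a direct consequence of Theorem \ref{Lostra}: any two high--momentum points sharing the same cut $\ell$ and the same associated affine space lie in the same stratum $\Sigma_{\GA,a}$, and by Remark \ref{types} the integer vector $L(m)$ is constant on each such stratum. The coefficient of $z'_m\bar z'_m$ in $f^*$ therefore depends on $m$ only through $A$, which is precisely the piecewise T\"oplitz condition with $\sigma=+,\sigma'=-,h=0$ in Definition \ref{topa}. The main obstacle I anticipate is not the T\"oplitz identification itself (which is essentially automatic from Theorem \ref{Lostra}) but the careful bookkeeping between the low--momentum bound $\sum_j |\er(j)|(\alpha_j+\beta_j)<\mu' N^3$ valid for $F$ and the corresponding bound with parameter $\mu$ needed to certify that $G_{\alpha,\beta,k,l}\in \mathcal L_{s,r}(N,\mu,0)$; this amounts to checking that the degree shift produced by substituting $y\rightsquigarrow y' + \sum L(k)|z'_k|^2$ is absorbed by the gap $(\mu'-\mu)N^3>N$.
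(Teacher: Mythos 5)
Your proof is correct and takes essentially the same route as the paper: expand the substitution $y_i \rightsquigarrow y'_i + \sum_{j\in S^c} L^{(i)}(j)|z'_j|^2$, observe that because $F$ is $(N,\mu')$--low the only way to produce a bilinear high-variable pair is to pull a single factor $L^{(i)}(m)|z'_m|^2$ out of the $y$--binomial (forcing diagonality), and then use Theorem \ref{Lostra} to conclude that $L(m)$ is constant on good points of each affine space. The paper writes the result compactly as $f^* = \Pi^L_{N,\mu}\bigl(\nabla_y F\circ\Psi\bigr)\cdot\sum_m L(m)|z_m|^2$, which matches your computation; note however that your displayed formula has a small notational slip: the factor $\sum_i l_i L^{(i)}(m)$ cannot be pulled out as a scalar multiplying a single $G_{\alpha,\beta,k,l}$, because the remaining $(y')^{l-e_i}$ (more precisely, $\tilde y^{\,l-e_i}$ with $\tilde y_i := y'_i + \sum_{j\ \mathrm{low}} L^{(i)}(j)|z'_j|^2$) depends on the chosen index $i$; the coefficient of $z'_m\bar z'_m$ is really the dot product $\sum_i L^{(i)}(m)\,G_i$ where $G_i$ is the ($m$-independent) projection of $\partial_{y_i}F\circ\Psi$. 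This does not affect the conclusion, since each $G_i$ is independent of $m$ and $L(m)$ is constant on each stratum $\Sigma_{\GA,a}$, so the full coefficient still depends on $m$ only through $A$.
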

\begin{proof}
By the previous remarks we may compute explicitly $f^*$  as:
$$\Pi_{N,\mu}^L\Big(\nabla_y F\circ \Psi\Big)\cdot\sum_{|m|>\theta N^{\tau_1}, \atop m\in (N,\theta,\mu,\tau)-{\rm cut}} L(m)|z_m|^2\,, $$
we have that $f^*\in \mathcal T_{(N,\theta,\mu,\tau)}$  since $L(m)$ is fixed on all the $(N,\theta,\mu,\tau)$--good points of any subspace (by Theorem \ref{Lostra}). 
\end{proof}\
 \begin{lemma}
Given a function $F\in \mathcal T_{(N,\theta',\mu',\tau)}$, then $\Pi_{N,\theta,\mu,\tau} F \circ \Psi \in \mathcal T_{(N,\theta,\mu,\tau)} $
\end{lemma}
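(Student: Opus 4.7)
The plan is to track how the T\"oplitz decomposition of $F$ transforms monomial by monomial under $\Psi$ (cf.~\eqref{labella}), and then to apply $\Pi_{N,\theta,\mu,\tau}$. Write $F$ as in Definition~\ref{topa},
$$F=\sum_{A,\sigma,\sigma'}\sum_{m,n}^{(A,\underline p')}F^{\sigma,\sigma'}(A,\sigma m+\sigma'n)\,z_m^{\sigma}z_n^{\sigma'},\qquad \underline p':=(N,\theta',\mu',\tau),$$
with $F^{\sigma,\sigma'}(A,h)\in\mathcal L_{s,r}(N,\mu',h+u(A,h,\sigma,\sigma'))$. The change of variables $\Psi$ acts by $z_k^{\rho}\mapsto e^{-\ii\rho L(k)\cdot x}(z'_k)^{\rho}$ on every $z$-factor and by $y\mapsto y'+\sum_{k\in S^c}L(k)|z'_k|^2$ on the $y$-dependence of the coefficient $F^{\sigma,\sigma'}(A,h)(x,y,w^L)$. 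Hence each T\"oplitz monomial becomes
$$\widetilde F^{\sigma,\sigma'}(A,h)(x,y',z',\bar z')\cdot e^{-\ii(\sigma L(m)+\sigma'L(n))\cdot x}\,(z'_m)^{\sigma}(z'_n)^{\sigma'},$$
where $\widetilde F^{\sigma,\sigma'}(A,h)$ is the result of substituting $z_k\to e^{-\ii L(k)\cdot x}z'_k$ in the low variables of $F^{\sigma,\sigma'}(A,h)$ and Taylor-expanding its $y$-dependence about $y'$, each $y$-derivative being multiplied by a polynomial in $\sum_k L(k)|z'_k|^2$.

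The first key point is that the exponential $e^{-\ii(\sigma L(m)+\sigma'L(n))\cdot x}$ is a function of $(A,h,\sigma,\sigma')$ alone. Indeed, by Theorem~\ref{Lostra}, $L(m)$ is determined by the stratum $\Sigma_{\GA,a}$ containing $m$, and this stratum is read off from the affine space $A\frec{N}[v_i;p_i]_\ell$ associated to the $(N,\theta,\mu,\tau)$-cut of $m$; by Remark~\ref{dueta2} the analogous affine space $B=-\sigma\sigma'A+\sigma' h$ for $n$, and so $L(n)$, is also a function of $(A,h,\sigma,\sigma')$. This phase may therefore be absorbed into a new coefficient $\mathtt g^{\sigma,\sigma'}(A,h)$ without breaking the T\"oplitz form.

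Next, apply $\Pi_{N,\theta,\mu,\tau}$. Because $\theta>\theta'$ and $\mu<\mu'$ (by the gap conditions $(\theta-\theta')N^{\tau_1}>2d\kappa$ and $(\mu'-\mu)N^3>N$), any $(N,\theta,\mu,\tau)$-bilinear pair $(m,n)$ is automatically $(N,\theta',\mu',\tau)$-bilinear and so appears in the decomposition above. Among the terms produced by the expansion $(y'+\sum_k L(k)|z'_k|^2)^i$, any factor $|z'_k|^2$ with $|\er(k)|\geq\mu N^3$ would either contribute a third high variable or raise the degree in one of $z'_m,z'_n$ above one, and is therefore killed by the bilinear projection; only indices with $|\er(k)|<\mu N^3$ survive. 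Repeating the estimate of Lemma~\ref{top:aa} with the role of the Birkhoff map played by $\Psi$, and using the slack $(\mu'-\mu)N^3>N$ to absorb the extra low-momentum produced by the surviving $y$-expansion terms together with the finitely many low-frequency phases $e^{\mp\ii L(k)\cdot x}$, one concludes that the surviving coefficient of $(z'_m)^{\sigma}(z'_n)^{\sigma'}$ lies in $\mathcal L_{s,r}(N,\mu,\sigma\er(m)+\sigma'\er(n))$.

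Putting the two steps together yields
$$\Pi_{N,\theta,\mu,\tau}(F\circ\Psi)=\sum_{A,\sigma,\sigma'}\sum_{m,n}^{(A,\underline p)}\mathtt g^{\sigma,\sigma'}(A,\sigma m+\sigma'n)\,(z'_m)^{\sigma}(z'_n)^{\sigma'},$$
with $\mathtt g^{\sigma,\sigma'}(A,h)\in\mathcal L_{s,r}(N,\mu,h+u(A,h,\sigma,\sigma'))$, which is precisely the defining form of $\mathcal T_{(N,\theta,\mu,\tau)}$ (Definitions~\ref{topa}--\ref{topp}). The hard part will be the careful bookkeeping of the low momentum after the combined $y$-expansion and phase absorption; this is exactly where the gap conditions $(\mu'-\mu)N^3>N$ and $(\theta-\theta')N^{\tau_1}>2d\kappa$ are used, in analogy with Lemma~\ref{top:aa} for the Birkhoff map.
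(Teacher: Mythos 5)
Your proposal follows the same route the paper takes: write $F$ in its piecewise T\"oplitz form, push the change of variables $\Psi$ through monomial by monomial, observe that the induced phase $e^{-\ii(\sigma L(m)+\sigma' L(n))\cdot x}$ and the transformed coefficient depend on $(m,n,\sigma,\sigma')$ only through $(A,\sigma m+\sigma'n,\sigma,\sigma')$, and let the projection $\Pi_{N,\theta,\mu,\tau}$ (equivalently $\Pi^L_{N,\mu}$ on the coefficient) kill any high factors produced by the $y'+\sum_k L(k)|z'_k|^2$ expansion. The only notable difference is that you explicitly invoke Theorem \ref{Lostra} and Remark \ref{dueta2} to justify the constancy of $\sigma L(m)+\sigma' L(n)$ on $(A,h,\sigma,\sigma')$, a point the paper states without reference; this is a welcome elaboration but not a different method.
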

 \begin{proof}
 Recalling Definition \ref{topa} we have
$$
F = \sum_{A\in \mathcal H_N} \sum_{|m|,|n|> \theta' N^{\tau_1}, \s,\s'=\pm}^*
F^{\s,\s'}(A,\s m+\s' n) z_m^\s z_n^{\s'} \ \ {\rm with} \ \ F^{\s,\s'}(A, h)
\in {\mathcal L}_{r,s}(N,\mu', h ) \, .
$$
denoting $A=[v_i;p_i]_\ell$ the apex $*$ means the sum restricted to those $n,m$ which have a cut  at $\ell$ with parameters $(N,\theta',\mu',\tau)$ and $m$ has associated space $A$.
 
Composing with the map $ \Psi $ in \eqref{labella}, since $ m, n \notin {S } $ and $|\er(m)|,|\er(n)|> \theta N^{\tau_1}$ implies $|m|,|n|> \theta' N^{\tau_1}$,  we get $\Pi_{(N,\theta,\mu,\tau)}F\circ \Psi =$
$$
 \sum_{A\in \mathcal H_N} \sum_{|\er(m)|,|\er(n)|> \theta N^{\tau_1}, \s,\s'=\pm}^*
\Pi_{N,\mu}^L\Big(F^{\s,\s'}(A,\s m+\s' n)\circ \Psi \, e^{-\ii(\s L(m)+\s' L(n), x)}\Big)(z'_m)^\s (z'_n)^{\s'} \, .
$$
Each coefficient
$ F^{\s,\s'}(A,\s m+\s' n)\circ \Phi$   depends on $n,m,\s,\s'$ only
through $A,\s m+\s' n,\s,\s' $, same for $\s L(m)+\s' L(n)$. 
\end{proof}

 \begin{proposition}{\bf (Quasi--T\"oplitz)} \label{qtop0bis}
Let $$\vec p= (r,s,\PaPi,\theta,\mu,\lambda, \ro \mathfrak K_\alpha) ,\ {\vec p}\,'= (2r,2s,\PaPi,\theta',\mu',\lambda, \ro \mathfrak K_\alpha) $$ be admissible parameters and
\begin{equation}\label{fusobis}
(\mu'-\mu){\PaPi}^3 > \PaPi\,,\qquad (\theta-\theta')\PaPi^{\tau_1}> 2d\kappa>s^{-1}\,,  \qquad\PaPi^{\tau_1}
2^{-\frac{\PaPi}{8\kappa d}+2}<1 \,  .
\end{equation}
If
$ F \in \mathcal Q_{\vec p'}^T ,$
then $ f := F \circ \Psi\in \mathcal Q_{\vec p}^T$
and
\begin{equation}\label{topstimbis}
 \|X_f\|_{\vec p}^T
 \lessdot e^{2d\kappa s}\|X_F\|_{\vec p'}^T \, .
 \end{equation}
\end{proposition}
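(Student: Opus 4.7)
The plan is to mimic the structure of the proof of Proposition \ref{qtop0}, using the three preceding lemmas as the analogues of Lemmas \ref{cometichiami}, \ref{ciccio} and \ref{top:aa}. The new wrinkle (compared to the action--angle case) is that the substitution $y = y' + \sum_k L(k)|z'_k|^2$ turns low--momentum monomials in $F$ into new bilinear monomials in the variables $(x',y',z',\bar z')$; that contribution has to be tracked separately, and is precisely what Lemma \ref{bir} is designed to handle.

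Fix $N\ge \PaPi$ and let $\tilde F\in \mathcal T_{2r,2s,N,\theta',\mu',\tau}$, $\hat F$ be an $\epsilon$--Toplitz approximation of $F$, i.e.
\[
\Pi_{N,\theta',\mu',\tau}F=\tilde F+N^{-4d\tau}\hat F,\qquad \|X_{\tilde F}\|_{2r,2s},\,\|X_{\hat F}\|_{2r,2s}\le 2\|X_F\|^T_{\vec p\,'}.
\]
I would split any function $G$ as
\[
G=\Pi_{\mathfrak p\ge N}G\;+\;\Pi^L_{N,\mu'}\,\Pi_{\mathfrak p<N}G\;+\;(I-\Pi^L_{N,\mu'})\,\Pi_{\mathfrak p<N}G,
\]
apply it to $G=F$ and $G=\tilde F$, and take as candidate Toplitz approximation of $f=F\circ\Psi$
\[
\tilde f\;:=\;\Pi_{N,\theta,\mu,\tau}\!\Big(\big(\Pi^L_{N,\mu'}\Pi_{\mathfrak p<N}F+(I-\Pi^L_{N,\mu'})\Pi_{\mathfrak p<N}\tilde F\big)\circ\Psi\Big).
\]
The second summand in the bracket lies in $\mathcal T_{2r,2s,N,\theta',\mu',\tau}$ and has no $(N,\mu')$--low part, so the last lemma of the section places $\Pi_{N,\theta,\mu,\tau}\big((I-\Pi^L_{N,\mu'})\Pi_{\mathfrak p<N}\tilde F\big)\circ\Psi$ in $\mathcal T_{(N,\theta,\mu,\tau)}$; the first summand is $(N,\mu')$--low, so Lemma \ref{bir} yields that $\Pi_{N,\theta,\mu,\tau}\big(\Pi^L_{N,\mu'}\Pi_{\mathfrak p<N}F\big)\circ\Psi$ is piecewise Toplitz (and in fact diagonal). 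Hence $\tilde f\in \mathcal T_{s,r,N,\theta,\mu,\tau}$.

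To estimate the Toplitz defect, I would subtract $\tilde f$ from $\Pi_{N,\theta,\mu,\tau}f$ and reorganize. By the ``$f^*=0$''\ Lemma applied to $(I-\Pi^L_{N,\mu'})\Pi_{\mathfrak p<N}F$ (which satisfies its two vanishing hypotheses by construction), one has
\[
\Pi_{N,\theta,\mu,\tau}\big((I-\Pi^L_{N,\mu'})\Pi_{\mathfrak p<N}F\big)\circ\Psi
=\Pi_{N,\theta,\mu,\tau}\big(\Pi_{N,\theta',\mu',\tau}(I-\Pi^L_{N,\mu'})\Pi_{\mathfrak p<N}F\big)\circ\Psi,
\]
so the difference reduces to three manageable pieces: (i) $\Pi_{N,\theta,\mu,\tau}(\Pi_{\mathfrak p\ge N}F)\circ\Psi$, (ii) $N^{-4d\tau}\Pi_{N,\theta,\mu,\tau}(\hat F\circ\Psi)$, and (iii) $\Pi_{N,\theta,\mu,\tau}\big((\Pi_{\mathfrak p\ge N}\tilde F)\circ\Psi\big)$. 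Pieces (i) and (iii) are absorbed by the smoothing estimate \eqref{stimaPX} and the hypothesis $\PaPi^{\tau_1}2^{-\PaPi/(8d\kappa)+2}<1$, while piece (ii) is bounded directly by $2\|X_F\|^T_{\vec p\,'}$ using the standard composition estimate $\|X_{G\circ\Psi}\|_{s,r}\le 4e^{2d\kappa s}\|X_G\|_{2s,2r}$.

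Combining these bounds and the estimate on $\|X_{\tilde f}\|_{s,r}$ (again obtained via the composition inequality applied to $\Pi_{\mathfrak p<N}\tilde F$ and $\Pi^L_{N,\mu'}\Pi_{\mathfrak p<N}F$) yields $\max(\|X_{\tilde f}\|_{s,r},\|X_{N^{4d\tau}(\Pi_{N,\theta,\mu,\tau}f-\tilde f)}\|_{s,r})\lessdot e^{2d\kappa s}\|X_F\|^T_{\vec p\,'}$, uniformly in $N\ge\PaPi$, giving \eqref{topstimbis}. The Lipschitz--in--$\xi$ bound requires no change since $\Psi$ is $\xi$--independent. The main obstacle, as anticipated, is bookkeeping the low--momentum branch: one must verify that Lemma \ref{bir} indeed produces an element of $\mathcal T_{(N,\theta,\mu,\tau)}$, which uses Theorem \ref{Lostra} to see that $L(m)$ is constant on the good points of each affine stratum and hence the coefficient in $\sum_m L(m)|z'_m|^2$ depends on $m$ only through its associated subspace $A$.
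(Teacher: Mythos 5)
Your proposal is correct and follows essentially the same route as the paper's (very terse) proof: split $F$ into its $(N,\mu')$-low part, which Lemma \ref{bir} shows is exactly T\"oplitz after composing with $\Psi$, and its complement, for which one mimics the proof of Proposition \ref{qtop0} using the smoothing, $f^*=0$, and T\"oplitz-preservation lemmas of this section. The only small imprecision is in the enumeration of the pieces of the T\"oplitz defect — the $\Pi_{\mathfrak p\ge N}\tilde F$ contributions actually cancel, leaving only $\Pi_{\mathfrak p\ge N}F$ and $N^{-4d\tau}\Pi_{\mathfrak p<N}\hat F$ — but since every variant piece is bounded by the same smoothing and composition estimates, this does not affect the final inequality.
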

 \begin{proof}
Consider $N\geq \PaPi$ and suppose that $F$ has no $N,\mu'$--low terms. In this case the proof is identical to that of Proposition \ref{qtop0} provided we use the corresponding Lemmata of this section. We conclude the proof by noting that $\Pi_{(N,\theta,\mu,\tau)}\big(\Pi^L_{N,\mu'} F\circ \Psi)\in \mathcal T_{N,\theta,\mu,\tau}$ by Lemma \ref{bir}. Hence in this case the T\"oplitz defect is zero.
\end{proof}

\subsection{The final step}  In the final step we diagonalize block by block the matrices following Theorem \ref{xixi}.
The linear change $\Xi$ has a {\em finite block structure}  in the sense that the Hilbert space  $\ell^{a,p}$  is decomposed into an orthogonal sum of subspaces  $V_{\GA,a}$ indexed by the combinatorial pairs $\GA, a$  and, if we write a vector as a finite vector with coordinates in these subspaces, the linear transformation $\Xi$ is given by the finite matrix $U_\GA=(U_{i,j}(\xi))$ with entries  depending on $\xi$ and uniformly bounded by some value $U$, see Remark\ref{normU}.  Denote by $\Xi^*:F\mapsto F\circ \Xi$  the map induced on functions.  One may trivially see that the majorant norm
$$\sup_{\xi\in \ro \mathfrak K_\alpha}\|M \Xi  z'\|_{s,r} ^2 \le  \sum_{i\in S^c} (\sum_{j:\,\er(j)=\er(i)} \sup_{\xi\in \ro \mathfrak K_\alpha} |U_{i,j}(\xi)| |z'_j| )^2 e^{2a |i|} |i|^{2p} \le 2^{2p}(d+1)^2 U^2 e^{4d\kappa|a|}|z'|^2_{s,r}\,. $$ We now restrict to thet domain $\mathcal O_0= \ro \mathfrak K_\alpha$ of measure of order $\e^{2n}$,  which is all contained in one of the connected components of Theorem \ref{xixi}. Recall that   one of the domains   $ \mathfrak K_e$   is contained in the elliptic region.

Using the bounds of  Remark \ref{normU} and passing to the majorant norm for vector fields we have
\begin{equation}
\label{ilrrr}\| X_{\Xi^*f  }\|^\lambda_{s,r}\le  \eufm A \| X_{f}\|^\lambda_{s,C r} \,,\quad \eufm A:= (d+1)2^p U \,e^{2d\kappa|a|}\,. 
\end{equation}

Next we need   to control the T\"oplitz norms. We remark that, since we are making linear transformations among variables $z_k$  which have the same root,  any monomial in these variables is replaced by a  homogeneous sum of monomials in the new variables, all of which have the same  root-momentum, so the space 
$\mathcal  L_{s,r}(N,\mu,h)$ is mapped into itself. The bilinear functions  $\mathcal B_{\underline p}$, with   $\underline p:=(N,\theta,\mu,\tau)$ are mapped in $\mathcal B_{\underline p^1}$, with  $\underline p^1:=(N,\theta^1,\mu^1,\tau)$, provided that  $\theta,\theta^1,\mu,\mu^1$ are parameters which satisfy the neighborhood lemma \ref{mah}, so that if $m$ has a $\underline p$--cut, also $m+u$  has a $\underline p^1$--cut for all possible types  $u\in \mathcal Z$.  The new estimate on parameters that we need is, using Formula \eqref{vicin} for $r-m\in\mathcal Z$ is:  
 \begin{equation}
\label{vicinz}2d\kappa<\min({\kappa}   ^{-1}(\mu^1-\mu)N^{\tau-1},\ {\kappa}   ^{-1}(\theta -\theta^1)N^{4d\tau-1} ).
\end{equation}\smallskip

We now claim that for more restricted parameters $p'$ we have  that $\Xi^*\mathcal Q^T_{\underline p}$ is contained in $\mathcal Q^T_{\underline p'}$. it
 remains to understand what happens to the space  $ {\mathcal  T}_{\underline p}$ we claim that $\Pi_{\underline p'}\Xi^* {\mathcal  T}_{\underline p}\subset   {\mathcal  T}_{\underline p'}$. 
 Take thus a function $
g  =\sum_{m,n}^{(A,\underline p)} {\mathtt g}(\s m+\s' n) z_m^\s z_n^{\s'}$ as in Formula \eqref{nzo}.
We have that $A^g_{\underline p}$  is contained in some stratum $\Sigma_{\GA,a}$  for some combinatorial pair $\GA,a$.  For the space $B$ associated to $n$ we have that $B^g_{\underline p}$  is contained  in a  stratum $\Sigma_{\mathcal B,b}$. 
Note that the pair $\mathcal B,b$ is determined by $\GA,a$ and $\s m+\s' n$.

 Now the change of variables $\Xi$ acts on $z_m$ giving a linear combination of $z_ {m-u_a+u}
$ where $u_a$ is the type of $m$ and $u$ runs over the types  appearing in $\GA$ similarly for $\mathcal B$.

Consider \begin{equation}
\label{mischia}\Pi_{\underline p'}\Xi^* g=\sum_{m,n}^{(A,\underline p)}\Pi_{\underline p'}\Xi^* {\mathtt g}(\s m+\s' n) \!\!\!  \sum_{v\in \GA, \;k_1= u_v-u_a} \!\!\!  U_\GA(\xi)_{a,v} z_{m+k_1}^\s \!\!\!  \sum_{w\in \mathcal B\,,\; k_2= u_w-u_b} \!\!\!  U_\mathcal B(\xi)_{b,w}  z_{n+k_2}^{\s'}.
\end{equation} We have already remarked that $\Xi^* {\mathtt g}(\s m+\s' n)\in \mathcal  L_{s,r}(N,\mu,h)$.
Formula \eqref{mischia}  gives a sum  $\sum_{m',n'}g_{m' ,n'}^{\s,\s'}z_{m'}^\s z_{n'}^{\s'}$  where both $m',n'$ have a $\underline p'$  cut and either the associated space of $m'$ precedes that of $n'$ or the opposite case occurs, moreover $m'\in \Sigma_ {\GA, v},\ n'\in\Sigma_{\mathcal B,w}$.   Reordering Formula \eqref{mischia} it is easily seen that the coefficient $$g_{m',n'}^{\s,\s'}= U_\GA(\xi)_{a,v}  U_\mathcal B(\xi)_{b,w} {\mathtt g}(\s (m'-k_1)+\s' (n'-k_2))=$$
$$U_\GA(\xi)_{a,v}  U_\mathcal B(\xi)_{b,w} {\mathtt g}(\s  m' +\s'  n'- \s  k_1-\s'  k_2 )$$
 depends only upon $\s m'+\s' n'$ and $v$ hence the claim. Indeed the only thing to make explicit is  how to remove the restriction $m'-k_1\in A^g_{\underline p}$. This follows from the estimate on the parameters  $\underline p'$ which ensures that, if $m',n'$ have a  $\underline p'$ cut at $\ell$ then the vectors  $m'-k_1, m'-k_2$  have a  $\underline p $ cut at $\ell$. This we do as usual by the neighborhood Lemma noticing that $|k_1|,|k_2|\leq  2d\kappa$ since they are differences of two elements in $\mathcal Z$ (cf. Remark \ref{types}). So the requirement is by \eqref{vicin}:
$$2d\kappa<\min({\kappa}   ^{-1}(\mu-\mu')N^{\tau-1},\ {\kappa}   ^{-1}(\theta'-\theta)N^{4d\tau-1} .$$  \smallskip

Summarizing we have performed 4 changes of coordinates   called $\Psi^{(1)},\Phi_\xi,$$\Psi, \Xi$. The final Hamiltonian is thus  $H\circ  \Psi^{(1)}\circ \Phi_\xi\circ \Psi\circ \Xi$, this by definition is    {\em The Hamiltonian of the NLS in the final coordinates}.   Recall that the perturbation $P$ refers to the Hamiltonian  $H\circ \Psi^{(1)}\circ \Phi_\xi=\mathcal N+P   $ (Definition \ref{puzza})  which by abuse of notation we have still called $H$.

  \begin{proposition}\label{Pfinale}
 The perturbation of the Hamiltonian of the NLS in the final coordinates is quasi-T\"oplitz for the parameters $\vec p_0=(r_0= r/(2\eufm A), s_0= s/4,\theta_0=\mathtt C/2, \mu_0= 2 \mathtt c, K_0> N_0, \lambda=2\e^2, \mathcal O= \ro \mathfrak K_\alpha)$. We have the bounds:
 \begin{equation}\label{stimefinali}
 \norma X_{  P   \circ\Psi\circ \Xi}\norma_{\vec p_0}^T \leq  C(\e r + \e^5 r^{-1})\,, 
\end{equation}
\end{proposition}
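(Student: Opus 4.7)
My plan is to obtain the conclusion by chaining the three quasi--T\"oplitz preservation results already at our disposal: Corollary \ref{NLS00} giving $P\in\mathcal Q^{T}$ (the transformations $\Psi^{(1)}$ and $\Phi_\xi$ are already folded into the definition of $P$), Proposition \ref{qtop0bis} which handles the symplectic reduction $\Psi$ to constant coefficient normal form, and the linear change analysis culminating in the operator bound \eqref{ilrrr} together with the verification $\Pi_{\underline p_0}\Xi^{*}\mathcal T_{\underline p_1}\subset\mathcal T_{\underline p_0}$ that appears in the paragraph immediately preceding the statement. There is no new analytic input; the work consists of matching parameters.

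I would proceed by working backward from $\vec p_{0}$. The $\Xi$-step calls for a pre-$\Xi$ tuple $\vec p_{1}=(s_{0},Cr_{0},K_{0},\theta_{1},\mu_{1},\lambda,\mathcal O)$ whose angular parameters dominate $(\theta_{0},\mu_{0})$ with enough margin to absorb the type translation $|u_{v}-u_{a}|\le 2d\kappa$ appearing in \eqref{mischia} via the neighborhood condition \eqref{vicinz}; the radius contracts by the operator--norm constant $C=(d+1)U$ and the norm acquires the factor $\eufm A$ of \eqref{ilrrr}. The $\Psi$-step then demands a pre-$\Psi$ tuple $\vec p_{2}=(2s_{0},2Cr_{0},K_{0},\theta_{2},\mu_{2},\lambda,\mathcal O)$ with a further gap in $(\theta,\mu)$ verifying \eqref{fusobis} together with the ultraviolet smallness $K_{0}^{\tau_{1}}2^{-K_{0}/(8d\kappa)+2}<1$; going backward the radius doubles and the norm picks up $e^{2d\kappa s_{0}}$. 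Each gap condition is of the form $(\text{numerical gap})\cdot K_{0}^{\text{positive power}}>\text{absolute constant}$, so they are all satisfied by enlarging $K_{0}>N_{0}$, exactly in the spirit of the choice \eqref{primomu}. Admissibility $\mathtt c<\theta_{j},\mu_{j}<\mathtt C$ is preserved throughout.

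I would then apply Corollary \ref{NLS00} on $D(2s_{0},2Cr_{0})$: since $r_{0}=r/(2\eufm A)$ and $r$ lies in the admissible range $(\varepsilon^{3},c_{1}\varepsilon/2)$, so does $2Cr_{0}$ after fixing the absolute constants. This yields
\[
\|X_{P}\|^{T}_{\vec p_{2}}\le C'\frac{\lambda}{\varepsilon^{2}}\bigl(\varepsilon\cdot 2Cr_{0}+\varepsilon^{5}(2Cr_{0})^{-1}\bigr)\lessdot \frac{\lambda}{\varepsilon^{2}}(\varepsilon r+\varepsilon^{5}r^{-1}).
\]
Composing through $\Psi$ multiplies the bound by $e^{2d\kappa s_{0}}$ (Proposition \ref{qtop0bis}) and through $\Xi$ by $\eufm A$ (formula \eqref{ilrrr}); as $s_{0}=s/4$ and $\eufm A$ depend only on the fixed data $s,d,\kappa,a,p,U$, both factors are absolute constants which can be absorbed. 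Substituting $\lambda=2\varepsilon^{2}$ finally cancels the $\lambda/\varepsilon^{2}$ prefactor and delivers the desired $C(\varepsilon r+\varepsilon^{5}r^{-1})$.

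The main obstacle is purely bookkeeping: checking that the two intermediate tuples $\vec p_{1},\vec p_{2}$ sit inside the admissible range and simultaneously meet the gap conditions \eqref{fusobis} and \eqref{vicinz} at a single $K_{0}$, and that the Corollary \ref{NLS00} values of $(\theta,\mu)$ are compatible with $\vec p_{2}$. Since the right hand sides of the gap inequalities decay polynomially in $K_{0}$ while the gaps themselves are of order fixed numerical fractions of $\mathtt c$ and $\mathtt C$, a suitable choice exists as soon as $K_{0}$ is taken sufficiently large relative to $N_{0},d,\kappa,\tau_{0},\tau_{1}$. Once the tuples are fixed, the three preservation results give the quasi--T\"oplitz property of $P\circ\Psi\circ\Xi$ and the bound \eqref{stimefinali} directly.
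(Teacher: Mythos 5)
Your plan reproduces the paper's own argument: Corollary \ref{NLS00} gives $P=H\circ\Psi^{(1)}\circ\Phi_\xi-\mathcal N$ quasi--T\"oplitz, Proposition \ref{qtop0bis} pushes this through $\Psi$, and the operator bound \eqref{ilrrr} together with the neighborhood estimate \eqref{vicinz} handles $\Xi$; the norm and radius factors chain and $\lambda=2\varepsilon^2$ cancels the $\lambda/\varepsilon^2$ prefactor from Corollary \ref{NLS00}. The only caution I would add concerns your claim that the gap conditions are ``satisfied by enlarging $K_0>N_0$''. Enlarging $K_0$ (equivalently $N_0$) handles only the quantitative side of conditions such as $(\mu'-\mu)K_0^3>K_0$ and $(\theta-\theta')K_0^{\tau_1}>2d\kappa$. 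What you still must supply is a monotone chain of admissible pairs $(\theta_j,\mu_j)$ that interpolates between Corollary \ref{NLS00}'s $(\tfrac34\mathtt C,\tfrac54\mathtt c)$ and the stated $(\theta_0,\mu_0)=(\tfrac12\mathtt C,2\mathtt c)$ with gaps of the correct \emph{sign} at each step, as determined by Propositions \ref{qtop0} and \ref{qtop0bis}. This is precisely the role of the explicit numerical fractions the paper writes out at $N_0$, and it cannot be deferred to a largeness assumption on $K_0$: if the ordering were wrong, no choice of $K_0$ would fix it. Your proposal identifies this as the ``compatibility'' check and acknowledges it must be done, so the plan is sound, but the reader should not come away thinking the check is automatic. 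A second small point: you apply Corollary \ref{NLS00} at radius $2Cr_0=Cr/\eufm A$ rather than at $r$ as the paper does; this is a harmless reparametrisation provided $2Cr_0$ stays in the admissible window $(\varepsilon^3,c_1\varepsilon/2)$, which is indeed the case once $r=\varepsilon^2$ is fixed, but worth stating.
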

\begin{proof}
By  Corollary \ref{NLS00}  we have that $P$ is quasi-T\"oplitz with parameters $s,r, K,\theta= \mathtt C \frac34,\mu= \mathtt c \frac54,2\e^2, \ro \mathfrak K_\alpha) $. 
 Since, by Formula \eqref{itau}: 
  $$ (\frac{3}{2} \mathtt c - \frac{5}{4} \mathtt c) N_0^2> 4d\kappa^2\, , \quad (\frac34 - \frac38) \mathtt C N_0^{\tau_1}> 4d\kappa^2\,, \quad  N_0^{\tau_1}
2^{-\frac{N_0}{2\kappa}+1}<1$$  we apply Proposition \ref{qtop0bis}  and obtain the desired bounds for $P \circ \Psi$ with the parameters
$(s/2,r/2, \theta= \frac38 \mathtt C, \mu= \frac 32 \mathtt c, K> N_0, 2\e^2, \mathcal O_0)$. Then we apply the  last change of variables $\Xi$ we need to satisfy again the neighborhood Lemma  and reduce the parameters to $\theta_0=\mathtt C/2, \mu_0= 2 \mathtt c$ moreover we reduce the analyticity radius by $\frac 1{\eufm A}$. We obtain the desired result. \end{proof}

\subsection{Final conclusions: solutions of the NLS}
\begin{proposition}
The Hamiltonian of the NLS in the final coordinates is a compatible Hamiltonian in the sense of Definition \ref{buone} and satisfies the hypotheses of Theorem \ref{gacom} provided we choose $r= \e^2$ and $\e$ small.
\end{proposition}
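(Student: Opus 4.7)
The plan is to verify systematically the conditions $(A1)$--$(A5)$ and $(A2*)$ of Definition \ref{buone} for the Hamiltonian $H\circ\Psi^{(1)}\circ\Phi_\xi\circ\Psi\circ\Xi=\mathcal N+\tilde P$ of the NLS in the final coordinates, with the choice of parameters
$$ r_0=\e^2,\; s_0 = s/4,\;\theta_0=\mathtt C/2,\;\mu_0=2\mathtt c,\;\PaPi_0\text{ sufficiently large},\;\mathcal O_0=\ro\mathfrak K_\alpha,$$
and then check the smallness hypothesis of Theorem \ref{gacom}.

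First I would dispose of the ``algebraic'' conditions. For $(A1)$, Corollary \ref{partoKAM} gives $\omega_i(\xi)=|\mathtt j_i|^2-2\xi_i$, hence $\xi\mapsto\omega(\xi)$ is an affine lipeomorphism with explicit constants: one may take $M_0=2$, $L_0=1$, and $|\omega(\xi)-\vgot|_\infty\le 2\e^2 = M_0\e^2$ on $\ro\mathfrak K_\alpha$. This also takes care of the homogeneity condition $(A2^*)$, since both $\omega(\xi)-\vgot=-2\xi$ and $\Omega_k-\sigma(k)|\er(k)|^2=2\sigma(k)\vartheta_k(\xi)$ are analytic and homogeneous of degree one in $\xi$ by Theorem \ref{xixi}. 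For $(A2)$, we take $\tilde\Omega_n\equiv 0$ at the initial step, and the bound $2|\vartheta|_\infty\le M_0\e^2$, $2|\vartheta|_\infty^{\rm lip}\le M_0$ follows from Remark \ref{normU} and the homogeneity of degree one. The relation $M_0L_0=2$ required in \eqref{normal} holds.

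Second, condition $(A3)$ regularity and quasi--T\"oplitz is precisely the content of Proposition \ref{Pfinale}, which gives
$$ \|X_{\tilde P}\|^T_{\vec p_0}\le C(\e r_0 + \e^5 r_0^{-1}) = 2C\e^3,$$
for $r_0=\e^2$. The residual requirement in $(A3)$, that $\Pi_{(N,\theta,\mu,\tau)}\sum_j\vartheta_j|z_j|^2\in\mathcal T_{(N,\theta,\mu,\tau)}$, is a property built into the construction: by Theorem \ref{Lostra} and the block structure of Theorem \ref{xixi}, the eigenvalues $\vartheta_j$ depend only on the combinatorial block $\GA$ of $j$, hence on the cut data of $j$, and this is exactly the T\"oplitz condition of Definition \ref{topa}. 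For $(A4)$, the smallness, one sets $\gamma:=\e^{3-\eta}$ for some small $\eta>0$; then $\gamma\le 2M_0\e^2$, $\Theta_0=\gamma^{-1}\|X_{\tilde P}\|^T_{\vec p_0}\le 2C\e^\eta<\c$ for $\e$ small, and $\|X_{\tilde\Omega}\|^T_{\vec p_0}=0<\gamma$; finally $2^{2n+15}|\vec\ix_0|\PaPi_0^{4d\tau_1}<1$ by taking $\e$ small (with $\PaPi_0$ fixed).

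The nontrivial step, and the main obstacle, is $(A5)$, the first Melnikov condition $|\Delta_{\xi,\varrho}(\langle\omega,k\rangle+\Omega\cdot l)|>a_0$ on radial differences, for all $(k,l)$ satisfying momentum conservation, \eqref{argh} and $|k|\le\esse=16\sqrt n$, $|l|\le 2$, $(k,l)\ne 0$. Here one exploits two facts. Firstly, as observed in the proof of Lemma \ref{misu}, $(k,l)$ running in a bounded range and the constraint \eqref{argh} together force $f_{k,l}(\xi):=\langle\omega,k\rangle+\Omega\cdot l$ to reduce, up to a nonzero integer, to a homogeneous polynomial of degree one in $\xi$ built from $\vartheta$-eigenvalues of combinatorial matrices $C_\GA$; since this polynomial is nonzero (by the separation Theorem \ref{te12p} and Proposition \ref{nodeno}), Remark \ref{lemme} guarantees that if the compact $\mathfrak K_\alpha$ is taken disjoint from the finitely many resultants corresponding to $|k|\le\esse$, $|l|\le 2$, then $|f_{k,l}(\xi)|$ is bounded below, and more importantly its radial derivative is bounded below by a constant $a_0>0$ independent of $\e$ (again by homogeneity, cf.\ Remark \ref{eind}). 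Secondly, when \eqref{argh} fails, $f_{k,l}$ contains a nonzero integer part dominating the perturbation of order $\e^2$, and the bound is automatic. Combining these one obtains an $\e$-independent $a_0>0$.

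Finally, to invoke Theorem \ref{gacom}, one needs $\|X_{\tilde P}\|^T_{\vec p_0}\le C\e^\beta$ with $\beta>2$, which by Proposition \ref{Pfinale} holds with $\beta=3$, and the compatibility condition $(\bar\Theta)^{-1}C\e\le\gamma\le\min(2M_0,B^{-1}a_0C_1)$; with the choice $\gamma=\e^{3-\eta}$ above and $\e$ small, both sides of \eqref{smeg} are satisfied since $a_0,M_0,C_1,B$ are $\e$-independent. One concludes that Theorem \ref{KAM} applies, yielding the quasi-periodic solutions \eqref{solutions} of Theorem 1.
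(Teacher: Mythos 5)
Your proposal is correct and follows essentially the same route as the paper: check $(A1)$, $(A2)$, $(A2^*)$ from the explicit formulas $\omega=\vgot-2\xi$, $\Omega_k=\sigma(k)(|\er(k)|^2+2\vartheta_k)$ with $\tilde\Omega\equiv 0$; get $(A3)$ and the T\"oplitz part of the smallness from Proposition \ref{Pfinale}, Theorem \ref{Lostra}, and the fact that $\vartheta_j$ is constant on strata; get $(A5)$ from homogeneity of degree one of $\langle\omega,k\rangle+(\Omega,l)$ under \eqref{argh}, plus the non-vanishing from \cite{PP1} via Remark \ref{lemme}; and then feed the $\beta=3$ bound $\|X_{P_0}\|^T\lesssim\e^3$ into Theorem \ref{gacom}. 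The only cosmetic slip is writing $r_0=\e^2$ rather than $r_0=r/(2\eufm A)$ with $r=\e^2$, but this does not affect the estimates.
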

\begin{proof}
The fact that it  satisfies the smallness condition in Theorem \ref{gacom} follows from \eqref{stimefinali}.
We need to verify all the conditions $(A1)-(A5)$.

\noindent $(A1)$\quad {\it Non--degeneracy:} The map $\xi\to
\omega(\xi)$ is $\xi\mapsto \vgot -2\xi$ so it is a lipeomorphism from  $\mathcal O_0$ to
its image with $ |\ome^{-1}|^{lip}_\infty \leq 1$. We have $ |\ome(\xi)-\vgot|\leq   \varepsilon^2$ since by assumption $|\xi|\leq \varepsilon^2$.
\bigskip

\noindent $(A2)$\quad  {\it Asymptotics of normal frequency:}
 For all $n\in S^c$ we have a decomposition:
\begin{equation}\label{asympN} \Omega _n(\xi)=\s(n)(|\er(n)|^2+2\val_n(\xi)).
\end{equation} 
In our case we start from $  \tilde
\Omega _n(\xi)=0$.
  We know that the $ \val _n(\xi)$ are   in a finite list of analytic functions which are homogeneous of degree one in $\xi$.  As for \eqref{omelip}, by homogeneity,  we can fix $M\geq 1$ so that $
 2+ 2|\val|^{lip}_\infty \leq M\,,\quad  2|\val|_\infty \leq M \e^2\,.  
 $ 
 This fixes the parameters $M,L$, then  we fix $K_0$ large enough (independently form $r_0$) and  choose $\gamma   < \min(2 M ,B)\e^2$ with $B=B(K_0,\O_0)$ given in the iterative Lemma.    
\bigskip

 \noindent$(A3)$\quad  {\it Regularity and Quasi--T\"oplitz property:} The function $ \tilde\Omega (z)=0.$ The functions $P$, $ \val(z):=\sum_j\vartheta_j |z_j|^2$    are $M$--regular, preserve momentum as in \eqref{mome},  are Lipschitz in the parameters $\xi$. Then   $P$ is  quasi-T\"oplitz
with parameters $(s_0,r_0,{\Pappa_0},\theta_0,\mu_0, \gamma/M, \ro \mathfrak K_\alpha)$ by Proposition \ref{Pfinale}, with the bounds \eqref{stimefinali}. Moreover we know that the functions $\val_i$ are constant of the strata $\Sigma_{\GA,a}$  of \S \ref{gac} hence $ \val (z) :=\sum_j\vartheta_j |z_j|^2$ is  regular, preserve momentum and is quasi--T\"oplitz and for all $N \geq K_0$ $\tau_0\leq \tau\leq \tau_1/4d$ we have
$\Pi_{(N,\theta,\mu,\tau)} \sum_j\val_j |z_j|^2 \in \mathcal T_{(N,\theta,\mu,\tau)}$.
 \bigskip

\noindent $(A4)$\quad{ \it Smallness condition}:  We compute $\Theta,|\vec \ix|\leq C \e^3 \gamma^{-1}$ by \eqref{stimefinali}. The condition
\begin{equation}
 \Theta<{1} \,, \;LM |\vec\ix|<1\,,\; \gamma^{-1}\| X_{ \tilde\Omega  }\|^{T}_{\vec p}<  1\,,\; \kappa e |\vec{\ix}| {\Pappa}_0^{4d \tau_1} \ll 1.
\end{equation}  This translates to the condition 
\eqref{smeg}  in which $\alpha=3$ by \eqref{scossa}  with $r= \e^2$. Finally we   note that the third condition in $(A4)$ is trivial since $\tilde\Ome=0$.
 \bigskip 

\noindent$(A5)$\quad {\it Non--degeneracy (Melnikov conditions):}  For all $(k,l)\neq 0$ compatible with momentum conservation the function $\langle \ome, k\rangle  +  (\Ome , l)$  is of the form $\langle \vgot, k\rangle  + ( \Vgot , l)-2\sum_ik_i\xi_i+2\theta$ where $\theta$ can be $0,\pm\val_j, \pm \val_j\pm \val_k$.    From the main result  of \cite{PP1} we know that all the  functions $\sum_ik_i\xi_i+ \theta$ are analytic, homogeneous of degree 1 and different from 0 and give distinct eigenvalues on distinct blocks. 
Hence   in each connected component $(\R_+)^n_\alpha$  of $(\R_+)^n\setminus \mathfrak A$ we can choose a  compact domain $\mathfrak K_\alpha$ which does not intersect any of the zero curves of  the functions $(\langle \ome, k\rangle  +  (\Ome, l)) 0$ for $|k|<16 \sqrt{n}$, this amunts, as explained in Remark \ref{lemme},  to taking the  $\mathfrak K_\alpha$ disjoint from finitely many hypersurfaces describing the resultants  given in that Remark. Now fix $\mathfrak K_\alpha$,  for each $k,l$ as above  such that $\langle \vgot, k\rangle  + ( \Vgot , l)=0$ we have that $\langle \ome, k\rangle  +  (\Ome, l) $ is homogeneous of degree one  and non zero. Then for all positive $\rho$ such that $\rho\xi\in \mathfrak K_\alpha$ we have
$$
\frac{|\langle \ome(\rho\xi), k\rangle  +  (\Ome(\rho\xi), l)-(\langle \ome(\xi), k\rangle  +  (\Ome(\xi), l))|}{ (|1-\rho||\xi|}=
\frac{\langle \ome, k\rangle  +  (\Ome, l)}{|\xi|}\geq a>0
$$
for some positive $a:=a(\alpha)$.  We repeat the same argument for all $\alpha$ and we choose $\mathcal O_0= \e^2 \mathfrak K $. 

\end{proof}
For compatible Hamitonians we  have proved in the previous section a general Theorem \ref{gacom}  which ensures the existence  of KAM--tori, now we can apply this Theorem to the NLS  and have as final result Theorem \ref{ebbe} .

\bibliographystyle{plain}

\bibliography{/Users/procesi/Documents/scienza/lavori-in-corso/bibliografia}
\end{document}